\documentclass[12pt]{article}
\usepackage[utf8]{inputenc}
\usepackage{amssymb,graphicx,mathtools,amsfonts}
\usepackage{amsmath}
\usepackage{amsthm}
\usepackage{hyperref}
\usepackage[dvipsnames]{xcolor}
\usepackage{dsfont}
\usepackage{comment}
\usepackage{xcolor}
\usepackage{tikz-cd}

\usepackage{enumitem}

\usepackage{a4wide}
\usepackage{esint}
\catcode`\@=11
\def\theequation{\@arabic{\c@section}.\@arabic{\c@equation}}
\catcode`\@=12
\usepackage{enumitem}
\graphicspath{{images/}}
\newtheorem{theorem}{Theorem}[section] 
\newtheorem{lemma}{Lemma}[section] 
\newtheorem{definition}{Definition}[section] 
\newtheorem{proposition}{Proposition}[section]

\newtheorem{remark}{Remark}[section]

\allowdisplaybreaks

\newcommand{\e}{\varepsilon}

\newcommand{\om} {\Omega}

\newcommand{\real}{\mathbb{R}}

\newcommand{\rnn}{\mathbb{R}^{N}}
\newcommand{\rtwon}{\mathbb{R}^{2N}}
\newcommand{\lv}{\lVert}
\newcommand{\rv}{\rVert}

\newcommand{\grad}{\nabla}
\newcommand{\ntrl}{\mathbb{N}}

\newcommand{\X}{\mathcal{X}_0} 
\newcommand{\J}{\mathcal{J}} 

\newcommand{\Lo}{\mathcal{L}_{p,q}}
\usepackage{tikz}
\usetikzlibrary{positioning}

\DeclareMathOperator*{\essinf}{ess\,inf}

\providecommand{\subjclass}[2][2020]{%
  \par\medskip\noindent\textbf{#1 MSC.}\ #2\par}
\providecommand{\keywords}[1]{%
  \par\smallskip\noindent\textbf{Keywords.}\ #1\par}

\title{
The isocritical regime for mixed local-nonlocal $(p,q)$
Laplacian: existence of ground state, and decay estimates}

\author{Diksha Gupta \footnote{Department of Mathematical Sciences, Korea Advanced Institute of Science and Technology, Daejeon 34141, Republic of Korea, dikshagupta1232@gmail.com}, Shammi Malhotra\footnote{Department of Mathematics, Indian Institute of Technology Delhi, Hauz Khas New Delhi 110016,  India, shammi22malhotra@gmail.com}, and K. Sreenadh\footnote{Department of Mathematics, Indian Institute of Technology Delhi, Hauz Khas New Delhi 110016,  India, sreenadh@maths.iitd.ac.in}}

\date{}

\begin{document}
\maketitle

\begin{abstract}
We study the mixed local-nonlocal operator $\mathcal{L}_{p,q} := -\Delta_p + (-\Delta)_q^s$
in the \emph{isocritical regime} $p^* = q_s^*$, i.e. $1 - N/p = s - N/q$,
under which both operators become critical for the same nonlinearity.
We consider
\[
  -\Delta_p u + (-\Delta)_q^s u = |u|^{p^*-2}u \qquad \text{in } \mathbb{R}^N,
\]
with $N \geq 2$, $1 < p < N$, $0 < s < 1$, $1 < sq < N$.
In this regime the energy space reduces to $\mathcal{D}_0^{1,p}(\mathbb{R}^N)$, and both best Sobolev constants enter the variational structure simultaneously.
We prove: \textup{(i)} existence of a nonneg\-ative radial ground state via Nehari manifold methods and a double-threshold concentration-compactness analysis; \textup{(ii)} a logarithmic energy estimate, weak comparison principle, and strong maximum principle for all admissible exponents; \textup{(iii)} a weak Harnack inequality; and \textup{(iv)} sharp two-sided decay $U(x) \asymp |x|^{-(N-p)/(p-1)}$ for positive radial solutions, matching the fundamental solution of the $p$-Laplacian.
\subjclass[2020]{35J20, 35J92, 35B33, 35B38, 35B40, 35B50, 35A15}
\keywords{Mixed local--nonlocal operator; $(p,q)$-Laplacian; $p$-Laplacian; fractional $q$-Laplacian; isocritical exponent; ground state; concentration-compactness; Nehari manifold; Harnack inequality; strong maximum principle; decay estimates}
\end{abstract}

\section{Introduction}
A great deal of the analysis of nonlinear elliptic equations is, at bottom, a
study of \emph{diffusion}. The $p$-Laplacian
\[
-\Delta_p u := -div \big(|\nabla u|^{p-2}\nabla u\big),\qquad 1<p<\infty,
\]
models a flux that responds nonlinearly to the local gradient of a quantity,
and arises in non-Newtonian fluid mechanics, nonlinear elasticity and torsion
problems, glaciology, and image processing \cite{cuccu_plap_nonlinear_elasticity,glowinski_plap_glaciology, kuijper_plap_image_processing}. Its
defining feature is \emph{locality}: the value of $-\Delta_p u$ at a point is
determined by the behaviour of $u$ in an arbitrarily small neighbourhood of that
point. Many phenomena, however, are governed by interactions that are
emphatically \emph{not} local. Particles that move by L\'evy-type jumps,
populations that disperse over long ranges, and phase transitions with nonlocal
interaction energies, see \cite{valdinoci_logistic_equation, pellacci_best_dispersal_strategies, pellacci_logistic_equation}, are all naturally described by the fractional
$q$-Laplacian
\[
(-\Delta)^s_q u(x)
:= \mathrm{P.V.}\int_{\mathbb{R}^N}
\frac{|u(x)-u(y)|^{q-2}\big(u(x)-u(y)\big)}{|x-y|^{N+qs}}\,dy,
\qquad s\in(0,1), \; q \in (1,\infty).
\] 
Thus, the two operators sit at opposite ends of a spectrum. Therefore, a fundamental question, both physically and mathematically, is what happens when both types of diffusion are present simultaneously, i.e., when a system experiences both short-range gradient-driven flux and long-range nonlocal interactions. This is precisely the setting of the operator
\begin{equation*}
  \mathcal{L}_{p,q} := -\Delta_p + (-\Delta)_q^s,
\end{equation*}
which combines the two mechanisms additively. Such \emph{mixed local-nonlocal operators} have been the subject of intense recent investigation and can be seen as a model of optimal animal foraging strategies, dispersal strategies, and anisotropic heat transfer \cite{valdinoci_ecological_niche, blazevski_anisotropic_heat_transport}.

\medskip
The systematic study of $\mathcal{L}_{p,q}$ has, in recent years, grown into one of the most active directions in nonlinear analysis. The cleanest case, $p=q=2$, gives the linear operator $-\Delta+(-\Delta)^s$, for which the systematic work of Biagi, Dipierro, Valdinoci and Vecchi \cite{biagi_dipierro_valdinoci_vecchi_2021,biagi_dipierro_valdinoci_vecchi_2022} established well-posedness, maximum principles, interior and boundary regularity, and a variational spectral theory for the associated Dirichlet problem. Subsequently, some of the results were generalised to the general case of $p = q$ in \cite{silva2024mixed}. 

\medskip 
The spectral theory of mixed local-nonlocal operators has been studied in several recent works. Del~Pezzo, Ferreira, and Rossi \cite{pezzo_ferreira_rossi_eigenvalues_mixed} established that the first Dirichlet eigenvalue of $-\Delta_p + (-\Delta)^s_p$ is isolated and simple, and analyzed its asymptotic behaviour as $p \to \infty$. Isoperimetric inequalities of Faber--Krahn and Hong--Krahn--Szeg\H{o} type for the Dirichlet spectrum of $-\Delta_p + (-\Delta)^s_p$ were subsequently established in \cite{divya_eigenvalue_mixed, biagi_faber_krahn_ineq_mixed,biagi_Hong_krahn_ineq_mixed}.

\medskip 
Moving to multiplicity of solutions to mixed local-nonlocal problems with concave-convex nonlinearities, it have been investigated in \cite{biagi_vecchi_abc_mixed, bhakta_mixed_multiplicity, dhanya_mixed_multiplicity_weights}. Biagi and Vecchi \cite{biagi_vecchi_abc_mixed} obtained a second positive solution to a critical problem driven by $-\Delta + (-\Delta)^s$, which was extended to the quasilinear operator $-\Delta_p + \varepsilon(-\Delta_p)^s$ in \cite{bhakta_mixed_multiplicity} via Ambrosetti--Brezis--Cerami type arguments. For the general operator $\mathcal{L}_{p,q}$ with sign-changing weights, multiplicity results in both subcritical and critical regimes were established in \cite{dhanya_mixed_multiplicity_weights} using Nehari manifold methods.

\medskip 
The regularity theory for mixed local-nonlocal operators has been developed extensively in recent years. In the probabilistic setting, Chen, Kim, Song, and Vondra\v{c}ek \cite{chen_kim_song_uniform_harnack_mixed} established a uniform boundary Harnack principle for the operator $\Delta + \Delta^{\alpha/2}$, connecting it to the superposition of Brownian and stable L\'evy processes. The foundational analytic regularity theory for $-\Delta_p + (-\Delta)^s_p$ was developed by Garain and Kinnunen \cite{garain_juha_transactions_regularity_mixed}, who established local boundedness, H\"older continuity, and Harnack inequalities via De~Giorgi--Nash--Moser theory. This was sharpened by Garain and Lindgren \cite{garain_lindgren_holder_pmixed}, who obtained almost Lipschitz regularity in the homogeneous case and H\"older continuity of the gradient for certain parameter ranges. In the semilinear setting, Su, Valdinoci, Wei, and Zhang \cite{xifeng_valdinoci_mixed_regularity} established $L^\infty$-boundedness and $C^{1,\alpha}$-regularity, subsequently sharpened to $C^{2,\alpha}$-regularity with optimal exponents for subcritical nonlinearities in \cite{xifeng_valdinoci_mixed_c2alpha_regularity}. A strong maximum principle for $-\Delta_p u + (-\Delta)^s_p u = c(x)|u|^{p-2}u$ was proved by Shang and Zhang \cite{shang_zhang_smp_mixed_ev}, while Antonini and Cozzi \cite{antonini_cozzi_gradient_regularity_mixed} established global gradient regularity and a Hopf boundary point lemma for general quasilinear mixed operators. At the variational level, De~Filippis and Mingione \cite{CristianaGiuseppe} proved local $C^{1,\alpha}$-regularity of minimizers of mixed $(p,q)$-functionals with $p \neq q$, and Byun, Lee, and Song \cite{byun_regularity_mixed} extended this program to mixed local-nonlocal double-phase functionals.

\medskip
A second strand of the literature, which we draw on heavily, concerns the function spaces that make these problems well posed. The homogeneous Sobolev spaces $\mathcal{D}^{1,p}_0(\mathbb{R}^N)$ and their fractional
analogues $\mathcal{D}^{s,q}_0(\mathbb{R}^N)$, defined in Subsection \ref{subsec:Function_spaces}, are the correct energy spaces for
problems set on the whole of $\mathbb{R}^N$, where no boundary condition is
available and the only constraint comes from finiteness of energy and decay at
infinity. Their fine structure -- density of smooth functions, the precise
relationship between the ``completion'' and ``zero-trace'' definitions, and the
strict inclusions among them -- were taken from \cite{Brasco_DecayEstimate,brasco_salort_spaces,BrascoCharacterization}.

\subsection*{Motivation and Isocritical Condition}

One of the fundamental questions that arises when working with a differential 
operator concerns the embedding of the underlying Sobolev space into an 
appropriate Lebesgue space. For the $p$-Laplacian, this is the celebrated 
Sobolev critical phenomenon: the embedding 
$\mathcal{D}^{1,p}_0(\mathbb{R}^N) \hookrightarrow L^{p^*}(\mathbb{R}^N)$ 
is continuous but \emph{not} compact, where $p^* = \tfrac{pN}{N-p}$. This 
leads to the study of the best Sobolev constant
\begin{equation}\label{eq:best_constant_local}
    \mathcal{S} := \inf_{u \in \mathcal{D}_0^{1,p}(\mathbb{R}^N) \setminus \{0\}} 
\frac{\displaystyle\int_{\mathbb{R}^N} |\nabla u|^p \, dx}
{\left(\displaystyle\int_{\mathbb{R}^N} |u|^{p^*}\,dx \right)^{\frac{p}{p^*}}},
\end{equation}
whose extremals were identified explicitly by Aubin~\cite{aubin} and 
Talenti~\cite{talenti}. The minimization problem~\eqref{eq:best_constant_local} 
is naturally associated with the critical exponent equation 
$-\Delta_p u = |u|^{p^*-2}u$ in $\mathbb{R}^N$. The systematic study of such 
critical problems in the presence of lower-order perturbations was initiated 
by the seminal work of Brezis and Nirenberg~\cite{brezis_nirenberg}. At the 
critical exponent, standard variational methods break down because 
Palais--Smale sequences may concentrate or escape to infinity without 
converging; compactness must then be recovered through the 
concentration-compactness analysis developed by Lions~\cite{lions_strauss_lemma}.

For the purely nonlocal problem $(-\Delta)^s_q u = |u|^{q_s^*-2}u$, an 
analogous role is played by the fractional best Sobolev constant
\begin{equation}\label{eq:best_constant_nonlocal}
    \mathcal{S}_f := \inf_{u \in \mathcal{D}_0^{s,q}(\mathbb{R}^N) \setminus \{0\}} 
\frac{\displaystyle\iint_{\mathbb{R}^{2N}} 
\dfrac{|u(x) - u(y)|^{q}}{|x-y|^{N+qs}}\, dx\, dy}
{\left(\displaystyle\int_{\mathbb{R}^N} |u|^{q_s^*}\,dx \right)^{\frac{q}{q_s^*}}}.
\end{equation}
The existence of extremals and their sharp decay estimates were established 
in~\cite{cotsiolis_tavoularis, Brasco_DecayEstimate} and the references therein.
\begin{remark}
It is easy to check from Theorem \ref{ineq:gagliardo_nirenberg} that $\mathcal{S}_f \leq C(N,s,p,q) \;\mathcal{S}^{\frac{qs}{p}}$.
\end{remark}
The considerations above naturally motivate the study of the analogous problem 
for the mixed operator $\mathcal{L}_{p,q}$. Accordingly, in this paper we seek 
a function $u \in \mathcal{D}^{1,p}_0(\rnn) \cap \mathcal{D}^{s,q}_0(\rnn)$ satisfying
\begin{equation}\tag{$\mathcal{P}$}\label{eq:main_problem}
  -\Delta_p u + (-\Delta)_q^s u = |u|^{p^*-2}u \quad \text{in } \mathbb{R}^N,
\end{equation}
under the standing assumptions
\[
  N \geq 2, \qquad 1 < p < N, \qquad 0 < s < 1, \qquad 1 < sq < N,
\]
where $p^* = \tfrac{pN}{N-p}$ is the critical Sobolev exponent for the 
$p$-Laplacian.
For the mixed operator the picture is richer, because the two operators carry \emph{different} critical exponents.
The natural critical exponent for $-\Delta_p$ is $p^*=\tfrac{pN}{N-p}$, while that for $(-\Delta)^s_q$ is the fractional Sobolev critical exponent $q^*_s=\tfrac{qN}{N-qs}$.
In general these two thresholds are distinct, and their relative position
dictates the entire structure of the problem:
\begin{itemize}
  \item if $p^*>q^*_s$, the nonlocal term is subcritical relative to the
        nonlinearity and the problem is governed, morally, by the local
        operator alone;
  \item if $p^*<q^*_s$, the roles are exactly reversed.
\end{itemize}
In both of these \emph{non-isocritical} regimes one operator dominates at the
critical scale, the other behaves as a lower-order perturbation, and the analysis
is expected to reduce to a single-operator critical problem.
In fact, we expect to establish the non-existence of positive solutions in these regimes in
our forthcoming work, by extending the Pohozaev identity developed
in~\cite{anthal2025pohozaev} to the present setting.

This is not merely a heuristic expectation: it is precisely what we observed, in
the Hilbertian model case $p=q=2$, in our previous work~\cite{chakraborty2025global}.
There we studied the Brezis--Nirenberg-type problem
\[
  -\Delta u + (-\Delta)^s u - \lambda u = |u|^{2^*-2}u
  \quad\text{in }\Omega\subset\mathbb{R}^N,
  \qquad u=0 \text{ in }\mathbb{R}^N\setminus\Omega,
\]
for the mixed operator $-\Delta+(-\Delta)^s$ on a bounded domain, and analysed
the loss of compactness through a global compactness (profile decomposition)
theorem for the associated Palais--Smale sequences.
The decisive structural fact was a \emph{scaling collapse} of the nonlocal term.
Under the critical rescaling $u_k(x)=k^{(N-2)/2}u(kx)$ that generates the
concentrating bubbles, the Dirichlet energy is scale-invariant, whereas the
Gagliardo seminorm of the nonlocal term satisfies
\[
  [u_k]_s^2 = k^{2s-2}\,[u]_s^2 \longrightarrow 0
  \qquad\text{as } k\to\infty,
\]
since $2s-2<0$.
The nonlocal contribution is thus \emph{strictly subcritical} relative to the
local one.
This was complemented by the compact embedding of the underlying fractional
Sobolev space into the local one (see~\cite[Lemma~2.2]{chakraborty2025global}).
As a consequence (building on Biagi, Dipierro, Valdinoci and
Vecchi~\cite{biagi_dipierro_valdinoci_vecchi_2021,biagi_dipierro_valdinoci_vecchi_2022}),
the best Sobolev constant for the mixed operator coincides with the purely local
one and is \emph{never attained}, the bubbles in the profile decomposition are
exactly the Aubin--Talenti bubbles of the local critical problem
$-\Delta U=|U|^{2^*-2}U$ in $\mathbb{R}^N$, and compactness can be restored
only by a perturbation (the term $\lambda u$) or by domain topology (a
Coron-type construction).
In short, despite its presence in the equation, the fractional term is invisible
to the critical geometry: the problem is governed by the Laplacian alone.

That experience is precisely what motivates the present paper.
The collapse $[u_k]_s^2=k^{2s-2}[u]_s^2\to 0$ hinges on the mismatch between
the scaling exponents of the two operators~-- or equivalently, on the strict
inequality between their critical exponents.
The natural question is therefore: \emph{what happens when the critical exponents
of both operators coincide? Does this remove the defect in the scaling? Does it
change the role of the fractional Laplacian from a subcritical perturbation to
that of a genuine competing operator?}

This delicate case, which is the subject of the present paper, is the
\emph{isocritical} regime:
\begin{equation}\label{eq:isocritical-intro}
  p^*=q^*_s,
  \qquad\text{equivalently}\qquad
  1-\frac{N}{p}=s-\frac{N}{q}.
\end{equation}
This automatically gives us $q = \tfrac{pN}{N-p+ps}$, and $p<q$. The answer to each of the above questions is affirmative.
Here both operators become critical for the \emph{same} nonlinearity at the
\emph{same} time, and both are invariant under the \emph{same} critical
rescaling; the nonlocal operator is thus a genuine co-protagonist rather than a
perturbation.
This shared scaling is at once an obstruction and an opportunity.
A classical embedding theorem (see~\cite[Theorem~7.28]{giovanni_fractional_book})
shows that under the isocritical condition,
$\mathcal{D}^{1,p}_0(\mathbb{R}^N)\hookrightarrow\mathcal{D}_0^{s,q}(\mathbb{R}^N)$,
so the natural energy space for~\eqref{eq:main_problem} collapses to
$\mathcal{D}^{1,p}_0(\mathbb{R}^N)$ alone.
However, this embedding is not compact: if $u_n(x)=n^{(p-N)/p}u(x/n)$ for some
fixed $0<u\in C_c^\infty(\mathbb{R}^N)$, then $(u_n)$ is bounded in
$\mathcal{D}^{1,p}_0(\mathbb{R}^N)$ but admits no convergent subsequence in
$\mathcal{D}_0^{s,q}(\mathbb{R}^N)$.
On the other hand, this very scale-invariance opens the possibility of finding
ground states via concentration-compactness, and this is precisely what we carry
out in Section~\ref{sec:existence} to prove the existence of a ground state
of~\eqref{eq:main_problem}.

The embedding theorem yields two pieces of information.
First, the non-compactness implies that both best Sobolev constants~-- $\mathcal{S}$
for $-\Delta_p$ and $\mathcal{S}_f$ for $(-\Delta)^s_q$~-- enter the energy level,
and the concentration-compactness alternative must account for both thresholds
simultaneously.
To the best of our knowledge, \eqref{eq:main_problem} is the first variational
problem for the mixed $(p,q)$-Laplacian at \emph{double critical} growth, in
which the local and nonlocal best constants jointly determine the variational
level.
This is conceptually distinct from both the purely local critical problem
(see~\cite{brezis_nirenberg}) and the purely fractional one
(see~\cite{mosconi_BN_pfrac}), and it necessitates a genuinely new compactness
analysis that handles both Sobolev thresholds at once
(see Section~\ref{sec:existence}).
Second, the embedding reveals that, even though both operators compete, the
$p$-Laplacian has an advantage over the fractional $q$-Laplacian.
This is consistent with our decay estimates, where solutions
of~\eqref{eq:main_problem} exhibit the pointwise decay of the fundamental
solution of the $p$-Laplacian.

\begin{remark}
  Although the $L^p$ scale does not distinguish between the $p$-Laplacian and the
  fractional $q$-Laplacian, the finer Lorentz space scale does.
  Recall that the Lorentz space $L^{\alpha,\beta}(\mathbb{R}^N)$ is defined in
  Subsection~\ref{subsec:lorentz_space}, and that
  (see~\cite[Exercise~15.19]{giovanni_book_sobolev} and~\cite{peetre_lorentz_spaces})
  \begin{equation}\label{eq:lorentz_embeddings}
    L^{\alpha,\beta_1}(\mathbb{R}^N)\subsetneq L^{\alpha,\beta_2}(\mathbb{R}^N)
    \quad\text{for } 0<\beta_1<\beta_2<\infty,
  \end{equation}
  with strict inclusion witnessed by the function $|x|^{-N/\alpha}\left(\log \left( \frac{2}{ |x|^N}\right)\right)^{-1/\beta_1} \chi_{\{ |x| \leq 1 \}}$. Moreover, by~\cite[Theorem~15.33]{giovanni_book_sobolev},
  \cite{peetre_lorentz_spaces}, and~\cite[Exercise~12.8]{giovanni_fractional_book},
  \begin{equation*}
    \mathcal{D}^{1,p}_0(\mathbb{R}^N)\hookrightarrow L^{p^*,p}(\mathbb{R}^N),
    \qquad
    \mathcal{D}_0^{s,q}(\mathbb{R}^N)\hookrightarrow L^{q_s^*,q}(\mathbb{R}^N).
  \end{equation*}
  Since $p^*=q_s^*$ in the isocritical regime, combining the above with
  \eqref{eq:lorentz_embeddings} (and using $p<q$ in the relevant parameter range)
  makes the dominance of the $p$-Laplacian over the fractional $q$-Laplacian
  visible at the Lorentz scale.
  This, in turn, motivates the study of the minimization problem
  \begin{equation*}
    \mathcal{S}_{N,p^*,p}
    = \inf_{\substack{u\in\mathcal{D}^{1,p}_0(\mathbb{R}^N) \\ u\neq 0}}
      \frac{\|\nabla u\|_{L^p}}{\|u\|_{L^{p^*,p}}},
  \end{equation*}
  whose optimal constant was first determined by Alvino~\cite{alvino_best_Lorentz_Sobolev}:
  \begin{equation*}
    \mathcal{S}_{N,p^*,p}
    = \frac{N-p}{p}\cdot
      \frac{\sqrt{\pi}}{\bigl[\Gamma\!\bigl(1+\tfrac{N}{2}\bigr)\bigr]^{1/N}}.
  \end{equation*}
  Notably, this best constant is \emph{not} attained in
  $\mathcal{D}^{1,p}_0(\mathbb{R}^N)$; the extremals lie in a strictly larger
  space (see~\cite[Theorem~4]{cassani_Sobolevineq_lorentz}
  and~\cite{cassani_Hardy_lorentz}).
\end{remark}

\subsection*{Main results and methodology} 
Some of our results require
$p^*=q^*_s$, while others hold for general exponents. To be precise, the
existence of a ground state and the sharp two-sided decay estimate are proved in
the isocritical regime $p^*=q^*_s$, under which the energy space reduces to
$\mathcal{D}^{1,p}_0(\mathbb{R}^N)$ with norm $[\,\cdot\,]_{1,p}$. By contrast,
the comparison principle, the strong maximum principle, the logarithmic
estimate, and the weak Harnack inequality are established for the operator
$\mathcal{L}_{p,q}$ with \emph{general} exponents $p,q>1$, under the standing
assumptions $1<p<N$, $0<s<1$, $1<sq<N$, with no relation imposed
between $p^*$ and $q^*_s$.

We obtain four main results which, taken together, give a fairly complete picture of
the problem. Concerning the isocritical regime, we prove ground state exists in the radial class, and it is strictly positive. The decay behaviour is settled
for the natural class of positive, radially symmetric, radially decreasing
solutions of \eqref{eq:main_problem}. The strict positivity itself follows from a strong maximum
principle for $\mathcal{L}_{p,q}$, valid for general exponents $1<p\leq q< \infty$.

\bigskip
\noindent\textbf{Existence of a radial ground state (Section \ref{sec:existence}).}\\
The strategy for proving the existence result hinges on a careful treatment of the compactness defects mentioned earlier. The crucial structural step is a \emph{localisation of the loss of compactness} as depicted in  Lemma~\ref{lem:annulus_zero}. For a weakly null $(PS)_{\tilde c}$ sequence $u_n\rightharpoonup0$,
we prove that all three energies - the
$p$-Dirichlet energy $[u_n]_{1,p}^p$, the Gagliardo energy
$[u_n]_{s,q}^q$, and the critical mass $\int|u_n|^{p^*}$ -- tend to zero on
every annulus $B_{c',d'}$. 
The mechanism is the compactness of the embedding
$\mathcal{X}^r_0(\mathbb{R}^N)\hookrightarrow L^r(B_{R_1}\setminus B_{R_2})$ for every $1\le r<\infty$. 
This localisation is precisely what makes the concentration quantities mentioned in \eqref{eq:defining kappas}
independent of the radius $\delta>0$. 

The concentration-compactness Lemma \ref{lem:k3_estimate} proves that the critical mass $\lim\int_{B_\delta}|u_n|^{p^*}$
either vanishes or is bounded below by
$\max\{\mathcal{S}^{p^*/(p^*-p)},\mathcal{S}_f^{q^*_s/(q^*_s-q)}\}$, a
single threshold that simultaneously encodes both Sobolev constants.

A rescaling step (refer Lemma \ref{lem:rescaling})
$\tilde u_n(x)=r_n^{(N-p)/p}u_n(r_nx)$ then pins the $L^{p^*}$-mass at a definite
scale 
$\int_{B_1}|\tilde u_n|^{p^*}=\xi \;\;\forall n$,
ruling out both vanishing and escape, and a Brezis--Lieb decomposition
\cite{brezis_lieb} extracts a nontrivial limit at the ground-state level. The scaling
is legitimate as the exponent identity $-N+qs+q(N-p)/p=0$ holds precisely when
$p^*=q^*_s$, so the rescaling leaves both local and non local energies unchanged. The equality $m = \tilde{c}$ of the Nehari
minimum and mountain pass level completes following the existence argument.

\begin{theorem}\label{thm:existence_equality}
Assume $p^*=q^*_s$, $1<p<N$, $0<s<1$, $1<sq<N$. Then the problem
\eqref{eq:main_problem} admits a non-negative ground state $u\in\mathcal{X}^r_0(\mathbb{R}^N)$,
that is, a nontrivial weak solution of least energy
$\mathcal{J}(u)=\inf_{\mathcal{N}}\mathcal{J}$ on the Nehari manifold
$\mathcal{N}$. 
\end{theorem}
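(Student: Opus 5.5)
We work in the radial space $\mathcal{X}^r_0(\mathbb{R}^N)$ of radial functions in $\mathcal{D}^{1,p}_0(\mathbb{R}^N)$. By the isocritical embedding $\mathcal{D}^{1,p}_0\hookrightarrow\mathcal{D}^{s,q}_0$, the functional
\[
\mathcal{J}(u)=\frac1p[u]_{1,p}^p+\frac1q[u]_{s,q}^q-\frac1{p^*}\int_{\mathbb{R}^N}|u|^{p^*}\,dx
\]
is $C^1$ on this space. By Palais' principle of symmetric criticality, radial critical points are weak solutions on the whole space.

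\textbf{Step 1: geometry and the Nehari level.} Since $p<q<p^*$, for each $u\neq0$ the fibering map $t\mapsto\mathcal{J}(tu)$ has exactly one maximum point $t_u>0$. The reason is that $t^{-p}\langle\mathcal{J}'(tu),tu\rangle = [u]_{1,p}^p+t^{q-p}[u]_{s,q}^q-t^{p^*-p}\|u\|_{p^*}^{p^*}$ is strictly monotone after its single sign change. Hence
\[
m:=\inf_{\mathcal N}\mathcal{J}=\inf_{u\neq0}\max_{t>0}\mathcal{J}(tu)=\tilde c,
\]
the mountain pass level, and $m>0$ by the Sobolev inequality.

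By Ekeland's principle, or the mountain pass theorem without the (PS) condition, we obtain a $(PS)_{\tilde c}$ sequence $(u_n)\subset\mathcal{X}^r_0$. The sequence is bounded, via $\mathcal{J}(u_n)-\frac1{q}\langle \mathcal{J}'(u_n),u_n\rangle\ge(\frac1p-\frac1q)[u_n]_{1,p}^p$. We may replace $u_n$ by $|u_n|$: since $\mathcal{J}(|u|)\le\mathcal{J}(u)$, we can take minimizing elements and fibers nonnegative, so nonnegativity follows.

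\textbf{Step 2: upper bound on $\tilde c$.} Testing the fibering map with an Aubin--Talenti bubble, and separately with a fractional extremal, we compare $\tilde c$ with the threshold
\[
c^*=\Bigl(\tfrac1p-\tfrac1{p^*}\Bigr)\max\Bigl\{\mathcal S^{p^*/(p^*-p)},\mathcal S_f^{q_s^*/(q_s^*-q)}\Bigr\}.
\]
The role of this bound is only to rule out that a pure bubble is cheaper than $\tilde c$. In practice the argument will run on the rescaling rather than a strict inequality, which we do not expect to have.

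\textbf{Step 3: concentration control and rescaling.} If $u_n\to u\neq0$ weakly, then $u$ is a critical point. Brezis--Lieb together with weak lower semicontinuity give $\mathcal{J}(u)\le\tilde c$ and $u\in\mathcal N$, hence $u$ is a ground state.

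To force a nontrivial weak limit, we use Lemma~\ref{lem:rescaling}. Choose $r_n$ so that $\int_{B_1}|\tilde u_n|^{p^*}=\xi$ with $\tilde u_n(x)=r_n^{(N-p)/p}u_n(r_nx)$. Here $\xi$ is fixed below the threshold of Lemma~\ref{lem:k3_estimate} and below the total mass $\lim\|u_n\|_{p^*}^{p^*}=N\tilde c>0$. By the exponent identity $-N+qs+q(N-p)/p=0$, $(\tilde u_n)$ is again a radial $(PS)_{\tilde c}$ sequence.

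Suppose $\tilde u_n\rightharpoonup0$. Lemma~\ref{lem:annulus_zero} shows that all energies vanish on annuli, so the mass in $B_1$ sits at the origin. Lemma~\ref{lem:k3_estimate} then says this concentrated mass is either $0$ or at least the threshold. Both alternatives contradict the choice of $\xi$: the first because $\xi>0$, the second because $\xi$ lies below the threshold. Hence $\tilde u\neq0$, and Step 3 gives the ground state $u=|\tilde u|\ge0$.

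The main obstacle is this last step, namely making the dichotomy apply with the normalization $\xi$. The concentration defect must be shown to localize only at $0$, which requires radial symmetry and Lemma~\ref{lem:annulus_zero}, and not at infinity. Mass escaping to infinity is handled by the same annulus lemma applied to the part outside $B_1$, combined with the choice of $r_n$ as the first radius attaining $\xi$.
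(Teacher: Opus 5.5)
Your route is the paper's: Nehari level equals mountain-pass level, a bounded radial $(PS)_{\tilde c}$ sequence, the isocritical rescaling that fixes $\int_{B_1}|\tilde u_n|^{p^*}=\xi$, and the annulus lemma plus the $\kappa_3$-dichotomy to exclude $\tilde u_n\rightharpoonup 0$. Step~2 can be dropped, as you suspect, because the normalization $\xi$ replaces any energy threshold.

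\textbf{The gap.} The step that does not go through as written is ``if $\tilde u_n\rightharpoonup\tilde u\neq0$ then $\tilde u$ is a critical point''. For $p\neq 2$ the map $u\mapsto|\nabla u|^{p-2}\nabla u$ is not weakly sequentially continuous. The Minty/$(S_+)$ trick also cannot be run globally, because $\int|\tilde u_n|^{p^*-2}\tilde u_n(\tilde u_n-\tilde u)\,dx$ need not vanish at critical growth (think of $\tilde u$ plus a concentrating bubble). The nonlocal term and the nonlinearity are harmless: $J_q(\tilde u_n(x)-\tilde u_n(y))|x-y|^{-(N+qs)/q'}$ is bounded in $L^{q'}(\mathbb{R}^{2N})$, $|\tilde u_n|^{p^*-2}\tilde u_n$ is bounded in $L^{(p^*)'}(\mathbb{R}^N)$, and both converge a.e. For the $p$-Laplacian, however, you need $\nabla\tilde u_n\to\nabla\tilde u$ a.e.

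\textbf{How to close it.} The paper inserts exactly this step: a Boccardo--Murat truncation argument with test functions $T_k(\tilde u_n-\tilde u)\eta$, where the extra nonlocal term splits into a nonnegative part plus a part of size $O(k)$. In the radial setting you can also close it with your own tools. Test $\mathcal{J}'(\tilde u_n)$ with $(\tilde u_n-\tilde u)\eta$, where $\eta\ge 0$ is supported in an annulus $B_{a,b}$.
\begin{itemize}
\item Bounded radial sequences in $\mathcal{D}^{1,p}$ are precompact in $C(\overline{B_{a,b}})$, so the critical term tends to $0$.
\item The commutator $\iint J_q(\tilde u_n(x)-\tilde u_n(y))(\tilde u_n-\tilde u)(y)(\eta(x)-\eta(y))|x-y|^{-N-qs}\,dx\,dy$ tends to $0$ by the same estimate as the integral $I$ in the proof of Lemma~\ref{lem:annulus_zero}.
\item The remaining nonlocal part is $\ge o_n(1)$ by monotonicity of $J_q$.
\item Simon's inequality then gives $\nabla\tilde u_n\to\nabla\tilde u$ in $L^p_{\mathrm{loc}}(\mathbb{R}^N\setminus\{0\})$.
\end{itemize}
Once criticality is known, Brezis--Lieb is unnecessary. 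By weak lower semicontinuity,
$\mathcal{J}(\tilde u)=(\frac1p-\frac1{p^*})[\tilde u]_{1,p}^p+(\frac1q-\frac1{p^*})[\tilde u]_{s,q}^q\le\liminf_n\bigl(\mathcal{J}(\tilde u_n)-\frac1{p^*}\langle\mathcal{J}'(\tilde u_n),\tilde u_n\rangle\bigr)=\tilde c$,
while $\tilde u\in\mathcal{N}$ gives $\mathcal{J}(\tilde u)\ge m=\tilde c$.

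\textbf{Smaller corrections.}
\begin{itemize}
\item The identity $\lim\|u_n\|_{p^*}^{p^*}=N\tilde c$ is false once the nonlocal energy is present. With $A_n=[u_n]_{1,p}^p$ and $B_n=[u_n]_{s,q}^q$, you have $\|u_n\|_{p^*}^{p^*}=A_n+B_n+o_n(1)$ and $\tilde c=(\frac1p-\frac1{p^*})A_n+(\frac1q-\frac1{p^*})B_n+o_n(1)$, with $\frac1q-\frac1{p^*}<\frac1N$. So only $\liminf_n\|u_n\|_{p^*}^{p^*}\ge N\tilde c>0$ holds, which is all the normalization needs.
\item Replacing a Palais--Smale sequence by $|u_n|$ is not legitimate, since $\mathcal{J}'(|u_n|)$ need not tend to $0$. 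Do it at the end, as the paper does. Since $\mathcal{J}(t|\tilde u|)\le\mathcal{J}(t\tilde u)$ for all $t\ge 0$, you get $m\le\mathcal{J}(t_{|\tilde u|}|\tilde u|)\le\mathcal{J}(t_{|\tilde u|}\tilde u)\le\mathcal{J}(\tilde u)=m$. Because $t=1$ is the unique maximizer of $t\mapsto\mathcal{J}(t\tilde u)$, this forces $t_{|\tilde u|}=1$. Then $|\tilde u|$ is a critical point because $\mathcal{N}$ is a natural constraint.
\item Mass escaping to infinity plays no role. The contradiction only involves $\int_{B_1}|\tilde u_n|^{p^*}$ and the annulus lemma on $B_{\delta,1}$, so choosing $r_n$ as the ``first radius'' attaining $\xi$ is unnecessary.
\end{itemize}
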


\medskip
\noindent\textbf{Comparison and strong maximum principle (cf. Section~\ref{sec: comparison}).}\\
The technical core of this section is a logarithmic energy estimate
(Lemma~\ref{lem:logarithmic_lemma}) for the mixed operator.
Alongside it, we develop a weak comparison
principle, built on a Simon-type monotonicity inequality \cite{simon_inequalities}
together with the nonlocal monotonicity of
\cite{lindgren_lindqvist_fractional_eigenvalues}. These results are established
for general exponents $p,q>1$ under the standing assumptions, and are also of
independent interest (see also \cite{antonini_cozzi_gradient_regularity_mixed}).

Formulating these results on exterior domains like $B_R^c$, demands a careful choice of functional setting, and this is the role of
the auxiliary space $\mathcal{D}^{s,p,q}(\Omega)$. For an open set
$\Omega\subset\mathbb{R}^N$ we set
\begin{equation*}
\begin{aligned}
\mathcal{D}^{s,p,q}(\Omega):=\Bigl\{&u\in L^{q-1}_{\mathrm{loc}}(\mathbb{R}^N)\cap\mathcal{D}^{1,p}(\Omega)\ :\ \\
&\exists\,E\supset\Omega,\ E^c\text{ compact},\ 
\mathrm{dist}(E^c,\Omega)>0,\ [u]_{s,q,E}<\infty\Bigr\}.
\end{aligned}
\end{equation*}
The difficulty it resolves is that the nonlocal operator $(-\Delta)^s_q$ is
``too'' global, meaning that $u$ must be controlled far away even when only its
behaviour near $\Omega$ is of interest. The space $\mathcal{D}^{s,p,q}(\Omega)$
asks instead for exactly what is needed and no more: the local
$\mathcal{D}^{1,p}$ energy on $\Omega$, mere local $L^{q-1}$ integrability
of the tail, and finite Gagliardo
energy only on a slightly enlarged set $E$ 
that stays at positive distance controlling the near interaction without constraining the
far tail.
Demanding 
$[u]_{s,q}<\infty$ would be too strong because the barrier functions mignt not always have globally finite nonlocal energy. With this setting in place, $\mathcal{L}_{p,q}u$ is a well-defined
continuous functional on the test space
$\mathcal{D}^{1,p}_0(\Omega)\cap\mathcal{D}^{s,q}_0(\Omega)$ (Lemma~\ref{dualityLemma}),
and the comparison principle (Lemma \ref{thm:strong_comparison_mixed}) can be applied to the slowly-decaying
barriers of the decay analysis.

\medskip
\noindent\textbf{Weak Harnack inequality (Section \ref{sec: Harnack}).}\\
The real weight of the proof of the Harnack inequality is carried by the \emph{expansion
of positivity} Lemma \ref{lem:expansion of positivity}, i.e., if $u\ge k$ on a fixed proportion of a ball $B_r$, in the sense that
$|B_r(x_0)\cap\{u\ge k\}|\ge\tau|B_r|$ for some $\tau\in(0,1]$, then
$
\operatorname*{ess\,inf}_{B_{4r}(x_0)} u\ \ge\ \delta k,
$
with $\delta$ depending only on $\tau$, $\|u\|_{L^\infty_{\mathrm{loc}}(\mathbb{R}^{N})}$ and the given data. The proof of this step
is where the mixed structure is felt most acutely. It combines the logarithmic
estimate of Section \ref{sec: comparison}, used to bound the measure 
$\{u\le 2\delta k\}$, with a De~Giorgi-type iteration over a sequence of shrinking
balls and rising levels. In contrast to results of
Section~\ref{sec: comparison}, the Harnack analysis is carried out in the regime $p^*=q^*_s$ and uses the ordering $p<q$. The De~Giorgi iteration balances a local term of
order $r^{-p}$ against a nonlocal term of order $r^{-sq}$, and the isocritical
identity $q^*_s/q=p^*/q$ is what lets a single Sobolev gain exponent $\kappa = q_s^*/q= p^*/q$
control both (estimate \ref{eq:final_recursive_estimate_s2}). Iterating the expansion of positivity over
a geometric sequence of thresholds upgrades it to the weak Harnack inequality
stated below.

\begin{proposition}\label{Weak Harnack inequality}
Let
$
u \in \X(\mathbb{R}^{N})
    \cap L^{\infty}(\mathbb{R}^{N})$
be a non-negative weak supersolution of $\mathcal{L}_{p,q}$.
Then there exist constants $\theta\in(0,1)$ and $C\ge1$,
depending only on $N,p,q,s$ and
$\|u\|_{L^\infty_{\mathrm{loc}}(\mathbb{R}^{N})}$, such that
\[
\left(
\fint_{B_R} u^\theta\,dx
\right)^{\frac1\theta}
\le
C\,\inf_{B_R}u.
\]
\end{proposition}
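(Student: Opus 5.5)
The argument follows the Krylov--Safonov/DiBenedetto route sketched in the introduction: the expansion of positivity (Lemma~\ref{lem:expansion of positivity}) is converted into a measure-decay estimate for the super-level sets of $u$, and this decay is then integrated against $t^{\theta-1}$. Fix $B_R=B_R(x_0)$ and put $m:=\inf_{B_R}u$. If $m=0$ we first run the argument for $u+\varepsilon$ and let $\varepsilon\to0$. This is legitimate because $u+\varepsilon$ is again a non-negative supersolution: $\mathcal{L}_{p,q}$ only sees differences and gradients. Its local sup norm changes by at most $\varepsilon$, so the constants do not move.

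\textbf{Step 1 (covering).} Apply Lemma~\ref{lem:expansion of positivity} on balls $B_r(y)$ with $B_{4r}(y)\supset B_R$. For instance take $y\in B_R$ and $r=R/2$, so that $B_{4r}(y)=B_{2R}(y)\supset B_R$. This gives, for every level $k>0$,
\[
|B_{R/2}(y)\cap\{u\ge k\}|\ge \tau |B_{R/2}| \;\Longrightarrow\; m\ge \delta(\tau)\,k .
\]
Cover $B_R$ by a fixed number $M=M(N)$ of such balls. Then $|B_R\cap\{u\ge k\}|\ge M\tau|B_{R/2}|$ forces some ball $B_{R/2}(y_i)$ to satisfy $|B_{R/2}(y_i)\cap\{u\ge k\}|\ge\tau|B_{R/2}|$. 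Consequently
\[
\frac{|B_R\cap\{u> k\}|}{|B_R|}\le c_N\tau\quad\text{whenever } k>m/\delta(\tau).
\]

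\textbf{Step 2 (quantitative dependence on $\tau$).} This is the main obstacle. Integrability of $u^\theta$ needs more than a single $\tau$: we need the decay $|\{u>k\}\cap B_R|\lesssim (m/k)^{\varepsilon_0}|B_R|$. I would first check whether the proof of Lemma~\ref{lem:expansion of positivity} gives $\delta(\tau)\ge c\,\tau^{\beta}$ for some $\beta>0$. This is plausible, since the logarithmic estimate Lemma~\ref{lem:logarithmic_lemma} bounds $|\{u\le 2\delta k\}\cap B_r|$ by $C/\log(1/\delta)$, so the admissible $\delta$ is $\exp(-C/\tau)$. That yields only logarithmic decay, $|\{u>k\}|\lesssim |B_R|/\log(k/m)$, which is still too weak for a power.

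Hence I would instead use a Krylov--Safonov-type iteration, or a crawling-ink-spot/Calder\'on--Zygmund covering, applied at every scale and center with the fixed $\tau=1/2$ and $\delta_0=\delta(1/2)$. The claim to prove is
\[
|\{u>k\delta_0^{-j}\}\cap B_R|\le (1-\eta)^j|B_R|\quad\text{for } k\gtrsim m .
\]
The scale invariance of the lemma in the isocritical regime is what makes this work: its constants depend only on data and on $\|u\|_{L^\infty_{\mathrm{loc}}}$, not on $r$. In the Calder\'on--Zygmund decomposition, every dyadic cube where $\{u>k\delta_0^{-j-1}\}$ has density above $1/2$ has its predecessor contained in $\{u>k\delta_0^{-j}\}$. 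Applying this with the enlargement factor $4$ from the lemma gives the geometric decay with some $\eta=\eta(N)$.

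\textbf{Step 3 (conclusion).} By the layer-cake formula,
\[
\fint_{B_R}u^\theta = \theta\int_0^\infty t^{\theta-1}\frac{|\{u>t\}\cap B_R|}{|B_R|}dt \le (Cm)^\theta+\theta\int_{Cm}^\infty t^{\theta-1}C(m/t)^{\varepsilon_0}dt .
\]
Here $\varepsilon_0=\log(1/(1-\eta))/\log(1/\delta_0)$. Choosing $\theta<\varepsilon_0$ makes the integral at most $C m^\theta$, which gives the stated inequality. The constants depend on $N,p,q,s$ and $\|u\|_{L^\infty_{\mathrm{loc}}}$ only through $\delta_0$ and $\eta$.
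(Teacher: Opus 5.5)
Your proposal is correct and follows essentially the paper's route, which defers to Lemma~6.7 and Proposition~6.8 of Cozzi: the expansion of positivity (Lemma~\ref{lem:expansion of positivity}) is upgraded by a Krylov--Safonov covering argument to power decay of $|B_R\cap\{u>t\}|$ (your dyadic geometric decay is an equivalent packaging of the intermediate statement $|B_R\cap\{u\ge t\}|\ge\tau^k|B_R|\Rightarrow u\ge\delta^k t$), followed by layer-cake integration with $\theta$ small. Two small corrections. First, the shift to $u+\varepsilon$ is unnecessary, and in fact not admissible in Lemma~\ref{lem:expansion of positivity} since $u+\varepsilon\notin\X(\mathbb{R}^N)$; when $m=0$ your decay estimate already holds for every $k>0$, which forces $u=0$ a.e.\ in $B_R$. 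Second, that lemma is not scale invariant but only uniform for radii $r\le 1$ (it uses $sq<p$ and $r\le1$), so your covering requires $R$ bounded, e.g.\ $\sqrt{N}R\le 1$, consistent with the restriction $R\in(0,1]$ implicit in the paper.
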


The role of the inequality for us is qualitative rather than quantitative. Applied to a positive solution, it gives the strict interior
positivity $\inf_{B_1}U\ge c^*>0$ that the decay analysis takes as its starting
point, and this positive floor is the input to the auxiliary problem of
Section~\ref{sec:decayEstimates}.

\medskip
\noindent\textbf{Sharp two-sided decay (refer Section \ref{sec:decayEstimates}).}\\
We obtain the following result concerning the decay of positive, radially symmetric, decreasing solutions $U$ of \eqref{eq:main_problem}.
\begin{theorem}[Decay of positive radial solutions]\label{prop:decay-main}
Assume the isocritical condition \eqref{eq:isocritical-intro}, i.e. $p^\ast = q^\ast_s$, together
with the standing assumptions $1<p<N$, $0<s<1$, $1<q<N/s$, so that
$q = \tfrac{pN}{N-p+ps} > 1$. Set
\[
\alpha := \frac{N-p}{p-1}.
\] Let $U\in \X(\mathbb{R}^N)$ be a positive,
radially symmetric, and decreasing weak solution of \eqref{eq:main_problem}. Then:

\begin{enumerate}
\item[\textup{(i)}] \textup{(Upper bound.)} For all $|x|\ge 1$,
\[
U(x) \;\le\; \left( \omega_N^{-1/p^\ast}\, \mathcal{S}^{-1/p}\, \|U\|_{L^{p^\ast-1}(\mathbb{R}^N)}^{(p^\ast-1)/p} \right)^{\!p/(p-1)} |x|^{-\alpha}.
\]

\item[\textup{(ii)}] \textup{(Lower bound.)} If in addition
\[
q \neq \frac{p(N-1)}{N-p},
\]
then there exists a constant $c_1 = c_1\big(N,p,q,s, \inf_{x\in B_1} U(x\big)\big) > 0$ such that
\[
U(x) \;\ge\; c_1\, |x|^{-\alpha} \qquad \text{for all } |x|\ge 3.
\]
\end{enumerate}
In particular, $U$ decays exactly at the rate $|x|^{-\alpha}=|x|^{-(N-p)/(p-1)}$ of the
fundamental solution of the $p$-Laplacian (see \cite[p.~37]{lindqvist_notes}): there exist $0<c_1\le c_2$ and $R\ge 3$ with
$$
c_1\,|x|^{-\alpha} \;\le\; U(x) \;\le\; c_2\,|x|^{-\alpha}, \qquad |x|\ge R.
$$
\end{theorem}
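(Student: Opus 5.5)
For the upper bound (i) we use the radial monotonicity together with the embedding $\mathcal{D}^{1,p}_0\hookrightarrow L^{p^*}$ applied to a truncation. Since $U$ is radially decreasing, for $|x|=R\ge 1$ we have $U(x)^{t}\,\omega_N R^N \le \int_{B_R} U^{t}$ for any $t>0$. This is the usual Lieb-type pointwise bound for decreasing functions.

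To get the exponent $\alpha$ rather than the weaker $N/p^*$, we test the equation with the truncation $\phi=\min\{U,U(R)\}$, or equivalently with $(U-U(R))^+$ and its complement, as in \cite{Brasco_DecayEstimate}. The nonlocal term $\langle(-\Delta)^s_q U,\phi\rangle$ is nonnegative, because $t\mapsto\min\{t,k\}$ is monotone, so the pairing $|U(x)-U(y)|^{q-2}(U(x)-U(y))(\phi(x)-\phi(y))\ge0$. Discarding it leaves
\[
\int_{\{U<U(R)\}}|\nabla U|^p \le U(R)\int U^{p^*-1}.
\]
The left-hand side is $[\min\{U,U(R)\}]_{1,p}^p$ computed on $\{U<U(R)\}\supset B_R^c$. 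Applying the Sobolev constant $\mathcal{S}$ to $\phi$, and noting $\phi\equiv U(R)$ on $B_R$, gives
\[
\mathcal{S}\,\bigl(U(R)^{p^*}\omega_N R^N\bigr)^{p/p^*}\le U(R)\,\|U\|_{L^{p^*-1}}^{p^*-1}.
\]
Hence $U(R)^{p-1}\le \omega_N^{-p/p^*}\mathcal{S}^{-1}\|U\|_{p^*-1}^{p^*-1}R^{-(N-p)}$, which is exactly the stated constant with $R^{-\alpha}$.

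A preliminary check is that $U\in L^{p^*-1}$. If this is not yet known, we first obtain $U(R)\lesssim R^{-N/p^*}$ from the monotonicity bound and bootstrap. The one-step estimate above only needs the right-hand side to be finite, so some iteration may be needed. The pairing with the truncated test function is justified by Lemma~\ref{dualityLemma}.

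For the lower bound (ii) we use comparison on the exterior domain $B_1^c$ (or $B_3^c$) with the barrier $w(x)=c\,(|x|^{-\alpha}-\varepsilon)$, or more simply $c|x|^{-\alpha}$ modified. The steps are as follows.
\begin{enumerate}
\item Weak Harnack (Proposition~\ref{Weak Harnack inequality}), or the strong maximum principle, together with monotonicity gives $U\ge c^*:=\inf_{B_1}U>0$ on $B_1$.
\item Note that $-\Delta_p |x|^{-\alpha}=0$ away from the origin, and compute $(-\Delta)^s_q(|x|^{-\alpha})$ on $|x|>1$. Using the known formula for fractional $q$-Laplacians of powers, $(-\Delta)^s_q|x|^{-\beta}=C(\beta)|x|^{-\beta(q-1)-sq}$ with a constant $C(\beta)$ whose sign changes at the critical power $\beta=(N-sq)/(q-1)$.
\item The hypothesis $q\ne p(N-1)/(N-p)$ is exactly what makes $\alpha\neq(N-sq)/(q-1)$ under isocriticality, so $C(\alpha)\ne0$. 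If $C(\alpha)<0$, $w$ is already a subsolution. If $C(\alpha)>0$, we replace the barrier by $c(|x|^{-\alpha}-|x|^{-\alpha-\eta})$, or truncate it to $0$ inside $B_1$, where $U\ge c^*$ dominates. Choosing $c$ small relative to $c^*$, the nonlocal error is absorbed because $(-\Delta)^s_q$ of the truncated piece is negative outside $B_2$ by an amount $\sim c^{q-1}|x|^{-N-sq}$.
\item Apply the weak comparison principle (Lemma~\ref{thm:strong_comparison_mixed}) in $\mathcal{D}^{s,p,q}(B_3^c)$. We have $U\ge w$ on $B_3$ by choice of $c$, and $U$ is a supersolution since $\mathcal{L}_{p,q}U=U^{p^*-1}\ge0$. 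This yields $U\ge w\ge c_1|x|^{-\alpha}$ for $|x|\ge3$.
\end{enumerate}

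The main obstacle is the barrier construction in step 3: verifying that the modified profile is a genuine weak subsolution of $\mathcal{L}_{p,q}$ on $B_3^c$. One has to control the cross interaction between the region near the origin, where the barrier is cut off, and infinity, and to balance the local correction term $\sim |x|^{-(\alpha+\eta)(p-1)-p}$ against the nonlocal terms $\sim |x|^{-\alpha(q-1)-sq}$ and $|x|^{-N-sq}$. This forces a careful choice of $\eta$ depending on which of these powers dominates, and that choice is where the excluded value of $q$ enters. Finally, combining (i) and (ii) gives the two-sided estimate.
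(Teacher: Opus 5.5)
Your part (i) is the paper's argument in compressed form. The paper tests with $\min\{t,u\}$ for every $t>0$, discards the nonnegative nonlocal term, obtains $U\in L^{\eta_0,\infty}$ with $\eta_0=\frac{(p-1)N}{N-p}$, and converts this into a pointwise bound with the radial Lorentz lemma. Taking $t=U(R)$ and using $\min\{U,U(R)\}\equiv U(R)$ on $B_R$ does both steps at once and gives the same constant.

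What is missing is $\|U\|_{L^{p^*-1}}<\infty$. Without it $c_2=+\infty$ and the two-sided bound says nothing, and the bootstrap you suggest cannot produce it. If $U(r)\le Cr^{-\beta}$, your inequality
\[
\mathcal S\,\omega_N^{p/p^*}\,U(R)^{p}R^{N-p}\le U(R)\int_{B_R}U^{p^*-1}\,dx+\int_{B_R^c}U^{p^*}\,dx
\]
returns the exponent $\beta'=\frac{\beta(p^*-1)-p}{p-1}$. Now $\beta'-\beta=\frac{\beta(p^*-p)-p}{p-1}$ vanishes exactly at your starting value $\beta=N/p^*$. The estimate is invariant under the critical scaling, so iterating it gains no power.

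The paper gets the extra integrability from a separate absorption argument (Step~1 of Proposition~\ref{Borderline Lorentz estimate}). It tests with $\psi_\varepsilon(u)$, which behaves like $u^\theta$ with $0<\theta<1$. It then uses $\psi_\varepsilon(t)t^{p-1}\le\theta^{-1}\Psi_\varepsilon(t)^p$, the smallness of $\int_{\{u\le M_0\}}u^{p^*}$ for small $M_0$, and $u\in L^\infty$ to absorb the right-hand side into the Sobolev term. This gives $u\in L^\eta$ for every $\eta>\eta_0$, and since $p^*-1=\eta_0+\frac{p}{N-p}$, that is what you need.

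Part (ii) has a genuine gap: no barrier is actually constructed, and your explanation of the excluded exponent is wrong. Under $p^*=q^*_s$ one has $N-sq=\frac{q(N-p)}{p}$, hence
\[
\alpha-\frac{N-sq}{q-1}=\frac{(N-p)(q-p)}{p(p-1)(q-1)}>0
\]
for every admissible $q$ (recall $q>p$). So $\alpha$ is never the critical power of $(-\Delta)^s_q$, $C(\alpha)<0$ whenever it is defined, and your branch ``$C(\alpha)>0$'' never occurs. The excluded value $q=\frac{p(N-1)}{N-p}$ is $\alpha(q-1)=N$, the threshold for $|x|^{-\alpha}\in L^{q-1}_{\rm loc}$, and that is where the difficulty lies.

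The barrier must be bounded near the origin, and your candidates fail this:
\begin{itemize}
\item for $\alpha(q-1)\ge N$ the pure power is not even in $\mathcal D^{s,p,q}$, and its fractional $q$-Laplacian diverges;
\item in every case $c|x|^{-\alpha}$ cannot lie below the bounded $U$ on $B_3$;
\item $c(|x|^{-\alpha}-\varepsilon)$ turns negative for $|x|>\varepsilon^{-1/\alpha}$, so it cannot give $U\ge c_1|x|^{-\alpha}$.
\end{itemize}
Any modification on a core changes $(-\Delta)^s_q$ at large $|x|$ by a term of order $|x|^{-N-sq}$, which competes with the tail term of order $|x|^{-\alpha(q-1)-sq}$. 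Its sign is also not what you claim. If $w$ vanishes on $B_1$, the contribution $2\int_{B_1}J_q(w(x))|x-y|^{-N-sq}\,dy$ is positive. A negative contribution of order $|x|^{-N-sq}$ only comes from a core on which the barrier is \emph{large}.

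The paper treats the two sides of the threshold separately.
\begin{itemize}
\item For $\alpha(q-1)>N$ it takes a $C^2$ radial cap $\Upsilon$ of $|x|^{-\alpha}$. Here $-\Delta_p\Upsilon=0$ in $|x|>1$, and the core dominates the faster-decaying tail: $(-\Delta)^s_q\Upsilon\le-c_*|x|^{-N-sq}$ for $|x|\ge k$ \cite[Lemma~7.1]{PezzoQuaas}. Then $K\Upsilon(k\,\cdot)$ is compared with $U$ on $\overline{B_1}^c$.
\item For $\alpha(q-1)<N$ it uses $\widetilde\Gamma=\min\{1,|x|^{-\alpha}\}$, with $\mathcal L_{p,q}\widetilde\Gamma\le C|x|^{-N-sq}$ coming from $C(\alpha)<0$ and $|y|^{-\alpha(q-1)}\in L^1(B_1)$. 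It compares $\varepsilon\widetilde\Gamma$ with the truncation $\bar U$ of the obstacle-problem minimizer $U_0$, which satisfies $\mathcal L_{p,q}\bar U\ge c_1|x|^{-N-sq}$, and concludes with $U_0\le U$.
\end{itemize}
In this second regime, once $C(\alpha)<0$ is recognized, the term $C(\alpha)|x|^{-\alpha(q-1)-sq}$ already dominates the $O(|x|^{-N-sq})$ cap error at infinity. So a rescaled $\widetilde\Gamma$ would itself serve as a subsolution outside $B_1$.

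None of these ingredients appears in your sketch. The hypothesis does not enter through a choice of an auxiliary exponent $\eta$.
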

\begin{remark}
The upper bound in Theorem~\ref{prop:decay-main} uses the radial Lemma
\ref{Radial Lemma for Lorentz spaces}, and thus the radial symmetry of the
solution, whereas the lower bound holds for any nonnegative solution.
\end{remark}
\begin{remark}
It is natural to ask whether every nonnegative ground state of \eqref{eq:main_problem} is radial, which would allow one to drop the symmetry hypothesis altogether. We believe this is true for $p\ge2$ using the techniques developed in \cite{chen_li_symmetry_pfrac} under appropriate regularity assumptions; for $1<p<2$, however, the question appears to remain open.
\end{remark}
This is the qualitative heart of the paper. Although both operators are critical
and neither can be dropped from the equation, it is the local operator that dictates the
large-scale behaviour. The upper bound is reached through a global
$L^\infty$ bound (Proposition~\ref{prop:global_boundedness_mixed}), a borderline
weak-Lorentz estimate placing $U$ in the Lorentz space
$L^{\eta_0,\infty}(\mathbb{R}^N)$ with the critical exponent
$\eta_0=\tfrac{(p-1)N}{N-p}$ (refer Proposition \ref{Borderline Lorentz estimate}), and a radial Lorentz lemma that converts this
membership into the pointwise rate. 
 
\smallskip
\noindent\emph{A remark on the scope of the Lorentz estimate.} The borderline
estimate is proved for
arbitrary exponents $p,q>1$ under the standing assumptions. Its proof uses
only the local Sobolev inequality together with the sign of the nonlocal term, which is
favourable regardless of the relation between $p$ and $q$. Consequently, the
membership $U\in L^{\eta_0,\infty}(\mathbb{R}^N)$, and hence the upper bound
$U(x)\le c_2|x|^{-\alpha}$, hold in generality.
\emph{What is special to the isocritical regime is not the validity of the
estimate but its optimality.} The exponent $\eta_0=\tfrac{(p-1)N}{N-p}$ is the
borderline integrability of the $p$-Laplacian's fundamental solution, and the
rate $|x|^{-\alpha}$ it encodes is attained, i.e.\ the upper bound is matched by
the lower bound and is therefore sharp, precisely when $p^*=q^*_s$, and
$|x|^{-\alpha}$ is the true asymptotic profile. Away from the isocritical
balance the same Lorentz membership still holds, but it is no longer expected to
capture the sharp rate, and $|x|^{-\alpha}$ degrades from an exact rate to a mere
upper envelope. The upper estimate, in other words, is universal; its sharpness is
isocritical.

 \medskip
The lower bound is the delicate half, and its proof splits into two
regimes according to the sign of $\alpha(q-1)-N$, that is, according to whether
$q\gtrless\tfrac{p(N-1)}{N-p}$. The two regimes call for genuinely different
techniques, because the sign of $\alpha(q-1)-N$ decides whether the singular
profile $\Gamma(x)=|x|^{-\alpha}$ can be turned into a usable subsolution at all.

 \medskip
In the regime $q>\tfrac{p(N-1)}{N-p}$ the construction is
direct. One regularises (Lemma \ref{lem:antisymmetry}) $\Gamma$ into a $C^2$ radial function $\Upsilon$,
equal to $|x|^{-\alpha}$ for $|x|\ge1$. The local part of $\mathcal{L}_{p,q}\Upsilon$
vanishes, while the
nonlocal part is strictly negative, $(-\Delta)^s_q\Upsilon(x)\le-c_*|x|^{-N-sq}$ for $|x|\ge k$. Hence $\Upsilon$ is a
subsolution of $\mathcal{L}_{p,q}$ outside a large ball, and a single application
of the comparison principle yields $U(x)\ge c_1|x|^{-\alpha}$.

\medskip
The case $q<\tfrac{p(N-1)}{N-p}$ is the tricky one, since the profile
$\Gamma(x)=|x|^{-\alpha}$ is not directly comparable to our solution. To circumvent this, we
introduce the constrained minimisation problem
\[
\mathcal{A}:=\inf\Bigl\{\,[u]_{1,p}^p+[u]_{s,q}^q\ :\ u\ge\tfrac{c^*}{2}\ \text{on }B_1\,\Bigr\},
\]
whose minimiser $U_0$ replaces $\Gamma$. Being a free minimiser outside $B_1$,
$U_0$ solves $\mathcal{L}_{p,q}U_0=0$ in $B_1^c$, and
satisfies $U_0\equiv\tfrac{c^*}{2}$ on $B_1$, so a comparison
reduces the lower bound for $U$ to one for $U_0$. Since the bare power $\Gamma$ is
not comparable on the boundary, we cap it as $\widetilde\Gamma=\min\{1,|x|^{-\alpha}\}$, for which
$\mathcal{L}_{p,q}\widetilde\Gamma\le C|x|^{-N-sq}$ (Lemma~\ref{lem:nonlocal_upper});
the same truncation applied to $U_0$ gives a lower bound with constant proportional
to $(U_0(1)-U_0(2))^{q-1}$ (Lemma~\ref{lem:nonlocal_barU}), strictly positive by
Lemma~\ref{lem: strict dercresing U_0}, and chaining the comparisons yields
Theorem~\ref{prop:decay-main}.Our sub-solution constructions and the accompanying nonlocal computations on
exterior domains draw on the techniques of \cite{PezzoQuaas,HolderRegularityFractional,Brasco_DecayEstimate}.
\noindent
\begin{remark}[The borderline case $q=\tfrac{p(N-1)}{N-p}$.] The separating
value $\alpha(q-1)=N$ is a genuine borderline, and the reason is a clash of
scales in the nonlocal term (refer \cite[Lemma 7.1]{PezzoQuaas}).
The upper bound is unaffected, as it never uses
the sign of $\alpha(q-1)-N$. Moreover, within the isocritical regime this
borderline together with
\eqref{eq:isocritical-intro} forces $sq=1$. The decay analysis at this
borderline is more delicate and will be carried out in the upcoming work.
\end{remark}

\subsection*{Plan of the paper}

Section~\ref{sec:prelim} fixes notation and assembles the functional framework:
the homogeneous spaces $\mathcal{D}^{1,p}_0(\mathbb{R}^N)$ and
$\mathcal{D}^{s,q}_0(\mathbb{R}^N)$, the isocritical reduction of the energy
space to $\mathcal{D}^{1,p}_0(\mathbb{R}^N)$, the notion of weak (super/sub)solution, the global
$L^\infty$ bound, and the Lorentz-space preliminaries used in the decay
argument. Section~\ref{sec:existence} proves the existence of a radial ground
state through the Nehari manifold and the double-critical
concentration-compactness alternative. Section~\ref{sec: comparison} establishes
the weak comparison principle, the logarithmic estimate, and the strong maximum
principle. Section~\ref{sec: Harnack} develops the weak Harnack inequality.
Section~\ref{sec:decayEstimates} combines these tools to prove the sharp two-sided
pointwise decay.

\section{Notations and Preliminaries}\label{sec:prelim}
\subsection{Notations}

Throughout the paper, we use the following notation.

\begin{itemize}
\item For $R>0$ and $x_0 \in \mathbb{R}^N$,
$
B_R(x_0) := \{ x \in \mathbb{R}^N : |x - x_0| < R \}, 
\quad B_R := B_R(0).
$
\item For $0 < c < d$, $
B_{c,d} := \{ x \in \mathbb{R}^N : c < |x| < d \}.
$

\item For a set $A \subset \mathbb{R}^N$, $
A^c := \mathbb{R}^N \setminus A.
$
\item For any measurable set $E \subset \mathbb{R}^N$, $|E|$ denotes its Lebesgue measure.

\item For sets $A,\Omega\subset\mathbb{R}^N$ we write $A\Subset\Omega$
      (``$A$ compactly contained in $\Omega$'') if $\overline{A}$ is compact
      and $\overline{A}\subset\Omega$.
\item $\omega_N:=|B_1|$ denotes the Lebesgue measure of the unit ball in
      $\mathbb{R}^N$, and $\mathcal{H}^{k}$ the $k$-dimensional Hausdorff measure.
     
\item $
C_{0,r}^\infty(\mathbb{R}^N)
:=
\{ u \in C_c^\infty(\mathbb{R}^N) : u(x) = u(|x|) \ \text{for all } x \in \mathbb{R}^N \}.
$

\item For $1 \le q \le \infty$, $L^q(\mathbb{R}^N)$ denotes the usual Lebesgue space with norm $\|\cdot\|_{L^q}$; for an open set $\Omega \subset \mathbb{R}^N$, $L^q(\Omega)$ is defined analogously with norm $\|\cdot\|_{L^q(\Omega)}$.

\item The letter $C$ denotes a positive constant which may change from line to line and may depend on parameters such as $N, p, q, s$, and $\eta$

\item For $1 < p  < \infty$, the $p$-duality pairing is defined as 
\begin{equation*}
    \langle - \Delta_p u, v \rangle_{p} := \int_{\rnn} |\grad u|^{p-2} \grad u \cdot \grad v \, dx,
\end{equation*}
whenever it is well-defined.
\item For fractional spaces, the bilinear form
\[
\langle u, v \rangle_{s,q} 
:= \iint_{\mathbb{R}^N \times \mathbb{R}^N} 
\frac{|u(x) - u(y)|^{q-2} (u(x)-u(y))(v(x)-v(y))}{|x-y|^{N+qs}}\,dx\,dy,
\]
whenever it is well-defined.

 \item We write the full operator pairing
\[
\hspace{-5cm}\langle\!\langle \mathcal{L}_{p,q}(u),\varphi\rangle\!\rangle
:= \langle -\Delta_p u,\varphi\rangle_p + \langle (-\Delta)^s_q u,\varphi\rangle_{s,q}.\]
whenever the right-hand side is well defined (cf. Lemma~\ref{dualityLemma}).
\item For $q>1$, we define $J_q:\mathbb{R}\to\mathbb{R}$ by $J_q(t):=|t|^{q-2}t$.
\item For measurable sets $X, Y \subset \mathbb{R}^N$, we write
\[
\iint_{X \times Y} f(x,y)\,dx\,dy := \int_{Y} \left( \int_{X} f(x,y)\,dx \right) dy,
\]
whenever the integrals are well-defined.

 \item For a measurable set $E \subset \mathbb{R}^N$ with $0<|E|<\infty$, $(f)_{E}=\fint_E f\,dx := \frac{1}{|E|}\int_E f\,dx$.

 \item $a \gg b$ implies that there exists a constant $C > 0$ such that $a > Cb$ for $a,b \in \real$.
 \item For $t\in\mathbb{R}$, we write $t^{+}:=\max\{t,0\}$ and
      $t^{-}:=\max\{-t,0\}$.
      \item We write $o_n(1)$ for any quantity (possibly depending on other fixed
      parameters) that tends to $0$ as $n\to\infty$.

\end{itemize}

\subsection{Function spaces: a brief survey}\label{subsec:Function_spaces}

We collect here the definitions and basic properties of all function spaces 
used in this paper. Throughout, $N \geq 2$, $1 < p < N$, $0 < s < 1$, and 
$1 < q < \infty$ with $sq < N$.

\subsubsection*{Classical Sobolev spaces on $\Omega$ and $\mathbb{R}^N$}

Let $\Omega \subseteq \mathbb{R}^N$ be an open set. 
For $1 \leq p < \infty$, the \emph{Sobolev space} $W^{1,p}(\Omega)$ is defined by
\[
W^{1,p}(\Omega) 
:= \left\{ u \in L^p(\Omega) : \nabla u \in L^p(\Omega;\mathbb{R}^N)\right\},
\]
endowed with the norm
\[
\|u\|_{W^{1,p}(\Omega)} 
:= \left( \int_\Omega |u|^p\,dx + \int_\Omega |\nabla u|^p\,dx \right)^{1/p}.
\]

The space $W^{1,p}_0(\Omega)$ is the closure of $C_c^\infty(\Omega)$ in 
$W^{1,p}(\Omega)$:
\[
W^{1,p}_0(\Omega) := \overline{C_c^\infty(\Omega)}^{\,\|\cdot\|_{W^{1,p}(\Omega)}}.
\]
When $\Omega$ is bounded with Lipschitz boundary, the Poincar\'e inequality holds: 
there exists $C = C(N,p,\Omega) > 0$ such that
\[
\|u\|_{L^p(\Omega)} \leq C \|\nabla u\|_{L^p(\Omega)} 
\qquad \forall\, u \in W^{1,p}_0(\Omega),
\]
so that $\|\nabla u\|_{L^p(\Omega)}$ is an equivalent norm on 
$W^{1,p}_0(\Omega)$ (see \cite[Corollary $9.19$]{brezis_book}).\\

When $\Omega = \mathbb{R}^N$, it is well known that $W^{1,p}_0(\mathbb{R}^N) = W^{1,p}(\mathbb{R}^N)$. 

\subsubsection*{Homogeneous Sobolev spaces $\mathcal{D}^{1,p}$, 
$\mathcal{D}^{1,p}_0, \widetilde{\mathcal{D}}_0^{1,p}$}

For the critical embedding to take the cleanest form, and because the 
energy functional $\mathcal{J}$ involves only the gradient norm (not the 
full $W^{1,p}$ norm), it is natural to work with homogeneous Sobolev spaces.


For any open set $\Omega \subseteq \mathbb{R}^N$, define the 
\emph{homogeneous Sobolev space}
\[
\mathcal{D}^{1,p}(\Omega) 
:= \left\{ u \in L^{p^*}(\Omega) 
: \nabla u \in L^p(\Omega;\mathbb{R}^N) \right\},
\]
endowed with the seminorm
\[
[u]_{1,p,\Omega} 
:= \left(\int_\Omega |\nabla u|^p\,dx\right)^{1/p}, 
\qquad [u]_{1,p} := [u]_{1,p,\mathbb{R}^N}.
\]

In particular $[\cdot]_{1,p}$ is a genuine 
norm on $\mathcal{D}^{1,p}(\mathbb{R}^N)$.

The space $\mathcal{D}^{1,p}(\mathbb{R}^N)\subsetneq 
W^{1,p}(\mathbb{R}^N)$ when $p < N$. To see this, choose  define
\begin{equation} \label{strictInclusion}
    u(x) := \frac{1}{(1+|x|)^\alpha}
\end{equation}
for any 
$\alpha \in \left(\frac{N-p}{p},\, \frac{N}{p}\right]$. 
\medskip

Further, define
\[
\widetilde{\mathcal{D}}^{1,p}_0(\Omega) 
:= \overline{C_c^\infty(\Omega)}^{\,[\,\cdot\,]_{1,p}}, \]
endowed with the norm $[u]_{1,p} = \|\nabla u\|_{L^p(\mathbb{R}^N)}$.\[
\mathcal{D}_0^{1,p}(\Omega) 
:= \left\{ u \in L^{p^*}(\mathbb{R}^N) 
: \|\nabla u\|_{L^p(\mathbb{R}^N)} < \infty,\ 
u \equiv 0 \text{ a.e. on } \Omega^c \right\},
\]

\paragraph{Relationship between $\mathcal{D}^{1,p}_0(\Omega)$ and 
$\widetilde{\mathcal{D}}_0^{1,p}(\Omega)$.}

We now show precisely when these two spaces coincide. The relationship 
depends on whether $\Omega$ all of $\mathbb{R}^N$ or not.

Now for $\Omega$ bounded and $\partial \Omega$ continious using Remark 2.1 of \cite{brasco_salort_spaces}, we have \[\widetilde{\mathcal{D}}^{1,p}_0(\Omega)= \widetilde{W}_0^{1, p}(\Omega):=\left\{u \in W^{1, p}\left(\mathbb{R}^N\right): u=0 \text { a.e. in } \mathbb{R}^N \backslash \Omega\right\} .\]
Further using Poincar\'e and H\"{o}lder, we have $ \widetilde{W}_0^{1, p}(\Omega) = \mathcal{D}_0^{1,p}(\Omega)$. Moreover, we have the following general
density result.

\begin{theorem}
Let $\Omega\subset\mathbb{R}^N$ be an open set
with continuous boundary.
Then $C^\infty_c(\Omega)$ is dense in
$\mathcal{D}^{1,p}_0(\Omega)$ with respect
to $[\cdot]_{1,p}$.
\end{theorem}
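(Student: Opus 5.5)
Given $u\in\mathcal{D}^{1,p}_0(\Omega)$, I would approximate it in three successive steps: truncate, cut off, and finally push the support inside $\Omega$ and mollify. Each step will be controlled in the seminorm $[\cdot]_{1,p}$.

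\emph{Step 1: truncation.} Set $T_k u:=\max\{-k,\min\{u,k\}\}$. Then $\nabla T_k u=\nabla u\,\chi_{\{|u|<k\}}$, so dominated convergence gives $[T_k u-u]_{1,p}\to0$. This reduces the problem to bounded $u$.

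\emph{Step 2: cutoff at infinity.} Let $\eta_R(x)=\eta(x/R)$, where $\eta\in C_c^\infty(B_2)$ and $\eta\equiv1$ on $B_1$. Then
\[
\nabla(\eta_R u)-\nabla u=(\eta_R-1)\nabla u+u\nabla\eta_R .
\]
The first term tends to $0$ in $L^p$ by dominated convergence. For the second, H\"older's inequality with exponents $p^*/p$ and $N/p$ gives
\[
\|u\nabla\eta_R\|_{L^p}\le \|u\|_{L^{p^*}(B_{2R}\setminus B_R)}\,\|\nabla\eta\|_{L^N}.
\]
The factor $\|\nabla\eta\|_{L^N}$ is scale invariant, and $u\in L^{p^*}$ forces the tail norm to $0$. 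The functions $\eta_R u$ have bounded support and still vanish a.e.\ outside $\Omega$. By the Poincar\'e and H\"older inequalities they lie in $\widetilde W^{1,p}_0(\Omega\cap B_{2R})$, where $\Omega\cap B_{2R}$ is bounded.

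\emph{Step 3: interior approximation.} This is the main obstacle. We must pass from a $W^{1,p}(\mathbb{R}^N)$ function vanishing a.e.\ off a set with continuous boundary to a function in $C_c^\infty(\Omega)$. The tool is the classical continuous-boundary argument (cf.\ Remark 2.1 of \cite{brasco_salort_spaces}, already quoted above), which gives $\widetilde W^{1,p}_0(\Omega)=\widetilde{\mathcal D}^{1,p}_0(\Omega)$ for bounded $\Omega$. To get bounded sets, I would note that $\Omega\cap B_{2R}$ may fail to have continuous boundary. So I would apply the argument to $\Omega$ directly, using local charts near the compact set $\partial\Omega\cap\overline{B_{2R}}$. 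In each chart, $\Omega$ is the region below the graph of a continuous function. After a partition of unity, each piece $v$ is translated inward along the chart direction, $v_\tau(x)=v(x+\tau e_N)$, so that its support lies at positive distance from $\partial\Omega$ inside the chart. Here continuity of the graph guarantees a uniform positive gap on the compact support. Translation is continuous in $L^p$, so $[v_\tau-v]_{1,p}\to0$. Finally, mollifying with radius smaller than the gap yields $C_c^\infty(\Omega)$ functions converging in $W^{1,p}$, hence in $[\cdot]_{1,p}$. The interior piece of the partition is mollified directly.

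A diagonal argument combining Steps 1--3 then gives density. The delicate points are the uniform positivity of the gap after translation and the requirement that the partition of unity cover $\mathrm{supp}(\eta_R u)$ by finitely many charts; both follow from compactness of that support. The partition cutoffs introduce $L^p$ terms of the form $u\nabla\psi$, which are harmless because $u\in L^p_{loc}$.
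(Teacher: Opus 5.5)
Your argument is correct, but it takes a different route from the paper.

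\textbf{The paper's route.} The paper splits $u=u^+-u^-$. For $u\ge0$ it replaces $u$ by $u_\varepsilon=(u-\varepsilon)_+$. This is a $1$-Lipschitz composition, so $u_\varepsilon$ stays in $\mathcal{D}^{1,p}_0(\Omega)$, and it converges to $u$ in $[\cdot]_{1,p}$ by dominated convergence, since the error is $\int_{\{0<u\le\varepsilon\}}|\nabla u|^p$. By Chebyshev, $|\{u>\varepsilon\}|<\infty$, so H\"older gives $u_\varepsilon\in L^p$ and hence $u_\varepsilon\in\widetilde W^{1,p}_0(\Omega)$. The paper then simply invokes Grisvard's density theorem (Theorem 1.4.2.2) for $\widetilde W^{1,p}_0(\Omega)$ on sets with continuous boundary.

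\textbf{Your route.} You reach compactly supported $W^{1,p}$ functions through the cutoff $\eta_R u$. The product-rule error $u\nabla\eta_R$ is controlled by the critical H\"older pairing $1/p=1/p^*+1/N$ together with the scale invariance of $\|\nabla\eta\|_{L^N}$. You then reprove the cited theorem by hand: a finite chart cover of $\partial\Omega\cap\overline{B_{2R}}$, inward translation in each chart, and mollification.

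\textbf{What each buys.} The level truncation has no product-rule error term and is shorter. However, it rests on a black box and on the sign decomposition. Also, $u_\varepsilon$ need not have bounded support, so the finite-cover argument is hidden inside the reference. (Alternatively it needs a further cutoff at infinity, which is easy there because $u_\varepsilon\in L^p$.) Your version is self-contained and works directly for signed $u$. It also shows exactly where continuity of $\partial\Omega$ enters: the uniform positive gap after translating a compact support below a continuous graph.

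Your Step 1 is superfluous, since boundedness of $u$ is never used. Likewise Poincar\'e is not needed in Step 2: $u\in L^{p^*}$ already gives $u\in L^p_{\mathrm{loc}}$ by H\"older, and that is all Steps 2 and 3 require.
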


\begin{proof}
Let $u\in\mathcal{D}^{1,p}_0(\Omega)$.
Replacing $u$ by $u^\pm$ if necessary,
assume $u\ge 0$.
For $\varepsilon>0$, set
$u_\varepsilon:=(u-\varepsilon)_+$.
Since $t\mapsto(t-\varepsilon)_+$ is
$1$-Lipschitz: $[u_\varepsilon]_{1,p}
\le[u]_{1,p}<\infty$ and
$u_\varepsilon\equiv 0$ on $\Omega^c$,
so $u_\varepsilon\in\mathcal{D}^{1,p}_0(\Omega)$.
By dominated convergence,
$[u_\varepsilon-u]_{1,p}\to 0$.
Since $u\in L^{p^*}(\mathbb{R}^N)$,
Chebyshev gives $|\{u>\varepsilon\}|<\infty$,
and H\"{o}lder then gives
$u_\varepsilon\in L^p(\mathbb{R}^N)$.
Hence $u_\varepsilon\in \widetilde W^{1,p}_0(\Omega)$,
and since $\partial\Omega$ is continuous,
$C^\infty_c(\Omega)$ is dense in
$\widetilde{W}^{1,p}_0(\Omega)$ with respect to
$\|\cdot\|_{W^{1,p}(\Omega)}$
(see \cite[Theorem~1.4.2.2]{grisvard_book}).
\end{proof}

For $\Omega = \mathbb{R}^N$, the condition $u \equiv 0$ on $\Omega^c$ is 
vacuous, and both definitions reduce to the same space:
\[
\widetilde{\mathcal{D}}_0^{1,p}(\mathbb{R}^N) 
= \mathcal{D}^{1,p}(\mathbb{R}^N)=\mathcal{D}^{1,p}_0(\mathbb{R}^N).
\]
For $p<N$, the equality
$\mathcal{D}^{1,p}_0(\mathbb{R}^N)=\mathcal{D}^{1,p}(\mathbb{R}^N)$
follows from \cite[Proposition 7.2.2]{WillemFunctionalAnalysis}; 
see also \cite[p.~5]{BrascoCharacterization} for a more detailed discussion of this characterization in the case $p \geq N$.

\medskip
\noindent\textbf{Radial subspace.} We write 
\[
\mathcal{D}_r^{1,p}(\mathbb{R}^N) 
:= \left\{ u \in \mathcal{D}^{1,p}(\mathbb{R}^N) : u(x) = u(|x|) 
\text{ a.e.} \right\}.
\]
For the Strauss compactness lemma for the above radial space, the reader may look into \cite[Corollary II.3]{lions_strauss_lemma}.
\subsubsection*{Fractional Sobolev spaces $W^{s,q}$, $\mathcal{D}^{s,q}_0$, 
and $\hat{\mathcal{D}}^{s,q}_0$}
For $0 < s < 1$ and $1 < q < \infty$, 
the \emph{Gagliardo seminorm} of a measurable function 
$u : \Omega \to \mathbb{R}$ is defined by
\[
[u]_{s,q,\Omega} 
:= \left( \iint_{\Omega \times \Omega} 
\frac{|u(x)-u(y)|^q}{|x-y|^{N+qs}}\,dy\,dx \right)^{1/q},
\qquad [u]_{s,q} := [u]_{s,q,\mathbb{R}^N}.
\]
For an open set $\Omega \subseteq \mathbb{R}^N$, the 
\emph{fractional Sobolev space} is
\[
W^{s,q}(\Omega) 
:= \left\{ u \in L^q(\Omega) : [u]_{s,q,\Omega} < \infty \right\},
\]
endowed with the norm
\[
\|u\|_{W^{s,q}(\Omega)} 
:= \left( \|u\|_{L^q(\Omega)}^q + [u]_{s,q,\Omega}^q \right)^{1/q}.
\]
\medskip
Further, define
\[
W^{s,q}_0(\Omega)
:= \overline{C_c^\infty(\Omega)}^{\,\|\cdot\|_{W^{s,q}(\Omega)}},
\]
For any open $\Omega \subseteq \mathbb{R}^N$, we consider the spaces
\begin{align*}
\mathcal{D}^{s,q}(\Omega) 
&:= \left\{ u \in L^{q^*_s}(\Omega) : [u]_{s,q} < \infty \right\},\\[6pt]
\mathcal{D}^{s,q}_0(\Omega) 
&:= \left\{ u \in L^{q^*_s}(\mathbb{R}^N) : [u]_{s,q} < \infty, 
\ u \equiv 0 \text{ on } \Omega^c \right\}.
\end{align*}

endowed with the norm $[u]_{s,q}$. The fractional Sobolev inequality (see \eqref{eq:best_constant_nonlocal}) implies $[\cdot]_{s,q}$ is indeed a norm (and not merely a seminorm) on 
$\mathcal{D}^{s,q}_0(\mathbb{R}^N)$.
For $\Omega = \mathbb{R}^N$ the condition $u \equiv 0$ on $\Omega^c$ 
is vacuous, and we obtain
\[
\mathcal{D}^{s,q}_0(\mathbb{R}^N) 
= \left\{ u \in L^{q^*_s}(\mathbb{R}^N) : [u]_{s,q} < \infty \right\}.
\]
Also,
\[
\hat{\mathcal{D}}^{s,q}_0(\Omega) 
:= \overline{C_c^\infty(\Omega)}^{\,[\,\cdot\,]_{s,q}}. 
\]

\medskip
\noindent\textbf{Relationship between the fractional spaces.}\\
Let $\Omega=\mathbb{R}^N$ and recall our standing assumption $sq<N$.
Then $C_c^\infty(\mathbb{R}^N)$ is dense in $\mathcal{D}^{s,q}_0(\mathbb{R}^N)$
with respect to the seminorm $[\cdot]_{s,q}$(see
\cite[Theorem~3.1]{BrascoCharacterization}).
The proof follows along the same lines as the classical density result
for $W^{s,q}(\mathbb{R}^N)$ with respect to the full norm
$\|\cdot\|_{W^{s,q}}$ (see \cite[Theorem~6.66]{giovanni_fractional_book}).
Consequently,
\[
\hat{\mathcal{D}}^{s,q}_0(\mathbb{R}^N)=\mathcal{D}^{s,q}(\mathbb{R}^N),
\qquad
W^{s,q}_0(\mathbb{R}^N)=W^{s,q}(\mathbb{R}^N).
\]
For an open set $\Omega \subset \mathbb{R}^N$, the situation
is more delicate. The precise relationship between
$W^{s,q}_0(\Omega)$ and $W^{s,q}(\Omega)$ depends on the values of
$s$ and $q$ as well as the regularity of $\partial\Omega$; we refer
to \cite[Theorem 6.105]{giovanni_fractional_book} for a complete
treatment. Regarding $\mathcal{D}^{s,q}_0(\Omega)$,  we recall the following
result from \cite{Brasco_DecayEstimate}.

\begin{theorem}[{\cite[Theorem~2.1]{Brasco_DecayEstimate}}]
Let $\Omega \subset \mathbb{R}^N$ be an open set such that $\partial \Omega$ is compact and locally the graph of a continuous function. Then $\mathcal{D}^{s,q}_0(\om)$ is the completion of $C_c^{\infty}(\Omega)$ with respect to the norm $[\cdot]_{s, q}$ for $sq<N$, i.e., $\hat{\mathcal{D}}^{s,q}_0(\Omega) = \mathcal{D}^{s,q}_0(\Omega)$.
\end{theorem}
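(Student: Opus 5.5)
The statement is \cite[Theorem~2.1]{Brasco_DecayEstimate}. We must show that $\hat{\mathcal{D}}^{s,q}_0(\Omega)=\mathcal{D}^{s,q}_0(\Omega)$ when $\partial\Omega$ is compact and locally a continuous graph, and $sq<N$.

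\textbf{Easy inclusion.} Let $\varphi_n\in C_c^\infty(\Omega)$ be Cauchy in $[\cdot]_{s,q}$. By the fractional Sobolev inequality \eqref{eq:best_constant_nonlocal}, the sequence is Cauchy in $L^{q^*_s}(\mathbb{R}^N)$. It therefore converges in $L^{q^*_s}$ and, along a subsequence, a.e. to some $u$. Fatou's lemma gives $[u]_{s,q}<\infty$ and $[\varphi_n-u]_{s,q}\to0$. Since each $\varphi_n$ vanishes on $\Omega^c$, so does $u$. Hence $u\in\mathcal{D}^{s,q}_0(\Omega)$.

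\textbf{Reverse inclusion: reductions.} Fix $u\in\mathcal{D}^{s,q}_0(\Omega)$. I approximate it in three steps.

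First, I truncate at infinity. Take a cutoff $\eta_R(x)=\eta(x/R)$ with $\eta\equiv1$ on $B_1$ and $\operatorname{supp}\eta\subset B_2$. The standard product estimate bounds $[u(1-\eta_R)]_{s,q}$ by two pieces.
\begin{itemize}
\item The tail of the Gagliardo energy of $u$ outside $B_R$, which tends to $0$.
\item A term $\int|u|^q\,|\nabla\eta_R|^{?}$-type. By H\"older with exponents $q^*_s/q$ and $N/(sq)$, it is controlled by $\|u\|_{L^{q^*_s}(B_R^c)}^q$ times a scale-invariant factor, so it also tends to $0$.
\end{itemize}
This is where $sq<N$ enters. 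So we may assume $u$ has compact support.

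Second, I truncate the level, as in the density proof for $\mathcal{D}^{1,p}_0$ above. Splitting $u=u^+-u^-$ and replacing $u$ by $\min\{u,M\}$, the $1$-Lipschitz property and dominated convergence in the double integral show we may assume $u\ge0$ and bounded. Such a $u$ has compact support and is bounded, so it lies in $W^{s,q}(\mathbb{R}^N)$.

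Third, I localize near the boundary. Since $\partial\Omega$ is compact, I cover it by finitely many cylinders in which $\Omega$ is the subgraph of a continuous function, and add an interior open set. Take a subordinate partition of unity $\{\psi_j\}$. Each $\psi_j u$ is still in $W^{s,q}$ with compact support and vanishes outside $\Omega$, by the same product-rule estimate.

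\textbf{Main step.} The main obstacle is approximating $v=\psi_j u$, supported in a cylinder where $\Omega=\{x_N<g(x')\}$ with $g$ merely continuous. I would push $v$ strictly inside $\Omega$ by a translation $v_t(x)=v(x+te_N)$. Since $v=0$ where $x_N\ge g(x')$, the translate $v_t$ vanishes where $x_N\ge g(x')-t$, and continuity of $g$ on the compact support makes this a compact subset of $\Omega$. Continuity of translations in $W^{s,q}$ (e.g.\ via density of smooth functions in $W^{s,q}(\mathbb{R}^N)$) gives $[v_t-v]_{s,q}\to0$. A standard mollification of $v_t$ with a radius smaller than its distance to $\partial\Omega$ then yields functions in $C_c^\infty(\Omega)$ converging in $[\cdot]_{s,q}$. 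Summing over $j$ concludes the proof.

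The delicate points are two. One is checking the product-rule bounds with scale-invariant constants, which uses Hardy/Sobolev and $sq<N$. The other is the translation argument's reliance on the graph structure.
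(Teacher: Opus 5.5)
The paper gives no proof of this statement; it simply quotes Brasco--Mosconi--Squassina. Your overall route is the standard one: easy inclusion by Sobolev plus Fatou, cutoff at infinity, partition of unity near the compact boundary, then translation into $\Omega$ followed by mollification, which is where the continuous-graph hypothesis enters. The easy inclusion, the level truncation, the localization, and the translation--mollification step are all fine. For translation continuity, note that $(x,y)\mapsto (v(x)-v(y))|x-y|^{-N/q-s}$ lies in $L^q(\mathbb{R}^{2N})$, and $v_t$ corresponds to a diagonal translation there.

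\textbf{The gap is in the cutoff at infinity: the bound you give there is false.} Writing $u(x)(1-\eta_R(x))-u(y)(1-\eta_R(y))=(u(x)-u(y))(1-\eta_R(x))+u(y)(\eta_R(y)-\eta_R(x))$, the second piece is $\int_{\mathbb{R}^N}|u(y)|^q\Phi_R(y)\,dy$, where
\[
\Phi_R(y)=\int_{\mathbb{R}^N}\frac{|\eta_R(x)-\eta_R(y)|^q}{|x-y|^{N+sq}}\,dx=R^{-sq}\Phi_1(y/R).
\]
This weight is nonlocal. For $|y|<R/2$ and $|x-y|>5R/2$ one has $\eta_R(y)=1$ and $\eta_R(x)=0$, so $\Phi_R(y)\ge\int_{|z|>5R/2}|z|^{-N-sq}\,dz=cR^{-sq}$. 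Thus $\Phi_R$ does not live on the annulus where $\nabla\eta_R\neq0$, and the term is not controlled by $\|u\|^q_{L^{q^*_s}(B_R^c)}$. For instance, if $u$ is supported in $B_1$, that tail norm vanishes for $R>1$, while the term is at least $cR^{-sq}\int|u|^q>0$. H\"older over all of $\mathbb{R}^N$ with the scale-invariant $\|\Phi_R\|_{L^{N/(sq)}}$ only yields $C\|u\|^q_{L^{q^*_s}(\mathbb{R}^N)}$, which is bounded but not small.

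\textbf{First fix: split at radius $\varepsilon R$.}
\begin{itemize}
\item On $B_{\varepsilon R}$, use $\Phi_R\le R^{-sq}\|\Phi_1\|_{L^\infty}$ and H\"older to get $C\varepsilon^{sq}\|u\|^q_{L^{q^*_s}}$.
\item On $B_{\varepsilon R}^c$, use H\"older with $\|\Phi_1\|_{L^{N/(sq)}}<\infty$ (valid since $\Phi_1(z)\le C|z|^{-N-sq}$ for $|z|\ge4$) to get $C\|u\|^q_{L^{q^*_s}(B_{\varepsilon R}^c)}$.
\end{itemize}
Let $R\to\infty$ first and then $\varepsilon\to0$.

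\textbf{Second fix: reorder the steps, as the paper does in its own density proof for $\mathcal{D}^{1,p}_0$.} First replace $u\ge0$ by $(u-\varepsilon)_+$.
\begin{itemize}
\item The error is $[\min\{u,\varepsilon\}]_{s,q}\to0$ by dominated convergence.
\item By Chebyshev and H\"older, $(u-\varepsilon)_+\in L^q$, hence it lies in $W^{s,q}(\mathbb{R}^N)$.
\item The cutoff error is then at most $R^{-sq}\|\Phi_1\|_{L^\infty}\|(u-\varepsilon)_+\|^q_{L^q}\to0$.
\end{itemize}
With this order no scale-invariant H\"older estimate is needed, and the truncation from above by $M$ becomes unnecessary.
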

Moreover, as observed in \cite[Remark~3.2]{BrascoCharacterization},
an example constructed in a way analogous to the local case
\eqref{strictInclusion} shows that
\[
W^{s,q}_0(\mathbb{R}^N)=W^{s,q}(\mathbb{R}^N)\subsetneq \mathcal{D}^{s,q}(\mathbb{R}^N)=\hat{\mathcal{D}}^{s,q}_0(\mathbb{R}^N).
\]

\begin{remark}
The cases corresponding to $sq\ge N$ are also treated in
\cite{BrascoCharacterization}. For further discussion on the
relationship between the spaces $\mathcal{D}^{1,r}(\Omega)$ and
$\mathcal{D}^{s,r}(\Omega)$ for $r>1$, see
\cite{JeanBrezisMironescu,Ponce}.
\end{remark}

\subsubsection*{The energy space $\mathcal{X}_0$ and the isocritical 
reduction}

The natural energy space for problem \eqref{eq:main_problem} is
$
\mathcal{X}_0 (\rnn)$
where for any open set $\Omega \subset \rnn$
\[\X(\Omega):= \mathcal{D}^{1,p}_0(\om) \cap \mathcal{D}^{s,q}_0(\om)\]
endowed with the norm
\[
\|u\|_{\mathcal{X}_0} := [u]_{1,p} + [u]_{s,q}.
\]
In general, $\mathcal{X}_0(\rnn)$ is a strict subspace of both 
$\mathcal{D}^{1,p}_0(\mathbb{R}^N)$ and 
$\mathcal{D}^{s,q}_0(\mathbb{R}^N)$, and the two norms 
$[\cdot]_{1,p}$ and $[\cdot]_{s,q}$ are independent. 
However, under the isocritical condition \eqref{eq:isocritical-intro} the following embedding holds. 
\begin{theorem}\cite[Theorem 7.28]{giovanni_fractional_book}
Let $1 \leq p_1$, $1 < p_2 < \infty$, and $0<s <1$ be such that 
$$ 1 - \frac{N}{p_2} = s - \frac{N}{p_1}.$$
Then for every $u \in \mathcal{D}_0^{1,p_2}(\rnn)$, there exists $C= C(p_1,p_2,N,s)>0$ such that
\begin{equation}\label{ineq:main_embedding}
  [u]_{s,p_1} \leq C \lv \grad u \rv_{L^{p_2}(\rnn)}.
\end{equation}
Furthermore, $\mathcal{D}_0^{1,p_2}(\rnn) \hookrightarrow \mathcal{D}_0^{s,p_1}(\rnn)$.
\end{theorem}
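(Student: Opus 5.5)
The inequality $[u]_{s,p_1}\le C\|\nabla u\|_{L^{p_2}}$ is scale-invariant under the isocritical relation $1-N/p_2=s-N/p_1$. I would prove it first for $u\in C_c^\infty(\mathbb{R}^N)$ and then pass to all of $\mathcal{D}^{1,p_2}_0(\mathbb{R}^N)$ by density. The natural route is through difference quotients and a Lorentz-type refinement of Hardy/Sobolev.

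\emph{Step 1 (reduction to a modulus of continuity).} Substituting $y=x+h$ gives
\[
[u]_{s,p_1}^{p_1}=\int_{\mathbb{R}^N}\frac{\|u(\cdot+h)-u\|_{L^{p_1}}^{p_1}}{|h|^{N+sp_1}}\,dh .
\]
So it suffices to control $\omega(h):=\|\tau_h u-u\|_{L^{p_1}}$, where $p_1>p_2$ (note $p_1=Np_2/(N-p_2+sp_2)>p_2$).

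\emph{Step 2 (two competing bounds on $\omega$).} I would use two estimates and split the $h$-integral at a single threshold.

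For small $|h|$, write $u(x+h)-u(x)=\int_0^1\nabla u(x+th)\cdot h\,dt$. Interpolating between $L^{p_2}$ and $L^{p_2^*}$ is awkward, since $p_1<p_2^*$. Instead I would localize: on a ball of radius $|h|$, Sobolev--Poincaré gives the oscillation in $L^{p_1}$ controlled by $|h|^{1-N/p_2+N/p_1}\|\nabla u\|_{L^{p_2}(B_{2|h|}(x))}$. Summing over a cover of $\mathbb{R}^N$ by balls of radius $|h|$ gives $\omega(h)\lesssim |h|^{s}\,\|\nabla u\|_{L^{p_2}}$, using $1-N/p_2+N/p_1=s$ and $p_1\ge p_2$ to sum the $\ell^{p_1}$ norm by the $\ell^{p_2}$ one.

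This bound alone makes the $dh$-integral diverge only logarithmically, so the crude bound is insufficient.

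\emph{Step 3 (the main obstacle).} Removing this logarithmic divergence is the real difficulty. The endpoint is the sharp statement $\dot W^{1,p_2}\hookrightarrow \dot B^{s}_{p_1,p_2}\hookrightarrow \dot B^s_{p_1,p_1}$, and the second embedding uses $p_2\le p_1$.

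I would do this with a Littlewood--Paley decomposition $u=\sum_j\Delta_j u$. Bernstein's inequality gives
\[
\|\Delta_j u\|_{L^{p_1}}\lesssim 2^{jN(1/p_2-1/p_1)}\|\Delta_j u\|_{L^{p_2}}\lesssim 2^{j(N/p_2-N/p_1-1)}\|\Delta_j\nabla u\|_{L^{p_2}},
\]
so that $2^{js}\|\Delta_j u\|_{L^{p_1}}\lesssim\|\Delta_j\nabla u\|_{L^{p_2}}$. The Gagliardo seminorm is equivalent to $\big(\sum_j (2^{js}\|\Delta_ju\|_{L^{p_1}})^{p_1}\big)^{1/p_1}$. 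Then $\ell^{p_2}\subset\ell^{p_1}$ reduces everything to
\[
\Big(\sum_j\|\Delta_j\nabla u\|_{L^{p_2}}^{p_2}\Big)^{1/p_2}\lesssim \|\nabla u\|_{L^{p_2}}.
\]
This last inequality holds for $p_2\le 2$ by Littlewood--Paley square functions and Minkowski. For $p_2>2$ it fails in this form, and I would instead route through $\dot F^{1}_{p_2,2}\hookrightarrow \dot F^{s}_{p_1,p_1}$, i.e. the Jawerth--Franke embedding for Triebel--Lizorkin spaces, which holds for any fine index.

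This Jawerth--Franke step is where I expect the real work. Alternatively I would simply cite it, as the paper does via \cite[Theorem 7.28]{giovanni_fractional_book}.

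\emph{Step 4 (extension to $\mathcal{D}^{1,p_2}_0(\mathbb{R}^N)$).} Given $u\in\mathcal{D}^{1,p_2}_0(\mathbb{R}^N)$, take $u_n\in C_c^\infty$ with $u_n\to u$ in $[\cdot]_{1,p_2}$; this is possible by the density theorem above. By Sobolev, $u_n\to u$ in $L^{p_2^*}=L^{p_1{}^*_s}$, so a subsequence converges a.e. Fatou's lemma then gives $[u]_{s,p_1}\le\liminf[u_n]_{s,p_1}\le C\|\nabla u\|_{L^{p_2}}$. Applying the same estimate to $u_n-u_m$ shows $(u_n)$ is Cauchy in $\mathcal{D}^{s,p_1}_0$, which yields the continuous embedding.
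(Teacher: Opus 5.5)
Your Step~3 has its case distinction reversed, so as written the argument fails for $1<p_2<2$. For $1<r<\infty$ the sharp Littlewood--Paley fact is $L^r\hookrightarrow\dot B^0_{r,\max\{r,2\}}$.

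When $r\ge 2$, the bound $\|a\|_{\ell^r}\le\|a\|_{\ell^2}$ and the square-function theorem give $\sum_j\|\Delta_j g\|_{L^r}^r\le\int\bigl(\sum_j|\Delta_j g|^2\bigr)^{r/2}dx\lesssim\|g\|_{L^r}^r$. When $r<2$, Minkowski (reversed, in $L^{r/2}$) only yields the $\ell^2$ bound. The $\ell^r$ bound $\bigl(\sum_j\|\Delta_j g\|_{L^r}^r\bigr)^{1/r}\lesssim\|g\|_{L^r}$ is then false: by Khintchine, a random-sign sum of $M$ frequency-separated copies of a fixed bump has $L^r$ norm $\approx M^{1/2}$ but $\ell^r$-Besov norm $\approx M^{1/r}$.

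Consequently your elementary chain
$\dot W^{1,p_2}\hookrightarrow\dot B^1_{p_2,\max\{p_2,2\}}\hookrightarrow\dot B^s_{p_1,\max\{p_2,2\}}\hookrightarrow\dot B^s_{p_1,p_1}$
reaches the target only when $p_1\ge 2$. The remaining regime $p_2<p_1<2$ is not vacuous under the paper's standing assumptions: for example $N=2$, $s=0.9$, $p=1.5$ gives $q=3/1.85<2$ with $1<sq<2$. There, no block-by-block Bernstein argument can work, because the gain in the fine index comes from the change of integrability itself.

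That is exactly where you need the deep embedding you reserved for $p_2>2$, where it is not needed. Either use the Triebel--Lizorkin Sobolev embedding $\dot F^1_{p_2,2}\hookrightarrow\dot F^s_{p_1,p_1}=\dot B^s_{p_1,p_1}$, which holds for arbitrary fine indices because $p_2<p_1$, or use Jawerth's $\dot F^1_{p_2,2}\hookrightarrow\dot B^s_{p_1,p_2}$ followed by $\ell^{p_2}\subset\ell^{p_1}$. Either one covers every $1<p_2<\infty$, so the fix is to use it throughout, or at least to swap your two cases. Steps~1, 2 and~4 are fine as they stand.

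On the comparison: the paper does not prove this statement; it simply quotes \cite[Theorem~7.28]{giovanni_fractional_book}. Once repaired, your route is a legitimate alternative. It explains the embedding on the Besov scale and, via Jawerth's embedding, even gives the sharper target $\dot B^s_{p_1,p_2}$. However, its decisive step is again a citation of a nontrivial embedding theorem, so it is not more self-contained than the paper's.

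If you want a one-line derivation from tools the paper already states, combine the Gagliardo--Nirenberg inequality of Theorem~\ref{ineq:gagliardo_nirenberg} (stated for exactly this exponent relation) with the Sobolev inequality \eqref{eq:best_constant_local}:
$[u]_{s,p_1}\le C[u]_{1,p_2}^{s}\|u\|_{L^{p_2^*}}^{1-s}\le C\,\mathcal{S}^{-(1-s)/p_2}[u]_{1,p_2}$,
where $\mathcal{S}$ is the Sobolev constant for the exponent $p_2$.
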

In fact, a more general version of this exists and is given by 
\begin{theorem}[Gagliardo-Nirenberg]\label{ineq:gagliardo_nirenberg}
Let $0 < s < 1, 1 < p < q < \infty$ with $p^* = q_s^*$. Then the following holds
\begin{equation*}
[u]_{s,q} \leq C [u]_{1,p}^s\lv u \rv_{L^{p^*}(\rnn)}^{1-s}
\end{equation*}
for all $u \in \mathcal{D}^{1,p}(\rnn)$.
\end{theorem}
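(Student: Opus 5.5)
The natural route is through a pointwise representation of the Gagliardo seminorm in terms of difference quotients, followed by real interpolation between the two available controls on an increment. For $u\in C_c^\infty(\mathbb{R}^N)$ we first reduce to the smooth case; density of $C_c^\infty$ in $\mathcal{D}^{1,p}(\mathbb{R}^N)$ with respect to $[\cdot]_{1,p}$, together with Fatou's lemma for $[\cdot]_{s,q}$, then extends the inequality. We write
\[
[u]_{s,q}^q=\int_{\mathbb{R}^N}\frac{\|u(\cdot+h)-u\|_{L^q}^q}{|h|^{N+qs}}\,dh .
\]
The difficulty is that increments are naturally controlled in $L^p$ by $|h|\,\|\nabla u\|_{L^p}$ and in $L^{p^*}$ by $2\|u\|_{L^{p^*}}$. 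Since $p<q<p^*$ (using $q=\tfrac{pN}{N-p+ps}$ and $s<1$), the $L^q$ norm of the increment sits between them. Hölder interpolation gives $\|\delta_h u\|_{L^q}\le \|\delta_h u\|_{L^p}^{\theta}\|\delta_h u\|_{L^{p^*}}^{1-\theta}$ with $\tfrac1q=\tfrac{\theta}{p}+\tfrac{1-\theta}{p^*}$. A direct computation with the isocritical relation yields $\theta=s$ exactly, because
\[
\tfrac1q=\tfrac1{p^*}+\tfrac sN \quad\text{and}\quad \tfrac1p-\tfrac1{p^*}=\tfrac1N .
\]
This gives the homogeneity pattern of the claimed inequality, but integrating $|h|^{qs}|h|^{-N-qs}$ diverges logarithmically at both ends. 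So this naive pointwise-in-$h$ interpolation fails, and a finer argument is needed.

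To repair this, I would pass to Lorentz refinements, or equivalently use a maximal-function argument. The key step is the pointwise estimate
\[
|u(x+h)-u(x)|\le C|h|\bigl(M|\nabla u|(x)+M|\nabla u|(x+h)\bigr),
\]
valid for Sobolev functions, together with the trivial bound $|u(x+h)-u(x)|\le |u(x)|+|u(x+h)|$. For fixed $x$ we split the $h$-integral at a radius $\rho(x)$, using the gradient bound for $|h|<\rho(x)$ and the trivial bound for $|h|>\rho(x)$. This gives a contribution of order
\[
(M|\nabla u|)^q\rho^{q-qs}+|u|^q\rho^{-qs}
\]
plus symmetric terms in $x+h$, which we handle by the change of variables $y=x+h$. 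Optimizing with $\rho=|u|/M|\nabla u|$ leads to
\[
[u]_{s,q}^q\lesssim \int_{\mathbb{R}^N} (M|\nabla u|)^{qs}|u|^{q(1-s)}\,dx .
\]
The far-field term with $|u(x+h)|$ is not directly localized, so the cleaner way is to split the double integral over $\{|u(x)|\ge|u(y)|\}$ by symmetry and apply the argument with the larger value. Then Hölder with exponents $p/(qs)$ and $p^*/(q(1-s))$ closes the estimate, provided
\[
\tfrac{qs}{p}+\tfrac{q(1-s)}{p^*}=1 ,
\]
which is exactly the identity $\tfrac1q=\tfrac sp+\tfrac{1-s}{p^*}$ checked above. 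The maximal inequality on $L^p$ with $p>1$ then gives $\|M|\nabla u|\|_{L^p}\lesssim[u]_{1,p}$.

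The main obstacle is making the splitting rigorous with a radius $\rho$ that depends on $x$ while keeping the symmetric tail controlled. I expect to handle it through the symmetry reduction to $|u(x)|\ge|u(y)|$, so that the far region is bounded by $|u(x)|^q\int_{|h|>\rho}|h|^{-N-qs}\,dh$. The near region uses the Hedberg-type estimate with both $M|\nabla u|(x)$ and $M|\nabla u|(y)$. The $(y)$-term requires an extra weighted integration, which I would handle by noting $\rho(x)\le$ something comparable, or by bounding it again via the maximal function of $(M|\nabla u|)^{q}$ localized at scale $\rho$. If this becomes messy, the fallback is to cite the known fractional Gagliardo--Nirenberg inequality (Brezis--Mironescu) directly.
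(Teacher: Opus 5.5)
Your exponent bookkeeping is right. In $\tfrac1q=\tfrac sp+\tfrac{1-s}{p^*}$ one has $\theta=s$, and the naive pointwise-in-$h$ interpolation fails because of $\int|h|^{-N}\,dh$. Once you have $[u]_{s,q}^q\lesssim\int F^{qs}G^{q(1-s)}\,dx$ with $\|F\|_{L^p}\lesssim[u]_{1,p}$ and $\|G\|_{L^{p^*}}\lesssim\|u\|_{L^{p^*}}$, your H\"older step closes exactly.

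The gap is in obtaining that intermediate bound. Starting from $|u(x)-u(y)|\le C|x-y|\bigl(M|\nabla u|(x)+M|\nabla u|(y)\bigr)$ and $|u(x)|+|u(y)|$, you take a minimum of quantities living at two different points. The reduction to $\{|u(x)|\ge|u(y)|\}$ handles the far field. It does not handle the near-field cross term
\[
\int_{\mathbb{R}^N}\int_{\{|h|<\rho(x)\}}\frac{\bigl(M|\nabla u|(x+h)\bigr)^q}{|h|^{N+qs-q}}\,dh\,dx,\qquad \rho(x)=\frac{|u(x)|}{M|\nabla u|(x)},
\]
and neither of your repairs works. First, there is no reason for $\rho(x+h)$ to be comparable to $\rho(x)$ when $|h|<\rho(x)$. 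Second, bounding the inner integral by $\rho(x)^{q(1-s)}M\bigl[(M|\nabla u|)^q\bigr](x)$ requires the maximal operator on $L^{p/q}$ with $p/q<1$, where it is unbounded. Indeed, since $q>p$, $(M|\nabla u|)^q$ need not even be locally integrable. Moreover, the prefactor $|u|^{q(1-s)}(M|\nabla u|)^{-q(1-s)}$ would not combine with it into $(M|\nabla u|)^{qs}$ anyway.

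The missing idea is to separate the two points before taking minima. Set $r=|x-y|$ and write
\[
u(x)-u(y)=\bigl(u(x)-u_{B_r(x)}\bigr)+\bigl(u_{B_r(x)}-u_{B_r(y)}\bigr)+\bigl(u_{B_r(y)}-u(y)\bigr).
\]
Poincar\'e on dyadic balls at a Lebesgue point gives $|u(x)-u_{B_r(x)}|\le C\min\{rM|\nabla u|(x),\,|u(x)|+Mu(x)\}$. Since $B_r(x),B_r(y)\subset B_{2r}(x)$, also $|u_{B_r(x)}-u_{B_r(y)}|\le C\min\{rM|\nabla u|(x),\,Mu(x)\}$. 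Hence, with $F=M|\nabla u|$ and $G=|u|+Mu$,
\[
|u(x)-u(y)|\le C\bigl(\min\{rF(x),G(x)\}+\min\{rF(y),G(y)\}\bigr).
\]
By symmetry of the kernel,
\[
[u]_{s,q}^q\le C\int_{\mathbb{R}^N}\int_{\mathbb{R}^N}\frac{\min\{|h|F(x),G(x)\}^q}{|h|^{N+qs}}\,dh\,dx=C\int_{\mathbb{R}^N}F^{qs}G^{q(1-s)}\,dx .
\]
Your H\"older step, plus the maximal theorem on $L^p$ and $L^{p^*}$, then finishes the argument.

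With this lemma your route becomes a self-contained proof. The paper does not prove the inequality at all. It invokes the general fractional Gagliardo--Nirenberg inequality of Leoni, Theorem 7.41, with $s_1=0$, $p_1=p^*$, $s_2=1$, $p_2=p$, $\theta=1-s$; this is your Brezis--Mironescu fallback.
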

\begin{proof}
Follows by taking $\theta = 1-s, \, p = q, \, p_1 = p^*, \, p_2 = p,\, s_1 = 0$ and $s_2 = 1$ in Theorem $7.41$ of \cite{giovanni_fractional_book}.
\end{proof}

Therefore, under the assumption \eqref{eq:isocritical-intro}, we have
\[
\mathcal{X}_0(\mathbb{R}^N)
=
\mathcal{D}^{1,p}_0(\mathbb{R}^N),
\qquad
\mathcal{X}(\Omega)
=
\mathcal{D}^{1,p}_0(\Omega),
\]
with norms equivalent to $[u]_{1,p}$. The functional associated with our problem \eqref{eq:main_problem} is denoted by $\mathcal{J} : \X(\rnn)\to \real $ and defined as  
\begin{equation}\label{eq:energy_functional}
    \mathcal{J}(u) := \frac{1}{p} \int_{\rnn} |\grad u|^p dx + \frac{1}{q} [u]_{s,q}^q - \frac{1}{p^*} \int_{\rnn} |u|^{p^*} dx.
\end{equation}

  For $u\in \X(\mathbb{R}^N)$, the Fr\'echet derivative of $J$ acts as
      \[
        \langle J'(u),v\rangle
        =\int_{\mathbb{R}^N}|\nabla u|^{p-2}\nabla u\cdot\nabla v\,dx
        +\langle u,v\rangle_{s,q}
        -\int_{\mathbb{R}^N}|u|^{p^*-2}u\,v\,dx ,
        \quad v\in \X(\mathbb{R}^N).
      \]
Since we are working with radial functions, the following radial lemma will be helpful
\begin{lemma}[Corollary II.1 of \cite{lions_strauss_lemma}]\label{lem:radial_lemma}
For $N \geq 2$, $1 \leq p < \infty$, $u \in \mathcal{D}_{r}^{1,p}(\rnn)$, there exists a continuous function $\tilde u$ with $u = \tilde u$ a.e. such that the following inequality holds
\begin{equation*}
    |\tilde u(x) | \leq C (N,p) \lv u\rv_{1,p} |x|^{(p-N)/p} \qquad \text{for every } x\in \rnn.
\end{equation*}
\end{lemma}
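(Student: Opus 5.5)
The statement to prove is Lemma~\ref{lem:radial_lemma}: the Strauss-type pointwise bound for radial functions in $\mathcal{D}^{1,p}_r(\mathbb{R}^N)$. I will reduce to smooth compactly supported radial functions and pass to the limit by density.

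\emph{Step 1: the smooth case.} Take $\varphi\in C^\infty_{0,r}(\mathbb{R}^N)$ and write $\varphi(x)=v(|x|)$. Since $v$ vanishes for large $r$, for each $r>0$ the fundamental theorem of calculus and H\"older's inequality give
\[
|v(r)|\le\int_r^\infty |v'(t)|\,dt
=\int_r^\infty |v'(t)|\,t^{\frac{N-1}{p}}\,t^{-\frac{N-1}{p}}\,dt
\le\Bigl(\int_r^\infty |v'|^p t^{N-1}\,dt\Bigr)^{1/p}\Bigl(\int_r^\infty t^{-\frac{N-1}{p-1}}\,dt\Bigr)^{\frac{p-1}{p}}.
\]
The first factor equals $\bigl(\mathcal{H}^{N-1}(\mathbb{S}^{N-1})\bigr)^{-1/p}[\varphi]_{1,p}$. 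The second integral converges because $\frac{N-1}{p-1}>1$, which holds since $p<N$. It equals $C\,r^{\frac{p-N}{p-1}}$, and raising it to the power $\frac{p-1}{p}$ gives $C\,r^{\frac{p-N}{p}}$. Hence
\[
|\varphi(x)|\le C(N,p)\,[\varphi]_{1,p}\,|x|^{(p-N)/p}.
\]
If $p=1$, I would instead bound $|v(r)|\le r^{1-N}\int_r^\infty|v'|t^{N-1}\,dt$.

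\emph{Step 2: density in the radial class.} Given $u\in\mathcal{D}^{1,p}_r(\mathbb{R}^N)$, I need radial $\varphi_n\in C^\infty_{0,r}$ with $[\varphi_n-u]_{1,p}\to0$. Since $C_c^\infty$ is dense in $\mathcal{D}^{1,p}_0(\mathbb{R}^N)$, I would take any approximating sequence and average it over $O(N)$ with respect to Haar measure. Averaging preserves smoothness and compact support, produces radial functions, and does not increase the $[\cdot]_{1,p}$-distance to $u$. This last point follows from the convexity of the norm together with the rotation invariance of both $u$ and the norm.

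\emph{Step 3: passage to the limit.} Applying the Step~1 bound to $\varphi_n-\varphi_m$ shows that $(\varphi_n)$ is uniformly Cauchy on each set $\{|x|\ge\varepsilon\}$. So $\varphi_n$ converges locally uniformly on $\mathbb{R}^N\setminus\{0\}$ to a continuous function $\tilde u$. By Sobolev, $\varphi_n\to u$ in $L^{p^*}$, so along a subsequence $\varphi_n\to u$ a.e., which gives $\tilde u=u$ a.e. The bound passes to the limit pointwise for $x\ne0$. At $x=0$ the right-hand side is $+\infty$, so the inequality holds trivially there, and $\tilde u$ is continuous on $\mathbb{R}^N\setminus\{0\}$ (this is the sense in which Lions' corollary gives continuity). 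The main point requiring care is Step~2, the rotational averaging. It is routine, but it is needed to keep the approximants radial.
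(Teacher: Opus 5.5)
Your argument is correct. The paper gives no proof and simply quotes Lions, and what you wrote is the standard argument behind that citation: the FTC--H\"older estimate along rays for functions in $C^\infty_{0,r}(\mathbb{R}^N)$, density of radial test functions in $\mathcal{D}^{1,p}_r(\mathbb{R}^N)$, and locally uniform convergence on $\{|x|\ge\varepsilon\}$.

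You are right to read the conclusion as continuity on $\mathbb{R}^N\setminus\{0\}$ and to work with $p<N$, which is the only range in which $\mathcal{D}^{1,p}$ is defined here. The literal claim of a representative continuous on all of $\mathbb{R}^N$ fails, e.g.\ for $|x|^{-\beta}$ times a radial cutoff with $0<\beta<\frac{N-p}{p}$. The only slip is in Step 1: the first H\"older factor is bounded by $\bigl(\mathcal{H}^{N-1}(\mathbb{S}^{N-1})\bigr)^{-1/p}[\varphi]_{1,p}$, not equal to it.
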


\subsection{Notion of solution}\label{sec: notion of solution}

Let $\Omega \subseteq \mathbb{R}^N$ be open. For $u,\varphi \in \X(\Omega)$ set
\[
\langle\!\langle \mathcal{L}_{p,q}(u),\varphi\rangle\!\rangle := \int_{\Omega} |\nabla u|^{p-2}\nabla u \cdot \nabla\varphi \,dx
+ \iint_{\mathbb{R}^{2N}} \frac{J_q\big(u(x)-u(y)\big)\big(\varphi(x)-\varphi(y)\big)}{|x-y|^{N+qs}}\,dx\,dy .
\]
Under the isocritical condition \eqref{eq:isocritical-intro}, $\X(\Omega) = \mathcal{D}^{1,p}_0(\Omega)$,
and $\langle\!\langle \mathcal{L}_{p,q}(u),\varphi\rangle\!\rangle$ is finite for all $u,\varphi \in \X(\Omega)$ by Hölder's
inequality.
\begin{definition}[Weak solution]
A function $u \in \X(\mathbb{R}^N)$ is a \emph{weak solution} of \eqref{eq:main_problem} if
\[
\langle\!\langle \mathcal{L}_{p,q}(u),\varphi\rangle\!\rangle = \int_{\mathbb{R}^N} |u|^{p^*-2}u\,\varphi \,dx
\qquad \text{for all } \varphi \in \X(\mathbb{R}^N).
\]
\end{definition}

\begin{definition}[Weak sub- and supersolution]\label{def:sub-super}
Let $\Omega \subseteq \mathbb{R}^N$ be open and $u \in \X(\Omega)$.
We say $u$ is a \emph{weak subsolution} of $\mathcal{L}_{p,q}:=-\Delta_p + (-\Delta)_q^s$ in $\Omega$, written
$\mathcal{L}_{p,q}u \le 0$ weakly, if
\[
\langle\!\langle \mathcal{L}_{p,q}(u),\varphi\rangle\!\rangle \le 0
\qquad \text{for all } \varphi \in \X(\Omega),\ \varphi \ge 0 \text{ a.e. in } \mathbb{R}^N,
\]
and a \emph{weak supersolution} of $\mathcal{L}_{p,q}$ in $\Omega$, written $\mathcal{L}_{p,q}u \ge 0$
weakly, if the reverse inequality holds for all such $\varphi$. A function is a
\emph{weak solution} of $\mathcal{L}_{p,q}u = 0$ in $\Omega$ iff it is simultaneously a weak
sub- and supersolution.
\end{definition}

\begin{remark}
When a right-hand side $f$ is present, the inequalities are interpreted against $f$:
$u$ is a weak supersolution of $\mathcal{L}_{p,q}u = f$ if
$\langle\!\langle \mathcal{L}_{p,q}(u),\varphi\rangle\!\rangle \ge \int_{\mathbb{R}^N} f\,\varphi\,dx$
for all nonnegative $\varphi \in \X(\Omega)$ (subsolution: reverse inequality). In
particular, $u$ is a weak supersolution of $\mathcal{L}_{p,q}u = |u|^{p^*-2}u$ if
$\langle\!\langle \mathcal{L}_{p,q}(u),\varphi\rangle\!\rangle \ge \int_{\mathbb{R}^N} |u|^{p^*-2}u\,\varphi\,dx$ for all such $\varphi$.
\end{remark}

\begin{remark}
For the comparison principles in Section~\ref{sec: comparison}, we use a larger class $\mathcal{D}^{s,p,q}(\Omega)$
and test functions $\varphi \in \mathcal{D}^{1,p}_0(\Omega)\cap \mathcal{D}^{s,q}_0(\Omega)$ (see Lemma \eqref{dualityLemma}).
Accordingly, the notions of weak sub-/supersolution of
Definition~\ref{def:sub-super} extend verbatim to $u\in\mathcal{D}^{s,p,q}(\Omega)$
by testing against nonnegative $\varphi \in \mathcal{D}^{1,p}_0(\Omega)\cap \mathcal{D}^{s,q}_0(\Omega)$.
\end{remark}

\subsection{Uniform estimate}
We begin with a global boundedness result for general $1<p<N,q>1, sq<N$, which will be used repeatedly throughout the subsequent analysis. To this end, we first recall an iteration lemma (see \cite[Lemma~7.1]{giusti_book}), which will be instrumental in the argument.
\begin{lemma}\label{lem:iteration_decay}
Let $\{a_j\}_{j=0}^{\infty}$ be a sequence of positive real numbers satisfying 
\begin{enumerate}
    \item[$(i)$] $a_0 \le c_0^{-\frac{1}{\alpha}}\, b^{-\frac{1}{\alpha^2}},$
    \item[$(ii)$] for each $j = 0,1,2,\dots$, we have 
\begin{equation*}
a_{j+1} \le c_0\, b^j\, a_j^{1+\alpha}, \qquad j = 0,1,2,\dots,
\end{equation*}
where $c_0 > 0$, $b > 1$, and $\alpha > 0$ are fixed constants.
\end{enumerate}
Then the sequence $\{a_j\}$ converges to zero.
\end{lemma}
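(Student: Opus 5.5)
We prove the standard De Giorgi iteration lemma by induction, establishing the quantitative bound
\[
a_j \le b^{-j/\alpha}\, a_0 \qquad \text{for all } j\ge 0,
\]
which immediately gives $a_j\to 0$ since $b>1$ and $\alpha>0$. The case $j=0$ is trivial.

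For the inductive step, assume $a_j\le b^{-j/\alpha}a_0$ and apply hypothesis $(ii)$:
\[
a_{j+1}\le c_0\, b^{j}\, a_j^{1+\alpha}\le c_0\, b^{j}\, b^{-j(1+\alpha)/\alpha}\, a_0^{1+\alpha}
= c_0\, a_0^{\alpha}\, b^{-j/\alpha}\, a_0 .
\]
It therefore suffices that $c_0\,a_0^{\alpha}\, b^{-j/\alpha}\le b^{-(j+1)/\alpha}$, i.e. $c_0 a_0^\alpha\le b^{-1/\alpha}$.

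This last inequality is exactly hypothesis $(i)$ raised to the power $\alpha$:
\[
a_0^\alpha\le c_0^{-1}\, b^{-1/\alpha}.
\]
This closes the induction, and letting $j\to\infty$ in $a_j\le b^{-j/\alpha}a_0$ yields $a_j\to0$.

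The only point requiring care is choosing the right geometric rate: the decay factor $b^{-1/\alpha}$ per step has to absorb the growth $b^{j}$ in $(ii)$, which is precisely why the exponent $-1/\alpha^2$ appears in $(i)$. Once that rate is fixed, the argument has no genuine obstacle. The positivity of the $a_j$ is used only so that the powers $a_j^{1+\alpha}$ and $a_0^\alpha$ make sense.
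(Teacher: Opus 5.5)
Your proposal is correct: the induction $a_j\le b^{-j/\alpha}a_0$, closed by raising hypothesis $(i)$ to the power $\alpha$ to get $c_0a_0^{\alpha}\le b^{-1/\alpha}$, is the standard argument. The paper does not prove this lemma itself; it cites Giusti's book (Lemma~7.1), whose proof is exactly this induction.
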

\begin{proposition}\label{prop:global_boundedness_mixed}
Let $u\in \mathcal{D}^{1,p}(\mathbb R^N)\cap \mathcal{D}^{s,q}(\mathbb R^N)$ be a nonnegative weak solution of \eqref{eq:main_problem}. Then
$
u\in L^\infty(\mathbb R^N).
$
\end{proposition}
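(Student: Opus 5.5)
The plan is a De Giorgi iteration on the upper level sets of $u$, carried out on all of $\mathbb{R}^N$ and driven by Lemma~\ref{lem:iteration_decay}. The nonlocal term only helps, and the critical nonlinearity is absorbed by exploiting that $\|u\|_{L^{p^*}(\{u>k\})}\to0$ as $k\to\infty$.

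For levels $k_j := M(1-2^{-j-1})$, with $M\ge1$ to be chosen, test the equation with $\varphi_j := (u-k_j)_+$. This is admissible: $\varphi_j$ is obtained from $u$ by a $1$-Lipschitz truncation, so it lies in $\mathcal{D}^{1,p}(\mathbb{R}^N)\cap\mathcal{D}^{s,q}(\mathbb{R}^N)$.

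The nonlocal term is nonnegative. Indeed, for $t\mapsto (t-k)_+$ nondecreasing we have
\[
J_q(u(x)-u(y))\bigl((u(x)-k)_+-(u(y)-k)_+\bigr)\ge |(u(x)-k)_+-(u(y)-k)_+|^q\ge0 .
\]
We may therefore drop it. Using $\mathcal{S}$ from \eqref{eq:best_constant_local}, this gives
\[
\mathcal{S}\,\|\varphi_j\|_{L^{p^*}}^{p}\le\int|\nabla\varphi_j|^p\le\int_{\{u>k_j\}}u^{p^*-1}\varphi_j\,dx .
\]

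The right-hand side is the crux. On $\{u>k_j\}$ write $u\le \varphi_j+k_j$, so that
\[
u^{p^*-1}\varphi_j\le C\bigl(\varphi_j^{p^*}+k_j^{p^*-1}\varphi_j\bigr).
\]
For the first term, Hölder gives
\[
\int\varphi_j^{p^*}\le \|u\|_{L^{p^*}(\{u>k_0\})}^{p^*-p}\,\|\varphi_j\|_{L^{p^*}}^p .
\]
Since $u\in L^{p^*}$, we can fix $M$ (hence $k_0=M/2$) so large that $C\|u\|^{p^*-p}_{L^{p^*}(\{u>M/2\})}\le\mathcal{S}/2$. This absorbs the first term into the left side. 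This absorption is the step I expect to be the main obstacle: it is exactly where the criticality of the exponent bites, and it works only because the smallness comes from the tail of the fixed function $u$, not from a norm bound.

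After absorption,
\[
\|\varphi_j\|^p_{L^{p^*}}\le C M^{p^*-1}\int\varphi_j\le CM^{p^*-1}\|\varphi_j\|_{L^{p^*}}\,|\{u>k_j\}|^{1-1/p^*}.
\]

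Now set $a_j:=\|\varphi_j\|_{L^{p^*}}^{p^*}$. Chebyshev between consecutive levels gives
\[
|\{u>k_{j+1}\}|\le (2^{j+2}/M)^{p^*}a_j ,
\]
and $\varphi_{j+1}\le\varphi_j$. Combining,
\[
a_{j+1}\le C\,M^{\gamma}\,b^{j}\,a_j^{1+\alpha},\qquad \alpha=\frac{p^*-1}{p-1}\cdot\frac{p^*-1}{p^*}\cdot\frac{p^*}{p^*}-1>0,
\]
for some $b>1$ and some power $\gamma$ of $M$. Here one checks that $p^*>p$ makes the exponent $\tfrac{p^*(p^*-1)}{p^*(p-1)}$ exceed $1$.

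Hypothesis $(i)$ of Lemma~\ref{lem:iteration_decay} requires
\[
a_0\le\|u\|^{p^*}_{L^{p^*}(\{u>M/2\})}\le (CM^\gamma)^{-1/\alpha}b^{-1/\alpha^2}.
\]
If $\gamma\le0$ this holds for $M$ large, since the left side tends to $0$. If $\gamma>0$, I would instead rescale, applying the argument to $u/M$, or use a shift $\varphi_j=(u-k_j)_+$ with $M$ fixed and $k_j = k_0 + d(1-2^{-j})$, choosing the increment $d$ large with $k_0$ fixed. In either variant the first-step smallness comes from $k_0$ and the growth in $d$ is balanced. Tracking these powers is routine bookkeeping.

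Once $(i)$ holds, Lemma~\ref{lem:iteration_decay} yields $a_j\to0$. Hence $(u-M)_+=0$ a.e., so $u\le M$, and since $u\ge0$ this gives $u\in L^\infty(\mathbb{R}^N)$.
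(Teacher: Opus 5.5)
Your argument is correct, and it is organised differently from the paper's. The paper works in two stages.
\begin{enumerate}
\item A Moser-type step with the test function $u\min\{u,L\}^{\beta-1}$. Here the smallness of $\int_{\{u\ge M_0\}}u^{p^*}$ absorbs the critical term and yields $u^{p^*-1}\in L^{\alpha}$ for some $\alpha>N/p$.
\item A De Giorgi iteration on $(u-\theta_j)_+$ in which $u^{p^*-1}$ is treated as a fixed coefficient. It is estimated by $\|u^{p^*-1}\|_{L^\alpha}\|w_{j+1}\|_{L^{p^*}}|\{w_{j+1}>0\}|^{1/\alpha'-1/p^*}$. The condition $\alpha>N/p$ (equivalently $p\alpha'<p^*$) supplies the superlinear exponent, and a large level $\ell$ makes the initial term small.
\end{enumerate}

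You use the same tail-smallness mechanism, but inside the De Giorgi step itself. Splitting $u=\varphi_j+k_j$ on $\{u>k_j\}$ lets you absorb the critical piece $\varphi_j^{p^*}$ at every level. What remains has the bounded coefficient $k_j^{p^*-1}\le M^{p^*-1}$, so no preliminary gain of integrability is needed and the two stages collapse into one. The paper's route isolates a higher-integrability statement and then runs a Brezis--Kato type scheme with an $L^{\alpha}$ coefficient; yours is shorter. In both, the resulting bound depends on the profile of $u$ (how fast $\|u\|_{L^{p^*}(\{u>t\})}\to0$) rather than on a norm alone. This is unavoidable at critical growth.

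Two bookkeeping points. First, your displayed formula for $\alpha$ is garbled. Raising $\|\varphi_{j+1}\|_{L^{p^*}}^{p-1}\le CM^{p^*-1}|\{u>k_{j+1}\}|^{(p^*-1)/p^*}$ to the power $p^*/(p-1)$ gives the exponent $(p^*-1)/(p-1)$ on $a_j$. So $\alpha=(p^*-p)/(p-1)$, which is what your next sentence in fact says.

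Second, there is no need to hedge about $\gamma$: it is exactly zero, since
\[
a_{j+1}\le C\,M^{\frac{p^*(p^*-1)}{p-1}}\Bigl(\frac{2^{j+2}}{M}\Bigr)^{\frac{p^*(p^*-1)}{p-1}}a_j^{\frac{p^*-1}{p-1}},
\]
and the powers of $M$ cancel, reflecting the scaling invariance of the critical problem. Thus $c_0$ and $b$ do not depend on $M$. Hypothesis $(i)$ of Lemma~\ref{lem:iteration_decay} then follows from $a_0\le\|u\|^{p^*}_{L^{p^*}(\{u>M/2\})}\to0$ as $M\to\infty$, and the fallback variants you sketch are unnecessary.
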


\begin{proof}

\textbf{Step 1: }This step establishes that $u^{p^*-1}\in L^{\alpha}(\mathbb R^N)$ for $\alpha>\frac{N}{p}$.  Fix $L>0$ and $\beta>1$, and define
\[
u_L:=\min\{u,L\}, \qquad h_{\beta,L}(t):= t\,\min\{t,L\}^{\beta-1}, \qquad t\ge 0.
\]

\medskip
\noindent
Since $h_{\beta,L}$ is Lipschitz continuous, the standard stability of Sobolev and fractional Sobolev spaces under Lipschitz compositions yields
$
h_{\beta,L}(u)\in \mathcal{D}^{1,p}(\mathbb R^N)\cap \mathcal{D}^{s,q}(\mathbb R^N).
$ Therefore, $h_{\beta,L}(u)$ is an admissible test function in the weak formulation, and hence
\begin{align}
&\int_{\mathbb R^N} |\nabla u|^{p-2}\nabla u\cdot \nabla(h_{\beta,L}(u))\,dx + \iint_{\mathbb R^{2N}}
\frac{J_q(u(x)-u(y))\bigl(h_{\beta,L}(u(x))-h_{\beta,L}(u(y))\bigr)}{|x-y|^{N+qs}}\,dx\,dy
\notag\\
& \hspace{6cm}= \int_{\mathbb R^N} u^{p^*-1}h_{\beta,L}(u)\,dx
= \int_{\mathbb R^N} u^{p^*}u_L^{\beta-1}\,dx.
\label{eq:test-h}
\end{align}

Now define
\[
H_{\beta,L}(t):=\int_0^t \bigl(h'_{\beta,L}(\tau)\bigr)^{1/p}\,d\tau.
\]

By the chain rule and since $h_{\beta,L}$ is increasing on $[0,\infty)$,  we have
\[
|\nabla H_{\beta,L}(u)|^p
=
h'_{\beta,L}(u)\,|\nabla u|^p
=
|\nabla u|^{p-2}\nabla u\cdot \nabla(h_{\beta,L}(u)).
\]

Therefore the first term in \eqref{eq:test-h} becomes
$
\int_{\mathbb R^N} |\nabla H_{\beta,L}(u)|^p\,dx.
$ For the nonlocal term, since $h_{\beta,L}$ is increasing on $[0,\infty)$, for every $a,b\ge0$ we have
$
J_q(a-b)(h_{\beta,L}(a)-h_{\beta,L}(b))\ge0,
$
see also Lemma A.2 in \cite{brasco_second_eigenvalue}. Consequently, using the Sobolev inequality \eqref{eq:best_constant_local}, we obtain from \eqref{eq:test-h}
\[
\mathcal{S}\left(\int_{\mathbb R^N} |H_{\beta,L}(u)|^{p^*}\,dx\right)^{p/p^*}
\le \int_{\mathbb R^N} u^{p^*}u_L^{\beta-1}\,dx.
\]
A direct computation yields the following estimate for the function $H_{\beta,L}$
\[
H_{\beta,L}(t)
\ge
\frac{p}{p+\beta-1}\,
t\,\min\{t,L\}^{\frac{\beta-1}{p}} .
\]

Substituting this in the Sobolev estimate, we obtain

\[
\mathcal{S}\left(\frac{p}{p+\beta-1}\right)^p
\left(\int_{\mathbb R^N} u^{p^*}u_L^{(\beta-1)\frac{p^*}{p}}\,dx\right)^{p/p^*}
\le \int_{\mathbb R^N} u^{p^*}u_L^{\beta-1}\,dx.
\]
We now estimate the right-hand side.
Fix $M_0>0$, to be chosen later, we get
\begin{align*}
\int_{\mathbb R^N} u^{p^*}u_L^{\beta-1}\,dx
&=
\int_{\{u<M_0\}} u^{p^*}u_L^{\beta-1}\,dx
+
\int_{\{u\ge M_0\}} u^{p^*}u_L^{\beta-1}\,dx\\
& \leq M_0^{\beta-1}\int_{\mathbb R^N} u^{p^*}\,dx+ \left(\int_{\{u\ge M_0\}} u^{p^*}\,dx\right)^{\frac{p^*-p}{p^*}}
\left(\int_{\mathbb R^N}(u^p u_L^{\beta-1})^{\frac{p^*}{p}}\,dx\right)^{\frac{p}{p^*}}.
\end{align*}

Combining the previous estimates, we arrive at
\begin{align}
\mathcal{S}\left(\frac{p}{p+\beta-1}\right)^p &
\left(\int_{\mathbb R^N} u^{p^*}u_L^{(\beta-1)\frac{p^*}{p}}\,dx\right)^{p/p^*} \notag \\ 
\le M_0^{\beta-1}&\int_{\mathbb R^N} u^{p^*}\,dx +
\left(\int_{\{u\ge M_0\}} u^{p^*}\,dx\right)^{\frac{p^*-p}{p^*}}
\left(\int_{\mathbb R^N}(u^p u_L^{\beta-1})^{\frac{p^*}{p}}\,dx\right)^{\frac{p}{p^*}}. \label{estimate1}
\end{align}
Now choose $\beta>1$ so that
\[
p^*+(\beta-1)\frac{p^*}{p}=\alpha(p^*-1), \text{i.e.,\;}\beta
=p\,\alpha\,\frac{p^*-1}{p^*}-(p-1),
\]
for some $\alpha>\frac{N}{p}$.
Notice that this choice is possible. Therefore we may choose $M_0=M_0(\beta,u)>0$ so large that
\[
\left(\int_{\{u\ge M_0\}}u^{p^*}\,dx\right)^{\frac{p^*-p}{p^*}}
\le
\frac{\mathcal{S}}{2}
\left(\frac{p}{p+\beta-1}\right)^p.
\]

With this choice, estimate \eqref{estimate1} yields
\[
\left(\frac{p}{p+\beta-1}\right)^p
\left(
\int_{\mathbb R^N} u^{p^*}\,u_L^{(\beta-1)\frac{p^*}{p}}\,dx
\right)^{\frac p{p^*}}
\le
\frac{2}{\mathcal{S}}\,M_0^{\beta-1}\int_{\mathbb R^N}u^{p^*}\,dx.
\]
We now let $L\to\infty$. Since $u_L=\min\{u,L\}\uparrow u$ pointwise yields
$
u^{p^*-1}\in L^\alpha(\mathbb R^N)\text{ for } \alpha>\frac Np.$

\medskip
\textbf{Step 2: }In this step, we complete the proof by arguing in the spirit of
\cite[Proposition~2.1]{CristianaGiuseppe}. Note that, since $\alpha>\frac{N}{p}$, we have
$
p^*>\, p\,\alpha^{'}, 
$
 where $\alpha^{'}=\frac \alpha{\alpha-1}$. Choose $\ell>0$, to be fixed later. For $j\in\mathbb N\cup \{0\}$, define
\[
\theta_j:=2\ell\Bigl(1-2^{-j-1}\Bigr),
\qquad
w_j:=(u-\theta_j)_+ .
\]

Then
\[
\ell\le \theta_j\le 2\ell,
\qquad
\theta_j\uparrow 2\ell,
\]

and $w_j\in \mathcal{D}^{1,p}(\mathbb R^N)\cap \mathcal{D}^{s,q}(\mathbb R^N)$ for every $j$.
Testing the weak formulation with $w_{j+1}$, we get
\begin{align}
0
=\int_{\mathbb R^N} |\nabla u|^{p-2}\nabla u\cdot \nabla w_{j+1}\,dx
&+\iint_{\mathbb R^{2N}}
\frac{J_q(u(x)-u(y))(w_{j+1}(x)-w_{j+1}(y))}{|x-y|^{N+qs}}\,dx\,dy \notag\\
&\qquad \qquad 
-\int_{\mathbb R^N} u^{p^*-1}\,w_{j+1}\,dx:=J_1+J_2-J_3.\label{testfunction}
\end{align}
Using the Sobolev inequality \eqref{eq:best_constant_local}, we obtain
$$J_1 \geq \mathcal{S}\,\|w_{j+1}\|_{L^{p^*}(\mathbb R^N)}^p.$$
Before estimating $J_3$, we observe that since $u\in \mathcal{D}^{1,p}(\mathbb R^N)$, it follows that $u\in L^{p^*}(\mathbb R^N)$. Therefore, by Chebyshev's inequality, for every $k>0$,
\[
|\{u>k\}|
\le
\frac{1}{k^{p^*}}
\int_{\mathbb R^N} u^{p^*}\,dx.
\]
In particular,
\[
|\{w_{j+1}>0\}|=|\{u>\theta_{j+1}\}|<\infty.
\]
Now, because $\alpha^{'}<p^*$, we apply Hölder's inequality to obtain
\begin{align*}
J_3:=\int_{\mathbb R^N} u^{p^*-1}\,w_{j+1}\,dx
&\le \|u^{p^*-1}\|_{L^\alpha(\mathbb R^N)}
\|w_{j+1}\|_{L^{p^*}(\mathbb R^N)}
\,|\{w_{j+1}>0\}|^{\frac1{\alpha^{'}}-\frac1{p^*}}.
\end{align*}
We next consider the nonlocal term $J_2$. The function $t\mapsto (t-\theta_{j+1})_+$ is increasing, hence
\[
(u(x)-u(y))(w_{j+1}(x)-w_{j+1}(y))\ge0 .
\]
Consequently,
$
J_2\ge0 $. Putting together the above estimates for $J_1$, $J_2$, and $J_3$ in \eqref{testfunction}, we obtain
\begin{equation}
\mathcal{S}\;\|w_{j+1}\|_{L^{p^*}(\mathbb{R}^N)}^{p}
\le
\,\|u^{p^*-1}\|_{{L^\alpha}(\mathbb R^N)}\,
\|w_{j+1}\|_{L^{p^*}(\mathbb R^N)}
|\{w_{j+1}>0\}|^{\frac1{\alpha^{'}}-\frac1{p^*}} . \label{eq:iterA}
\end{equation}
Next we estimate the measure of the superlevel set. On $\{w_{j+1}>0\}$ we have
\[
w_j=(u-\theta_j)_+\ge \theta_{j+1}-\theta_j= \ell\,2^{-j-1}.
\]

Therefore
\[
|\{w_{j+1}>0\}|
\le
\left(\frac{2^{j+1}}{\ell}\right)^{p^*}
\int_{\mathbb R^N} w_j^{p^*}\,dx.
\]
Writing $
\eta_j:=\ell^{-p}\|w_j\|_{L^{p^*}(\mathbb R^N)}^p.
$ Then, using \eqref{eq:iterA} together with the monotonicity $w_{j+1}\le w_j$, we obtain
\[
\eta_{j+1}
\le 
C(p,N)\|u^{p^*-1}\|_{{L^\alpha}(\mathbb R^N)}\,\left(2^{j}\right)^{\frac{p^{*}-\alpha^{'}}{\alpha^{'}}} \ell^{1-p}\,\eta_j^{\frac{p^*}{\alpha^{'}p}},
\]

Set
\[
c_0
=
C\|u^{p^*-1}\|_{L^\alpha(\mathbb R^N)}\,\ell^{1-p},
\qquad
b
=
2^{\frac{p^{*}-\alpha^{'}}{\alpha^{'}}},
\qquad
\alpha_{0}
=
\frac{p^*}{p \alpha^{'}}-1 .
\]

Next observe that
\[
\eta_0
=
\ell^{-p}\|(u-\ell)_+\|_{L^{p^*}(\mathbb R^N)}^p
\le
\ell^{-p}\|u\|_{L^{p^*}(\mathbb R^N)}^p .
\]

Because $p+\frac{p-1}{\alpha}>0$, we may enlarge the lower bound on $\ell(\|u\|_{L^{p^*}(\mathbb R^N)},\|u^{p^*-1}\|_{{L^\alpha}(\mathbb R^N)},N,p)$ so that
\[ \eta_0\le \ell^{-p}\|u\|_{L^{p^*}(\mathbb R^N)}^p\leq {\left(C\|u^{p^*-1}\|_{{L^\alpha}(\mathbb R^N)}\ell^{1-p}\right)}^{-1/\alpha_{0}}{\left(2^{\frac{p^{*}-\alpha^{'}}{\alpha^{'}}}\right)}^{-1/\alpha_{0}^2}.
\]

Therefore the assumptions of Lemma \ref{lem:iteration_decay} are satisfied, and we conclude that
$
\eta_j \to 0 .
$  
This implies
\[
0=\lim_{j\to\infty}\eta_j
=
\ell^{-p}
\left(
\int_{\mathbb R^N} (u-2\ell)_+^{p^*}\,dx
\right)^{p/p^*}.
\]
Hence, $
u\le 2\ell \;\text{ a.e. in }\mathbb R^N.
$ Since $u\ge0$, this proves $u\in L^\infty(\mathbb R^N)$.
\end{proof}

\subsection{Lorentz Spaces}\label{subsec:lorentz_space}
Let $u:\mathbb{R}^N\to\mathbb{R}$ be a measurable function. Its distribution function is defined by
\[
\mu_u(t)
:=
\bigl|\{x\in\mathbb{R}^N:\ |u(x)|>t\}\bigr|,
\qquad t>0.
\]

For $0<\beta<\infty$ and $0<\theta<\infty$, the Lorentz space
$L^{\beta,\theta}(\mathbb{R}^N)$ is defined by
\[
L^{\beta, \theta}\left(\mathbb{R}^N\right):=\left\{u:\int_0^\infty
t^{\theta-1}
\, \mu_u(t)^{\frac{\theta}{\beta}}
\, dt
<\infty\right\}.
\]

When $\theta=\infty$, the weak Lorentz space $L^{\beta,\infty}(\mathbb{R}^N)$ is given by
\[
L^{\beta,\infty}(\mathbb{R}^N)
:=
\left\{
u:\sup_{t>0}
t\, 
\mu_u(t)^{1/\beta}
<\infty
\right\}.
\]

\begin{lemma}[Radial Lemma for Lorentz spaces]\label{Radial Lemma for Lorentz spaces}
 Let $0<\theta \leq \infty$ and $0<\beta<\infty$. Let $u \in L^{\beta, \theta}\left(\mathbb{R}^N\right)$ be a non-negative and radially symmetric decreasing function. Then
 \begin{equation*}
\begin{aligned}
& 0 \leq u(x) \leq\left(\theta \omega_N^{-\frac{\theta}{\beta}} \int_0^{\infty} t^{\theta-1} \mu_u(t)^{\frac{\theta}{\beta}} d t\right)^{\frac{1}{\theta}}|x|^{-\frac{N}{\beta}}, \quad \text { if } \theta<\infty \\
& 0 \leq u(x) \leq\left(\omega_N^{-\frac{1}{\beta}} \sup _{t>0} t \mu_u(t)^{\frac{1}{\beta}}\right)|x|^{-\frac{N}{\beta}}, \quad \text { if } \theta=\infty .
\end{aligned}
\end{equation*}
\end{lemma}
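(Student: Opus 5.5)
The plan is to compare $\mu_u(t)$ with the measure of a ball, using only radial symmetry and monotonicity. Fix $x\neq 0$ and set $t_0:=u(x)$; if $t_0=0$ there is nothing to prove, so assume $t_0>0$. Since $u$ is radially symmetric and decreasing, every $y$ with $|y|<|x|$ satisfies $u(y)\ge u(x)=t_0$. Hence for every $0<t<t_0$ we have $B_{|x|}\subset\{u>t\}$, and therefore
\[
\mu_u(t)\ \ge\ \omega_N |x|^N \qquad \text{for all } 0<t<t_0 .
\]
If $u$ is only defined a.e., I would first replace it by its lower semicontinuous or right-continuous radial representative, so that the inclusion holds up to null sets. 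This does not affect $\mu_u$.

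\textbf{Case $\theta=\infty$.} For $t<t_0$, the lower bound on $\mu_u$ gives
\[
t\,\omega_N^{1/\beta}|x|^{N/\beta}\le t\,\mu_u(t)^{1/\beta}\le \sup_{\tau>0}\tau\,\mu_u(\tau)^{1/\beta}.
\]
Letting $t\uparrow t_0$ yields $u(x)\le \omega_N^{-1/\beta}\sup_{\tau>0}\tau\,\mu_u(\tau)^{1/\beta}\,|x|^{-N/\beta}$, which is the second claimed estimate.

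\textbf{Case $\theta<\infty$.} I restrict the defining integral to $(0,t_0)$ and use the same lower bound:
\[
\int_0^\infty t^{\theta-1}\mu_u(t)^{\theta/\beta}\,dt\ \ge\ \omega_N^{\theta/\beta}|x|^{N\theta/\beta}\int_0^{t_0}t^{\theta-1}\,dt=\frac{t_0^\theta}{\theta}\,\omega_N^{\theta/\beta}|x|^{N\theta/\beta}.
\]
Solving for $t_0=u(x)$ gives exactly
\[
u(x)\le\Bigl(\theta\,\omega_N^{-\theta/\beta}\int_0^\infty t^{\theta-1}\mu_u(t)^{\theta/\beta}\,dt\Bigr)^{1/\theta}|x|^{-N/\beta}.
\]
Nonnegativity of $u$ is part of the hypothesis.

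The only delicate point is the pointwise meaning of $u(x)$ for an a.e.-defined function. Choosing the monotone radial representative, which is defined everywhere off the origin, handles this. No other step presents a real obstacle.
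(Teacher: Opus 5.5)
Your argument is correct and is essentially the standard proof that the paper defers to (Lemma~2.9 of \cite{Brasco_DecayEstimate}). The inclusion $B_{|x|}\subset\{u>t\}$ for $0<t<u(x)$ gives $\mu_u(t)\ge\omega_N|x|^N$, and restricting the Lorentz integral (or the supremum) to $t<u(x)$ produces exactly the stated constants.

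The detour through a lower semicontinuous or right-continuous representative is unnecessary. A pointwise radially non-increasing $u$ already satisfies the inclusion for every $t<u(x)$, and the same argument works verbatim for any monotone radial representative, including the largest one.
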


For the proof of the above Lemma, refer Lemma 2.9 of \cite{Brasco_DecayEstimate}.


\section{Existence in Radial case}\label{sec:existence}
This section is devoted to proving the existence of a nonnegative radial ground state solution to \eqref{eq:main_problem} under the assumption \eqref{eq:isocritical-intro}. For this purpose, we work in the radial subspace
\[
\X^r(\mathbb{R}^N) 
:= \left\{ u \in \X(\mathbb{R}^N) : u(x) = u(|x|) 
\text{ a.e.} \right\}.
\]
The aim of this section is to prove Theorem \ref{thm:existence_equality}.
The proof is based on a variational approach in the radial framework. We begin by collecting some standard preliminary results concerning the geometry of the associated energy functional (see, for instance, \cite{willem}).
\begin{lemma}\label{MtPassGeometry}
    The functional $\mathcal{J}$ satisfies the mountain pass geometry. 
\end{lemma}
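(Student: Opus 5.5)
We verify the two standard mountain pass conditions for $\mathcal{J}$ on $\X^r(\mathbb{R}^N)$ (equivalently on $\X(\mathbb{R}^N)$, which by the isocritical reduction equals $\mathcal{D}^{1,p}_0(\mathbb{R}^N)$ with norm equivalent to $[u]_{1,p}$):
\begin{itemize}
\item[(a)] $\mathcal{J}(0)=0$, and there exist $\rho,\beta>0$ such that $\mathcal{J}(u)\ge\beta$ whenever $[u]_{1,p}=\rho$;
\item[(b)] there is $e$ with $[e]_{1,p}>\rho$ and $\mathcal{J}(e)<0$.
\end{itemize}
We also note that $\mathcal{J}\in C^1$, with derivative given by the formula displayed after \eqref{eq:energy_functional}.

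\textbf{Step 1: $C^1$ regularity.} Each of the three terms of $\mathcal{J}$ is a standard $C^1$ functional on its own space: the $p$-Dirichlet energy on $\mathcal{D}^{1,p}_0$, the Gagliardo energy on $\mathcal{D}^{s,q}_0$, and the $L^{p^*}$ term. The continuous embeddings \eqref{ineq:main_embedding} and \eqref{eq:best_constant_local} transfer this regularity to $\X(\mathbb{R}^N)$. Radial symmetry is preserved by all three terms, so $\mathcal{J}$ restricts to a $C^1$ functional on $\X^r(\mathbb{R}^N)$.

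\textbf{Step 2: the lower bound on a small sphere.} Drop the nonnegative term $\frac1q[u]_{s,q}^q$ and apply the Sobolev inequality \eqref{eq:best_constant_local}:
\[
\mathcal{J}(u)\ge \frac1p[u]_{1,p}^p-\frac{1}{p^*}\mathcal{S}^{-p^*/p}[u]_{1,p}^{p^*}.
\]
Since $p^*>p$, the right-hand side is strictly positive for $[u]_{1,p}=\rho$ with $\rho$ small. Explicitly, taking $\rho^{p^*-p}=\tfrac{p^*}{2p}\mathcal{S}^{p^*/p}$ gives $\beta=\tfrac{1}{2p}\rho^p>0$. If one prefers to measure with the norm $\|\cdot\|_{\X_0}=[u]_{1,p}+[u]_{s,q}$, the equivalence $[u]_{s,q}\le C[u]_{1,p}$ from \eqref{ineq:main_embedding} converts a small $\X_0$-sphere into the same estimate with adjusted constants.

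\textbf{Step 3: a direction of negative energy.} Fix a radial $0\le\varphi\in C^\infty_{0,r}(\mathbb{R}^N)$ with $\varphi\not\equiv0$. For $t>0$,
\[
\mathcal{J}(t\varphi)=\frac{t^p}{p}[\varphi]_{1,p}^p+\frac{t^q}{q}[\varphi]_{s,q}^q-\frac{t^{p^*}}{p^*}\|\varphi\|_{L^{p^*}}^{p^*}.
\]
The relevant exponents satisfy $p<q<p^*$: we have $p<q$ by the isocritical relation, and $q<q^*_s=p^*$. Hence $\mathcal{J}(t\varphi)\to-\infty$ as $t\to\infty$, and we take $e=t_0\varphi$ with $t_0$ large enough that $[e]_{1,p}>\rho$ and $\mathcal{J}(e)<0$.

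There is no real obstacle here. The only point needing care is that the nonlocal term has the intermediate growth $t^q$, so one must check $q<p^*$ in order for the critical term to dominate. This is immediate from $q<q^*_s=p^*$.
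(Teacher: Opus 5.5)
Your proposal is correct and is the standard verification the paper relies on. The paper states this lemma without proof and defers to Willem's \emph{Minimax Theorems}; your three ingredients are exactly what Theorem~1.15 there needs for the $(PS)_{\tilde c}$ sequence of Lemma~\ref{ExistenceOfPSsequence}: $C^1$ regularity, the Sobolev lower bound $\mathcal{J}(u)\ge \beta$ on a small sphere, and $\mathcal{J}(t\varphi)\to-\infty$ using $p<q<q^*_s=p^*$.
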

To prove the existence of a solution of \eqref{eq:main_problem} we crucially use the Nehari manifold associated with the functional $\mathcal{J}$ given by \eqref{eq:energy_functional}
\begin{equation*}
    \mathcal{N} = \{ u \in \X^r (\rnn) : \langle \mathcal{J}^{\prime} (u), u \rangle  = 0 \}.
\end{equation*}
Moreover, we take 
\begin{equation*}
    m = \inf_{ u \in \mathcal{N}} \mathcal{J}(u).
\end{equation*}

\begin{lemma}
For every nonzero $u \in \X^r(\mathbb{R}^N)$, there exists a unique scalar $t_u>0$ such that $t_u u$ lies on the Nehari manifold $\mathcal N$. Moreover, $t_u$ corresponds to the unique maximizer of the map $t \mapsto \mathcal J(tu)$ for $t \ge 0$. The assignment $u \mapsto t_u$ is continuous, and the projection $u \mapsto t_u u$ establishes a homeomorphism between the unit sphere of $\X^r(\mathbb{R}^N)$ and the Nehari manifold $\mathcal N$.
\end{lemma}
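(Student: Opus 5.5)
The plan is to study, for fixed $u\in\X^r(\mathbb{R}^N)\setminus\{0\}$, the fibering map
\[
\varphi_u(t):=\mathcal{J}(tu)=\frac{t^p}{p}A+\frac{t^q}{q}B-\frac{t^{p^*}}{p^*}C,\qquad A=[u]_{1,p}^p,\ B=[u]_{s,q}^q,\ C=\int_{\mathbb{R}^N}|u|^{p^*}dx.
\]
Here $A,C>0$ and $B\ge 0$. All three are finite, because in the isocritical regime $\X(\mathbb{R}^N)=\mathcal{D}^{1,p}_0(\mathbb{R}^N)$, with the Gagliardo seminorm controlled by \eqref{ineq:main_embedding} and the $L^{p^*}$ norm by \eqref{eq:best_constant_local}. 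Moreover $p<q<p^*$, since $q=pN/(N-p+ps)$ and $q^*_s=p^*$. We have $tu\in\mathcal{N}$ with $t>0$ exactly when $\varphi_u'(t)=0$, i.e.
\[
A+t^{q-p}B=t^{p^*-p}C,\qquad\text{equivalently}\qquad g(t):=t^{-(p^*-p)}A+t^{-(p^*-q)}B=C.
\]
The function $g$ is strictly decreasing on $(0,\infty)$, from $+\infty$ to $0$, because both exponents are negative and $A>0$. Hence there is a unique solution $t_u$.

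Next I will identify $t_u$ as the maximizer. Since $\varphi_u'(t)=t^{p^*-1}(g(t)-C)$, the derivative is positive on $(0,t_u)$ and negative afterwards. Together with $\varphi_u(0)=0$, this makes $t_u$ the unique global maximizer on $[0,\infty)$. Radiality is preserved under scaling, so $t_uu\in\mathcal{N}$ sits inside the radial Nehari set as defined.

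For continuity of $u\mapsto t_u$, I will use that $A$, $B$ and $C$ depend continuously on $u$ in $\X^r$. This follows from the norm equivalence and the Sobolev and Gagliardo embeddings. Then $t_u$ is the unique zero of $F(t,u)=g_u(t)-C_u$, and $\partial_tF<0$. An elementary argument then gives continuity: if $u_n\to u$, then $t_{u_n}$ stays bounded, because $C_{u_n}\to C_u>0$ and $A_{u_n}\to A_u>0$ force $t_{u_n}$ into a compact subset of $(0,\infty)$. Every limit point of $t_{u_n}$ solves the limiting equation, so by uniqueness the whole sequence converges to $t_u$. Alternatively one can invoke the implicit function theorem.

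Finally, define $\Phi:S\to\mathcal{N}$ by $\Phi(w)=t_ww$, where $S$ is the unit sphere of $\X^r$ with respect to $\|\cdot\|_{\X_0}$ (or $[\cdot]_{1,p}$). This map is continuous. Its inverse is $u\mapsto u/\|u\|$, which is continuous on $\mathcal{N}$ provided $\mathcal{N}$ is bounded away from $0$. For that I will use $A=B\cdot(-1)+C\le C\le \mathcal{S}^{-p^*/p}A^{p^*/p}$, which gives $A\ge \mathcal{S}^{p^*/(p^*-p)}$ on $\mathcal{N}$. Bijectivity comes from the uniqueness of $t_u$: for $u\in\mathcal{N}$ we have $t_u=1$, and $t_{\lambda w}=t_w/\lambda$. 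The main obstacle is the continuity/compactness step, specifically the uniform bounds on $t_{u_n}$. These follow from $C_u>0$ together with the strict monotonicity of $g$, which relies on $p<q<p^*$.
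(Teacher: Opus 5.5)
Your proof is correct. It is the standard fibering-map argument from the Nehari-manifold theory, which the paper states without proof and leaves to the literature (it cites Willem's \emph{Minimax Theorems}).

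The only point to make explicit is that the paper's literal definition of $\mathcal{N}$ contains $u=0$, so the homeomorphism statement must be read with $\mathcal{N}\setminus\{0\}$, as your argument implicitly does. Also, continuity of $u\mapsto u/\|u\|$ on $\mathcal{N}$ needs only $u\neq 0$. Your bound $[u]_{1,p}^p\ge \mathcal{S}^{p^*/(p^*-p)}$ is extra information: it is what makes $\mathcal{N}\setminus\{0\}$ closed and gives $m>0$.
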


By Lemma~\eqref{MtPassGeometry} and Theorem 1.15 in \cite{willem}, we obtain the existence of a Palais--Smale sequence at the mountain pass level.

\begin{lemma}\label{ExistenceOfPSsequence}
There exists a sequence $\{u_n\} \subset \X^r(\mathbb{R}^N)$ such that
\begin{equation}
\mathcal{J}(u_n) \to \tilde{c} > 0
\quad \text{and} \quad
\mathcal{J}'(u_n) \to 0 \quad \text{as } n \to \infty, \label{PScondition}
\end{equation}
where the mountain pass level $\tilde{c}$ is defined by
\[
\tilde{c}
:=
\inf_{\gamma \in \Gamma_{mp}}
\ \sup_{t \in [0,1]} \mathcal{J}(\gamma(t)),
\]
and
\[
\Gamma_{mp}
:=
\left\{
\gamma \in C([0,1], \X^r(\mathbb{R}^N)) :
\gamma(0)=0,\ \mathcal{J}(\gamma(1))<0
\right\}.
\]
\end{lemma}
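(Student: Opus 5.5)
The plan is to apply the mountain pass theorem without the Palais--Smale condition, in the form of Theorem 1.15 in \cite{willem}, to the restriction of $\mathcal{J}$ to $\X^r(\mathbb{R}^N)$. First note that $\mathcal{J}$ is of class $C^1$ on $\X(\mathbb{R}^N)$; under the isocritical condition this space equals $\mathcal{D}^{1,p}_0(\mathbb{R}^N)$, and $[\cdot]_{s,q}$ is controlled by $[\cdot]_{1,p}$ through \eqref{ineq:main_embedding}. Moreover, $\mathcal{J}$ is invariant under rotations, so $\X^r(\mathbb{R}^N)$ is a closed subspace on which $\mathcal{J}$ is $C^1$. By Lemma \ref{MtPassGeometry}, the mountain pass geometry holds there, and we will take the Palais--Smale sequence $(u_n)$ at level $\tilde c$ directly from the preceding construction, so that \eqref{PScondition} holds.

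Two small verifications are needed. First, the conclusion $\mathcal{J}(u_n)\to\tilde c>0$ requires $\tilde c>0$. We will get this from the geometry: writing $\|u\|:=[u]_{1,p}$, we have $\mathcal{J}(u)\ge \frac1p\|u\|^p-\frac{1}{p^*}\mathcal{S}^{-p^*/p}\|u\|^{p^*}$, since the nonlocal term is nonnegative. Hence $\mathcal{J}\ge\rho_0>0$ on a small sphere $\|u\|=\rho$, and every path in $\Gamma_{mp}$ crosses that sphere because $\mathcal{J}(\gamma(1))<0$ forces $\|\gamma(1)\|>\rho$. Second, $\Gamma_{mp}\neq\emptyset$: fix $0\neq v\in C^\infty_{0,r}(\mathbb{R}^N)$. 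Then $\mathcal{J}(tv)\le \frac{t^p}{p}[v]_{1,p}^p+\frac{t^q}{q}[v]_{s,q}^q-\frac{t^{p^*}}{p^*}\|v\|_{p^*}^{p^*}\to-\infty$, since $p^*>q>p$ (recall $q_s^*=p^*>q$). So the path $t\mapsto tTv$ lies in $\Gamma_{mp}$ for $T$ large.

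Then we apply Theorem 1.15 of \cite{willem} (or equivalently Ekeland's principle on $\Gamma_{mp}$ with the deformation lemma, both posed in the Banach space $\X^r$). This gives the sequence $u_n\in\X^r$ with $\mathcal{J}(u_n)\to\tilde c$ and $\mathcal{J}'|_{\X^r}(u_n)\to0$ in $(\X^r)^*$. If one wants $\mathcal{J}'(u_n)\to0$ in the full dual, we will invoke the principle of symmetric criticality: the rotation group acts isometrically on $\X(\mathbb{R}^N)$ (both seminorms and the $L^{p^*}$ norm are rotation invariant). For radial $u$ the functional $\mathcal{J}'(u)$ is invariant, so its norm is attained on radial test functions; concretely, averaging a test function over $O(N)$ does not change the pairing and does not increase the norm, because the norm is convex.

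The only possible obstacle is this last norm comparison in the nonreflexive-looking setting. Since $\X$ is uniformly convex (it is equivalent to $\mathcal{D}^{1,p}_0$), Haar averaging of $v$ is well defined as a Bochner integral, and this step should be routine.
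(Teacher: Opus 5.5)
Your proposal is correct and follows the same route as the paper, which simply combines the mountain pass geometry (Lemma \ref{MtPassGeometry}) with Theorem 1.15 of \cite{willem} on $\X^r(\mathbb{R}^N)$. Your extra checks are sound additions: $\tilde c>0$, $\Gamma_{mp}\neq\emptyset$, and the Haar-averaging identity $\|\mathcal{J}'(u_n)\|_{\X^*}=\|\mathcal{J}'|_{\X^r}(u_n)\|_{(\X^r)^*}$. Two minor precisions. First, Willem's Theorem 1.15 uses paths with a fixed endpoint $e$, so reaching the free-endpoint level $\tilde c$ needs either the deformation argument on $\Gamma_{mp}$ that you mention, or a choice of $e$ whose fixed-endpoint level $c_e$ satisfies $c_e\le\tilde c+\varepsilon$. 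Second, the Haar average is a Bochner integral because $g\mapsto v\circ g$ is continuous into $\X$, not because of uniform convexity.
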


\smallskip
Any sequence $\{u_n\}$ satisfying \eqref{PScondition} will be referred to as a Palais--Smale sequence at level $\tilde{c}$, abbreviated as a $(PS)_{\tilde{c}}$ sequence.

\smallskip
\begin{lemma}\label{lem:m_equals_mtpasslevel}
$
m = \tilde{c}.
$
\end{lemma}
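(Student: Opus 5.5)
We prove the two inequalities $\tilde c\le m$ and $m\le\tilde c$ separately, following the standard scheme of \cite[Theorem~4.2]{willem}. Throughout, the key structural fact is that for fixed $u\neq 0$ the fibering map
\[
\phi_u(t):=\mathcal J(tu)=\frac{t^p}{p}[u]_{1,p}^p+\frac{t^q}{q}[u]_{s,q}^q-\frac{t^{p^*}}{p^*}\|u\|_{L^{p^*}}^{p^*}
\]
has exactly one critical point $t_u>0$, which is its global maximum on $[0,\infty)$. This holds because $p<q<p^*$: the quotient $\phi_u'(t)/t^{p-1}$ is the sum of a constant, an increasing term $t^{q-p}[u]_{s,q}^q$, and a strictly decreasing term, and one checks it changes sign exactly once. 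This is the content of the preceding lemma on $t_u$, which we take as given.

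\textbf{Step 1: $\tilde c\le m$.} Fix $u\in\mathcal N$, so that $t_u=1$. Since $p^*>q>p$, we have $\mathcal J(tu)\to-\infty$ as $t\to\infty$. Choose $T>1$ with $\mathcal J(Tu)<0$ and define the path $\gamma(t):=tTu$. Then $\gamma\in\Gamma_{mp}$, because radial symmetry is preserved under scalar multiplication and $\gamma(0)=0$. Consequently
\[
\tilde c\le\max_{t\in[0,1]}\mathcal J(tTu)=\max_{\tau\ge0}\mathcal J(\tau u)=\mathcal J(u).
\]
Taking the infimum over $u\in\mathcal N$ gives $\tilde c\le m$.

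\textbf{Step 2: $m\le\tilde c$.} We show that every path $\gamma\in\Gamma_{mp}$ crosses $\mathcal N$. Write $I(u):=\langle\mathcal J'(u),u\rangle=[u]_{1,p}^p+[u]_{s,q}^q-\|u\|_{L^{p^*}}^{p^*}$.

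First, near the origin $I$ is positive. By the Sobolev inequality \eqref{eq:best_constant_local}, $\|u\|_{L^{p^*}}^{p^*}\le \mathcal S^{-p^*/p}[u]_{1,p}^{p^*}$, so $I(u)\ge [u]_{1,p}^p\bigl(1-\mathcal S^{-p^*/p}[u]_{1,p}^{p^*-p}\bigr)>0$ for $0<[u]_{1,p}<\rho$. Recall that under the isocritical condition the norm of $\X$ is equivalent to $[\cdot]_{1,p}$.

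Second, at the endpoint $I$ is negative. Since $\mathcal J(\gamma(1))<0$, and
\[
\mathcal J(u)-\tfrac1p I(u)=\Bigl(\tfrac1q-\tfrac1p\Bigr)[u]_{s,q}^q+\Bigl(\tfrac1p-\tfrac1{p^*}\Bigr)\|u\|_{L^{p^*}}^{p^*},
\]
we cannot directly read off the sign, because the $q$-term carries a negative coefficient. Instead, compare with $\frac1q$: the identity
\[
\mathcal J(u)-\tfrac1q I(u)=\Bigl(\tfrac1p-\tfrac1q\Bigr)[u]_{1,p}^p+\Bigl(\tfrac1q-\tfrac1{p^*}\Bigr)\|u\|_{L^{p^*}}^{p^*}\ge0
\]
yields $I(\gamma(1))\le q\,\mathcal J(\gamma(1))<0$.

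By continuity of $t\mapsto I(\gamma(t))$, there is a $t_0\in(0,1)$ with $\gamma(t_0)\neq0$ and $I(\gamma(t_0))=0$, that is, $\gamma(t_0)\in\mathcal N$. Hence $\max_{t\in[0,1]}\mathcal J(\gamma(t))\ge m$ for every $\gamma\in\Gamma_{mp}$, and so $\tilde c\ge m$.

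\textbf{Main obstacle.} The only delicate point is the sign of $I$ at the endpoint of the path. The naive comparison with $\frac1p I$ fails because of the intermediate $q$-homogeneous term, and the way around it is to compare with $\frac1q I$ as above. The same identity also shows $m>0$, which is consistent with $\tilde c>0$ from Lemma~\ref{ExistenceOfPSsequence}. Note finally that no compactness is used anywhere in this argument.
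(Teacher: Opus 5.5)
Your proof is correct and is exactly the standard Willem-type argument that the paper cites without writing it out: the ray $t\mapsto tTu$ through $u\in\mathcal{N}$ gives $\tilde c\le m$, and every $\gamma\in\Gamma_{mp}$ crosses $\mathcal{N}$, because $\langle\mathcal{J}'(u),u\rangle>0$ on a punctured neighbourhood of $0$ while $\langle\mathcal{J}'(\gamma(1)),\gamma(1)\rangle\le q\,\mathcal{J}(\gamma(1))<0$, which gives $m\le\tilde c$. The one step to make explicit is the crossing, since $\gamma$ may return to $0$: let $t_*:=\max\{t:\gamma(t)=0\}<1$, take $t_2$ slightly larger than $t_*$, and apply the intermediate value theorem on $[t_2,1]$, so that the zero of $t\mapsto\langle\mathcal{J}'(\gamma(t)),\gamma(t)\rangle$ you obtain is guaranteed to be a nonzero point of $\mathcal{N}$.
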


Throughout the subsequent analysis, we pass to subsequences when necessary, while continuing to denote them by the original sequence.
\begin{lemma}\label{lem:annulus_zero}
Let $\{u_n\} \subset \X^r(\mathbb{R}^N)$ be a 
Palais-Smale sequence for $\mathcal{J}$ at level $\tilde{c}$, 
as produced in Lemma~\ref{ExistenceOfPSsequence}, satisfying 
$u_n \rightharpoonup 0$ in $\X^r(\mathbb{R}^N)$. Then, for every annulus $B_{c^{\prime},d^{\prime}} := \{x \in \mathbb{R}^N : c^{\prime} < |x| < d^{\prime}\}$, the following limits hold
\begin{align*}
    \int_{B_{c^{\prime},d^{\prime}}} |\nabla u_n|^p \,  dx \to 0,  \quad \iint_{B_{c^{\prime},d^{\prime}} \times \mathbb{R}^N} \frac{|u_n(x) - u_n(y)|^q}{|x-y|^{N+qs}} \, dy\, dx \to 0, \quad 
    \int_{B_{c^{\prime},d^{\prime}}} |u_n|^{p^*} \, dx \to 0.
\end{align*}
\end{lemma}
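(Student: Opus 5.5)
The plan is to localise everything with a cutoff and to exploit the Strauss-type compactness of radial functions away from the origin. Fix $0<c'<d'$ and choose $0<c''<c'<d'<d''$. Take a cutoff $\psi\in C_c^\infty(B_{c'',d''})$ with $0\le\psi\le1$ and $\psi\equiv1$ on $B_{c',d'}$. By Lemma~\ref{lem:radial_lemma}, $|u_n(x)|\le C\sup_n[u_n]_{1,p}|x|^{(p-N)/p}$. Hence $(u_n)$ is uniformly bounded on $B_{c'',d''}$.

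\textbf{Critical mass.} Since $u_n\rightharpoonup0$ in $\mathcal{D}^{1,p}_0$, Rellich on the bounded annulus gives $u_n\to0$ in $L^r(B_{c'',d''})$ for every finite $r$. Combined with the uniform $L^\infty$ bound there, dominated convergence yields $\int_{B_{c'',d''}}|u_n|^{p^*}\to0$. This is the third limit.

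\textbf{Gradient term.} Test $\mathcal J'(u_n)\to0$ with $\psi^p u_n$, which is bounded in $\mathcal X_0^r$. This gives
\[
\int|\nabla u_n|^p\psi^p+\langle u_n,\psi^pu_n\rangle_{s,q}=\int|u_n|^{p^*}\psi^p-p\int\psi^{p-1}u_n|\nabla u_n|^{p-2}\nabla u_n\cdot\nabla\psi+o_n(1).
\]
The right-hand side is $o_n(1)$: the first term by the mass estimate, the second by H\"older with $\|u_n\nabla\psi\|_{L^p}\to0$ from Rellich.

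\textbf{Nonlocal cross term.} Split
\[
\langle u_n,\psi^pu_n\rangle_{s,q}=\iint\frac{|u_n(x)-u_n(y)|^q\psi^p(x)}{|x-y|^{N+qs}}+R_n,
\]
where
\[
R_n=\iint\frac{J_q(u_n(x)-u_n(y))\,u_n(y)\,(\psi^p(x)-\psi^p(y))}{|x-y|^{N+qs}}.
\]
By H\"older, $|R_n|\le[u_n]_{s,q}^{q-1}\bigl(\iint|u_n(y)|^q|\psi^p(x)-\psi^p(y)|^q|x-y|^{-N-qs}\bigr)^{1/q}$. The inner integral is bounded by $\int|u_n(y)|^q\,\Phi(y)\,dy$, where
\[
\Phi(y):=\int\min\{1,|x-y|^q\}\,|x-y|^{-N-qs}\,dx
\]
over $x$ near the support of $\psi$. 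One has $\Phi\lesssim(1+|y|)^{-N-qs}$ for $|y|$ large and $\Phi\lesssim1$ everywhere. This is the step I expect to be the main obstacle. Near the origin and at infinity we cannot use Rellich. Instead I would split $\mathbb R^N$ into $B_\rho$, a compact annulus, and $B_R^c$.
\begin{itemize}
\item On $B_R^c$, H\"older with $\|u_n\|_{L^{p^*}}$ bounded, using $q<p^*$ and $(1+|y|)^{-N-qs}\in L^{p^*/(p^*-q)}$, gives a contribution that is small for large $R$, uniformly in $n$.
\item On $B_\rho$, H\"older with $\|u_n\|_{L^{p^*}}$ bounded gives a bound of order $|B_\rho|^{1-q/p^*}$, which is small.
\item On the compact annulus, strong $L^q$ convergence to $0$ applies.
\end{itemize}
Hence $R_n\to0$. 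Since both remaining terms on the left are nonnegative, this gives $\int_{B_{c',d'}}|\nabla u_n|^p\to0$ and $\iint\psi^p(x)|u_n(x)-u_n(y)|^q|x-y|^{-N-qs}\to0$. The second limit is exactly the claimed decay of $\iint_{B_{c',d'}\times\mathbb R^N}$, because $\psi=1$ on $B_{c',d'}$.
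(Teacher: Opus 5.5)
Your proof is correct and has the same skeleton as the paper's. You test $\mathcal{J}'(u_n)$ against a cutoff times $u_n$ (the paper uses $\eta u_n$; your power $\psi^p$ is harmless but not needed). You obtain the critical mass on the annulus from radial compactness; the paper cites the compact embedding of $\X^r(\mathbb{R}^N)$ into $L^r$ of annuli, which is exactly your Strauss bound plus Rellich. You then kill the local and nonlocal cross terms by H\"older.

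The one genuine difference is in the nonlocal remainder. The paper keeps the factor $u_n(x)(\eta(x)-\eta(y))$ and splits $\mathbb{R}^{2N}$ into several near/far pieces. It controls the far field with the pointwise decay $|u_n(x)|\lesssim |x|^{(p-N)/p}[u_n]_{1,p}$ from Lemma~\ref{lem:radial_lemma}, and then sends $M\to\infty$. You instead integrate out the cutoff variable first. This gives the weight $\Phi(y)=\int|\psi^p(x)-\psi^p(y)|^q|x-y|^{-N-qs}\,dx\lesssim\min\{1,(1+|y|)^{-N-qs}\}$; this is the correct definition of $\Phi$, and your bounds hold for it. You then apply H\"older against the uniform $L^{p^*}$ bound, which only needs $q<p^*$, automatic here since $q<q^*_s=p^*$. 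The paper also needs $q<p^*$, for its terms over balls containing the origin, so nothing is lost. What you gain is a shorter argument in which radial symmetry is used only for the $L^{p^*}$ convergence on the annulus, where Rellich genuinely fails. Your remainder estimate holds for any sequence $u_n\rightharpoonup0$ in $\mathcal{D}^{1,p}_0(\mathbb{R}^N)$.

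Two small points need fixing:
\begin{itemize}
\item Choose $\psi$ radial. The Palais--Smale information is only in the dual of $\X^r(\mathbb{R}^N)$, so $\psi^p u_n$ must lie in $\X^r(\mathbb{R}^N)$.
\item Rellich gives $L^r$ convergence only for $r<p^*$. The exponents $r\ge p^*$ on the annulus come from interpolating with your uniform $L^\infty$ bound there.
\end{itemize}
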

\begin{proof}
Let $\eta \in C^\infty_{0,r}(\mathbb{R}^N)$ satisfy $0 \leq \eta \leq 1$, where $0<c<c ^{\prime}<d^{\prime}<d<\infty$,
$\eta \equiv 1$ on $B_{c^\prime,d^\prime}$, and 
$\operatorname{supp}(\eta) \subset B_{c,d}$.
By the compact embedding result (see \cite[Lemma~6]{willem_radial_cpt_embeddingp>2}), we have
\begin{equation}
\X^r(\mathbb{R}^N) \hookrightarrow L^q(B_{R_1} \setminus B_{R_2})
\quad \text{compactly}, \label{AnnulusCompactEmbedding}
\end{equation}
for any $R_1 > R_2 > 0$ and $1 \leq q < \infty$.  Consequently,
\begin{equation}\label{eq:NonLinearityAnnulusConvergence}
\int_{B_{c,d}} |u_n|^{p^*} \, dx \longrightarrow 0
\quad \text{as } n \to \infty.
\end{equation}

To prove the lemma, we consider the function $\eta u_n \in \X^r(\mathbb{R}^N)$ as a test function in \eqref{PScondition} and estimate the quantity $\langle \mathcal{J}'(u_n), \eta u_n \rangle$. To this end, we analyze the resulting terms.\\
By Hölder's inequality and \eqref{AnnulusCompactEmbedding}, we obtain
\begin{equation*}
\begin{aligned}
\int_{\mathbb{R}^{N}} 
|\nabla u_n|^{p-1} |\nabla\eta| |u_n| \, dx 
\leq 
\left( \int_{\mathbb{R}^{N}} |\nabla u_n|^{p} \, dx \right)^{\tfrac{p-1}{p}}
\left( \int_{\mathbb{R}^{N}} |\nabla\eta|^{p} |u_n|^{p} \, dx \right)^{\tfrac{1}{p}} \to 0 
\quad \text{as } n \to \infty.
\end{aligned}
\end{equation*}

Now, consider
\begin{align*}
I & := \iint_{\mathbb{R}^{2N}} 
|u_n(x)|^{q} \, \big| \eta(x) - \eta(y) \big|^{q} \, \frac{dy \, dx}{|x-y|^{N + qs}}\\
& = 
\iint_{B_{c,d} \times B_{c,d}} 
+ \iint_{B_{c,d}^{c} \times B_{c,d}} 
+ \iint_{B_{c,d} \times B_{c,d}^{c} } := I_{1} + I_{2} + I_{3}.
\end{align*}
Let $\varepsilon > 0$ be chosen such that $\eta$ is Lipschitz continuous whenever $|x - y| < \varepsilon$ with Lipschitz constant denoted by $C_\eta$. Then, we have
\begin{equation*}
\begin{aligned}
I_{1} 
&:= 
\iint_{B_{c,d} \times B_{c,d}} 
|u_{n}(x)|^{q} \, |\eta(x) - \eta(y)|^{q} \, \frac{dy \, dx}{|x-y|^{N + qs}} \\
&\leq 
\iint_{\left(B_{c,d} \times B_{c,d} \right)\cap \{|x - y| < \varepsilon\}} 
\frac{C_\eta \, |u_{n}(x)|^{q}}{|x - y|^{N + q s - q}} \, dy \, dx\\
&\quad+
\iint_{\left(B_{c,d} \times B_{c,d}\right) \cap \{|x - y| \geq \varepsilon\}} 
\frac{|u_{n}(x)|^{q} \, |\eta(x) - \eta(y)|^{q}}{|x - y|^{N + q s}} \, dy \, dx\\
&\leq
C(\varepsilon,q,s,N, \eta) \int_{B_{c,d}} |u_{n}(x)|^{q} \, dx
+ 
\frac{2^{q}}{\varepsilon^{N + q s}} 
|B_{c,d}| 
\int_{B_{c,d}} |u_{n}(x)|^{q} \, dx.
\end{aligned}
\end{equation*}
By compact embedding, we have $u_{n} \to 0 $ in $L^{q}(B_{c,d})$. Thus, $\lim_{n \to \infty} I_{1} = 0$. Similarly, we obtain $\lim_{n \to \infty} I_{2} = 0$. Indeed
\begin{equation*}
\begin{aligned}
I_{2} 
&\leq 
\iint_{\left(B_{c,d}^{c} \times B_{c,d}\right) \cap \{|x - y| < \varepsilon\}}
\frac{C_\eta |u_{n}(x)|^{q}}{|x - y|^{N + q s - q}} \, dy \, dx\\
&\quad+
\iint_{\left(B_{c,d}^{c} \times B_{c,d}\right) \cap \{|x - y| \geq \varepsilon\}}
\frac{|u_{n}(x)|^{q} |\eta(x) - \eta(y)|^{q}}{|x - y|^{N + q s}} \, dy \, dx.
\end{aligned}
\end{equation*}

Now, consider
\begin{align*}
&I_{3} := 
\iint_{ B_{c,d}\times B_{c,d}^{c}} 
\frac{|u_{n}(x)|^{q} |\eta(x) - \eta(y)|^{q}}{|x - y|^{N + q s}} 
\, dy \, dx \\
&\leq 
\iint_{\left(B_{c,d} \times B_{c,d}^{c} \right)\, \cap \, \{|x - y| < \varepsilon\}}
\frac{C_\eta |u_{n}(x)|^{q}}{|x - y|^{N + q s - q}} \, dy \, dx
\\ & \quad + 
\iint_{\left(B_{c,d}\times B_{c,d}^{c}\right)  \, \cap \, \{|x - y| \geq \varepsilon\}}
\frac{|u_{n}(x)|^{q} |\eta(x) - \eta(y)|^{q}}{|x - y|^{N + q s}} 
\, dy \, dx =: J_{1} + J_{2}.
\end{align*}
Further, 
for $y \in B_{c,d}$ and $x \in B_{c,d}^c$ with 
$|x-y| < \varepsilon$, the triangle inequality gives
$
|x| \leq d + \varepsilon,
$
Then 
\[
J_1 \leq C_\eta\int_{|x| \leq d+\varepsilon}|u_n(x)|^q
\left(\int_{|z|<\varepsilon}
\frac{dz}{|z|^{N+qs-q}}\right)dx
= C(\varepsilon,q,s,N,\eta)
\int_{|x| \leq d + \varepsilon}|u_n(x)|^q\,dx,
\]
which tends to zero as $n \to \infty$.\\

For any $M > \max\{\varepsilon, 2d\}$, we decompose $J_2$ into a near-field and a far-field part:
\begin{equation*}
\begin{aligned}
J_2 & \leq \iint\limits_{\left(B_{c,d}\times B_{c,d}^{c}\right)  \, \cap \, \{\varepsilon \leq |x - y| < M\}} \frac{C |u_n(x)|^q}{|x-y|^{N+qs}} \, dy\,dx \\ & \quad + \iint\limits_{\left(B_{c,d} \times B_{c,d}^{c}\right) \, \cap \, \{|x - y| \geq M\}} \frac{|u_{n}(x)|^{q} |\eta(x) - \eta(y)|^{q}}{|x-y|^{N+qs}} \, dy\,dx  := K_1 + K_2.
\end{aligned}
\end{equation*}
Since $x \in B_{c,d}^c$, $y \in B_{c,d}$ and $|x-y|<M$, the triangle inequality yields $
|x| \le |x-y| + |y| < M + d,
$
so that $x \in B_{M + d}(0)$. Hence,
\begin{equation*}
\lim_{n \to \infty} K_1 = 0 \qquad \text{for every fixed } M > \varepsilon > 0.
\end{equation*}
Next, we estimate $K_2$. 
\begin{align*}
   & K_2:= \int_{B_{c,d}^{c}}  \int_{B_{c,d}\cap B_{M}^c(x)}  \frac{|u_{n}(x)|^{q} |\eta(x) - \eta(y)|^{q}}{|x-y|^{N+qs}} \, dy\,dx\\
    &\leq \int_{B_{c}} |u_n(x)|^{q} \left(\int_{B_{M}^c(x)}\frac{2^q \; dy}{|x-y|^{N+qs}} \, \right)dx + \; \int_{B_{d}^c} \int_{B_{c,d}\cap B_{M}^c(x)} \frac{|u_{n}(x)|^{q} |\eta(x) - \eta(y)|^{q}}{|x-y|^{N+qs}}  dydx.
\end{align*}
Clearly, the first term is $o_n(1)$. For the second term, we apply Lemma~\ref{lem:radial_lemma}. 
By the choice of $M$, for all $x \in B_d^c(0)$ and $y \in B_{c,d}$ with $|x-y| \ge M$, we have
$
|x| \ge M - d > d > 0.
$. Hence
\[
|u_n(x)|^q \le C(N,p,q,s)\, |x|^{q(p-N)/p} \|u_n\|_{1,p}^q
\le C(N,p,q,s)\, (M - d)^{q(p-N)/p} \|u_n\|_{1,p}^q.
\]

Therefore,
\begin{equation*}
J_2 \le o_n(1) + C(N,s,p,q)\, (M - d)^{q(p-N)/p} [\eta]_{s,q}^q
\end{equation*}
for all $M > \max\{\varepsilon, 2d\}$.

Fixing $M$ and letting $n \to \infty$, we get
\[
\limsup_{n\to\infty} J_2 
\le C(N,s,p,q)\, (M - d)^{q(p-N)/p} [\eta]_{s,q}^q.
\]

Finally, letting $M \to \infty$ and recalling that $q(p-N)/p < 0$, we conclude
$
\lim_{n\to\infty} J_2 = 0.
$

Combining all the estimates, we obtain
\begin{equation}
\lim_{n\to\infty} I:=\lim_{n\to\infty}\iint_{\mathbb{R}^{2N}} 
|u_n(x)|^{q} \, \big| \eta(x) - \eta(y) \big|^{q} \, \frac{dy \, dx}{|x-y|^{N + qs}} = 0. \label{eq:FractionalPartAnnulusConvergence}
\end{equation}

Since $\{u_n\}$ is a $(PS)_{\tilde{c}}$ sequence, by \eqref{eq:NonLinearityAnnulusConvergence} and combining Hölder's inequality with \eqref{eq:FractionalPartAnnulusConvergence}, we can deduce
\begin{align*}
&o_n(1) = \left\langle \J'(u_n), \eta u_n \right\rangle \\
&= \int_{\mathbb{R}^N} |\nabla u_n|^{p-2} \nabla u_n \nabla (\eta u_n) + \left\langle u_n, \eta u_n \right\rangle_{s,q} - \int_{\mathbb{R}^N} |u_n|^{p^*-2} u_n (\eta u_n) \\
&= \int_{\mathbb{R}^N} |\nabla u_n|^p \eta + \int_{\mathbb{R}^N} u_n |\nabla u_n|^{p-2} \nabla u_n \cdot \nabla \eta  + \iint_{\mathbb{R}^{2N}} |u_n(x) - u_n(y)|^q\eta(y) \,\frac{dy \, dx}{|x-y|^{N + qs}}    \\
& \quad + \iint_{\mathbb{R}^{2N}} |u_n(x) - u_n(y)|^{q-2}(u_n(x)-u_n(y))u_n(x)(\eta(x)-\eta(y))\, \frac{dy \, dx}{|x-y|^{N + qs}}  + o_n(1)\\
&= \int_{\mathbb{R}^N} |\nabla u_n|^p \eta + \iint_{\mathbb{R}^{2N}} |u_n(x) - u_n(y)|^q \eta(y)\, \frac{dy \, dx}{|x-y|^{N + qs}} + o_n(1).
\end{align*}
Thus,
\begin{equation*} 
\lim_{n\to\infty} \int_{B_{c',d'}} |\nabla u_n|^p \,dx = 0,
\quad \iint_{B_{c^{\prime},d^{\prime}} \times \rnn} |u_n(x) - u_n(y)|^q \, \frac{dy \, dx}{|x-y|^{N + qs}} = 0. \qedhere
\end{equation*}
\end{proof}

For any $\delta > 0$, we introduce the quantities
\begin{align}
\kappa_1 := \lim_{n \to \infty} \int_{B_{\delta}} |\nabla u_n|^p \, dx, \quad
\kappa_2 := \lim_{n \to \infty} \iint_{B_{\delta} \times B_{\delta}} \frac{|u_n(x) - u_n(y)|^q}{|x-y|^{N+qs}} \, dy\, dx, \notag\\
\kappa_3 := \lim_{n \to \infty} \int_{B_{\delta}} |u_n|^{p^*} \, dx.\label{eq:defining kappas}
\end{align}

\begin{lemma}\label{lem:k3_estimate}
Let $\{u_n\}$ be as in Lemma~\eqref{lem:annulus_zero}. Then, for every $\delta > 0$, 
\[\text{either \quad }
\kappa_3 = 0 
\quad \text{or} \quad 
\kappa_3 \geq \max\left\{ \mathcal{S}^{\frac{p^*}{p^* - p}}, \, \mathcal{S}_f^{\frac{q_s^*}{q_s^* - q}} \right\}.
\]
\end{lemma}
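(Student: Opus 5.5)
We localise with a cutoff and then apply each Sobolev inequality separately. Fix $\delta>0$ and a radial cutoff $\phi\in C^\infty_{0,r}(\mathbb{R}^N)$ with $0\le\phi\le1$, $\phi\equiv1$ on $B_\delta$ and $\operatorname{supp}\phi\subset B_{2\delta}$. By Lemma~\ref{lem:annulus_zero}, all three energies vanish on the annulus $B_{\delta/2,3\delta}$. Hence the limits $\kappa_1,\kappa_2,\kappa_3$ do not depend on $\delta$, and we may replace integrals over $B_\delta$ by the corresponding integrals weighted by $\phi^p$, $\phi^q$, or $\phi^{p^*}$ at the cost of $o_n(1)$.

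\textbf{Step 1.} Test the Palais--Smale condition with $\phi^p u_n$ (or with $\phi u_n$ in both terms; the choice is immaterial modulo $o_n(1)$, since $\phi$ equals $0$ or $1$ off the annulus). Arguing exactly as in the proof of Lemma~\ref{lem:annulus_zero}, the error terms in which the derivative falls on $\phi$ tend to zero. These are $\int|\nabla u_n|^{p-1}|u_n||\nabla\phi|$ and the nonlocal commutator controlled by $\iint |u_n(x)|^q|\phi(x)-\phi(y)|^q|x-y|^{-N-qs}$; both vanish because $u_n\to0$ in $L^r_{\rm loc}$ away from the origin and by the radial decay of Lemma~\ref{lem:radial_lemma}. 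The same reasoning shows that the interaction of $B_\delta$ with $B_\delta^c$ in the Gagliardo energy is $o_n(1)$: the part near $\partial B_\delta$ lies in the annulus, and the part at distance at least $\delta/2$ is controlled by $\int_{B_\delta}|u_n|^q\to0$ (compactness in $L^q(B_\delta)$ via the $\mathcal D^{1,p}$ bound). This yields the identity
\[
\kappa_1+\kappa_2=\kappa_3 .
\]

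\textbf{Step 2.} Apply the Sobolev inequalities to the localised functions. By \eqref{eq:best_constant_local} applied to $\phi u_n$,
\[
\mathcal S\Big(\int|\phi u_n|^{p^*}\Big)^{p/p^*}\le\int|\nabla(\phi u_n)|^p=\int_{B_\delta}|\nabla u_n|^p+o_n(1),
\]
so $\mathcal S\kappa_3^{p/p^*}\le\kappa_1$. Similarly, by \eqref{eq:best_constant_nonlocal} and $p^*=q_s^*$, using $[\phi u_n]_{s,q}^q\le \iint_{B_\delta\times B_\delta}+o_n(1)$ (again by Step 1's commutator bounds), we get $\mathcal S_f\kappa_3^{q/q_s^*}\le\kappa_2$.

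\textbf{Step 3.} Combine the two steps. Since $\kappa_1,\kappa_2\ge0$, Step 1 gives $\kappa_3\ge\kappa_1$ and $\kappa_3\ge\kappa_2$. Hence $\kappa_3\ge\mathcal S\kappa_3^{p/p^*}$ and $\kappa_3\ge \mathcal S_f\kappa_3^{q/q_s^*}$. If $\kappa_3>0$, dividing gives $\kappa_3^{(p^*-p)/p^*}\ge\mathcal S$ and $\kappa_3^{(q_s^*-q)/q_s^*}\ge\mathcal S_f$. Both inequalities hold simultaneously, so $\kappa_3$ is at least the maximum of the two thresholds.

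The main obstacle is Step 1, specifically showing that the nonlocal cross terms between $B_\delta$ and its complement are negligible. The Gagliardo form does not split cleanly under localisation. I will handle this with the decomposition used for $I_1,I_2,I_3$ in Lemma~\ref{lem:annulus_zero}: a near-diagonal Lipschitz part controlled by $\int_{B_{3\delta}}|u_n|^q$, and a far part controlled by $\delta^{-sq}\int_{B_{2\delta}}|u_n|^q$, which tends to zero by local compactness of $\mathcal D^{1,p}_0\hookrightarrow L^q_{\rm loc}$ (valid since $q<p^*$). A secondary point is that the limits defining the $\kappa_i$ need not exist. I will therefore pass to a subsequence along which all three converge, which is harmless because $\{u_n\}$ is bounded.
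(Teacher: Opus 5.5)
Your proposal is correct and is essentially the paper's argument. You test the Palais--Smale condition with $\phi u_n$, use Lemma~\ref{lem:annulus_zero} together with $u_n\to0$ in $L^q(B_{\delta_1})$ to discard the annulus and cross terms and obtain $\kappa_1+\kappa_2=\kappa_3$, then apply both Sobolev inequalities to $\phi u_n$ and divide by $\kappa_3>0$.

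One small imprecision: the paper splits the nonlocal pairing itself by regions, so the cross terms over $B_\delta\times B_{\delta_1}^c$ are bounded by $[u_n]_{s,q}^{q-1}\|u_n\|_{L^q(B_\delta)}$ alone. Your claim that the far part of the full Gagliardo interaction of $B_\delta$ with its complement is controlled by $\int_{B_\delta}|u_n|^q$ is incomplete: the contribution of $|u_n|^q$ at points far from $B_\delta$ also needs local compactness on bounded annuli plus the tail bound from Lemma~\ref{lem:radial_lemma} (or H\"older against $L^{p^*}$). You already invoke these elsewhere, so this is not a gap.
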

\begin{proof}
Let $\phi \in$ $C_{0,r}^\infty(\mathbb{R}^N)$ be such that $\phi \equiv 1$ on $B_\delta$ and $\operatorname{supp}(\phi) \subset B_{\delta_1}$. By Lemma~\ref{lem:annulus_zero}, we have

\begin{equation}\label{eq:integral_est_1}
\int_{\mathbb{R}^N} |\nabla u_n|^{p-2} \nabla u_n \cdot \nabla (\phi u_n) \,dx
= \int_{B_\delta} |\nabla u_n|^p \,dx + o_n(1),
\end{equation}
\begin{equation}\label{eq:integral_est_2}
\int_{\mathbb{R}^N} |u_n|^{p^*} \phi \,dx
= \int_{B_\delta} |u_n|^{p^*} \,dx + o_n(1).
\end{equation}
Next, using again Lemma~\ref{lem:annulus_zero} and H\"{o}lder's Inequality, we decompose the nonlocal term as
\begin{align*}
I &:= \iint_{\mathbb{R}^{2N}} |u_n(x) - u_n(y)|^{q-2} (u_n(x) - u_n(y)) (\phi(x) u_n(x) - \phi(y) u_n(y)) \,\frac{dy \, dx}{|x-y|^{N + qs}} \\
&= \iint_{B_\delta \times B_\delta} + \iint_{B_\delta \times B_{\delta_1}^c} + \iint_{B_{\delta_1}^c \times B_\delta} + \iint_{B_{\delta, \delta_1} \times \mathbb{R}^N} + \iint_{(\mathbb{R}^N \setminus B_{\delta,\delta_1}) \times B_{\delta, \delta_1}} \\
&= \iint_{B_\delta \times B_\delta} + \iint_{B_\delta \times B_{\delta_1}^c} + \iint_{B_{\delta_1}^c \times B_\delta} +  o_n(1) := I_1 + I_2 + I_3 + o_n(1).
\end{align*}
By H\"{o}lder's inequality, we estimate
\begin{align*}
I_3 &= \iint_{B_{\delta_1}^c \times B_{\delta}} |u_n(x)-u_n(y)|^{q-2} (u_n(x)-u_n(y)) u_n(x)\phi(x) \,\frac{dy \, dx}{|x-y|^{N + qs}} \\
&\leq \left( \iint_{B_{\delta_1}^c \times B_{\delta}} |u_n(x)-u_n(y)|^q \,\frac{dy \, dx}{|x-y|^{N + qs}} \right)^{(q-1)/q} \left( \iint_{B_{\delta_1}^c \times B_{\delta}} |u_n(x)|^q\, \frac{dy \, dx}{|x-y|^{N + qs}} \right)^{1/q} \\
&\leq C [u_n]_{s,q}^{q-1} \left( \int_{B_{\delta}} |u_n|^q \,dx \right)^{1/q}.
\end{align*}
Together with $u_n \rightharpoonup 0$ in $\X^r(\mathbb{R}^N)$ and \eqref{ineq:main_embedding}, we get the boundedness of the term $[u_n]_{s,q}$  then it follows from the compact embedding that
$
u_n \to 0 \;\; \text{in } L^q(B_{\delta_1}),
$
and hence $I_3 \to 0$ as $n \to \infty$. The same argument yields $I_2 \to 0$.

Consequently,
\begin{equation} \label{eq:integral_est_3}
I = \iint_{B_\delta \times B_\delta} |u_n(x) - u_n(y)|^q \,\frac{dy \, dx}{|x-y|^{N + qs}} + o_n(1)
\end{equation}
Combining \eqref{eq:integral_est_1}, \eqref{eq:integral_est_2}, and \eqref{eq:integral_est_3}, together with the fact that $
\langle \mathcal{J}'(u_n), \phi u_n \rangle = o_n(1),
$
we obtain
\begin{equation*}
\kappa_1 + \kappa_2 - \kappa_3 = 0.
\end{equation*}
Applying the Sobolev inequalities \eqref{eq:best_constant_local} and \eqref{eq:best_constant_nonlocal} to $\phi u_n$ and using the identity above, we further deduce
\begin{equation*}
\kappa_3^{p/p*} \leq \mathcal{S}^{-1} \kappa_1 \leq \mathcal{S}^{-1} \kappa_3 \quad \text{and} \quad 
\kappa_3^{q/q_s^*} \leq \mathcal{S}_f^{-1} \kappa_2 \leq \mathcal{S}_f^{-1} \kappa_3.
\end{equation*}
This implies that
\begin{equation*}
\text{either } \kappa_3=0 \quad \text{ or } \quad \kappa_3 \geq \max \left\{\mathcal{S}^{p^*/(p^*-p)}, \mathcal{S}_f^{q_s^*/(q_s^* - q)}\right\}.
\end{equation*}
\end{proof}
\begin{lemma}\label{lem:rescaling}
There exist a $\xi_1 \in \left(0, \frac{1}{2} \max\{ \mathcal{S}^{p^*/(p^* - p)}, \mathcal{S}_f^{q_s^*/(q_s^* - 2)} \} \right)$ and a nonnegative sequence $ \{r_n\}$ such that if
\begin{equation}
    \tilde{u}_n(x) = r_n^{(N-p)/p} u_n(r_n x). \label{rescaledPSsequence}
\end{equation}
Then for all $\xi \in (0,\xi_1)$ we have
\begin{equation}
    \int_{B_1} |\tilde{u}_n|^{p^*} dx = \xi  \text{ for all } n \in \ntrl. \label{concOfMass}
\end{equation}
\end{lemma}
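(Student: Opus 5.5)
The plan is a continuity argument on the concentration function of each $u_n$, in the style of Lions' rescaling. Fix $n$ and set
\[
Q_n(r):=\int_{B_r}|u_n|^{p^*}\,dx=\int_{B_1} r^{N}|u_n(rx)|^{p^*}\,dx, \qquad r>0 .
\]
Since $p^*(N-p)/p=N$, the substitution $y=r_nx$ gives $\int_{B_1}|\tilde u_n|^{p^*}\,dx=Q_n(r_n)$. Thus \eqref{concOfMass} amounts to solving $Q_n(r_n)=\xi$ for every $n$. The same exponent identity, together with the isocritical relation $-N+qs+q(N-p)/p=0$, shows that $[\tilde u_n]_{1,p}=[u_n]_{1,p}$, $[\tilde u_n]_{s,q}=[u_n]_{s,q}$ and $\|\tilde u_n\|_{L^{p^*}}=\|u_n\|_{L^{p^*}}$. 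Hence $\mathcal{J}(\tilde u_n)=\mathcal{J}(u_n)$, and $\tilde u_n$ is again a radial $(PS)_{\tilde c}$ sequence. This invariance is what later lets Lemmas \ref{lem:annulus_zero} and \ref{lem:k3_estimate} be applied to $\tilde u_n$.

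The key steps, in order, are as follows.

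\textbf{(a)} $Q_n$ is continuous and nondecreasing in $r$, with $Q_n(r)\to 0$ as $r\to 0^+$ by absolute continuity of the integral, and $Q_n(r)\to\|u_n\|_{L^{p^*}}^{p^*}$ as $r\to\infty$.

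\textbf{(b)} There is a positive lower bound on $\|u_n\|_{L^{p^*}}^{p^*}$ for large $n$. From the $(PS)_{\tilde c}$ conditions,
\[
\tilde c+o_n(1)=\mathcal{J}(u_n)-\tfrac1p\langle \mathcal{J}'(u_n),u_n\rangle
=\Bigl(\tfrac1q-\tfrac1p\Bigr)[u_n]_{s,q}^q+\Bigl(\tfrac1p-\tfrac1{p^*}\Bigr)\|u_n\|_{L^{p^*}}^{p^*}+o_n(1)\|u_n\|.
\]
Since $p<q$, the first term is nonpositive. Boundedness of $\{u_n\}$ in $\mathcal{X}^r(\mathbb{R}^N)$ follows from subtracting $\tfrac1{p^*}\langle\mathcal{J}'(u_n),u_n\rangle$ instead: both energy coefficients $\tfrac1p-\tfrac1{p^*}$ and $\tfrac1q-\tfrac1{p^*}$ are then positive, because $q<p^*=q^*_s$. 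Consequently
\[
\liminf_{n\to\infty}\|u_n\|_{L^{p^*}}^{p^*}\ \ge\ \Bigl(\tfrac1p-\tfrac1{p^*}\Bigr)^{-1}\tilde c=\tfrac{N}{p}\,\tilde c>0 .
\]

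\textbf{(c)} Define $\xi_1$ as the minimum of $\tfrac{N}{2p}\tilde c$ and the half-threshold $\tfrac12\max\{\mathcal{S}^{p^*/(p^*-p)},\mathcal{S}_f^{q_s^*/(q_s^*-q)}\}$ appearing in the statement. I read the exponent $q_s^*-2$ in the statement as the typo $q_s^*-q$. After discarding finitely many $n$ (legitimate, since we pass to subsequences throughout), we have $\|u_n\|_{L^{p^*}}^{p^*}>\xi_1$ for all remaining $n$.

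\textbf{(d)} For every $\xi\in(0,\xi_1)$, the intermediate value theorem yields $r_n>0$ with $Q_n(r_n)=\xi$. To make the choice canonical, take
\[
r_n:=\inf\{r>0: Q_n(r)\ge\xi\},
\]
which is positive because $Q_n(r)\to 0$ as $r\to 0^+$, and satisfies $Q_n(r_n)=\xi$ by continuity. This proves \eqref{concOfMass}.

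The main subtlety, rather than a genuine obstacle, is step (b): one must use the correct combination of $\mathcal{J}$ and $\langle\mathcal{J}'(u_n),u_n\rangle$ so that the $L^{p^*}$ mass stays bounded below uniformly in $n$, given that the sign of the $(\tfrac1q-\tfrac1p)$ term is unfavourable for a bound on the energy. Choosing the multiplier $\tfrac1p$ resolves this, because the unfavourable term then drops out as a nonpositive contribution and only helps the inequality. The smallness $\xi_1<\tfrac12\max\{\cdot\}$ plays no role in the existence of $r_n$. It is imposed only so that, later, the dichotomy of Lemma \ref{lem:k3_estimate} applied to $\tilde u_n$ on $B_1$ forces $\kappa_3=0$ whenever $\tilde u_n\rightharpoonup 0$. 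That contradicts $\int_{B_1}|\tilde u_n|^{p^*}=\xi>0$, once concentration on the annuli has been excluded via Lemma \ref{lem:annulus_zero}.
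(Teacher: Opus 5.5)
Your proposal is correct and follows essentially the paper's route. Both arguments use isocritical invariance of both energies under $r^{(N-p)/p}u(r\,\cdot)$, a positive lower bound on the $L^{p^*}$ mass of the $(PS)_{\tilde c}$ sequence, a choice of $\xi_1$ below both that mass and the Sobolev threshold, and an intermediate-value choice of $r_n=r_n(\xi)$ followed by the change of variables $\int_{B_1}|\tilde u_n|^{p^*}=\int_{B_{r_n}}|u_n|^{p^*}$.

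Your step (b), which uses the combination $\mathcal{J}(u_n)-\tfrac1p\langle\mathcal{J}'(u_n),u_n\rangle$ and $p<q$, actually supplies the justification that the paper only asserts via $\tilde c=m>0$. Your $\xi_1$ also respects the stated bound $\xi_1<\tfrac12\max\{\cdot\}$ (shrink it slightly to make the inequality strict), whereas the paper's $\xi_1$ does not. The only slip is that $(\tfrac1p-\tfrac1{p^*})^{-1}=N$, not $N/p$; this only weakens your lower bound and is harmless.
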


\begin{proof}

Under the standing assumption $p^* = q_s^*$, the rescaling \eqref{rescaledPSsequence}
preserves both the $p$-Laplacian and the $q$-fractional energies. Indeed,
\[
\int_{\mathbb{R}^N} |\nabla \tilde u_n|^p \, dx
=
\int_{\mathbb{R}^N} |\nabla u_n|^p \, dx,
\qquad
[\tilde{u}_n]_{s,q}^q 
= r_n^{-N + qs + q(N-p)/p} [u_n]_{s,q}^q,
\]
and the exponent $-N + qs + q(N-p)/p = 0$ precisely when $p^* = q_s^*$. 
Consequently,
$
\mathcal{J}(\tilde{u}_n) = \mathcal{J}(u_n),
$
and for any test function $\phi$,
$
\langle \mathcal{J}'(\tilde{u}_n), \phi \rangle 
= \left\langle \mathcal{J}'(u_n), r_n^{-(N-p)/p}\,\phi(r_n^{-1}\cdot) \right\rangle \to 0.
$
Therefore, $\{\tilde{u}_n\}$ is again a $(PS)_{\tilde{c}}$ sequence at level $\tilde{c}$.

Since $\tilde c > 0$ and $\tilde c = m$ (cf. Lemma~\ref{lem:m_equals_mtpasslevel}), it follows that
$
\kappa_\infty := \lim_{n\to\infty} \int_{\mathbb{R}^N} |u_n|^{p^*} \, dx > 0.
$
Define
\[
\xi_1 := \min\left\{
\max\left\{ \mathcal S^{p^*/(p^*-p)}, \mathcal S_f^{q_s^*/(q_s^*-q)} \right\},\; \kappa_\infty
\right\}.
\]
Fix $\xi \in (0,\xi_1)$. Then, for each $n \in \mathbb{N}$, there exists $r_n>0$ such that
$
\int_{B_{r_n}} |u_n|^{p^*} \, dx = \xi.
$ A change of variables then immediately yields \eqref{concOfMass}.
\end{proof}

\begin{proof}[Proof of Theorem \eqref{thm:existence_equality}]
     Since $\{\tilde u_n\}$ is again a $(PS)_{\tilde c}$ sequence, we have
\begin{equation*}
  \mathcal{J}\left(\tilde{u}_n\right)-\frac{1}{p^*}\left\langle  \mathcal{J}^{\prime}\left(\tilde{u}_n\right), \tilde{u}_n\right\rangle \geqslant\left(\frac1p-\frac1{p^*}\right)
\int_{\mathbb R^N}|\nabla \tilde u_n|^p\,dx. .
\end{equation*}
It follows that $\{\tilde u_n\}$ is bounded in $\X^r(\mathbb{R}^N)$. 
Hence, up to a subsequence, there exists $\tilde u \in \X^r(\mathbb{R}^N)$ such that
\begin{equation*}
\left\{\begin{array}{l}
\tilde{u}_n \rightharpoonup \tilde{u} \text { in } \X^r\left(\mathbb{R}^N\right) ; \\
\tilde{u}_n \rightharpoonup \tilde{u} \text { in } L^{p^*}\left(\mathbb{R}^N \right); \\
\tilde{u}_n(x) \rightarrow \tilde{u}(x) \text { a.e. on } \mathbb{R}^N .
\end{array}\right.
\end{equation*}

The pointwise convergence of the gradients of $u_n$ follows in the same way as in \cite[Lemma~2.2]{silva2024mixed} and \cite[Lemma~3.3]{wang2023ground}, provided that we estimate the error coming from the nonlocal term, which is given by
\begin{equation*}
J = \langle u_n,T_k(u_n -u) \eta\rangle_{s,q} - \langle u,T_k(u_n -u) \eta\rangle_{s,q},
\end{equation*}
where $T_k(t) = \begin{cases}
    t & \text{ if } |t| \leq k \\
    \frac{t}{|t|} k & \text{ if } |t| > k
\end{cases}$, and $\eta\in C_{0,r}^{\infty}(D)$ is a cutoff function with $D = B_d \text{ or } B_{a,b}$ for some $d > 0$ and $0 < a < b$. Then 
\begin{equation*}
\begin{aligned}
J & = \! \!\iint_{\rtwon}  \frac{(J_{q}(u_n(x) - u_n(y)) - J_q(u(x) - u(y)))(T_k(u_n-u)(x) - T_k(u_n-u)(y) ) \eta(x)}{|x-y|^{N+qs}}  dx dy \\
& \quad + \iint_{\rtwon} \frac{(J_{q}(u_n(x) - u_n(y)) - J_q(u(x) - u(y)))T_k(u_n-u)(y) (\eta(x) -  \eta(y))}{|x-y|^{N+qs}}  dx  dy\\
& = J_1 + J_2.
\end{aligned}
\end{equation*}
Then, clearly, $J_1 \geq 0 $ from $(2.8)$ of \cite{silva2024mixed} and $J_2$ can be estimated as 
\begin{equation*}
    J_2 \leq k 2^{q(q-1)^2} ( [u_n]_{s,q}^{q-1} + [u]_{s,q}^{q-1}) [\eta]_{s,q} \leq k C,
\end{equation*}
where $C > 0$ is some constant.

Therefore, up to a subsequence, we then have
$
\nabla \tilde u_n(x) \to \nabla \tilde u(x) \quad \text{a.e. in } \mathbb{R}^N.
$
Consequently, passing to the limit in the weak formulation and using the $(PS)_{\tilde c}$ condition, we deduce that $\tilde u$ is a critical point of $\mathcal J$, and in particular
$
\mathcal J(\tilde u) \ge 0.
$\\
Set $v_n := \tilde u_n - \tilde u$. Then $\{v_n\}$ is bounded in $\X^r(\mathbb{R}^N)$. Assume
\begin{equation*}
A\left(v_n\right):=\int_{\rnn} |\grad v_n|^p dx \rightarrow A_{\infty}, \quad B\left(v_n\right):=[v_n]_{s,q}^q \rightarrow B_{\infty}, \quad C\left(v_n\right):=\int_{\rnn} |v_n|^{p^*} dx \rightarrow C_{\infty} .
\end{equation*}
for some $A_\infty, B_\infty, C_\infty \ge 0$.
By Brezis--Lieb, we obtain
\begin{equation*}
\begin{aligned}
&   \mathcal{J}\left(v_n\right) \rightarrow \frac{1}{p} A_{\infty}+\frac{1}{q} B_{\infty}-\frac{1}{p^*} C_{\infty}=\tilde{c}-  \mathcal{J}(\tilde{u}),\\
&\left\langle  \mathcal{J}^{\prime}\left(v_n\right), v_n\right\rangle \rightarrow A_{\infty}+B_{\infty}-C_{\infty}=0.
\end{aligned}
\end{equation*}

If $A_{\infty} = 0$, 
$\tilde{u}$ is a ground state solution of \eqref{eq:main_problem}.

Assume that $A_\infty > 0$ and $\tilde u = 0$. Since $\{\tilde u_n\}$ is a $(PS)_{\tilde c}$ sequence, Lemma~\ref{lem:k3_estimate} applies to $\{\tilde u_n\}$ and yields the following
 \[\text{ either} \quad \lim_{n\to\infty} \int_{B_1} |\tilde u_n|^{p^*}\,dx = 0,
\quad \text{or} \quad
\lim_{n\to\infty} \int_{B_1} |\tilde u_n|^{p^*}\,dx 
\ge \max\left\{ \mathcal S^{p^*/(p^*-p)}, \mathcal S_f^{q_s^*/(q_s^*-q)} \right\}.
\]
Both alternatives contradict \eqref{concOfMass}. Hence, $\tilde u \not\equiv 0$.\\
If $\mathcal J(\tilde u)=\tilde c$, then $\tilde u$ is a ground state solution, since $m=\tilde c$.
Otherwise,
$
  \mathcal{J}(\tilde{u})>m=\tilde{c}.
$
On the other hand, using
\begin{equation*}
  \mathcal{J}\left(v_n\right)-\frac{1}{p^*}\left\langle  \mathcal{J}^{\prime}\left(v_n\right), v_n\right\rangle \geqslant\left(\frac{1}{p}-\frac{1}{p^*}\right) A\left(v_n\right) \geqslant 0 \implies  \mathcal{J}(\tilde{u}) \leqslant \tilde{c}.
\end{equation*}
Therefore,
$
\mathcal J(\tilde u)=\tilde c,$ and $\tilde u$ is a ground state solution of \eqref{eq:main_problem}. 
Finally, we indicate why a nonnegative ground state exists; the argument is
standard, so we only sketch the chain of reasoning. Passing to $|\tilde u|$ does not
increase the energy. In particular $|\tilde u|$ need not lie on
$\mathcal{N}$; rather, projecting it onto $\mathcal{N}$ through the unique
$t_{|\tilde u|}>0$ with $t_{|\tilde u|}|\tilde u|\in\mathcal{N}$ and using that
$t\mapsto\mathcal{J}(t\tilde u)$ is maximised at $t=1$, one obtains
$m\le\mathcal{J}(t_{|\tilde u|}|\tilde u|)\le\mathcal{J}(t_{|\tilde u|}\tilde u)\le
\mathcal{J}(\tilde u)=m$. Hence $w:=t_{|\tilde u|}|\tilde u|\ge0$ is again a
minimiser on $\mathcal{N}$. Thus
\eqref{eq:main_problem} admits a nonnegative ground state. Since the Nehari manifold is a natural constraint, $w$
is a free critical point of $\mathcal{J}$, that is, a nonnegative weak solution of
\eqref{eq:main_problem} (follows from Implicit Function Theorem and \cite[Proof of Theorem 2.3.1]{serra_book}); and by the principle of symmetric criticality (see \cite{palais_principle_symmetric}) it is a
critical point on the whole space $\mathcal{X}_0(\mathbb{R}^N)$. 
\end{proof}
\begin{remark}
Combined with the strong maximum principle (Theorem~\ref{thm:strong_comparison_mixed}), the
nonnegative ground state is in fact
strictly positive.
\end{remark}


\section{Comparison Principles}\label{sec: comparison}
In this section, we establish a weak comparison and a strong maximum principle for the operator $\mathcal{L}_{p,q}=-\Delta_p + (-\Delta)_q^s$. These results will be used in Section~\ref{sec:decayEstimates} to derive lower bounds for solutions of \eqref{eq:main_problem}, by comparing them with suitable subsolutions. All the results in this section are valid for arbitrary $p,q>1, p<N, sq<N$, with the exception of Theorem~\ref{thm:comparison_principle_mixed}.

Now, to formulate the comparison principle, we first introduce the appropriate functional setting for admissible functions.
For any open set $\Omega \subset \rnn$, we recall
\begin{equation*}
\begin{aligned}
\mathcal{D}^{s,p,q}(\Omega):=\Bigl\{&u\in L^{q-1}_{\mathrm{loc}}(\mathbb{R}^N)\cap\mathcal{D}^{1,p}(\Omega)\ :\ \\
&\exists\,E\supset\Omega,\ E^c\text{ compact},\ 
\mathrm{dist}(E^c,\Omega)>0,\ [u]_{s,q,E}<\infty\Bigr\}.
\end{aligned}
\end{equation*}

The next lemma ensures the validity of the operator $\Lo$ for functions $u \in \mathcal{D}^{s,p,q}(\om)$.

\begin{lemma}\label{dualityLemma}
For every $u \in \mathcal{D}^{s,p,q}(\Omega)$, 
$$\mathcal{D}_0^{1,p}(\Omega)\cap \mathcal{D}_0^{s,q}(\Omega) \ni \varphi \mapsto \langle\!\langle \mathcal{L}_{p,q}(u),\varphi\rangle\!\rangle$$ is well-defined and
$
\mathcal{L}_{p,q}(u)\in
\left(\mathcal{D}_0^{1,p}(\Omega)\cap \mathcal{D}_0^{s,q}(\Omega)\right)^*,
$
where $^*$ denotes the dual space.
\end{lemma}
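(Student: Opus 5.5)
The plan is to split the pairing $\langle\!\langle \mathcal{L}_{p,q}(u),\varphi\rangle\!\rangle$ into its local and nonlocal parts and bound each by a constant (depending on $u$, $E$, $\Omega$) times $\|\varphi\|:=[\varphi]_{1,p}+[\varphi]_{s,q}$. Linearity in $\varphi$ is clear once absolute convergence is known, so boundedness gives membership in the dual. Throughout, fix $\varphi\in\mathcal{D}^{1,p}_0(\Omega)\cap\mathcal{D}^{s,q}_0(\Omega)$ and let $E$ be the set given by the definition of $u\in\mathcal{D}^{s,p,q}(\Omega)$.

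\textbf{Local term.} Since $\varphi\equiv0$ a.e. outside $\Omega$, we have $\nabla\varphi=0$ a.e. on $\Omega^c$. Hölder's inequality then gives
\[
\Bigl|\int_\Omega|\nabla u|^{p-2}\nabla u\cdot\nabla\varphi\,dx\Bigr|\le [u]_{1,p,\Omega}^{p-1}[\varphi]_{1,p}.
\]

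\textbf{Nonlocal term.} Since $\varphi(x)-\varphi(y)=0$ whenever both $x,y\in\Omega^c$, the region $\mathbb{R}^{2N}$ reduces to $(\Omega\times\mathbb{R}^N)\cup(\mathbb{R}^N\times\Omega)$. By symmetry of the integrand it suffices to treat pairs with $x\in\Omega$, and we split them according to $y\in E$ or $y\in E^c$.

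On $E\times E$ (which contains the pairs $x\in\Omega$, $y\in E$), Hölder's inequality gives the bound $[u]_{s,q,E}^{q-1}[\varphi]_{s,q}$.

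On $\Omega\times E^c$ we use $\varphi(y)=0$, since $E^c\subset\Omega^c$ because $E\supset\Omega$. Set $d:=\mathrm{dist}(E^c,\Omega)>0$. For $x\in\Omega$ and $y\in E^c$ we have $|x-y|\ge d$, and moreover $|x-y|\ge c(1+|x|)$ for a constant $c>0$, because $E^c$ is compact, say $E^c\subset B_R$. We bound
\[
|u(x)-u(y)|^{q-1}\le C\bigl(|u(x)|^{q-1}+|u(y)|^{q-1}\bigr).
\]
The $|u(y)|^{q-1}$ part gives
\[
\int_{E^c}|u(y)|^{q-1}\,dy\int_\Omega\frac{|\varphi(x)|}{(1+|x|)^{N+qs}}\,dx.
\]
The first factor is finite because $u\in L^{q-1}_{\rm loc}$. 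The second factor is finite by Hölder's inequality with $\varphi\in L^{p^*}$ and $(1+|x|)^{-(N+qs)}\in L^{(p^*)'}$, and is bounded by $C[\varphi]_{1,p}$ via the Sobolev inequality \eqref{eq:best_constant_local}. The $|u(x)|^{q-1}$ part gives
\[
|E^c|\int_\Omega\frac{|u(x)|^{q-1}|\varphi(x)|}{(1+|x|)^{N+qs}}\,dx.
\]
This is the step I expect to be the main obstacle, since it requires the integrability of $u$ on $\Omega$ against a decaying weight. I would handle it with the embedding $u\in\mathcal{D}^{1,p}(\Omega)\subset L^{p^*}(\Omega)$ and Hölder's inequality with exponents $\frac{p^*}{q-1}$, $p^*$, and the remaining exponent on the weight. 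When $q-1\ge p^*$ this exponent split fails, and I would fall back on the $L^{q-1}_{\rm loc}$ bound near $E^c$, together with the fact that the weight $(1+|x|)^{-N-qs}$ decays fast enough to absorb the growth at infinity.

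Summing the three contributions yields $|\langle\!\langle \mathcal{L}_{p,q}(u),\varphi\rangle\!\rangle|\le C(u,E,\Omega)\|\varphi\|$, which gives the claim.
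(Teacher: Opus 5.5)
\textbf{There is a genuine gap in the term you flagged.} Your decomposition is the right one, and it is essentially the argument behind \cite[Proposition~2.5]{Brasco_DecayEstimate}, which the paper simply cites for the nonlocal part; the local part is H\"older, as you do. The decomposition treats the local term by H\"older, and splits the nonlocal term into $E\times E$ and the cross region $\Omega\times E^c$, using $|x-y|\ge c(1+|x|)$ there. The gap is the term
\[
\int_\Omega \frac{|u(x)|^{q-1}|\varphi(x)|}{(1+|x|)^{N+qs}}\,dx .
\]
\begin{itemize}
\item Your H\"older split with exponents $\frac{p^*}{q-1}$, $p^*$, $r$ needs $\frac{q-1}{p^*}+\frac{1}{p^*}\le 1$. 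That is $q\le p^*$, not $q-1<p^*$ as you wrote.
\item Your fallback does not work. Knowing $u\in L^{q-1}_{\rm loc}$ only makes $|u|^{q-1}$ locally integrable, which does not let you integrate it against $\varphi\in L^{p^*}$. The difficulty is local, on bounded parts of $\Omega$, not near $E^c$ or at infinity, so the decay of the weight is irrelevant.
\end{itemize}
Section~\ref{sec: comparison} uses this lemma for arbitrary $p,q>1$ with $p<N$, $sq<N$. In that range $q>p^*$ can occur (take $p$ close to $1$, $s$ small and $q$ large), so your argument is incomplete. In the isocritical regime $q=\frac{pN}{N-p+ps}<p^*$, so there your argument does close.

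\textbf{The missing idea.} Get integrability of $u$ on $\Omega$ from the nonlocal hypothesis, not from $u\in L^{p^*}(\Omega)$. The condition $[u]_{s,q,E}<\infty$ (together with $u\in L^{q-1}_{\rm loc}$) already gives
\[
\int_E\frac{|u(x)|^q}{(1+|x|)^{N+sq}}\,dx<\infty ;
\]
this is \cite[Lemma~2.4]{Brasco_DecayEstimate}. To see it:
\begin{itemize}
\item fix a ball $B\subset E$; then $u\in L^q(B)$, since $[u]_{s,q,B}<\infty$ and $u$ is finite a.e.;
\item write $|u(x)|^q\le 2^{q-1}\bigl(|u(x)-u(y)|^q+|u(y)|^q\bigr)$ and average over $y\in B$;
\item use $|x-y|\le C(1+|x|)$ for $y\in B$.
\end{itemize}
Then apply H\"older with exponents $\frac{q}{q-1}$ and $q$. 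This bounds the troublesome term by the weighted norm above to the power $\frac{q-1}{q}$, times $\bigl(\int_\Omega|\varphi|^q(1+|x|)^{-N-sq}\,dx\bigr)^{1/q}$. The last factor is at most $C\|\varphi\|_{L^{q^*_s}}\le C[\varphi]_{s,q}$, by H\"older with exponents $\frac{q^*_s}{q}$ and $\frac{N}{sq}$ and the fractional Sobolev inequality. This is exactly how Step~1 of Lemma~\ref{lem:antisymmetry} handles the analogous cross term, and it works for all admissible $p,q$.
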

\begin{proof}
The result follows from Proposition~2.5 in \cite{Brasco_DecayEstimate}, together with the fact that the mapping
$
\phi \mapsto \int_{\Omega} |\nabla u|^{p-2}\nabla u \cdot \nabla \phi \, dx
$
is well-defined and defines a continuous linear functional on $\mathcal{D}_{0}^{1,p}(\Omega) \cap \mathcal{D}_{0}^{s,q}(\Omega)$.
\end{proof}
\begin{proposition}[Simon-type inequality, cf. \cite{simon_inequalities}]
Let $1<p<\infty$ and let $\langle \cdot,\cdot\rangle$ denote the Euclidean inner product in $\mathbb{R}^N$, and $|\cdot|$ denotes the Euclidean norm. Then there exists a constant $C=C(p)>0$ such that for all $t_1,t_2\in\mathbb{R}^N$,
\begin{equation}\label{simon_inequality}
\big\langle |t_1|^{p-2}t_1 - |t_2|^{p-2}t_2,\; t_1 - t_2 \big\rangle
\ge
\begin{cases}
C\,|t_1-t_2|^{p}, & \text{if } p\ge 2,\\[6pt]
C\,\dfrac{|t_1-t_2|^{2}}{\big(|t_1|+|t_2|\big)^{\,2-p}}, & \text{if } 1<p\le 2.
\end{cases}
\end{equation}
\end{proposition}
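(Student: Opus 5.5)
The plan is to reduce the vector inequality \eqref{simon_inequality} to a one-variable computation by means of the identity
\[
|t_1|^{p-2}t_1-|t_2|^{p-2}t_2=\int_0^1 A(t_2+\tau(t_1-t_2))\,(t_1-t_2)\,d\tau,
\qquad A(z):=|z|^{p-2}\Big(I+(p-2)\frac{z\otimes z}{|z|^2}\Big).
\]
This identity holds whenever the segment $[t_2,t_1]$ avoids the origin. If the segment does pass through $0$, we handle it either by perturbing $t_2$ slightly and letting the perturbation tend to zero, or by noting that the set of bad $\tau$ has measure zero. The matrix $A(z)$ has eigenvalues $|z|^{p-2}$ (with multiplicity $N-1$) and $(p-1)|z|^{p-2}$. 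Hence $\langle A(z)\xi,\xi\rangle\ge \min\{1,p-1\}|z|^{p-2}|\xi|^2$, and pairing with $\xi=t_1-t_2$ gives
\[
\big\langle |t_1|^{p-2}t_1 - |t_2|^{p-2}t_2,\, t_1-t_2\big\rangle \ge \min\{1,p-1\}\,|t_1-t_2|^2\int_0^1 |t_2+\tau(t_1-t_2)|^{p-2}\,d\tau .
\]

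\textbf{Case $1<p\le2$.} Here $p-2\le0$ and $|t_2+\tau(t_1-t_2)|\le |t_1|+|t_2|$, so the integrand is at least $(|t_1|+|t_2|)^{p-2}$. This gives the second line of \eqref{simon_inequality} with $C=p-1$. The degenerate case $t_1=t_2=0$ is trivial, and if only one of them vanishes the bound is checked directly.

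\textbf{Case $p\ge2$.} We need the lower bound
\[
\int_0^1|t_2+\tau(t_1-t_2)|^{p-2}\,d\tau\ge c(p)\,|t_1-t_2|^{p-2}.
\]
This is the main point of the proof. Write $\xi=t_1-t_2$ and let $z_0$ be the point of the line $\{t_2+\tau\xi\}$ closest to the origin. Then $|t_2+\tau\xi|\ge |\tau-\tau_0|\,|\xi|$, where $\tau_0$ is the parameter of $z_0$. Therefore the integral is at least $|\xi|^{p-2}\int_0^1|\tau-\tau_0|^{p-2}\,d\tau$. Since $p-2\ge0$, the function $\tau_0\mapsto \int_0^1|\tau-\tau_0|^{p-2}d\tau$ is minimized at $\tau_0=1/2$, where it equals $2^{2-p}/(p-1)$. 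This yields the first line of \eqref{simon_inequality} with $C=2^{2-p}$. Alternatively, one could use the elementary inequality for monotone maps, but the integral route treats both cases uniformly, so I would adopt it.

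The only delicate step is justifying the integral identity when the segment meets the origin for $p<2$, because there $A(z)$ is singular. I would treat it by noting that at most one value of $\tau$ is exceptional. The map $\tau\mapsto |z(\tau)|^{p-2}z(\tau)$ is absolutely continuous, since its derivative is integrable near that point: the derivative behaves like $|\tau-\tau_0|^{p-2}$ and $p-2>-1$. So the fundamental theorem of calculus still applies.
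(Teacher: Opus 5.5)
Your proof is correct. The paper itself does not prove this proposition: it quotes it from Simon and uses it only in the weak comparison principle, as a strict monotonicity statement. So there is no in-paper argument to compare with. What you supply is the standard self-contained derivation via the integral representation of $|t_1|^{p-2}t_1-|t_2|^{p-2}t_2$ along the segment $[t_2,t_1]$. Your treatment of a segment through the origin when $1<p<2$ is adequate: $\tau\mapsto|z(\tau)|^{p-2}z(\tau)$ is continuous, and its derivative is integrable because $|\tau-\tau_0|^{p-2}$ is integrable for $p-2>-1$.

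There is one arithmetic slip, which is harmless. For $p\ge 2$ you have $\min\{1,p-1\}=1$ and $\int_0^1|\tau-\tfrac12|^{p-2}\,d\tau=\frac{2^{2-p}}{p-1}$. Your chain therefore yields $C=\frac{2^{2-p}}{p-1}$, not $C=2^{2-p}$. The value $2^{2-p}$ is in fact admissible, with equality at $t_1=-t_2$. However, it does not follow from the lower bound $\langle A(z)\xi,\xi\rangle\ge|z|^{p-2}|\xi|^2$ that you used. In that extremal configuration $\xi$ is parallel to $z$, and the bound throws away the factor $p-1$ from the radial eigenvalue. Since the statement only asks for some $C=C(p)>0$, the proof stands as written once the constant is corrected.
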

\begin{theorem}[Weak Comparison principle]\label{thm:comparison_principle_mixed}
Let $\Omega \subset \mathbb{R}^N$ be an open domain, and let $u,v \in \mathcal{D}^{s,p,q}(\Omega)$ such that $\Lo (u) \leq \Lo(v)$ weakly, i.e.,
\begin{equation*}
\langle \! \langle \Lo(u), \phi \rangle \! \rangle  \leq \langle \! \langle \Lo(v), \phi \rangle \! \rangle    
\end{equation*}
for every nonnegative test function
$
\phi \in \mathcal{D}_{0}^{1,p}(\Omega) \cap \mathcal{D}_{0}^{s,q}(\Omega).
$
If
$
u \le v  \text{ a.e. in } \Omega^c,
$
then
$
u \le v \text{ a.e. in } \Omega.
$
\end{theorem}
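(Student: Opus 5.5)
The plan is the standard one: test the inequality $\langle\!\langle \Lo(u),\phi\rangle\!\rangle \le \langle\!\langle \Lo(v),\phi\rangle\!\rangle$ with $\phi := (u-v)^+$ and show that both the local and the nonlocal contributions of $\langle\!\langle \Lo(u),\phi\rangle\!\rangle - \langle\!\langle \Lo(v),\phi\rangle\!\rangle$ are nonnegative. Combined with the inequality being $\le 0$, this forces the local contribution to vanish, hence $\nabla (u-v)^+\equiv 0$, and then $(u-v)^+=0$.

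\textbf{Step 1: admissibility of $\phi$.} Since $u\le v$ a.e. in $\Omega^c$, we have $\phi=0$ a.e. outside $\Omega$. Also $\nabla\phi=\chi_{\{u>v\}}(\nabla u-\nabla v)\in L^p(\Omega)$ and $\phi\in L^{p^*}$ because $u,v\in\mathcal{D}^{1,p}(\Omega)$, so $\phi\in\mathcal{D}^{1,p}_0(\Omega)$.

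For the Gagliardo seminorm of $\phi$, let $E_u,E_v$ be the sets from the definition of $\mathcal{D}^{s,p,q}(\Omega)$ and set $E:=E_u\cap E_v$. Then $E^c$ is compact, $\mathrm{dist}(E^c,\Omega)>0$, and $[u-v]_{s,q,E}<\infty$. Split $\mathbb{R}^{2N}$ into $E\times E$, $E^c\times E^c$ and the mixed parts.
\begin{itemize}
\item On $E\times E$, use that $t\mapsto t^+$ is $1$-Lipschitz.
\item On $E^c\times E^c$, $\phi$ vanishes identically.
\item On the mixed parts, only $x\in\Omega$, $y\in E^c$ contribute, and there $|x-y|\ge \mathrm{dist}(E^c,\Omega)=:d_0>0$. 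This piece is bounded by $|E^c|\,d_0^{-N-qs}\int_\Omega\phi^q$. That integral is finite if $\Omega$ is bounded; in general one instead bounds it by $\int_\Omega \phi(x)^q\,\mathrm{dist}(x,E^c)^{-N-qs}\,dx$, which is controlled by $\|\phi\|_{L^{p^*}}$ together with the decay of the kernel. If that fails, one localises first by truncating $\phi$ with $\min\{\phi,k\}$ and a cutoff, then passes to the limit.
\end{itemize}
With $\phi$ admissible, Lemma~\ref{dualityLemma} makes both pairings well defined.

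\textbf{Step 2: the local term.} It equals
\[\int_{\{u>v\}}\big\langle |\nabla u|^{p-2}\nabla u-|\nabla v|^{p-2}\nabla v,\nabla u-\nabla v\big\rangle\,dx,\]
which is nonnegative by the Simon-type inequality \eqref{simon_inequality}. It vanishes only if $\nabla u=\nabla v$ a.e. on $\{u>v\}$.

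\textbf{Step 3: the nonlocal term.} Write $a=u(x)-u(y)$ and $b=v(x)-v(y)$, and set $w=u-v$, so that $a-b=w(x)-w(y)$. The nonlocal term is
\[\iint_{\mathbb{R}^{2N}} \big(J_q(a)-J_q(b)\big)\big(w^+(x)-w^+(y)\big)\,|x-y|^{-N-qs}\,dx\,dy .\]
The pointwise claim is that $(J_q(a)-J_q(b))(w^+(x)-w^+(y))\ge0$. Since $J_q$ is increasing, $J_q(a)-J_q(b)$ has the sign of $a-b=w(x)-w(y)$, and $t\mapsto t^+$ is nondecreasing, so $w^+(x)-w^+(y)$ has the same sign (or is zero). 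This is the monotonicity used in \cite{lindgren_lindqvist_fractional_eigenvalues}. Integrability of the product follows from Lemma~\ref{dualityLemma}, which is why Step 1 matters.

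\textbf{Step 4: conclusion.} Steps 2 and 3 give
\[0\le \langle\!\langle \Lo(u),\phi\rangle\!\rangle-\langle\!\langle \Lo(v),\phi\rangle\!\rangle\le0 ,\]
so both terms vanish and $\nabla\phi=0$ a.e. in $\mathbb{R}^N$. Then $\phi$ is constant, and since $\phi\in L^{p^*}$ (or since $\phi=0$ on $\Omega^c$, which has positive measure when $\Omega\neq\mathbb{R}^N$), we get $\phi\equiv0$. Hence $u\le v$ a.e. in $\Omega$.

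For $1<p<2$, the vanishing of the Simon lower bound $|\nabla w|^2(|\nabla u|+|\nabla v|)^{p-2}$ still forces $\nabla w=0$ a.e. on $\{u>v\}$, because the weight is finite a.e.

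\textbf{Main obstacle.} The delicate point is the admissibility of $(u-v)^+$ as a test function in Step 1, specifically the finiteness of its global Gagliardo seminorm when $u,v$ only have finite energy on $E$ and $\Omega$ is unbounded (e.g. exterior domains). I would handle this through Proposition~2.5 of \cite{Brasco_DecayEstimate}, adapted with the tail estimate using $\mathrm{dist}(E^c,\Omega)>0$ and $\phi\in L^{p^*}$, together with a truncation-and-limit argument if needed.
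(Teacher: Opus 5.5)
Your proposal is correct and follows the paper's proof. You test with $(u-v)^+$, get nonnegativity of the local term from the Simon-type inequality and of the nonlocal term from the monotonicity of $J_q$ and $t\mapsto t^+$ (as in Lemma~9 of \cite{lindgren_lindqvist_fractional_eigenvalues}), and conclude from the vanishing of the local term. The only difference is that the paper settles $(u-v)^+\in\mathcal{D}^{s,q}_0(\Omega)$ by citing Theorem~2.7 of \cite{Brasco_DecayEstimate}, while you sketch it directly; your H\"older tail bound suffices in the isocritical regime since $q<p^*$, and testing with the truncation $\min\{(u-v)^+,k\}$ covers the remaining cases.
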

\begin{proof}
Set $
w:=(u-v)^+.
$ Since $[u]_{1,p,\om},[v]_{1,p,\om} < \infty$ and $w=0$ a.e. in $\mathbb R^N\setminus\Omega$. This gives $w \in \mathcal{D}_{0}^{1,p}(\om)$. Moreover, $w \in \mathcal{D}_{0}^{s,q}(\om)$ by Theorem $2.7$ of \cite{Brasco_DecayEstimate}. Thus $w \in \mathcal{D}_{0}^{1,p}(\om) \cap \mathcal{D}_{0}^{s,q}(\om)$ is an admissible nonnegative test function. Testing with $w$, we obtain
\begin{align*}
0
&\geq\langle \! \langle \Lo(u) - \Lo(v), w \rangle \! \rangle \\
&=
\int_\Omega
\left(|\nabla u|^{p-2}\nabla u-|\nabla v|^{p-2}\nabla v\right)
\cdot \nabla w \;dx \\
& \qquad +
\iint_{\mathbb R^{2N}}
\frac{
\left(J_q(u(x)-u(y))-J_q(v(x)-v(y))\right)(w(x)-w(y))
}{|x-y|^{N+qs}}\,dy\,dx .
\end{align*}

On the set $\{u>v\}$, we have by the monotonicity inequality \eqref{simon_inequality}, the local term is nonnegative. Moreover, by Lemma~9 of \cite{lindgren_lindqvist_fractional_eigenvalues}, the nonlocal term is also nonnegative. Therefore both terms must vanish. Since $w = 0$ a.e.\ in $\mathbb{R}^N \setminus \Omega$, it follows that $u \leq v$ a.e.\ in $\Omega$.
\end{proof}

We now turn to the strong maximum principle for the operator.
As a preliminary step, we require a logarithmic-type estimate (see Lemma~\ref{lem:logarithmic_lemma}). We begin by recalling the following elementary inequality from \cite{CastroTuomoPalatucci}.
\begin{lemma}\label{lem:aux_lemma_log}
Let $p \geq 1$ and $\e \in (0,1]$. Then for $a, b \in \rnn $ the following inequality holds
\begin{equation*}
    |a|^p \leq |b|^p + C \e |b|^p + (1+ C \e) \e^{1-p} |a - b|^p,
\end{equation*}
where $C= C(p)$ is some non-negative constant.
\end{lemma}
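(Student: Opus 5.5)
The inequality follows from the triangle inequality combined with a weighted convexity estimate for $t\mapsto t^p$; the parameter $\varepsilon$ will enter only through the choice of the convex weight. First, $|a|\le |b|+|a-b|$, and since $t\mapsto t^p$ is nondecreasing on $[0,\infty)$ it suffices to prove the scalar inequality
\[
(x+y)^p\le (1+C\varepsilon)\,x^p+(1+C\varepsilon)\,\varepsilon^{1-p}\,y^p,\qquad x,y\ge0,
\]
with $x=|b|$ and $y=|a-b|$. The case $p=1$ is trivial, with $C=0$, so I assume $p>1$.

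For $\tau\in(0,1)$ write
\[
x+y=(1-\tau)\frac{x}{1-\tau}+\tau\frac{y}{\tau}.
\]
Convexity of $t\mapsto t^p$ then gives
\[
(x+y)^p\le (1-\tau)^{1-p}x^p+\tau^{1-p}y^p.
\]
I choose $\tau=\varepsilon/(1+\varepsilon)\in(0,\tfrac12]$. Then
\[
(1-\tau)^{1-p}=(1+\varepsilon)^{p-1}
\qquad\text{and}\qquad
\tau^{1-p}=\varepsilon^{1-p}(1+\varepsilon)^{p-1}.
\]

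It remains to bound $(1+\varepsilon)^{p-1}\le 1+C\varepsilon$ uniformly for $\varepsilon\in(0,1]$. By the mean value theorem,
\[
(1+\varepsilon)^{p-1}-1=(p-1)(1+\theta)^{p-2}\varepsilon
\]
for some $\theta\in(0,\varepsilon)$. The factor $(1+\theta)^{p-2}$ is at most $\max\{1,2^{p-2}\}$ because $\theta\le1$. Hence the bound holds with $C=C(p)=(p-1)\max\{1,2^{p-2}\}$. Substituting this into both coefficients yields exactly the claimed estimate.

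The only point needing care is that $C$ must be independent of $\varepsilon$. This is exactly why the restriction $\varepsilon\le1$ is used: it keeps $\theta\le1$ in the mean value step.
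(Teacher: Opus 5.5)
Your proof is correct and complete. The paper gives no argument for this lemma; it only recalls the inequality from Di Castro--Kuusi--Palatucci, so your proof (triangle inequality, weighted convexity with $\tau=\varepsilon/(1+\varepsilon)$, and the bound $(1+\varepsilon)^{p-1}\le 1+C\varepsilon$) supplies a self-contained argument. It also has the advantage of an explicit constant $C(p)=(p-1)\max\{1,2^{p-2}\}$, uniform for $\varepsilon\in(0,1]$.
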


\begin{lemma}\label{lem:logarithmic_lemma}
Let $\Omega \subset \mathbb{R}^N$ be an open domain, and 
 $u \in \mathcal{D}^{s,p,q}(\Omega)$ be a nonnegative weak supersolution to $\mathcal{L}_{p,q}$ in $\Omega$ (see Definiton \ref{def:sub-super}).
Let $d>0$ and let $B_R \subset \Omega$ be such that $B_{2r} \subset B_R$. Then there exists a constant $C(N,s,p,q)>0$ such that
\begin{equation*}
\begin{aligned}
&\int_{B_r} |\nabla \log(u+d)|^p \, dx 
+ \iint_{B_r \times B_r} 
\left| \log \left( \frac{u(x)+d}{u(y)+d} \right) \right|^q 
\frac{(u(y)+d)^{q-p}}{|x-y|^{N+qs}} \, dy \, dx \\
&\quad \le 
C r^{N-p} 
+ C r^{-qs} \int_{B_{2r}} (u+d)^{q-p} \, dx 
+ C \iint_{(\mathbb{R}^N \setminus B_{2r}) \times B_{3r/2}} 
\frac{(u(x)+d)^{q-p}}{|x-y|^{N+qs}} \, dy \, dx.
\end{aligned}
\end{equation*}
\end{lemma}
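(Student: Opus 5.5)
We test the supersolution inequality with $\varphi=\psi^p (u+d)^{1-p}$, where $\psi\in C_c^\infty(B_{3r/2})$ satisfies $0\le\psi\le1$, $\psi\equiv1$ on $B_r$, and $|\nabla\psi|\le C/r$. Since $u+d\ge d>0$, and $u\in\mathcal{D}^{s,p,q}(\Omega)$ with $B_{2r}\subset\Omega$, the function $\varphi$ is nonnegative and lies in $\mathcal{D}^{1,p}_0(\Omega)\cap\mathcal{D}^{s,q}_0(\Omega)$. Hence Lemma~\ref{dualityLemma} and Definition~\ref{def:sub-super} give
\[
0\le \langle -\Delta_p u,\varphi\rangle_p+\langle u,\varphi\rangle_{s,q}.
\]
We treat the local and nonlocal pieces separately. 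Note that the exponent $1-p$ is tuned to the local part, which is why the weight $(u+d)^{q-p}$ appears in the nonlocal terms.

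\textbf{Local part.} Compute $\nabla\varphi=-(p-1)\psi^p(u+d)^{-p}\nabla u+p\psi^{p-1}(u+d)^{1-p}\nabla\psi$. This gives
\[
\int|\nabla u|^{p-2}\nabla u\cdot\nabla\varphi\le-(p-1)\int\psi^p|\nabla\log(u+d)|^p+p\int\psi^{p-1}|\nabla \log(u+d)|^{p-1}|\nabla\psi|.
\]
By Young's inequality the last term is at most $\frac{p-1}{2}\int\psi^p|\nabla\log(u+d)|^p+C\int|\nabla\psi|^p$. The remaining integral is bounded by $Cr^{N-p}$.

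\textbf{Nonlocal part.} Split $\mathbb{R}^{2N}$ into $B_{2r}\times B_{2r}$ and the two symmetric pieces $B_{2r}\times B_{2r}^c$, $B_{2r}^c\times B_{2r}$.

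On the diagonal piece we need a pointwise inequality, in the spirit of Di Castro--Kuusi--Palatucci, for $a=u(x)+d$ and $b=u(y)+d$ with, say, $a\ge b$:
\[
J_q(a-b)\Big(\frac{\psi(x)^p}{a^{p-1}}-\frac{\psi(y)^p}{b^{p-1}}\Big)\le -c\,\Big|\log\frac ab\Big|^q b^{q-p}\min\{\psi(x),\psi(y)\}^p+C|\psi(x)-\psi(y)|^q\,b^{q-p}.
\]
To prove it, distinguish $a\le 2b$ from $a>2b$. In each case use Lemma~\ref{lem:aux_lemma_log} with $\varepsilon$ proportional to $(a-b)/a$ to trade $\psi(x)^p$ for $\psi(y)^p$, and use $(a-b)^{q}\gtrsim b^{q}|\log(a/b)|^{q}$ (case $a\le 2b$) or $(a-b)^q\gtrsim a^q$ together with $|\log(a/b)|^q\lesssim (a/b)^{q-p}$-type bounds (case $a>2b$). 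The homogeneity mismatch $q\ne p$ is exactly what produces the factor $b^{q-p}$. Integrating the error term gives
\[
\iint_{B_{2r}^2}|\psi(x)-\psi(y)|^q(u(y)+d)^{q-p}|x-y|^{-N-qs}\le Cr^{-q}\int_{B_{2r}}(u+d)^{q-p}\int_{|z|<4r}|z|^{q-N-qs}\,dz,
\]
which is $\le Cr^{-qs}\int_{B_{2r}}(u+d)^{q-p}$.

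On the off-diagonal pieces, $\psi(y)=0$ for $y\notin B_{2r}$ and $\psi(x)\ne0$ only for $x\in B_{3r/2}$. Using $J_q(u(x)-u(y))\le (u(x)+d)^{q-1}$ (because $u\ge0$) and $\varphi\le(u(x)+d)^{1-p}$, the integrand is bounded by $(u(x)+d)^{q-p}|x-y|^{-N-qs}$ over $B_{3r/2}\times B_{2r}^c$. This is precisely the tail term in the statement, up to the relabelling of variables.

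\textbf{Conclusion and main obstacle.} Absorb the positive contributions into the left-hand side and use $\psi=1$ on $B_r$ to obtain the estimate. The main obstacle is the pointwise inequality for the diagonal nonlocal term with $p\ne q$. In the regime $a\gg b$ one must check carefully that the gain $(a-b)^{q-1}b^{1-p}$ dominates both $\log^q(a/b)\,b^{q-p}$ and the cutoff error. If $q<p$, one may instead need the weight $\max\{a,b\}^{q-p}$; I would first try the splitting $a\le2b$ versus $a>2b$ and adjust the weight there if necessary.
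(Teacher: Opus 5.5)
The gap is in the diagonal nonlocal estimate, and it comes from the exponent on the cutoff.

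\textbf{Why your route fails.} The lemma is needed in the regime $q\ge p$ ($p<q$ isocritically, $p\le q$ in Theorem~\ref{thm:strong_comparison_mixed}). For your test function $\varphi=\psi^p(u+d)^{1-p}$, the pointwise inequality you state is then false. Take $\psi(y)=0<\psi(x)=t$ and $a>b$:
\begin{itemize}
\item the left side is $(a-b)^{q-1}a^{1-p}t^p$;
\item the right side is $C\,b^{q-p}t^q$;
\item the ratio blows up as $t\downarrow 0$ when $q>p$.
\end{itemize}
The same example shows that no bound with a $q$-th power of $|\psi(x)-\psi(y)|$ can hold for this test function. The ratio also blows up as $a\to\infty$ with $b$ fixed, so the cutoff error must carry $\max\{a,b\}^{q-p}$ rather than $b^{q-p}$; that part is harmless after symmetrising.

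What Lemma~\ref{lem:aux_lemma_log} actually yields, applied with exponent $p$ and $\varepsilon=\delta(a-b)/a$, is the error
\[
(1+C\varepsilon)\,\delta^{1-p}(a-b)^{q-p}|\psi(x)-\psi(y)|^p\lesssim a^{q-p}|\psi(x)-\psi(y)|^p .
\]
The restriction $\varepsilon\lesssim(a-b)/a$ is forced by absorbing the $C\varepsilon\psi(y)^p$ term, so this cannot be improved. Integrating against the kernel gives
\[
C r^{-p}\int_{B_{2r}}(u+d)^{q-p}\,dx\int_{|z|<4r}|z|^{p-N-qs}\,dz ,
\]
which is finite only if $p>sq$. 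That condition holds in the isocritical case. However, the lemma is stated, and used in the strong maximum principle, without it, so your argument does not prove the statement. Your closing remark also points the wrong way: the obstruction is at $q>p$, and it concerns the cutoff power, not only the weight.

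\textbf{The fix.} Use the paper's test function $(u+d)^{1-p}\psi^q$.
\begin{itemize}
\item Local part: Young's inequality leaves $C\int|\nabla\psi|^p\psi^{q-p}\,dx\le Cr^{N-p}$, which is fine because $q\ge p$.
\item Nonlocal part: apply Lemma~\ref{lem:aux_lemma_log} with exponent $q$ and $\varepsilon=\delta\frac{u(x)-u(y)}{u(x)+d}$ for $u(x)>u(y)$. The error becomes $C\delta^{1-q}(u(x)+d)^{q-p}|\psi(x)-\psi(y)|^q$. The kernel integral $\int_{|z|<4r}|z|^{q-N-qs}\,dz$ converges for every $s<1$, giving $Cr^{-qs}\int_{B_{2r}}(u+d)^{q-p}$.
\end{itemize}
With this change the rest of your outline matches the paper: the split $a\le 2b$ versus $a>2b$, and the off-diagonal bound via $J_q(u(x)-u(y))\le (u(x)+d)^{q-1}$. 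One correction in the case $a>2b$: use $|\log(a/b)|^q\lesssim (a/b)^{q-1}$, which matches the gain $(a-b)^{q-1}b^{1-p}\gtrsim (a/b)^{q-1}b^{q-p}$. A bound by $(a/b)^{q-p}$ would fail when $q\le p$.
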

\begin{proof}
Taking $\eta := (u +d)^{1-p} \phi^q \in \mathcal{D}_0^{1,p}(\Omega)\cap \mathcal{D}_0^{s,q}(\Omega)$  as the test function with $\phi \equiv 1$ in $B_r$, $|D \phi | \leq \frac{C}{r}$, and $Supp(\phi) \subset B_{3r/2}$, we obtain
\begin{align*}
    0 \leq \int_{B_{\frac{3r}{2}}} | \nabla u |^{p-2} \, \nabla u \cdot \nabla \eta + \iint_{\mathbb{R}^{2N}} \frac{|u(x)-u(y)|^{q-2}(u(x)-u(y))(\eta(x)-\eta(y))}{|x-y|^{N+qs}} := J_1 + J_2.
\end{align*}
Before proceeding further, we verify the admissibility of the test function $\eta =(u+d)^{1-p}\phi^q.$ Since $(u+d)^{-p}\le d^{-p}$, it follows easily that $\eta\in \mathcal \mathcal{D}^{1,p}_0(\Omega).$

We next show that $\eta\in \mathcal D^{s,q}_0(\Omega)$. Recall that $u\in \mathcal D^{s,p,q}(\Omega)$ implies the existence of a set $E\supset\Omega$ such that $E^c$ is compact, $dist(E^c,\Omega)>0,$
and
$[u]_{s,q,E}<\infty.$ Since $\phi\in C_c^\infty(\Omega)$, 
it suffices to prove that $[\eta]_{s,q,E}<\infty.$
\begin{align*}
[\eta]_{s,q,E}^q
&\le
\iint_{E\times E}
\frac{\left|\bigl((u(x)+d)^{1-p}-(u(y)+d)^{1-p}\bigr)\phi(x)^q
\right|^q}{|x-y|^{N+sq}}\,dx\,dy\\
&\quad
+\iint_{E\times E}
\frac{\left|(u(y)+d)^{1-p}\bigl(\phi(x)^q-\phi(y)^q\bigr)
\right|^q}{|x-y|^{N+sq}}\,dx\,dy =: I_A+I_B.
\end{align*}

By the mean value theorem, it follows that
$
I_A
\le C(p,q) d^{-pq}
\iint_{E\times E}
\frac{|u(x)-u(y)|^q}{|x-y|^{N+sq}}
\,dx\,dy
<\infty,$

For the second term, 
$
I_B
\le d^{(1-p)q}
\iint_{E\times E}\frac{|\phi(x)^q-\phi(y)^q|^q}{|x-y|^{N+sq}}
\,dx\,dy < \infty.
$
Since $\phi\in C_c^\infty(\Omega)$, 
$[\phi^q]_{s,q}<\infty.$
Combining the above estimates, we conclude that
$\eta\in \mathcal D_0^{s,q}(\Omega).$\\
We now estimate $J_1$
\begin{equation*}
\begin{aligned}
J_1 &
     \leq (1-p) \int_{B_{\frac{3r}{2}}}
    |\nabla \log ( u +d)|^{p}  \varphi^{q} +  q \int_{B_{\frac{3r}{2}}}
    |\nabla \log (u +d) |^{p-1}  |\nabla \varphi| \, \varphi^{\, q-1} \\
    & \leq \frac{(1-p)}{2} \int_{B_{\frac{3r}{2}}}
    |\nabla \log ( u +d)|^{p}  \varphi^{q}  + C(p,q) \int_{B_{\frac{3r}{2}}}
    |\nabla \varphi|^{p}\varphi^{ q - p},
\end{aligned}
\end{equation*}
where the last inequality follows from Young's inequality.
For the non-local term, we break it as follows
\begin{equation*}
\begin{aligned}
J_2 = 
\iint_{B_{2r} \times B_{2r}} 
\;+\;
2 \iint_{(\rnn \setminus B_{2r}) \times B_{2r}} 
\;:=\;
I_{1} + I_{2}.
\end{aligned}
\end{equation*}

For estimating $I_{1}$, we first consider $u(x) > u(y)$. Using Lemma \ref{lem:aux_lemma_log}, we get

\begin{equation*}
\begin{aligned}
&(u(x) - u(y))^{q-1}
\left[ (u(x)+d)^{1-p} \, \varphi^{q}(x) -(u(y)+d)^{1-p} \, \varphi^{q}(y)\right]\\
&\leq
(u(x) - u(y))^{q-1} (u(x) + d)^{1-p}
\Big[
    \varphi^q(y)
    + \varepsilon C(q) \varphi^{q}(y)
    - \frac{(u(y)+d)^{1-p}}{(u(x)+d)^{1-p}} \varphi^{q}(y)
\Big]
\\
&\quad
+ (u(x) - u(y))^{q-1} (u(x)+d)^{1-p}
(1 + C(q)\varepsilon) \, \varepsilon^{1-q}
|\varphi(x) - \varphi(y)|^{q}
:= T_{1} + T_{2}.
\end{aligned}
\end{equation*}

We now choose $\varepsilon = \delta \, \frac{u(x) - u(y)}{u(x) + d}\le 1$ where $\delta \in (0,1)$. Then
\begin{equation}\label{eq:T1-expanded}
\begin{aligned}
T_{1}
&=
(u(x)-u(y))^{q} (u(x)+d)^{-p} \varphi^{q}(y)
\left[
    C(q)\delta 
    + 
    \frac{
        1 - \left( \dfrac{u(y)+d}{u(x)+d} \right)^{1-p}
    }{
        1 - \dfrac{u(y)+d}{u(x)+d}
    }
\right].
\end{aligned}
\end{equation}

To analyse the ratio appearing in \eqref{eq:T1-expanded}, define the function
\begin{equation*}
    g(t) = \frac{1 - t^{1-p}}{1 - t}, \qquad t \in (0,1).
\end{equation*}

Since $g$ is increasing on $(0,1)$, it follows that
\begin{equation}\label{eq:g-bounds}
g(t)
\le
\begin{cases}
\dfrac{1-p}{2^{p}} \, \dfrac{t^{1-p}}{1 - t} & 0 < t \le \frac{1}{2}, \\[6pt]
1-p & \frac{1}{2} \le t \le 1.
\end{cases}
\end{equation}
To estimate $T_1$, we distinguish two cases. First, suppose that $t := \frac{u(y)+d}{u(x)+d} \in (0,\tfrac{1}{2}]$. Using the bound \eqref{eq:g-bounds} for $g(t)$, we obtain
\begin{equation*}
\begin{aligned}
T_{1}
&\le 
(u(x)-u(y))^{q} (u(x)+d)^{-p} \varphi^{q}(y)
\left[
    C(q)\delta 
    + 
    \frac{1-p}{2^{p}}
    \frac{ \left( \dfrac{u(y)+d}{u(x)+d} \right)^{1-p} }
         { \dfrac{u(x)-u(y)}{u(x)+d} }
\right]\\
&\leq
(u(x)-u(y))^{q-1}
(u(y)+d)^{1-p}
\varphi^{q}(y)\left[  C(q)\delta + \frac{1-p}{2^{p}}\right].
\end{aligned}
\end{equation*}
where the last inequality follows from the elementary bound 
$
    \frac{u(x)-u(y)}{u(x)+d}
    \left( \frac{u(y)+d}{u(x)+d} \right)^{p-1}
    \le 1.
$
Choosing 
\begin{equation*}
    \delta = \frac{p-1}{ C(q) \, 2^{p-1}} \frac{1}{2^2}
\end{equation*}
where $C(q)>1$, we obtain the final estimate
\begin{equation}\label{eq:T1-final}
T_{1}
\le 
-\varphi^{q}(y)
\frac{p-1}{2^{p+1}}
\frac{(u(x)-u(y))^{\,q-1}}{(u(y)+d)^{\,q-1}}
(u(y)+d)^{q-p}.
\end{equation}
Now if 
$
t = \frac{u(y)+d}{u(x)+d} \in \left(\tfrac{1}{2},1\right),
$
then using the bound on $g(t)$, we obtain
\begin{equation}\label{eq:T1-case2-final}
\begin{aligned}
T_{1}
&\le 
(u(x)-u(y))^{q} (u(x)+d)^{-p} \varphi^{q}(y) 
(  C(q)\delta + 1-p)\\
&\le -
\frac{(u(x)-u(y))^{q}}{(u(x)+d)^{p}} 
\varphi^{q}(y)\,
(p-1)\left(\frac{2^{p+1}-1}{2^{p+1}}\right).
\end{aligned}
\end{equation}
We note that if $2(u(y)+d) \le u(x)+d$, then
\begin{equation}\label{eq:intermediate-est1}
\begin{aligned}
\left[
    \log \left( \frac{u(x)+d}{u(y)+d} \right)
\right]^{q}\le 
C(q) \left(
    \frac{u(x)-u(y)}{u(y)+d}
\right)^{q-1}
\end{aligned}
\end{equation}
because $\lim_{t\to\infty} \frac{(\log t)^{q}}{(t-1)^{q-1}}= 0$. On the other hand, if $2(u(y)+d) \ge u(x)+d$, then, using the inequality $\log(1+\xi)\le \xi$ for all $\xi\ge0$, we deduce
\begin{equation}\label{eq:intermediate-est2}
\begin{aligned}
\Bigg[
    \log\!\left( \frac{u(x)+d}{u(y)+d} \right) 
\Bigg]^{q} (u(y) +d)^{q-p}
&\le 
    \frac{(u(x)-u(y))^q}{(u(y)+d)^p} \leq 2^p \frac{(u(x) - u(y))^q}{(u(x) + d)^p}.
\end{aligned}
\end{equation}

Substituting \eqref{eq:intermediate-est1} and \eqref{eq:intermediate-est2} in \eqref{eq:T1-final} and \eqref{eq:T1-case2-final} respectively, we get
\begin{equation}\label{eq:T1_bound}
T_1 \leq - C(p,q) \varphi^{q}(y)
\left(
    \log \left( \frac{u(x)+d}{u(y)+d} \right)
\right)^{q}
(u(y)+d)^{q-p}.
\end{equation}


We next estimate the contribution of the term from $T_2$ using the Lipschitz continuity of $\varphi$ as
\begin{equation}\label{eq:phi-diff-term}
\begin{aligned}
\iint_{B_{2r}\times B_{2r}}
\frac{(u(x)+d)^{q-p}\,
|\varphi(x) - \varphi(y)|^{q}}{|x-y|^{N+qs}}
\, dy\, dx
&\le
\frac{C}{r^q}
\iint_{B_{2r}\times B_{2r}}
\frac{(u(x)+d)^{q-p}}
{|x-y|^{\,N + q(s-1)}}
\, dy\, dx\\
&\leq  \frac{C}{r^q} \int_{B_{2r}} (u(x) +d )^{q-p} \, dx \int_{B_{4r}} \frac{dz}{|z|^{N- q(1-s)}}   \\
& \leq \frac{C(s,q,N)}{r^{qs}} \int_{B_{2r}} (u(x) +d )^{q-p} \, dx.     
\end{aligned}
\end{equation}
Noting that the estimate holds trivially when $u(x)=u(y)$, and that in the case $u(y)>u(x)$ the same computations apply after interchanging the roles of $x$ and $y$, we combine \eqref{eq:T1_bound} and \eqref{eq:phi-diff-term} to conclude the following estimate of $I_1$ as follows, where $C = C(p,q,s,N)$:
\begin{align*}
    I_1 \leq -\iint_{B_{2r}\times B_{2r}} C \varphi^{q}(y)
\left|
    \log \left( \frac{u(x)+d}{u(y)+d} \right)
\right|^{q}
\frac{(u(y)+d)^{q-p}}{|x-y|^{N+qs}}dy \;dx+\frac{C}{r^{qs}} \int_{B_{2r}} (u(x) +d )^{q-p} \, dx. 
\end{align*}
Finally, we estimate the contribution of $I_2$. Since $u\ge 0$, we have
$
\left(
\frac{u(x)-u(y)}{u(x)+d}
\right)^{q-1}
\le 1.
$
Using this, together with the fact that $\varphi$ is supported in $B_{3r/2}$, we obtain
\begin{equation*}
\begin{aligned}
I_2
&\le
C(q)
\int_{\mathbb{R}^N\setminus B_{2r}}
\int_{B_{3r/2}}
\frac{\varphi^q(x)(u(x)+d)^{q-p}}{|x-y|^{N+qs}}
\,dy\,dx .
\end{aligned}
\end{equation*}
Combining this estimate with the bound for $I_1$, we obtain the corresponding estimate for $J_2$. Finally, combining the estimates for $J_1$ and $J_2$ yields the desired logarithmic estimate.
\qedhere
\end{proof}

With the logarithmic estimate established, we are now in a position to prove the strong maximum principle.
\begin{remark}
    We note that the weak maximum principle follows readily by employing the standard argument of taking $u^- := \max\{-u,0\}$ as a test function in the weak formulation.
    \end{remark}
\begin{theorem}[Strong Maximum Principle]\label{thm:strong_comparison_mixed}
 Let $\om \subset \rnn$ be a connected open set, $1<p\leq q<\infty, p<N, sq<N$, and $u \in \mathcal{D}^{s,p,q}(\om)$ be a weak supersolution of the operator $\Lo$ with $u = 0$ in $\om^c$. Then either $u>0$ a.e. in $\om$ or $u=0$ a.e. in $\om$.
\end{theorem}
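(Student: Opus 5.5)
First, establish nonnegativity. By the weak maximum principle in the preceding Remark, testing with $u^-$ (which vanishes in $\om^c$ because $u=0$ there), we obtain $u\ge 0$ a.e. in $\mathbb{R}^N$. This makes the logarithmic estimate of Lemma~\ref{lem:logarithmic_lemma} available for every $d>0$ and every ball with $B_{2r}\subset B_R\subset\om$. The plan is the classical De Giorgi--Moser style argument of Di Castro--Kuusi--Palatucci, adapted to the mixed operator. Assume $u\not\equiv 0$ in $\om$. We show that the zero set $Z:=\{x\in\om: u=0\}$ has measure zero by proving it is ``open and closed'' in a measure-theoretic sense, and then use connectedness of $\om$.

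\textbf{Local step.} Fix a ball $B_r=B_r(x_0)$ with $B_{4r}\subset\om$, and suppose $u\not\equiv0$ on $B_r$, i.e.\ $|B_r\cap\{u>0\}|>0$. We claim $u>0$ a.e.\ in $B_r$. Suppose instead that $|B_r\cap Z|>0$. For $d\in(0,1]$, set $F_d:=\log\bigl(1+u/d\bigr)=\log(u+d)-\log d$. Then $F_d=0$ on $Z$, and $\nabla F_d=\nabla\log(u+d)$. By Lemma~\ref{lem:logarithmic_lemma}, dropping the nonnegative nonlocal term on the left, we get
\[
\int_{B_r}|\nabla F_d|^p\,dx\le C r^{N-p}+C r^{-qs}\int_{B_{2r}}(u+d)^{q-p}\,dx+C\iint_{B_{2r}^c\times B_{3r/2}}\frac{(u(x)+d)^{q-p}}{|x-y|^{N+qs}}\,dy\,dx .
\]
Since $q\ge p$ and $d\le 1$, the right-hand side is bounded independently of $d$. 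Finiteness of the last term uses $u\in L^{q-1}_{\mathrm{loc}}$ together with the integrability provided by $\mathcal{D}^{s,p,q}(\om)$; when $q-p\le q-1$ the tail is controlled in the same way as in Lemma~\ref{dualityLemma}. Since $F_d$ vanishes on a subset of $B_r$ of positive measure $|B_r\cap Z|$, a Poincar\'e inequality for functions vanishing on a set of positive measure gives
\[
\int_{B_r}F_d^p\,dx\le C\,\frac{r^{N+p}}{|B_r\cap Z|}\int_{B_r}|\nabla F_d|^p\,dx\le M,
\]
with $M$ independent of $d$. On the other hand, on the set $\{u>0\}\cap B_r$, which has positive measure, we have $F_d\to\infty$ pointwise as $d\to0$. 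Fatou's lemma then forces $\int_{B_r}F_d^p\to\infty$, a contradiction. Hence $u>0$ a.e.\ in $B_r$.

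\textbf{Globalisation.} Define $\om^+$ as the set of points $x\in\om$ having a ball $B_\rho(x)\Subset\om$ on which $u>0$ a.e., and $\om^0$ as the set of points having a ball on which $u=0$ a.e. Both sets are open by definition. Take any $x\in\om$ and choose $r$ with $B_{4r}(x)\subset\om$. Either $u\equiv0$ on $B_r(x)$, so $x\in\om^0$, or, by the local step, $u>0$ a.e. on $B_r(x)$, so $x\in\om^+$. These alternatives are exclusive, so $\om=\om^+\sqcup\om^0$. Connectedness of $\om$ then gives that one of the two sets is all of $\om$, which yields the dichotomy.

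The main obstacle is ensuring that the right-hand side of the logarithmic estimate is finite and uniform as $d\to0$. It is the hypothesis $p\le q$ that makes $(u+d)^{q-p}$ stay bounded by $(u+1)^{q-p}$, and its tail integrability has to be extracted from $u\in\mathcal{D}^{s,p,q}(\om)$ and $L^{q-1}_{\mathrm{loc}}$. For this I would first try splitting the tail into a bounded region, handled by local integrability, and the far field, handled via $|x-y|\ge r/2$ together with a growth bound on $u$ coming from $u\in\mathcal D^{1,p}(\om)$. If this fails, I would fall back on working with $\log$ truncated at a level, which reduces the issue to bounded tails.
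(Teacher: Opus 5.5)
Your proof is correct and is essentially the paper's argument: the weak maximum principle gives $u\ge0$, the logarithmic estimate is made uniform in $d\in(0,1]$ via $q\ge p$, Poincar\'e is applied to $\log(1+u/d)$ vanishing on $Z$, and this function blows up on $\{u>0\}$ as $d\to0$. The only real difference is the globalisation, where your open/closed splitting $\Omega=\Omega^{+}\sqcup\Omega^{0}$ replaces the paper's chains of overlapping balls over connected compacta followed by exhaustions of bounded and unbounded $\Omega$, and is somewhat cleaner.

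The ``main obstacle'' you flag is not one. In Lemma~\ref{lem:logarithmic_lemma} the factor $(u+d)^{q-p}$ in the last term is evaluated at the point of $B_{3r/2}$, the support of the cutoff; the far point enters only through the kernel, and this is exactly where $u\ge0$ on all of $\mathbb{R}^N$ is used. That term is therefore bounded by $Cr^{-qs}\int_{B_{3r/2}}(u+d)^{q-p}\,dx$, which is why the paper simply absorbs it into the second term.
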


\begin{proof}
    In view of the weak maximum principle, $u \geqslant 0$ a.e. in $\mathbb{R}^N$. Thus we assume $u\not\equiv0$ a.e. in $\om$ and prove $u>0$ a.e. in $\om$. We aim at proving that for any connected compact set $K \subset \subset \om$, if $u \not \equiv 0$ in $K$ then $u>0$ a.e. in $K$. Observe that $K$ can be covered by a finite number of balls $B_{r }\left(x_1\right), \ldots, B_{r }\left(x_k\right)$ contained in $\om$ such that $x_i \in K$ and

\begin{equation}
\left|B_{r}\left(x_i\right) \cap B_{r}\left(x_{i+1}\right)\right|>0, \quad i=1, \ldots, k-1 . \label{eq:IntersectionPositive}
\end{equation}

Suppose by contradiction that
$
|\{x\in K:u(x)=0\}|>0.
$
Then there exists some $i \in\{1, \ldots, k-1\}$ such that
\[
Z:=\{x\in B_{r}(x_i):u(x)=0\}\] has positive measure.

Now $Z$ has positive Lebesgue measure and $u=0$ a.e. in $Z$. Set $v_\varepsilon := \log\left(1+\frac{u}{\varepsilon}\right)$. Then $v_\varepsilon=0$ a.e. in $Z$, and hence
$
(v_\varepsilon)_Z
=0.
$
By the Poincar\'e inequality  \cite[Theorem 13.27]{giovanni_book_sobolev}, we obtain
\[
\int_{B_{r}(x_i)}
\left|\log\left(1+\frac{u}{\varepsilon}\right)\right|^p dx
=
\int_{B_{r}(x_i)}
|v_\varepsilon-(v_\varepsilon)_Z|^p dx
\le
C (p, Z,B_{r}(x_i) )
\int_{B_{r}(x_i)}
|\nabla v_\varepsilon|^p dx.
\]

Now using Lemma \ref{lem:logarithmic_lemma}, we have 
\begin{align*}
     &\int_{ B_{r}\left(x_i\right)}\left|\log \left(1+\frac{u}{\varepsilon}\right)\right|^p d x \\
     &\leq C r^{N-p} + C r^{-qs}  \int_{B_{2r}} (u(x) +\varepsilon)^{q-p} + C \iint_{\rnn \setminus B_{2r} \times B_{3r/2}} \frac{(u(x) + \varepsilon)^{q-p}}{|x-y|^{N+qs}} dy \, dx\\
     & \leq C r^{N-p} + C r^{-qs}  \int_{B_{2r}} (u(x) +\varepsilon)^{q-p} .
\end{align*}
where $C>0$ is independent of $\varepsilon$.

If $u\not\equiv0$ in $B_{r}(x_i)$, then there exists $t>0$ such that
\[
A_t:=\{x\in B_{r}(x_i):u(x)\ge t\}
\]
has positive measure. Hence
\[
\int_{B_{r}(x_i)}
\left|\log\left(1+\frac{u}{\varepsilon}\right)\right|^p dx
\ge
|A_t|
\left|\log\left(1+\frac{t}{\varepsilon}\right)\right|^p
\to \infty
\quad\text{as }\varepsilon\to0,
\]
which contradicts the uniform bound. Therefore, if the zero set has positive measure in a ball, then $u\equiv0$ a.e. in that ball. In view of \eqref{eq:IntersectionPositive}, we can follow the same argument to get $u \equiv 0$ a.e. in K which contradicts our assumption.

For the case when $\Omega$ is bounded and connected, one can construct a sequence of connected compact sets $\{A_n\}_{n\in\mathbb N}$ satisfying
$
A_n \Subset \Omega
\text{ and }
|\Omega\setminus A_n|<\frac1n.
$ Applying the previous argument on each $A_n$ and passing to the limit as $n\to\infty$ yields the desired conclusion.

Next, suppose that $\Omega$ is connected but unbounded. Then there exists a sequence of bounded connected open sets $\{\Omega_n\}_{n\in \mathbb N }$ such that
$
\Omega_n\subset \Omega_{n+1}\subset \Omega,\;
\Omega=\bigcup_{n=1}^{\infty}\Omega_n.
$
Since $u\not\equiv0$ in $\Omega$, there exists $\tilde{n}\in\mathbb N$ for which $u\not\equiv0$ in $\Omega_n$ whenever $n\ge \tilde{n}$. Since $u$ is a non-negative weak supersolution of $\mathcal L_{p,q}$ in each $\Omega_n$, it follows from the bounded-domain case that $u>0$ a.e. in $\Omega_n$ for every $n\ge n_0$, and hence $u>0$ a.e. in $\Omega$.
This completes the proof.
\end{proof}

\section{Harnack Estimates}\label{sec: Harnack}
In this subsection, we establish Harnack-type estimates for non-negative supersolutions of the operator $\mathcal{L}_{p,q}$ when \eqref{eq:isocritical-intro} holds.
\begin{lemma}\label{lem:expansion of positivity}
Let $u \in \X(\rnn) \cap L^{\infty}(\rnn)$ be a non-negative weak supersolution of $\Lo$ (Section \ref{sec: notion of solution}). Assume $k >0$ and there exists $\tau \in (0,1]$ such that 
\begin{equation*}
| B_r(x_0) \cap \{ u \geq k \} | \geq \tau |B_r|
\end{equation*}
for $r \in (0,1]$ where $0 < r < \frac{R}{16}$ with $R>0$. Then there exists a positive constant $\delta \equiv \delta(\tau,p,q,N,s, \|u\|_{L_{loc}^\infty(\rnn)}) \in (0, \frac{1}{4})$ such that 
\begin{equation*}
    \essinf_{B_{4r}(x_0)} u \geq \delta k .
\end{equation*}
\end{lemma}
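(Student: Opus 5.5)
The plan is the standard two-step scheme for expansion of positivity (cf. Di Castro--Kuusi--Palatucci and Garain--Kinnunen), adapted to the mixed $(p,q)$ structure. First I will use the logarithmic estimate to show that the sublevel set $\{u\le 2\delta k\}$ occupies only a small fraction of $B_{6r}(x_0)$. Then a De~Giorgi iteration will convert this small-measure information into the pointwise bound $u\ge\delta k$ on $B_{4r}(x_0)$. Throughout, $M:=\|u\|_{L^\infty(B_{R}(x_0))}$ (or the global bound) is used to absorb the mismatch between the homogeneities $p$ and $q$. The restriction $r\le 1$ lets us compare $r^{-qs}$ with $r^{-p}$: since $q>p$, we have $1-\tfrac{N}{p}=s-\tfrac Nq$ and hence $sq<p$, so $r^{-qs}\le r^{-p}$.

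\textbf{Step 1 (measure of the sublevel set).} Apply Lemma~\ref{lem:logarithmic_lemma} on $B_{8r}(x_0)\subset B_R(x_0)$ with $d=\varepsilon k$, discarding the nonnegative nonlocal term on the left. On the right, $(u+d)^{q-p}\le (M+k)^{q-p}$, so the second term is at most $C(M)r^{N-qs}\le C(M)r^{N-p}$. For the tail term, integrating in $y$ over $|x-y|\ge r/2$ bounds it by $C(M)\,|B_{3r/2}|\,r^{-qs}$, which again is $\lesssim r^{N-p}$. This gives $\int_{B_{6r}}|\nabla\log(u+d)|^p\le C(M)r^{N-p}$. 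Set $v:=\min\{(\log\tfrac{k+d}{u+d})_+,\log\tfrac{1}{3\delta}\}$ with $d=\delta k$. Then $v$ vanishes on $\{u\ge k\}$, and this set has density at least $\tau 6^{-N}$ in $B_{6r}$. Poincar\'e's inequality (Theorem 13.27 of \cite{giovanni_book_sobolev}, as in the strong maximum principle) therefore gives
\[
\log\tfrac1{3\delta}\,\frac{|B_{6r}\cap\{u\le 2\delta k\}|}{|B_{6r}|}\le \fint_{B_{6r}}v\le C(\tau)\Big(r^{p-N}\!\int_{B_{6r}}|\nabla v|^p\Big)^{1/p}\le C(\tau,M).
\]
Consequently $|B_{6r}\cap\{u\le2\delta k\}|\le \frac{C}{\log(1/3\delta)}|B_{6r}|$, which can be made as small as desired by taking $\delta$ small.

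\textbf{Step 2 (De~Giorgi iteration).} Take radii $\rho_j=4r+2^{1-j}r$, levels $k_j=\delta k+2^{-j-1}\delta k$, cutoffs $\phi_j$ between $B_{\rho_{j+1}}$ and $B_{(\rho_j+\rho_{j+1})/2}$, and test functions $w_j=(k_j-u)_+$. Testing the supersolution inequality with $w_j\phi_j^q$ yields a Caccioppoli estimate for both energies. The local energy $\int|\nabla(w_j\phi_j)|^p$ is controlled by $2^{jp}r^{-p}\int_{B_{\rho_j}}w_j^p$. The Gagliardo term brings in $2^{jq}r^{-qs}\int w_j^q$ together with a tail $\sup_x\int_{B_{\rho_j}^c}\frac{(k_j-u(y))_+^{q-1}}{|x-y|^{N+qs}}dy\,\int w_j$. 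Because $u\ge0$, this tail is at most $C2^{j(N+qs)}(\delta k)^{q-1}r^{-qs}$. Using $w_j\le\delta k$ and $w_j\le M$, all three terms can be written as $C2^{j\gamma}r^{-p}(\delta k)^{p}|A_j|$, where $A_j:=B_{\rho_j}\cap\{u<k_j\}$; this is where $r\le1$, $sq<p$, and $\delta k\le M$ are used to absorb the powers of $\delta k$ coming from the $q$-homogeneous terms. On the left, apply the local Sobolev inequality (or equivalently the fractional one, since $p^*=q_s^*$) to $w_j\phi_j$, and bound it below on $A_{j+1}$ by $(k_j-k_{j+1})^{p}|A_{j+1}|^{p/p^*}$. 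Writing $Y_j=|A_j|/|B_{\rho_j}|$, this yields $Y_{j+1}\le C(M)\,b^{j}\,Y_j^{1+p/N}$. By Step~1, $Y_0\le C/\log(1/3\delta)$, so choosing $\delta$ small enough to satisfy condition (i) of Lemma~\ref{lem:iteration_decay} gives $Y_j\to0$, and hence $u\ge\delta k$ a.e.\ on $B_{4r}(x_0)$.

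\textbf{Main obstacle.} The delicate point is Step~2: the two energies scale differently in $\delta k$ ($(\delta k)^p$ versus $(\delta k)^q$), and the nonlocal tail enters at order $(\delta k)^{q-1}$. For the recursion to close with constants independent of $\delta$, each of these must be reduced to the single homogeneity $(\delta k)^p r^{-p}$. I will handle this by bounding $(\delta k)^{q-p}\le M^{q-p}$, which is why $\delta$ depends on $\|u\|_{L^\infty_{loc}}$, and by using $r^{-qs}\le r^{-p}$ for $r\le1$. The isocritical identity ensures that a single Sobolev gain exponent governs both sides of the estimate. It must also be checked that the resulting constant $C(M)$ does not degenerate as $\delta\to0$, since $\delta$ is chosen only afterwards from Step~1.
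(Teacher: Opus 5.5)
Your proposal is correct. It has the same architecture as the paper's proof: the logarithmic lemma plus Poincar\'e make $\{u\le 2\delta k\}$ small in $B_{6r}(x_0)$, and then a De~Giorgi iteration on $w_j=(k_j-u)_+$ is run with the same radii and levels. Where you differ is in how the iteration is closed.

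\textbf{How the paper closes the iteration.} It keeps both energies on the left and applies the local Sobolev inequality and the fractional Sobolev inequality on balls. It then weakens $Y_{j+1}^{p/p^*}$ to $Y_{j+1}^{q/q_s^*}$ and uses the ratio bound
\[
\Bigl(\tfrac{k_j^p}{r^p}+\tfrac{k_j^q}{r^{qs}}\Bigr)\Bigl[\tfrac{(k_j-k_{j+1})^p}{r^p}+\tfrac{(k_j-k_{j+1})^q}{r^{qs}}\Bigr]^{-1}\le 2^{(j+3)q}.
\]
The resulting recursion $Y_{j+1}\le c_0b^jY_j^{q_s^*/q}$ is independent of $\|u\|_{L^\infty}$, $k$ and $r$. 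All dependence on $M$, on $r\le 1$ and on $sq<p$ is isolated in Step~1.

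\textbf{How you close it.} You use $\delta k\le k\le\|u\|_{L^\infty(B_r(x_0))}$, which is immediate from the density hypothesis, together with $r^{-qs}\le r^{-p}$. This reduces everything to the single homogeneity $(\delta k)^p r^{-p}$. You then discard the Gagliardo energy and use only the local Sobolev inequality. Since $(\delta k)^p$ cancels against $(k_j-k_{j+1})^p$ and the powers of $r$ cancel after normalisation, you get $Y_{j+1}\le C(M)b^jY_j^{p^*/p}$ with $C(M)$ independent of $\delta$.

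Two small corrections:
\begin{itemize}
\item The gain exponent is $p^*/p=1+\frac{p}{N-p}$, not $1+\frac pN$. Your weaker exponent still works because $Y_j\le 1$.
\item Sobolev should be applied to $w_j\phi_j^{q/p}$ rather than $w_j\phi_j$, so that it matches the Caccioppoli term $\int|\nabla w_j|^p\phi_j^q$.
\end{itemize}

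\textbf{Comparison.} Your route is more elementary: it needs no fractional Sobolev inequality on balls and no extra cutoff factor, and it gives a better gain exponent. The cost is that your iteration constants depend on $M$ and on $r\le 1$, which is harmless here because Step~1 already requires both.

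Your choice $d=\delta k$ in the logarithmic lemma is also tidier than the paper's $d=\varepsilon$. It gives $v=\log\frac{1}{3\delta}$ directly on $\{u\le 2\delta k\}$. This removes the paper's $\varepsilon$-bookkeeping at the end, namely the assumption $\delta k>2\varepsilon$ and the intermediate conclusion $\essinf_{B_{4r}(x_0)}u\ge\delta k-2\varepsilon$.
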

\begin{proof}
The proof is divided into two main steps. \\
\textbf{Step 1:}
We claim that there exists a constant $C (N,s,p,q,\lv u \rv_{L^{\infty}(\rnn)})> 0$ such that
\begin{equation}\label{eq:level_set_estimate}
\left| B_{6 r}(x_0) \cap \left\{ u \le 2\delta k- \varepsilon \right\} \right|
\le 
\frac{C}{\tau \log\!\left(\tfrac{1}{2\delta}\right)} | B_{6r}(x_0) |,
\end{equation}
for every $\delta \in (0,\tfrac{1}{4})$, and $\varepsilon > 0$.\\
To start with, let $\psi$ be a standard cutoff function such that
\begin{equation*}
\begin{aligned}
0 \le \psi \le 1 &\quad \text{ and } \quad |\nabla \psi| \le \frac{8}{r} \quad \text{in } B_{7r}(x_0), \text{ and} \qquad 
\psi \equiv 1 &\quad \text{in } B_{6 r}(x_0). 
\end{aligned}
\end{equation*}
Define the auxiliary function $w = u + \varepsilon$, and choose the test function $\varphi = w^{1-p}\, \psi^q$. Working as in Logarithmic Lemma \ref{lem:logarithmic_lemma}, we obtain
\begin{equation}\label{eq:log_estimate}
\begin{aligned}
&\int_{B_{6 r}(x_0)} \left| \nabla \log(u+\varepsilon) \right|^{p} \, dx + \iint_{B_{6 r}(x_0) \times B_{6 r}(x_0)}
\left| \log \left( \frac{u(x)+\varepsilon}{u(y)+\varepsilon} \right) \right|^q
\frac{(u(y)+\varepsilon)^{q-p}}{|x-y|^{N + sq}}
\, dy \, dx \\
&\le C r^{N-p}
+ C r^{-qs}
\int_{B_{8 r}(x_0)} (u+\varepsilon)^{q-p} \, dx 
+ C \iint_{ \mathbb{R}^N \setminus B_{8 r}(x_0) \times B_{7 r}(x_0)}
\frac{(u(x)+\varepsilon)^{q-p}}{|x-y|^{N + sq}}
\, dy \, dx.
\end{aligned}
\end{equation}

Next, define the truncated logarithmic function for $\delta \in (0,\frac{1}{4})$
\begin{equation*}
v
=
\left[
\min \left\{
\log\!\left(\frac{1}{2\delta}\right),\;
\log\!\left( \frac{k+\varepsilon}{u+\varepsilon} \right)
\right\}
\right]_+.
\end{equation*}
Then using the Poincar\'{e} inequality in local form \cite[Theorem $2$ of Chapter $5$, Section 5.8]{evans_pde_book} and logarithmic estimate \eqref{eq:log_estimate}, we obtain
\begin{equation}\label{eq:v_energy_estimate}
\begin{aligned}
&\int_{B_{6r}(x_0)} |v - (v)_{B_{6r}(x_0)}| \, dx
\le 
C(N,p) r|B_{6r}(x_0)|^{1 -\frac{1}{p}}
\left( \int_{B_{6r}(x_0)} |\nabla v|^p \, dx \right)^{\frac{1}{p}} \\
&\le 
C r|B_{6r}|^{1-\frac{1}{p}} \left( \int_{B_{6r}(x_0)} \left| \nabla \log(u+\varepsilon) \right|^p \, dx \right)^{\frac{1}{p}} \\
&\le C(N,s,p,q) r |B_{6r}|^{1 - \frac{1}{p}} \left[ r^{N-p}
+  r^{-qs}
\int_{B_{8r}(x_0)} (u(x)+\varepsilon)^{q-p} \, dx \right. \\
& \qquad \qquad \qquad \qquad \qquad \qquad  \left. +  \iint_{\mathbb{R}^N \setminus B_{8r}(x_0) \times B_{7 r}(x_0)}
\frac{(u(x)+\varepsilon)^{q-p}}{|x-y|^{N + sq}}
\, dy \, dx\right]^{\frac{1}{p}}.
\end{aligned}
\end{equation}
By assumption, we have the measure estimate
\begin{equation*}
\left| B_{6 r}(x_0) \cap \{ v = 0 \} \right|
\ge 
\frac{\tau}{6^N} |B_{6 r}(x_0)|.
\end{equation*}
Using this and the definition of $v$, we obtain
\begin{equation*}
\begin{aligned}
\log\!\left(\frac{1}{2\delta}\right)
& =
\frac{1}{|B_{6r}(x_0) \cap \{v=0\}|}
\int_{B_{6r}(x_0) \cap \{v=0\}}
\big( \log\!\left(\tfrac{1}{2\delta}\right) - v(x) \big)\, dx \\
& \le 
\frac{6^N}{\tau}
\frac{1}{|B_{6r}(x_0)|}
\int_{B_{6r}(x_0)}
\big( \log\!\left(\tfrac{1}{2\delta}\right) - v(x) \big)\, dx  \le 
\frac{6^N}{\tau}
\left(
\log\!\left(\frac{1}{2\delta}\right) - (v)_{B_{6r}(x_0)}
\right).
\end{aligned}
\end{equation*}
Integrating the previous estimate, we obtain
\begin{equation}\label{eq:level_measure_intermediate}
\left|  B_{6r}(x_0) \cap  \left\{v = \log\!\left(\tfrac{1}{2\delta}\right) \right\}  \right|
\, \log\!\left(\tfrac{1}{2\delta}\right)
\le 
\frac{6^N}{\tau}
\int_{B_{6r}(x_0)} |v - (v)_{B_{6r}(x_0)}| \, dx.
\end{equation}
Using estimate \eqref{eq:level_measure_intermediate} in \eqref{eq:v_energy_estimate}, the fact $u\in L^{\infty}(\rnn)$, and the inequality $|x-y| \geq C|x_0 - y|$, for some constant $C>0$ with $x \in B_{7 r}(x_0)$ and $y \in \rnn \setminus B_{8r}(x_0)$, we deduce 
\begin{equation*}
\begin{aligned}
\left|  B_{6r}(x_0) \cap \left\{ v = \log\!\left(\tfrac{1}{2\delta}\right) \right\} \right| & \le 
\frac{C}{\tau \log\!\left(\tfrac{1}{2\delta}\right)}
\, r^{1+\frac{(p-1)N}{p}}
\Bigg[
r^{N-p}
+ r^{-qs}
\int_{B_{8r}(x_0)} (u(x)+\varepsilon)^{q-p} \, dx  \\
&\qquad \qquad +  \iint_{(\mathbb{R}^N \setminus B_{8r}(x_0))\times B_{7 r}(x_0)}
\frac{(u(x)+\varepsilon)^{q-p}}{|x-y|^{N+sq}}
\, dy \, dx
\Bigg]^{\frac{1}{p}}\\
&\leq 
\frac{C(N,s,p,q, \|u\|_{L^\infty_{loc}(\rnn)})}{\tau \log\!\left(\tfrac{1}{2\delta}\right)}
\, r^{1+\frac{(p-1)N}{p}} \left[ C r^{N-p} + C r^{N-sq}\right]^{1/p}.
\end{aligned}
\end{equation*}
Recalling the definition of $v$, we equivalently obtain
\begin{equation*}
\begin{aligned}
\left| B_{6r}(x_0) \cap \left\{ u \le 2\delta k - \varepsilon \right\} \right| &\leq \left| B_{6r}(x_0) \cap \left\{ u \le 2\delta (k+\varepsilon) - \varepsilon \right\} \right| =\left| B_{6r}(x_0) \cap \left\{ v = \log\!\left(\tfrac{1}{2\delta}\right) \right\} \right| \\
&\leq 
\frac{C}{\tau \log\!\left(\tfrac{1}{2\delta}\right)}
\, r^{1+\frac{(p-1)N}{p}} \left[  r^{N-p} + r^{N-sq} \right]^{\frac{1}{p}} \leq \frac{C}{\tau \log\!\left(\tfrac{1}{2\delta}\right)} |B_{6r}(x_0)|.\\
\end{aligned}
\end{equation*}
where $C = C(N,s,p,q,\|u\|_{L^\infty(\mathbb{R}^N)})$. Here we have crucially used that $p > sq$ and $r \in (0,1]$.

\smallskip
\textbf{Step 2:}
In this step we will establish the following: there exists $\delta^* > 0$ such that, for every $\delta \in (0,\delta^*)$,
\begin{equation*}
    \essinf_{B_{4r}(x_0)} u \geq \delta k - 2 \varepsilon.
\end{equation*}
Without loss of generality, we may assume that 
\begin{equation}\label{epsilonDeltaRelation}
    \delta k > 2 \varepsilon.
\end{equation}

Let $\rho \in [r,6r]$ and let $\varphi \in C_c^\infty(B_{\rho}(x_0))$ be a cutoff function such that $0 \leq \varphi \leq 1$ in $B_{\rho}(x_0)$. Define
$
w := (\ell - u)_+,
$
for some $\ell \in (\delta k, 2\delta k)$, where $\delta$ is as in Step~1. We take $\eta := w \varphi^q$ as a test function. Then, using the weak formulation, we get
\begin{equation}
\begin{aligned}
0 &\leq  \int_{\rnn} |\nabla u|^{p-2} \nabla u \cdot \nabla (w \varphi^q) \, dx  + \iint_{\mathbb{R}^N \times \mathbb{R}^N} 
\frac{J_q(u(x)-u(y)) \big(w\varphi^q(x) - w\varphi^q(y)\big)}{|x-y|^{N + sq}} \, dx\,dy\\
&:= I_1 + I_2.
\end{aligned}
\label{eq:weak_formulation_s2}
\end{equation}

Applying Young's inequality, we obtain
\begin{equation}\label{eq:I1_estimate_s2}
I_1
\leq -\frac{1}{2} \int_{B_{\rho}(x_0)} |\nabla w|^p \varphi^q \, dx 
+ C(p,q) \int_{B_{\rho}(x_0)} |\nabla \varphi|^p w^p \varphi^{q-p} \, dx.
\end{equation}

To estimate $I_2$, we borrow the fractional term estimate in Lemma $3.2$ of \cite{Castro_NonLocalHarnackInequalities} to get

\begin{equation}
\begin{aligned}
I_2 \leq  & - C \iint_{B_{\rho}(x_0)\times B_{\rho}(x_0)} 
\frac{|w(x)\varphi(x) - w(y)\varphi(y)|^q}{|x-y|^{N + sq}} \, dx\,dy \\
& + C \iint_{B_{\rho}(x_0)\times B_{\rho}(x_0)} 
\max\{w(x), w(y)\}^q \frac{|\varphi(x)-\varphi(y)|^q}{|x-y|^{N + sq}} \, dx\,dy \\
& +C \ell \, |B_\rho (x_0) \cap \{u < \ell \}|  \, \sup_{x \in \mathrm{supp}(\varphi)} 
\int_{\mathbb{R}^N \setminus B_\rho (x_0)} 
\frac{\ell^{\,q-1}}{|x-y|^{N + sq}} \, dy. 
\end{aligned}
\label{eq:I2_estimate_s2}
\end{equation}
Substituting  \eqref{eq:I1_estimate_s2} and \eqref{eq:I2_estimate_s2} into \eqref{eq:weak_formulation_s2}, we get, for some constant $C\equiv C(N,s,p,q)>0$,
\begin{equation}
\begin{aligned}
& \int_{B_\rho (x_0)} |\nabla w|^p \varphi^q \, dx 
+ \iint_{B_\rho (x_0)\times B_\rho (x_0)} 
\frac{|w(x)\varphi(x) - w(y)\varphi(y)|^q}{|x-y|^{N + sq}} \, dx\,dy \\
& \leq C \left[\int_{B_{\rho} (x_0)} |\nabla \varphi|^p w^p \varphi^{q-p} \, dx \right.\\
& \quad +  \iint_{B_{\rho} (x_0)\times B_{\rho} (x_0)} 
\max\{w(x), w(y)\}^q 
\frac{|\varphi(x)-\varphi(y)|^q}{|x-y|^{N + sq}} \, dx\,dy \\
& \quad + \left. \ell \, |B_\rho (x_0) \cap \{u < \ell \}|  \, \sup_{x \in \mathrm{supp}(\varphi)} 
\int_{\mathbb{R}^N \setminus B_\rho (x_0)} 
\frac{\ell^{\,q-1}}{|x-y|^{N + sq}} \, dy\right] := C\left[J_1 + J_2 + J_3\right].
\end{aligned}
\label{eq:key_energy_split}
\end{equation}
Now, we are ready to setup the iteration process. For each $j = 0,1,2,\dots$, we define the following sequences.
\begin{align*}
\ell = k_j & := \delta k + 2^{-j-1}\,\delta k \in [\delta k,\, 2\delta k], \qquad \rho= \rho_j := 4r + 2^{1-j} r \in (4r,\, 6r), \\
\hat{\rho}_j &:= \frac{\rho_j + \rho_{j+1}}{2} \in (4r,\, 6r), \qquad B_j := B_{\rho_j}(x_0), \qquad  \hat{B}_j := B_{\hat{\rho}_j}(x_0),\\
&\hspace{3cm}w_j := (k_j - u)_+.
\end{align*}
Since $k_j \leq 2\delta k$, the difference between successive levels satisfies
\begin{equation}
k_j - k_{j+1} = 2^{-j-2}\,\delta k \geq 2^{-j-3} k_j.
\label{eq:kj_gap_repeat}
\end{equation}
Also, on the set $\{u < k_{j+1}\}$, we estimate
\begin{equation}
\begin{aligned}
w_j = (k_j - u)_+ 
\geq (k_j - u)\,\chi_{\{u < k_{j+1}\}} \geq (k_j - k_{j+1})\,\chi_{\{u < k_{j+1}\}} \geq 2^{-j-3} k_j \,\chi_{\{u < k_{j+1}\}}.
\end{aligned}
\label{eq:wj_lower_repeat}
\end{equation}
Finally, let $(\psi_j)_{j \geq 0} \subset C_c^\infty(\hat{B}_j)$ be a sequence such that
\begin{equation*}
0 \leq \psi_j \leq 1 \quad \text{in } \hat{B}_j, \quad \psi_j \equiv 1 \quad \text{in } B_{j+1}, \text{ and} \quad |\nabla \psi_j| \leq \frac{2^{j+3}}{r}.
\end{equation*}
We now take $\varphi = \psi_j$ and $w = w_j$ in \eqref{eq:key_energy_split} and estimate each term on the right-hand side separately. We begin with the term $J_1$, taking into account that $p<q$.
\begin{equation}
J_1 :=  \int_{B_j} w_j^p |\nabla \psi_j|^p \psi_j^{\,q-p} \, dx \leq C(p)\, 2^{jp} k_j^p r^{-p} \, |B_j \cap \{u < k_j\}|.
\label{eq:J1_estimate_s2}
\end{equation}
Using the bounds in Lemma 3.2 of \cite{Castro_NonLocalHarnackInequalities}, we obtain
\begin{equation}
J_2 \leq C(N,q,s)\, 2^{jq} k_j^q r^{-sq} \, |B_j \cap \{u < k_j\}|,
\label{eq:J2_estimate_s2}
\end{equation}
and 
\begin{equation}
J_3 \leq C(N,q,s)\, 2^{j(N+qs)} k_j^q r^{-sq} \, |B_j \cap \{u < k_j\}|.
\label{eq:J3_estimate_s2}
\end{equation}

Combining \eqref{eq:J1_estimate_s2}, \eqref{eq:J2_estimate_s2}, and \eqref{eq:J3_estimate_s2} into \eqref{eq:key_energy_split}, we deduce
\begin{equation*}
\begin{aligned}
\int_{B_j} |\nabla w_j|^p \psi_j^q \, dx &+ \iint_{B_{j} \times B_{j}} \frac{|w_j (x) \psi_j(x) - w_j(y)\psi_j(y)|^{q}}{|x-y|^{N+qs}} \, dx \, dy \\
& \leq C \, |B_j \cap \{u < k_j\}| \left[
2^{jp} k_j^p r^{-p}
+ 2^{jq} k_j^q r^{-sq}
+ 2^{j(N+qs)} k_j^q r^{-sq} \right] \\
& \leq  C(N,s,p,q)\, |B_j \cap \{u < k_j\}|  2^{j(p+q+N+qs)}  \left( \frac{k_j^p}{r^p} + \frac{k_j^q}{r^{qs}} \right).
\end{aligned}
\end{equation*}
Let $\kappa = q_s^*/q= p^*/q$. Combining the Sobolev inequality on balls (see, e.g., \cite[Corollary 1.57]{LocalSobolevOnBall} for the local case;\;\cite[Corollary 4.10]{cozzi_harnack_fractional}, and \;\cite[Proposition 2.2]{brasco_second_eigenvalue} for the nonlocal case),\;\eqref{eq:key_energy_split}, and the preceding estimate, 
\begin{align}
&\left[\left(\frac{(k_j-k_{j+1})}{r}\right)^p+ \left(\frac{(k_j-k_{j+1})}{r^s}\right)^q \right] \left( \frac{|B_{j+1} \cap \{u < k_{j+1}\}|}{|B_{j+1}|} \right)^{\frac{1}{\kappa}} \notag \\
& \leq  \frac{1}{r^p} \left( (k_j - k_{j+1})^{p^*} 
\frac{|B_{j+1} \cap \{u < k_{j+1}\}|}{|B_{j+1}|} \right)^{\frac{p}{p^*}} + 
\frac{1}{r^{qs}}\left( (k_j - k_{j+1})^{q_s^*} 
\frac{|B_{j+1} \cap \{u < k_{j+1}\}|}{|B_{j+1}|} \right)^{\frac{q}{q_s^*}}
 \notag\\
&  \leq \frac{1}{r^p}\left( \frac{1}{|B_{j+1}|} 
\int_{B_{j+1} \cap \{u < k_{j+1}\}} (k_j - u)^{p^*}  dx \right)^{\frac{p}{p^*}}
+ \frac{1}{r^{qs}}\left( \frac{1}{|B_{j+1}|} 
\int_{B_{j+1} \cap \{u < k_{j+1}\}} (k_j - u)^{q_s^*}  dx \right)^{\frac{q}{q_s^*}} \notag\\
& \leq \frac{1}{r^p}\left( \frac{1}{|B_{j+1}|} 
\int_{B_{j+1}} w_j^{p^*} \psi_j^{q p^*/p} \, dx \right)^{\frac{p}{p^*}} +
\frac{1}{r^{qs}}\left( \frac{1}{|B_{j+1}|} 
\int_{B_{j+1}} w_j^{q_s^*} \psi_j^{q_s^*} \, dx \right)^{\frac{q}{q_s^*}} \notag\\
& \leq  \frac{C(N)}{r^p} \left( \frac{1}{|B_{j}|} 
\int_{B_{j}} w_j^{p^*} \psi_j^{q p^*/p} \, dx \right)^{\frac{p}{p^*}} +
 \frac{C(N)}{r^{qs}}\left( \frac{1}{|B_{j}|} 
\int_{\hat{B}_{j}} w_j^{q_s^*} \psi_j^{q_s^*} \, dx \right)^{\frac{q}{q_s^*}} \notag\\
& \leq  \frac{C}{|B_{j}|} \int_{B_j} |\nabla (w_j \psi_j^{q/p})|^p \, dx +
C(N,q,s) \frac{2^{j(1 + qs)}}{|B_{j}|} \iint_{B_{j} \times B_{j}} \frac{|w_j (x) \psi_j(x) - w_j(y) \psi_j(y)|^{q}}{|x-y|^{N+qs}}  dx  dy  \notag\\
&  \leq C(N,p,q,s)\, \frac{|B_j \cap \{u < k_j\}|}{|B_j|} \; 2^{j(p+q+N+2qs+1)} \left( \frac{k_j^p}{r^p} + \frac{k_j^q}{r^{qs}} \right). \label{eq:final_recursive_estimate_s2}
 \end{align}
Note that, in view of \eqref{eq:wj_lower_repeat} and the condition $p<q$, we have
\begin{equation}
\begin{aligned}
\left( \frac{k_j^p}{r^p} + \frac{k_j^q}{r^{qs}} \right)&\left[\left(\frac{(k_j-k_{j+1})}{r}\right)^p+ \left(\frac{(k_j-k_{j+1})}{r^s}\right)^q \right]^{-1} \\
&\leq \left( \frac{k_j^p}{r^p} + \frac{k_j^q}{r^{qs}} \right) \left[\frac{2^{(-j-3)p}k_j^p}{r^p} + \frac{2^{(-j-3)q}k_j^q}{r^{qs}} \right]^{-1}\\
& \leq 2^{(j+3)q}\left( \frac{k_j^p}{r^p} + \frac{k_j^q}{r^{qs}} \right) \left(\frac{k_j^p}{r^p} + \frac{k_j^q}{r^{qs}} \right)^{-1} = 2^{(j+3)q}.
\end{aligned}\label{eq:kj_bound}
\end{equation}
Define the normalized measure sequence
\begin{equation*}
Y_j := \frac{|B_j \cap \{u < k_j\}|}{|B_j|}, 
\qquad j = 0,1,2,\dots
\end{equation*}
From \eqref{eq:final_recursive_estimate_s2}, \eqref{eq:kj_gap_repeat} and \ref{eq:kj_bound}, we deduce the following recursive inequality
\begin{equation*}
\begin{aligned}
Y_{j+1}^{\frac{q}{q^*_s}} \leq C\, \, Y_j \, 2^{j(p+q(1+2s)+N+1)} 2^{(j+3)q} \leq  C(N,s,p,q)\,Y_j \, 2^{j(2q(1+s)+N+p+1)}.
\end{aligned}
\end{equation*}

Taking the power $q^*_s/q$, for each $j = 0, 1,2, \dots$, we arrive at the key iteration inequality 
\begin{equation*}
Y_{j+1} 
\leq c_0 \, b^j \, Y_j^{1+\alpha},
\end{equation*}
where
\begin{equation}
c_0 := C>0,
 \quad b := 2^{\frac{q^*_s}{q}(2q(1+s)+N+p+1)} > 1, \text{ and } \quad \alpha := \frac{q^*_s}{q} - 1 > 0.
\label{eq:contants_iteration_def}
\end{equation}
Finally, we estimate the initial quantity $Y_0$. By the definition of $Y_0$, together with \eqref{epsilonDeltaRelation} and \eqref{eq:level_set_estimate},
\begin{equation}
\begin{aligned}
Y_0 
&=  \frac{|B_{6r}(x_0) \cap \{u < \tfrac{3}{2}\delta k\}|}{|B_{6r}|}\leq \frac{|B_{6r}(x_0) \cap \{u < 2\delta k - \varepsilon\}|}{|B_{6r}(x_0)|} \leq \frac{C}{\tau \log\!\left(\tfrac{1}{2\delta}\right)},
\end{aligned}
\label{eq:Y0_def}
\end{equation}
for all $\delta \in (0,\frac{1}{4})$.

To apply Lemma \ref{lem:iteration_decay}, we need  $Y_0 \leq c_0^{-\frac{1}{\alpha}}\, b^{-\frac{1}{\alpha^2}}$. Using the expressions of $c_0$ and $b$ from \eqref{eq:contants_iteration_def}, this requirement becomes
\begin{equation}
\begin{aligned}
Y_0 \leq {} &
C^{-\frac{1}{\alpha}} 
\left( 2^{\frac{q^*_s}{q}(2q(1+s) + N + p +1)} \right)^{-\frac{1}{\alpha^2}}.
\end{aligned}
\label{eq:Y0_target_expanded}
\end{equation}
Combining \eqref{eq:Y0_def} and \eqref{eq:Y0_target_expanded}, it suffices to require
\begin{equation*}
\begin{aligned}
\frac{C}{\tau \log\!\left(\tfrac{1}{2\delta}\right)} 
\leq {} &
C^{-\frac{1}{\alpha}} 
\left( 2^{\frac{q^*_s}{q}(2q(1+s) + N + p +1)} \right)^{-\frac{1}{\alpha^2}}.
\end{aligned}
\end{equation*}
This clearly holds for all $\delta < \frac{1}{2} \exp\left( -\frac{C(N,s,p,q)}{\tau} \left( 2^{\frac{q^*_s}{q}(2q(1+s) + N + p +1)} \right)^{\frac{1}{\alpha^2}} \right)$.
Thus, by Lemma \ref{lem:iteration_decay}, $Y_j \to 0$ as $j \to \infty$.
\end{proof}

The following result, and then Proposition \eqref{Weak Harnack inequality} can be proved analogously to Lemma~6.7 and Proposition~6.8 of \cite{cozzi_harnack_fractional}.

\begin{lemma}
Let $
u \in \X(\mathbb{R}^{N})
    \cap L^{\infty}(\mathbb{R}^{N})$
be a non-negative weak supersolution of $\mathcal{L}_{p,q}$.
Assume that for some $R\in(0,1]$, $\tau\in(0,1)$, $t>0$, and
$k\in\mathbb{N}$,
\[
|B_R\cap\{u\ge t\}|
\ge \tau^{k}|B_R|.
\]
Then there exists a constant
$
\delta
=
\delta\bigl(
N,p,q,s,\tau,
\|u\|_{L^\infty_{\mathrm{loc}}(\mathbb{R}^{N})}
\bigr)
\in \left(0,\frac18\right)
$
such that
\[
u \ge \delta^{k} t
\qquad \text{in } B_R.
\]
\end{lemma}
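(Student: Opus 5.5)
The plan is to iterate Lemma~\ref{lem:expansion of positivity} $k$ times, in the spirit of the Krylov--Safonov ``crawling of ink spots'' argument of \cite[Lemma~6.7]{cozzi_harnack_fractional}. The induction is on $k$. For $k=1$ the hypothesis reads $|B_R\cap\{u\ge t\}|\ge\tau|B_R|$. Applying Lemma~\ref{lem:expansion of positivity} with radius $r=R$ (after harmlessly rescaling the auxiliary radius in that lemma), with level $t$ and proportion $\tau$, gives $\operatorname{ess\,inf}_{B_{4R}}u\ge\delta_1 t$. Here $\delta_1=\delta_1(N,p,q,s,\tau,\|u\|_{L^\infty_{\mathrm{loc}}})$, and in particular $u\ge\delta_1 t$ on $B_R$. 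We then set $\delta:=\min\{\delta_1,\tfrac1{16}\}$, so that $\delta<\tfrac18$. It matters that $\delta_1$ depends only on $\tau$ and the data, and not on $t$ or $R\in(0,1]$: the constants in Step~1 of Lemma~\ref{lem:expansion of positivity} were made uniform in $r\le1$ precisely by using $p>sq$ and the $L^\infty$ bound.

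For the inductive step, suppose the claim holds for $k-1$, and assume $|B_R\cap\{u\ge t\}|\ge\tau^k|B_R|$. The key step is a Calder\'on--Zygmund/covering dichotomy for $E:=B_R\cap\{u\ge t\}$ and its $\delta t$-enlargement $F:=B_R\cap\{u\ge\delta t\}$. We aim to show that either $|F|\ge\tau^{k-1}|B_R|$, or $F=B_R$ up to null sets. To do this, consider every ball $B_\rho(x)\subset B_R$ at which the density of $E$ satisfies $|E\cap B_\rho(x)|\ge\tau|B_\rho|$. By Lemma~\ref{lem:expansion of positivity} at radius $\rho$ and level $t$, each such ball forces $u\ge\delta t$ on $B_{4\rho}(x)\cap B_R$. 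A Vitali covering of the density points of $E$ by maximal such balls then yields $|F|\ge\tau^{-1}|E|\ge\tau^{k-1}|B_R|$, unless one of the selected balls already has radius comparable to $R$, in which case $F$ contains all of $B_R$. In the first alternative we apply the inductive hypothesis with level $\delta t$ and exponent $k-1$, which gives $u\ge\delta^{k-1}(\delta t)=\delta^k t$ on $B_R$. In the second alternative the conclusion $u\ge\delta t\ge\delta^k t$ is immediate.

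The main obstacle is the covering step, specifically obtaining the gain of a full factor $\tau^{-1}$, as opposed to a dimensional constant times it, while staying inside $B_R$. The dilated balls $B_{4\rho}(x)$ may exit $B_R$, and Lemma~\ref{lem:expansion of positivity} is stated only for centred balls with $r<R/16$. To handle this, I would first try the standard dyadic-cube version: replace $B_R$ by a cube, use the Krylov--Safonov lemma (if the density of $E$ in a dyadic subcube exceeds $\tau$, then $u\ge\delta t$ on its predecessor), and absorb the geometric constants by shrinking $\tau$ to $\tau/C(N)$ in the application of the expansion of positivity, which only changes $\delta$. The nonlocal tail plays no additional role here. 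All nonlocal effects are already contained in Lemma~\ref{lem:expansion of positivity}, and there they are favourable because $u\ge0$ on $\mathbb{R}^N$.
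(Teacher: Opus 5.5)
Your proposal is correct and follows essentially the same route as the paper, which defers to \cite[Lemma~6.7]{cozzi_harnack_fractional}. That argument is an induction on $k$ in which the Krylov--Safonov covering lemma turns $|B_R\cap\{u\ge t\}|\ge\tau^k|B_R|$ into either $u\ge\delta t$ on all of $B_R$ or $|B_R\cap\{u\ge\delta t\}|\ge\tau^{k-1}|B_R|$, after which the inductive hypothesis is applied at level $\delta t$. Your fix for the covering constant is the standard one: take the density threshold of the form $c(N)\tau$ and apply Lemma~\ref{lem:expansion of positivity} with that proportion, which only shrinks $\delta$. Your worry about dilated balls leaving $B_R$ is moot, since $u$ is a nonnegative supersolution on all of $\mathbb{R}^N$ and that lemma holds for arbitrary centres.
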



\section{Decay Estimates}\label{sec:decayEstimates}
In this subsection, we establish the precise asymptotic behavior at infinity of positive radial solutions to \eqref{eq:main_problem}, i.e., we give the proof of Lemma \ref{prop:decay-main}. Throughout this subsection, we work in the isocritical regime \eqref{eq:isocritical-intro}, namely
$
p^*=q_s^*.
$
Combining the Harnack-type estimates proved in the previous subsection with suitable comparison arguments, we derive the optimal decay rate at infinity for positive radially symmetric decreasing solutions. In particular, these solutions exhibit the same asymptotic decay as the fundamental solution of the \(p\)-Laplacian.

\noindent
The upper bound \textup{(i)} of Theorem \ref{prop:decay-main} is proved in Section~\ref{sec:upper-bound},
and the lower bound \textup{(ii)} in Section~\ref{sec:lower-bound}, treating the regimes
$q>\frac{p(N-1)}{N-p}$ and $q<\frac{p(N-1)}{N-p}$ separately. Set
$
\alpha := \frac{N-p}{p-1} 
$ throughout this section.

\subsection{Upper Bound}\label{sec:upper-bound}
We first establish a borderline Lorentz space estimate in $L^{\beta,\infty}(\mathbb{R}^N)$. The desired upper bound then follows from the Radial Lemma~\eqref{Radial Lemma for Lorentz spaces}.\\

\begin{proposition}[Borderline Lorentz estimate]\label{Borderline Lorentz estimate}
Let $u\in \mathcal{D}^{1,p}(\mathbb R^N)\cap \mathcal{D}^{s,q}(\mathbb R^N)$ be a nonnegative weak solution of
\eqref{eq:main_problem} for $1<p<N, q>1, sq<N$. Then
\begin{equation}
u\in L^\eta(\mathbb R^N)
\qquad \text{for all }\; \eta>\frac{(p-1)N}{N-p}:=\eta_0.
\label{increasinIntegrability}
\end{equation}
Furthermore, $u$ belongs to the weak Lorentz space
$L^{\eta_0,\infty}(\mathbb R^N)$, and satisfies
\begin{equation}
\sup_{t>0}
t\,|\{x\in\mathbb R^N:u(x)>t\}|^{1/\eta_0}
\le
\mathcal S^{-1/(p-1)}
\|u\|_{L_{p^*-1}}^{(p^*-1)/(p-1)}.
\label{LorentzSpaceCondition}
\end{equation}
\end{proposition}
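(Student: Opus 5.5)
The plan is to test the equation with truncations of $u$ at a level $t$, and to use only the local Sobolev inequality together with the sign of the nonlocal term, as in Step~2 of Proposition~\ref{prop:global_boundedness_mixed}.

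\textbf{Admissibility of $f:=u^{p^*-1}$.} By Proposition~\ref{prop:global_boundedness_mixed}, $u\in L^\infty(\mathbb R^N)$. Since also $u\in L^{p^*}$, we have $u\in L^r$ for every $r\ge p^*$. The norm $\|u\|_{L^{p^*-1}}$ is implicitly assumed finite in \eqref{LorentzSpaceCondition}, so $f\in L^1$.

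\textbf{Weak-type bound for fixed $t>0$.} Take the test function $\varphi=\min\{u,t\}=T_t(u)$, which lies in $\mathcal D^{1,p}\cap\mathcal D^{s,q}$ by Lipschitz stability. The nonlocal pairing $\langle u,T_t(u)\rangle_{s,q}\ge0$, since $T_t$ is nondecreasing. The local term equals $\int_{\{u<t\}}|\nabla u|^p=\int|\nabla T_t u|^p$. Hence
\[
\int|\nabla T_t u|^p\le\int f\,T_t(u)\le t\,\|f\|_{L^1}.
\]
The Sobolev inequality \eqref{eq:best_constant_local} gives $\mathcal S\|T_tu\|_{L^{p^*}}^p\le t\|f\|_{L^1}$. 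Since $T_tu=t$ on $\{u>t\}$, we get $\mathcal S\,t^p\mu_u(t)^{p/p^*}\le t\|f\|_{L^1}$, that is,
\[
t^{p-1}\mu_u(t)^{(N-p)/N}\le \mathcal S^{-1}\|u\|_{L^{p^*-1}}^{p^*-1}.
\]
Raising both sides to the power $1/(p-1)$, and noting $(N-p)/(N(p-1))=1/\eta_0$, yields exactly \eqref{LorentzSpaceCondition}, with constant $\mathcal S^{-1/(p-1)}\|u\|_{p^*-1}^{(p^*-1)/(p-1)}$.

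\textbf{From $L^{\eta_0,\infty}$ to \eqref{increasinIntegrability}.} Fix $\eta>\eta_0$ and use the layer-cake formula $\|u\|_\eta^\eta=\eta\int_0^\infty t^{\eta-1}\mu_u(t)\,dt$.
\begin{itemize}
\item On $(0,1)$, the weak bound $\mu_u(t)\le Ct^{-\eta_0}$ makes the integrand $\lesssim t^{\eta-\eta_0-1}$, which is integrable.
\item On $[1,\|u\|_\infty]$, the integrand is bounded by $t^{\eta-1}|\{u>1\}|$, which is finite because $u\in L^{p^*}$.
\end{itemize}
Hence $u\in L^\eta$ for all $\eta>\eta_0$.

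\textbf{Main obstacle.} The main difficulty is justifying finiteness of $\|u\|_{L^{p^*-1}}$ when $p^*-1<p^*$, since otherwise the right-hand side is vacuous. If it is not assumed, I would first run the argument with $f$ localized. Test with $T_t(u)$ and split $\int f\,T_tu\le t\int_{\{u\le\epsilon\}}u^{p^*-1}+\int_{\{u>\epsilon\}}u^{p^*-1}T_tu$, then bootstrap the integrability downward. Concretely, one would use $\mu_u(t)\lesssim t^{-\gamma}$ to get $u^{p^*-1}\in L^1$ once $\gamma<p^*-1$, iterating the exponent relation $\gamma\mapsto$ a smaller $\gamma$ until it drops below $p^*-1$. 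This iteration closes because $p^*-1>\eta_0$ (which holds since $p^*-\eta_0=\tfrac{N}{N-p}>1$). The bound as stated then follows verbatim.
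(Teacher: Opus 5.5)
\textbf{There is a genuine gap: the proof assumes $u\in L^{p^*-1}$, which is the main content of the proposition.} Your truncation step with $T_t(u)$ is correct and coincides with the paper's Step 2. The layer-cake passage from $L^{\eta_0,\infty}\cap L^\infty$ to $L^\eta$ for $\eta>\eta_0$ is also fine. But everything rests on $\|u\|_{L^{p^*-1}}<\infty$, and this is not a hypothesis. The assumptions only give $u\in L^{p^*}\cap L^\infty$, which on $\mathbb{R}^N$ says nothing about lower exponents, since those require decay at infinity. Meanwhile \eqref{increasinIntegrability} asserts $u\in L^{p^*-1}$ as a conclusion (take $\eta=p^*-1>\eta_0$). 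So the argument as written is circular: $L^{p^*-1}$ gives the weak-type bound, which gives $L^{p^*-1}$.

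\textbf{Your fallback bootstrap does not repair this.} First, the split $t\int_{\{u\le\epsilon\}}u^{p^*-1}$ contains exactly the unknown quantity; the usable bound on that set is $\int_{\{u\le t\}}u^{p^*}$. Second, take the correct split $\int u^{p^*-1}T_tu=\int_{\{u\le t\}}u^{p^*}+t\int_{\{u>t\}}u^{p^*-1}$. A tail bound $\mu_u(\tau)\le C\tau^{-\gamma}$ with $p^*-1<\gamma<p^*$ yields $\mathcal S t^p\mu_u(t)^{p/p^*}\lesssim t^{p^*-\gamma}$. The new exponent is therefore $\gamma'=p^*-\frac{p^*}{p}(p^*-\gamma)$. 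This map fixes $\gamma=p^*$, and $u\in L^{p^*}$ gives only $\gamma_0=p^*$; there the right-hand side is merely bounded by $\|u\|_{L^{p^*}}^{p^*}$. So the iteration never starts. The truncation cannot supply the missing smallness either: on $\{t<u\le\epsilon\}$ one has $tu^{p-1}\ge t^p=(T_tu)^p$, so that range cannot be absorbed into the Sobolev term.

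\textbf{The missing idea is the paper's Step 1.}
\begin{enumerate}
\item Test with a concave, power-like function $\psi_\varepsilon(u)$, with $\psi_\varepsilon(t)\le t^\theta/\theta$ and $0<\theta<1$. It is built so that the local term equals $\|\nabla\Psi_\varepsilon(u)\|_{L^p}^p$, where $\Psi_\varepsilon(t)=t(\varepsilon+t)^{(\theta-1)/p}$. It also satisfies $t^{p-1}\psi_\varepsilon(t)\le\Psi_\varepsilon(t)^p/\theta$ pointwise.
\item Drop the nonnegative nonlocal term. Apply H\"older on $\{u\le M_0\}$ to bound that part of the right-hand side by $\theta^{-1}\|u\|_{L^{p^*}(\{u\le M_0\})}^{p^*-p}\|\Psi_\varepsilon(u)\|_{L^{p^*}}^p$.
\item Absorb this into $\mathcal S\|\Psi_\varepsilon(u)\|_{L^{p^*}}^p$ by taking $M_0$ small, which is possible because $u\in L^{p^*}$. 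The absorbed quantity is finite since $\Psi_\varepsilon(u)\le\varepsilon^{(\theta-1)/p}u$. The set $\{u>M_0\}$ has finite measure and $u$ is bounded there, so that part is controlled.
\item Let $\varepsilon\to0$ to get $u\in L^{p^*(p-1+\theta)/p}$. Letting $\theta\downarrow0$ covers every exponent above $\eta_0=p^*(p-1)/p$, in particular $p^*-1$.
\end{enumerate}
Only after this is your truncation argument legitimate.
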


\begin{proof}
The argument is split into two steps. We begin by proving \eqref{increasinIntegrability}. Once this is established, \eqref{LorentzSpaceCondition} follows as a consequence.

\textbf{Step 1:} Let $0<\theta<1$ and $\varepsilon>0$. We introduce the Lipschitz, increasing function $\psi_{\varepsilon}:[0,+\infty)\to[0,+\infty)$ defined by

\begin{equation*}
\psi_{\varepsilon}(t)=\int_0^t\left[(\varepsilon+\tau)^{\frac{\theta-1}{p}}+\frac{\theta-1}{p} \tau(\varepsilon+\tau)^{\frac{\theta-1-p}{p}}\right]^p d \tau .
\end{equation*}

Since, $0<\theta<1$, we observe that 

\begin{equation}
0 \leq \psi_{\varepsilon}(t) \leq \int_0^t(\varepsilon+\tau)^{\theta-1} d \tau=\frac{1}{\theta}\left[(\varepsilon+t)^\theta-\varepsilon^\theta\right] \leq \frac{t^\theta}{\theta}. \label{TestFunctionBound}
\end{equation}
Also, define
\begin{equation*}
\Psi_{\varepsilon}(t):=\int_0^t \psi_{\varepsilon}^{\prime}(\tau)^{\frac{1}{p}} d \tau=t(\varepsilon+t)^{\frac{\theta-1}{p}}.
\end{equation*}

We now use the test function $\varphi=\psi_\varepsilon(u)\in \mathcal{D}^{1,p}(\mathbb R^N)\cap \mathcal{D}^{s,q}(\mathbb R^N)$, where the inclusion follows from the Lipschitz continuity of $\psi_\varepsilon$ and the fact that $\psi_\varepsilon(0)=0$, in the weak formulation \eqref{eq:main_problem}, which yields
\begin{equation*}
\int_{\mathbb{R}^N} |\nabla u|^{p-2} \nabla u \cdot\nabla  (\psi_{\varepsilon}(u)) +\iint_{\mathbb{R}^{2 N}} \frac{J_q(u(x)-u(y))\left(\psi_{\varepsilon}(u(x))-\psi_{\varepsilon}(u(y))\right)}{|x-y|^{N+s q}} =\int_{\mathbb{R}^N} u^{p^*-1} \psi_{\varepsilon}(u). 
\end{equation*}
We analyse the two sides separately, beginning with the right-hand side, for which it follows immediately from Proposition~\ref{prop:global_boundedness_mixed} and \eqref{TestFunctionBound} that
\begin{equation*}
\begin{aligned}
\int_{\mathbb{R}^N} u^{p^*-1} \psi_{\varepsilon}(u(x)) d x \leq & \frac{1}{\theta}\|u\|_{L^{\infty}}^{p^*+\theta-1}\left|\left\{u>M_0\right\}\right| \\
& +\left(\int_{\left\{u \leq M_0\right\}} u^{p^*} d x\right)^{\frac{p^*-p}{p^*}}\left(\int_{\left\{u \leq M_0\right\}}\left(\psi_{\varepsilon}(u) u^{p-1}\right)^{\frac{p^*}{p}} d x\right)^{\frac{p}{p^*}},
\end{aligned}
\end{equation*}

for some constant $M_0>0$.
Using \eqref{TestFunctionBound}, we obtain
\begin{equation*}
0 \leq \psi_{\varepsilon}(t) t^{p-1} \leq \frac{1}{\theta}(\varepsilon+t)^{\theta-1} t^p=\frac{1}{\theta} \Psi_{\varepsilon}(t)^p .
\end{equation*}
Combining the previous estimates, we deduce

\begin{equation*}
\begin{aligned}
\int_{\mathbb{R}^N} u^{p^*-1} \psi_{\varepsilon}(u(x)) d x \leq & \frac{1}{\theta}\|u\|_{L^{\infty}}^{p^*+\theta-1}\left|\left\{u>M_0\right\}\right| \\
& +\frac{1}{\theta}\left(\int_{\left\{u \leq M_0\right\}} u^{p^*} d x\right)^{\frac{p^*-p}{p^*}}\left(\int_{\mathbb{R}^N} \Psi_{\varepsilon}(u)^{p^*} d x\right)^{\frac{p}{p^*}}.
\end{aligned}
\end{equation*}

We now turn to the LHS.
\begin{equation*}
\underbrace{\int_{\mathbb{R}^N} |\nabla u|^{p-2} \nabla u \cdot\nabla  (\psi_{\varepsilon}(u(x))) \,dx}_{I_1}+\underbrace{\iint_{\mathbb{R}^{2 N}} \frac{J_q(u(x)-u(y))\left(\psi_{\varepsilon}(u(x))-\psi_{\varepsilon}(u(y))\right)}{|x-y|^{N+s q}} \,d x \,d y}_{I_2}. 
\end{equation*}
Consider first $I_1$
\begin{align*}
    I_1&:= \int_{\mathbb{R}^N} |\nabla u|^{p-2} \nabla u \cdot\nabla  (\psi_{\varepsilon}(u(x))) \,dx= \int_{\mathbb{R}^N}\left|\nabla \Psi_{\varepsilon}(u) \right|^p\geq \mathcal{S} \left(\int_{\mathbb{R}^N}\left|\Psi_{\varepsilon}(u)\right|^{p^*}\right)^{\frac{p}{p^*}}.
\end{align*}

Next, we analyse the term $I_2$. By Lemma A.2 in \cite{brasco_second_eigenvalue}, and assuming without loss of generality that $u(x)>u(y)$, we estimate the integrand as follows:
\begin{align*}
    J_q(u(x)-u(y))\left(\psi_{\varepsilon}(u(x))-\psi_{\varepsilon}(u(y))\right) &= (u(x)-u(y))^{q-1}\int_{u(y)}^{u(x)}\psi^{\prime}_{\varepsilon}(\tau) d \tau\\
    &\geq (u(x)-u(y))^{q-p} \left(\int_{u(y)}^{u(x)}\Psi^{\prime}_{\varepsilon}(\tau)d \tau\right)^{p}.
\end{align*}
Consequently, we obtain
\begin{align*}
    I_2&:=\iint_{\mathbb{R}^{2 N}} \frac{J_q(u(x)-u(y))\left(\psi_{\varepsilon}(u(x))-\psi_{\varepsilon}(u(y))\right)}{|x-y|^{N+s q}} \,d x \,d y\\
    &\geq \iint_{\mathbb{R}^{2 N}}\frac{\left|u(x)-u(y)\right|^{q-p} \left|\Psi_{\varepsilon}(u(x))- \Psi_{\varepsilon}(u(y)) \right|^{p}}{|x-y|^{N+sq}}\,d x \,d y \geq 0.
    \end{align*}
Putting together all the appropriate estimates, we deduce
\begin{align*}
    \mathcal{S} \lv \Psi_{\varepsilon}(u) \rv_{L^{p^*}}^p&\leq \frac{\|u\|_{L^{\infty}}^{p^*+\theta-1}}{\theta}\left|\left\{u>M_0\right\}\right|+\frac{1}{\theta}\left(\int_{\left\{u \leq M_0\right\}} u^{p^*} d x\right)^{\frac{p^*-p}{p^*}}\left(\int_{\mathbb{R}^N} \Psi_{\varepsilon}(u)^{p^*} d x\right)^{\frac{p}{p^*}}.
\end{align*}

We now choose the level $M_0 = M_0(\theta,u)>0$ such that
\begin{equation*}
\left(\int_{\left\{u \leq M_0\right\}} u^{p^*} d x\right)^{\frac{p^*-p}{p^*}} \leq \frac{\theta \mathcal{S}}{2}.
\end{equation*}
With this choice, we infer
\begin{equation*}
\left(\int_{\mathbb{R}^N}\left(u(u+\varepsilon)^{\frac{\theta-1}{p}}\right)^{p^*} d x\right)^{\frac{p}{p^*}} \leq \frac{2}{\theta \mathcal{S}}\|u\|_{L^{\infty}}^{p^*+\theta-1}\left|\left\{u>M_0\right\}\right|, \quad \text{for every $0<\theta<1$.}
\end{equation*}
Finally, letting $\varepsilon \to 0$, we obtain the desired integrability result \eqref{increasinIntegrability}.

\smallskip
\textbf{Step 2:} We now establish \eqref{LorentzSpaceCondition}. For any $t>0$, define $g_t(s)=\min\{t,s\}$ and set
\begin{equation*}
G_t(s)=\int_0^s g_t^{\prime}(\tau)^{\frac{1}{q}} d \tau=g_t(s).
\end{equation*}

Choosing $g_t(u)$ as a test function in \eqref{eq:main_problem}, we obtain
\begin{align*}
\int_{\mathbb{R}^N} |\nabla u|^{p-2} \nabla u \cdot\nabla  \left(g_t(u)\right) &+ \langle u, g_t(u)\rangle_{s,q}  = \int_{\mathbb{R}^N} u^{p^*-1} \left(g_t(u)\right).
\end{align*}

Again invoking \cite[Lemma~A.2]{brasco_second_eigenvalue}, together with the fractional Sobolev inequality \eqref{eq:best_constant_nonlocal}, we deduce that
\begin{equation*}
\begin{aligned}
0\leq\mathcal{S}_{f}\left\|g_t(u)\right\|_{L_{q^*_s}}^q & \leq\left[g_t(u)\right]_{s, q}^q \leq \iint_{\mathbb{R}^{2 N}} \frac{J_q(u(x)-u(y))\left(g_t(u(x))-g_t(u(y))\right)}{|x-y|^{N+s q}} \,d x \,d y .
\end{aligned}
\end{equation*}

Moreover,
\begin{equation*}
    \int_{\mathbb{R}^N} |\nabla u|^{p-2} \nabla u \cdot\nabla  \left(g_t(u)\right) \,dx =  \int_{\mathbb{R}^N} |\nabla g_t(u)|^{p} \,dx \geq \mathcal{S} \left\|g_t(u)\right\|_{L^{p^*}}^p.
\end{equation*}

Combining the above two estimates, we get
\begin{equation*}
    \mathcal{S} \left\|g_t(u)\right\|_{L^{p^*}}^p \leq  \int_{\mathbb{R}^N} u^{p^*-1} g_t(u) d x . 
\end{equation*}
Recalling from \eqref{increasinIntegrability} that $u\in L^{p^*-1}(\mathbb{R}^N)$, we deduce
\begin{equation*}
t|\{u>t\}|^{\frac{1}{p^*}} \leq\left\|g_t(u)\right\|_{L^{p^*}} \leq\mathcal{S}^{-\frac{1}{p}}\left(\int_{\mathbb{R}^N} u^{p^*-1} g_t(u) d x\right)^{\frac{1}{p}} \leq \mathcal{S}^{-\frac{1}{p}}t^{\frac{1}{p}}\|u\|_{L_{p^*-1}}^{\frac{p^*-1}{p}} .
\end{equation*}
The estimate above readily implies \eqref{LorentzSpaceCondition}.
\end{proof}

\subsection{Lower Bound }\label{sec:lower-bound}
We prove the above proposition by considering separately the two cases introduced in Sections~\ref{case 1} and~\ref{Case 2}.

\subsubsection{Proof of \textup{(ii)} of Theorem \ref{prop:decay-main} for the regime $q> \frac{p(N-1)}{N-p}$}\}\label{case 1}
We first construct a smooth function that serves as a subsolution of
$\mathcal{L}_{p,q}$ outside a large ball.

\begin{lemma}[Regularised subsolution]\label{lem:subsolution_gamma}
  Define $\Upsilon : \mathbb{R}^N \to \mathbb{R}$ by
  $\Upsilon(x) = \rho(|x|)$, where
  \[
    \rho(r) =
    \begin{cases}
      h(r) & 0 \le r \le 1, \\[4pt]
      r^{-\alpha} & r \ge 1,
    \end{cases}
  \]
  and $h(r) = 1+\frac{\alpha(\alpha+5)}{6} -  \frac{\alpha(\alpha+3)}{2}r^2 + \frac{\alpha(\alpha+2)}{3} r^3$ is a polynomial satisfying
  \begin{equation*}
    h(1) = 1, \quad
    h'(1) = -\alpha, \quad
    h''(1) = \alpha(\alpha+1), \quad
    h'(0) = 0,
  \end{equation*}
  with $h > 0$ and $h' \le 0$
  on $[0,1]$.

  Then the following hold:
  \begin{enumerate}
    \item $\Upsilon \in C^2(\mathbb{R}^N)$, and $\Upsilon$ is radially
          symmetric, positive, and strictly decreasing in $|x|$.
    \item There exist constants $k \gg 1$ and $c_* (N,p,q,s,\|\Upsilon\|_{C^2(\rnn)})> 0$ such that
          \[
            \mathcal{L}_{p,q}(\Upsilon)(x)
              := -\Delta_p \Upsilon(x) + (-\Delta)_q^s \Upsilon(x)
              < 0
            \qquad \text{for all } |x| \ge k.
          \]
  \end{enumerate}
\end{lemma}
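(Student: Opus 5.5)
Part 1 is a direct check on $h$. From the explicit cubic one verifies $h(1)=1$, $h'(1)=-\alpha$, $h''(1)=\alpha(\alpha+1)$ and $h'(0)=0$; these match the values $r^{-\alpha}$, $-\alpha r^{-\alpha-1}$, $\alpha(\alpha+1)r^{-\alpha-2}$ at $r=1$. Hence $\rho\in C^2([0,\infty))$. Because $h'(0)=0$ and $\rho$ is a polynomial in $r^2,r^3$ near the origin, $\Upsilon\in C^2(\mathbb{R}^N)$. For the sign, $h'(r)=\alpha r\bigl((\alpha+2)r-(\alpha+3)\bigr)<0$ on $(0,1]$. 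This gives strict decrease, and positivity follows from $h\ge h(1)=1$. The function is bounded with bounded first and second derivatives, so $\|\Upsilon\|_{C^2}<\infty$.

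For part 2, the local term vanishes for $|x|>1$. Indeed, $r^{-\alpha}$ with $\alpha=(N-p)/(p-1)$ is the radial fundamental solution, so $(r^{N-1}|\rho'|^{p-2}\rho')'=0$ and $-\Delta_p\Upsilon=0$ pointwise there. The whole task is therefore to show $(-\Delta)^s_q\Upsilon(x)\le -c_*|x|^{-N-sq}<0$ for $|x|\ge k$. Fix $|x|=R$ large and split $\mathbb{R}^N$ into three regions: $B_{R/2}(x)$, the region $B_{R/2}(x)^c\cap B_{R/2}$, and the rest.

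\emph{Far region $y\in B_{R/2}$.} Here $\Upsilon(y)\ge R^{-\alpha}2^{\alpha}$ (and $\ge 1$ when $|y|\le1$), so $\Upsilon(x)-\Upsilon(y)\le -(2^\alpha-1)R^{-\alpha}$. More importantly, on $B_1$ we have $\Upsilon(x)-\Upsilon(y)\le -1/2$ for $R$ large, and $|x-y|\le 2R$. This gives a negative contribution of size at least $c\,R^{-N-sq}$, with the constant coming from $|B_1|$.

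\emph{Remaining outer region} ($|y|\ge R/2$, $|x-y|\ge R/2$). The integrand is bounded by $|J_q(\Upsilon(x)-\Upsilon(y))|\le C R^{-\alpha(q-1)}$, so the contribution is $O(R^{-\alpha(q-1)-sq})$. Since $q>p(N-1)/(N-p)$ is exactly $\alpha(q-1)>N$, this is $o(R^{-N-sq})$.

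\emph{Near region $B_{R/2}(x)$.} Expand by Taylor: $\Upsilon(y)-\Upsilon(x)=\nabla\Upsilon(x)\cdot z+O(\|D^2\Upsilon\|_{L^\infty(B_{R/2}(x))}|z|^2)$ with $z=y-x$. Use the principal value and odd symmetry in $z$ to cancel the linear term. The standard estimate for $J_q$ of a $C^2$ function (as in \cite{PezzoQuaas,Brasco_DecayEstimate}) then bounds the contribution by a combination of $|\nabla\Upsilon|^{q-2}|D^2\Upsilon|\int_{B_{R/2}}|z|^{q-N-sq}\,dz$ and, when $q<2$, a term using $|D^2\Upsilon|^{q-1}$. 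On this region $|\nabla\Upsilon|\sim R^{-\alpha-1}$ and $|D^2\Upsilon|\sim R^{-\alpha-2}$, so the near part is $O(R^{-\alpha(q-1)-sq})$. For $q<2$, treat small $|z|$ separately with the $(q-1)$-H\"older bound, which gives the same order.

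Since $\alpha(q-1)>N$, both error terms are $o(R^{-N-sq})$ and the negative far-field term dominates. Choosing $k$ large gives $\mathcal{L}_{p,q}\Upsilon(x)\le -c_*|x|^{-N-sq}<0$. The main obstacle is the near-field estimate, in particular getting a rate uniform in $q\in(1,2)$, where $J_q$ is not Lipschitz. I would follow the case split of \cite[Lemma 7.1]{PezzoQuaas}: separate $|z|<|\nabla\Upsilon(x)|/\|D^2\Upsilon\|$ from larger $|z|$.
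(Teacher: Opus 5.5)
Your argument follows the paper's route. The paper also disposes of the local term by the $p$-harmonicity of $|x|^{-\alpha}$ in $\{|x|>1\}$, and it obtains $(-\Delta)^s_q\Upsilon(x)\le -c_*|x|^{-N-sq}$ for $|x|\ge k$ under $\alpha(q-1)>N$ simply by citing \cite[Lemma~7.1]{PezzoQuaas}. Your near/far/outer splitting is a correct unpacking of that lemma: a negative contribution of order $|x|^{-N-sq}$ from $B_1$, and everything else $O(|x|^{-\alpha(q-1)-sq})$.

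One detail of your near-field sketch is backwards. For $1<q<2$ the $(q-1)$-H\"older bound cannot be used for small $|z|$. Set $a:=-\nabla\Upsilon(x)\cdot z$ and $b:=\Upsilon(x)-\Upsilon(x+z)-a=O(|x|^{-\alpha-2}|z|^2)$. The H\"older bound then gives $|b|^{q-1}|z|^{-N-sq}\lesssim |z|^{2(q-1)-N-sq}$, whose integral near the origin is $\int_0^{1} r^{2(q-1)-sq-1}\,dr$. This diverges when $q\le 2/(2-s)$, which is compatible with $\alpha(q-1)>N$.

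Use instead $|J_q(a+b)-J_q(a)|\le C|a|^{q-2}|b|$, valid for all $a\neq 0$ when $q<2$. Combine it with $\int_{\mathbb{S}^{N-1}}|\omega\cdot e|^{q-2}\,d\sigma(\omega)<\infty$, which holds because $q-2>-1$. This uses $\nabla\Upsilon(x)\neq 0$ and handles all of $B_{|x|/2}(x)$ at once, with no split needed. It yields the same bound $O(|x|^{-\alpha(q-1)-sq})$, so your conclusion stands.
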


\begin{proof}

\medskip
The proof of $(1)$ is immediate. We now determine the sign of
$\mathcal{L}_{p,q}(\Upsilon)$ for sufficiently large $|x|$.

\smallskip
\textit{Local term.}
Since $\alpha=(N-p)/(p-1)$ which makes $|x|^{-\alpha}$ $p$-harmonic in
$\mathbb{R}^N\setminus\{0\}$ (see \cite[p.~37]{lindqvist_notes}),
\begin{equation*}
  -\Delta_p \Upsilon = 0
  \qquad \text{in } \overline{B_1}^c := \{x\in\mathbb{R}^N : |x|>1\}.
\end{equation*}

\smallskip
\textit{Nonlocal term.}
Using Lemma 7.1 of \cite{PezzoQuaas} under the assumption $q> \frac{p(N-1)}{N-p}\iff\alpha(q-1)>N$, there exists $k\gg 1$ and $c_*>0$ such that
\begin{equation*}
  (-\Delta)_q^s \Upsilon(x)
  \le -\frac{c_*}{|x|^{N+sq}}
  \qquad \text{for all } |x| \geq k.
\end{equation*}
Thus the claim follows.
\end{proof}
\begin{lemma}\label{lem:antisymmetry}
Let $\phi\in C^2(\mathbb{R}^N)
\cap\widetilde{\mathcal{D}}^{s,q}(\overline{B_1}^c)$
satisfy the following conditions:
\begin{enumerate}[label=(\roman*)]
  \item 
        $\overline{B_1}^c\Subset\{\nabla\phi\neq 0\}$;
  \item 
        $\phi$ is positive, radially symmetric
        and radially decreasing for $|x|\ge 1$;
  \item 
        there exist $C_1,C_2,C_3>0$ such that
        for all $|x|\ge 1$
        \[
          |\phi(x)|\le\frac{C_1}{|x|^\alpha},
          \quad
          |\nabla\phi(x)|\le\frac{C_2}{|x|^{\alpha+1}},
          \quad
          |D^2\phi(x)|\le\frac{C_3}{|x|^{\alpha+2}},
        \]
\end{enumerate}
where for any $R>0$
\begin{equation*}
\begin{aligned}
\widetilde{\mathcal{D}}^{s,q}(\overline{B_R}^c) := \{ u \in L^{q-1}_{\mathrm{loc}}(\mathbb{R}^N) \cap L^{q_s^*}(\overline{B_R}^c) :&  \text{ there exists }\, E \supset \overline{B_R}^c \text{ with } E^c \text{ compact, } \\
&\mathrm{dist}(E^c, \overline{B_R}^c) > 0 \text{ and } [u]_{s,q,E} < +\infty \}.
\end{aligned}
\end{equation*}
Let $\varphi\in \mathcal{D}_0^{s,q}(\overline{B_1}^c)$.
Then the following identity holds
\begin{equation}
  \iint_{\mathbb{R}^{2N}}
    \frac{J_q(\phi(x)-\phi(y))(\varphi(x)-\varphi(y))}
         {|x-y|^{N+sq}}\,dx\,dy
  = \int_{\mathbb{R}^N}
      (-\Delta)_q^s\phi(x)\;\varphi(x)\,dx, \label{eq:antisymmetry}
\end{equation}
where $(-\Delta)_q^s\phi(x)
= \lim_{\delta\downarrow 0}T_\delta\phi(x)$
and $T_\delta\phi(x)
:= 2\int_{|x-y|>\delta}
\frac{J_q(\phi(x)-\phi(y))}{|x-y|^{N+sq}}\,dy$
is well-defined and finite for every $x\in \overline{B_1}^c$.

\end{lemma}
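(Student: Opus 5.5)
The identity will follow from the standard antisymmetry argument: truncate the kernel near the diagonal, exploit the symmetry of the truncated form, and then remove the truncation by dominated convergence. For $\delta>0$ set
\[
\mathcal{E}_\delta(\phi,\varphi):=\iint_{\{|x-y|>\delta\}}\frac{J_q(\phi(x)-\phi(y))(\varphi(x)-\varphi(y))}{|x-y|^{N+sq}}\,dx\,dy .
\]
First we check that $\mathcal{E}_\delta$ converges absolutely. Since $\varphi\in\mathcal{D}^{s,q}_0(\overline{B_1}^c)$ vanishes on $\overline{B_1}$, the integrand is supported where $x$ or $y$ lies in $\overline{B_1}^c$. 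There we use H\"older's inequality with exponents $q/(q-1)$ and $q$, splitting the region into pairs inside $E\times E$ and pairs that leave $E$. On $E\times E$ the bound is $[\phi]_{s,q,E}^{q-1}[\varphi]_{s,q}$. If one point lies in $E^c$, compactness of $E^c$ together with $\mathrm{dist}(E^c,\overline{B_1}^c)>0$ makes $|x-y|$ bounded below, so the kernel is integrable. The contribution is then controlled by $\|\phi\|_{L^{q-1}(E^c)}$, finite because $\phi\in L^{q-1}_{\mathrm{loc}}$, and by the $L^{q_s^*}$ integrability of $\phi$ and $\varphi$ against the tail of the kernel.

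Once $\mathcal{E}_\delta$ converges absolutely, we expand $\varphi(x)-\varphi(y)$ and use Fubini together with the change of variables $x\leftrightarrow y$ and the oddness of $J_q$. The two pieces coincide, so $\mathcal{E}_\delta(\phi,\varphi)=\int_{\mathbb{R}^N}T_\delta\phi(x)\,\varphi(x)\,dx$. Letting $\delta\to0$, the left side tends to the full double integral in \eqref{eq:antisymmetry} by dominated convergence, using the absolute integrability from the first step. It remains to show that $T_\delta\phi\to(-\Delta)^s_q\phi$ pointwise on $\overline{B_1}^c$, with a dominating function $g$ satisfying $g\varphi\in L^1$.

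This last point is the main obstacle. Fix $x$ with $|x|>1$, and split $T_\delta\phi(x)$ at radius $r(x)=\min\{1,(|x|-1)/2\}\cdot c$. On $\delta<|x-y|<r(x)$ we symmetrize $y=x\pm z$, writing the region as the integral of $\tfrac12[J_q(\phi(x)-\phi(x+z))+J_q(\phi(x)-\phi(x-z))]$. Hypothesis (i) gives $\nabla\phi(x)\neq0$ near $x$. The standard Taylor estimate for $J_q$ (valid for all $q>1$ when the gradient does not vanish) then bounds this symmetrized integrand by $C|\nabla\phi|^{q-2}|D^2\phi||z|^{q}$. For $q<2$ one needs $|\nabla\phi|$ bounded below on the ball, which (i) and (iii) provide locally. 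This makes the principal value converge, and on compact subsets of $\overline{B_1}^c$ it gives $|T_\delta\phi(x)|\le C$ uniformly in $\delta$. Outside $B_{r(x)}(x)$, the tail $\int|\phi(x)-\phi(y)|^{q-1}|x-y|^{-N-sq}\,dy$ is finite because $\phi\in L^{q-1}_{\mathrm{loc}}$ and $\phi$ decays like $|y|^{-\alpha}$.

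To obtain integrability at infinity, we use the decay (iii) to get $\sup_\delta|T_\delta\phi(x)|\le C(|x|^{-\alpha(q-1)-sq}+|x|^{-N-sq})$ for large $|x|$, following the computation of \cite[Lemma 7.1]{PezzoQuaas}. This dominating function $g$ lies in $L^{(q_s^*)'}$ near infinity, so $g\varphi\in L^1$ because $\varphi\in L^{q_s^*}$. Near $|x|=1$, the radius $r(x)$ shrinks, so we would first prove the identity for $\varphi\in C_c^\infty(\overline{B_1}^c)$. We then extend it to all of $\mathcal{D}^{s,q}_0(\overline{B_1}^c)$ by density, using \cite[Theorem~2.1]{Brasco_DecayEstimate} and continuity of both sides in $[\varphi]_{s,q}$. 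Continuity of the right side requires $(-\Delta)^s_q\phi\in(\mathcal{D}^{s,q}_0)^*$, which follows because it agrees with the left side, and the left side is continuous in $\varphi$ by Lemma~\ref{dualityLemma}.
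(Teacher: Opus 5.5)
Your architecture matches the paper's. You prove \eqref{eq:antisymmetry} for $\varphi\in C_c^\infty(\overline{B_1}^c)$ via the truncated form, Fubini plus oddness of $J_q$, and dominated convergence on both sides, then extend by density. Your finiteness step (H\"older with $\phi,\varphi\in L^{q_s^*}(\overline{B_1}^c)$ against $(1+|x|)^{-N-sq}\in L^{N/(sq)}$) is a cleaner substitute for the paper's tail lemma plus Hardy inequality.

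\textbf{The gap is in the density step.} You argue that $\varphi\mapsto\int(-\Delta)^s_q\phi\,\varphi\,dx$ is continuous because it agrees with the left-hand side. But agreement is only known on $C_c^\infty(\overline{B_1}^c)$. All this gives is a bounded extension $\Lambda$ of the restricted functional to $\mathcal{D}^{s,q}_0(\overline{B_1}^c)$, with $\Lambda(\varphi)$ equal to the left-hand side. The lemma claims the left-hand side equals the Lebesgue integral $\int f\varphi\,dx$, where $f:=(-\Delta)^s_q\phi$, for every $\varphi$. Nothing you wrote shows that $f\varphi\in L^1$, or that $\int f\varphi_n\,dx\to\int f\varphi\,dx$ when $[\varphi_n-\varphi]_{s,q}\to0$. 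A dual-norm bound on a dense subspace does not give this for a general $f\in L^1_{\mathrm{loc}}$; you need genuine integrability of $f$. Your dominating function $g\in L^{(q_s^*)'}$ provides it near infinity. You explicitly have no control as $|x|\downarrow1$, and that is exactly where the argument is incomplete.

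\textbf{The missing ingredient is the full strength of hypothesis (i).} It puts the closed set $\{|x|\ge1\}$ inside the open set $\{\nabla\phi\neq0\}$. So for each $R>1$, $|\nabla\phi|$ is bounded below on a neighbourhood of the compact annulus $\{1\le|x|\le R\}$. Since $\phi\in C^2(\mathbb{R}^N)$, the symmetrized Taylor estimate then works with a fixed radius there; the ball need not stay inside $\overline{B_1}^c$, so your shrinking $r(x)$ is unnecessary. This gives $\sup_\delta|T_\delta\phi|\le C_R$ on $\{1<|x|\le R\}$, which is what the paper obtains from the Korvenp\"a\"a--Kuusi--Lindgren lemma.

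Combined with your decay bound, this yields $f\in L^{(q_s^*)'}(\overline{B_1}^c)$. Then $|\int f(\varphi_n-\varphi)\,dx|\le\|f\|_{L^{(q_s^*)'}}\mathcal{S}_f^{-1/q}[\varphi_n-\varphi]_{s,q}\to0$ closes the argument. The paper does the same with the weight $|x|^s$ and the fractional Hardy inequality, checking $|x|^s f\in L^{q'}(\overline{B_1}^c)$.

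\textbf{Smaller points.}
\begin{enumerate}
\item For $1<q<2$, the bound $C|\nabla\phi|^{q-2}|D^2\phi||z|^q$ is not pointwise in $z$: it degenerates when $z\perp\nabla\phi(x)$, and holds only after integrating over directions.
\item At infinity that bound needs $|\nabla\phi(x)|\gtrsim|x|^{-\alpha-1}$, which (iii) does not provide. There you are relying on the specific profile, as the paper does via Del Pezzo--Quaas.
\item Splitting $\mathcal{E}_\delta$ by Fubini needs absolute integrability of $|\phi(x)-\phi(y)|^{q-1}|\varphi(x)|\,|x-y|^{-N-sq}$ on $\{|x-y|>\delta\}$, not just of the original integrand. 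This is immediate for $\varphi\in C_c^\infty$ since $\phi$ is bounded.
\end{enumerate}
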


\begin{proof}
The proof is divided into two parts.
In Part~A, we prove the identity for
$\varphi\in C^\infty_c(\overline{B_1}^c)$.
In Part~B, we extend to general
$\varphi\in \mathcal{D}_0^{s,q}(\overline{B_1}^c)$
by a density argument.

Throughout, $E\supset \overline{B_1}^c$ denotes the open set
with $E^c$ compact, $d:=\mathrm{dist}(E^c,\overline{B_1}^c)>0$,
and $[\phi]_{s,q,E}<\infty$, from the definition
of $\widetilde{\mathcal{D}}^{s,q}(\overline{B_1}^c)$.

\medskip
\noindent\textit{Part A: Proof for
$\varphi\in C^\infty_c(\overline{B_1}^c)$
with compact support $\mathcal{K}\subset \overline{B_1}^c$.}

For $\delta>0$, define
\[
  A_\delta
  := \bigl\{(x,y)\in\mathbb{R}^{2N} : |x-y|>\delta\bigr\},
\]
\[
  I
  := \langle \phi, \varphi \rangle_{s,q},
  \quad
  I_\delta
  := \iint_{A_\delta}
       \frac{J_q(\phi(x)-\phi(y))(\varphi(x)-\varphi(y))}
            {|x-y|^{N+sq}}dxdy.
\]

\medskip
\textbf{Step 1:} $I$ is finite.

\begin{equation*}
  I
  = \underbrace{
      \iint_{E\times E}
        \frac{J_q(\phi(x)-\phi(y))(\varphi(x)-\varphi(y))}
             {|x-y|^{N+sq}}\,dx\,dy
    }_{:=\,I_E}
    + \underbrace{
        2\iint_{\overline{B_1}^c\times E^c}
          \frac{J_q(\phi(x)-\phi(y))\varphi(x)}
               {|x-y|^{N+sq}}\,dx\,dy
      }_{:=\,I_{\overline{B_1}^c\times E^c}},
\end{equation*}
\smallskip
Applying  H\"{o}lder's inequality, we get 
\begin{align*}
  |I_E|
  & \le [\phi]_{s,q,E}^{q-1}\,[\varphi]_{s,q}
  <\infty,
\end{align*}
\smallskip
Since $E^c$ is compact and $d=\mathrm{dist}(E^c,\overline{B_1}^c)>0$,
 \[|x-y|\ge C_1(1+|x|)
  \qquad\forall\,x\in \overline{B_1}^c,\;y\in E^c,\]

for some $C_1=C_1(E,\overline{B_1}^c)>0$.
Using 
$(|a|+|b|)^{q-1}\le \max\{1,2^{q-2}\}(|a|^{q-1}+|b|^{q-1})$
and then Fubini, we can deduce
\begin{align}
  |I_{\overline{B_1}^c\times E^c}|
  &\le
 C(E,\overline{B_1}^c,q)
  \Biggl[
    |E^c|
    \int_{\overline{B_1}^c}
      \frac{|\phi(x)|^{q-1}|\varphi(x)|}{(1+|x|)^{N+sq}}\,dx
    +
    \|\phi\|_{L^{q-1}(E^c)}^{q-1}
    \int_{\overline{B_1}^c}
      \frac{|\varphi(x)|}{(1+|x|)^{N+sq}}\,dx
  \Biggr].\label{eq:Icross_split}
\end{align}
For the first integral, apply H\"{o}lder's again
\begin{align*}
  \int_{\overline{B_1}^c}
    \frac{|\phi(x)|^{q-1}|\varphi(x)|}
         {(1+|x|)^{N+sq}}\,dx
  &\le
  \left(
    \int_{\overline{B_1}^c}
      \frac{|\phi(x)|^q}{(1+|x|)^{N+sq}}\,dx
  \right)^{\!\!(q-1)/q}
  \left(
    \int_{\overline{B_1}^c}
      \frac{|\varphi(x)|^q}{(1+|x|)^{N+sq}}\,dx
  \right)^{\!\!1/q}.
\end{align*}
The first factor in the above inequality
is finite by \cite[Lemma $2.4$]{Brasco_DecayEstimate},
and for any ball $B_R\subset \overline{B_1}^c$
\begin{equation*}
  \int_{\overline{B_1}^c}\frac{|\phi(x)|^q}{(1+|x|)^{N+sq}}\,dx
  \le C(N,s,q,R)\Bigl([\phi]_{s,q,E}^q
             +\|\phi\|_{L^{q-1}(B_R)}^q\Bigr)
  < \infty.
\end{equation*}
For the second factor, using Hardy's inequality \cite{FractionalHaryInequality}
\begin{equation*}
  \int_{\overline{B_1}^c}\frac{|\varphi(x)|^q}{(1+|x|)^{N+sq}}\,dx
  \le\int_{\overline{B_1}^c}\frac{|\varphi(x)|^q}{|x|^{sq}}\,dx
  \le C(N,q,s)[\varphi]_{s,q}^q
  <\infty.
\end{equation*}

For the second integral in \eqref{eq:Icross_split}, we perform similar calculations and finally obtain
\begin{equation}
 |I|
  \le C\Bigl([\phi]_{s,q,E}
            +\|\phi\|_{L^{q-1}(B_{R_0})}\Bigr)^{q-1}
     [\varphi]_{s,q}
  <\infty,\label{eq:I_finite_final}
\end{equation}
where $C=C(N,q,s,E,\overline{B_1}^c,R)>0$, and for some $R_0>0$.
\begin{remark}\label{rem:step1_general_varphi}
The bound \eqref{eq:I_finite_final} established
in Step~1 is valid for any
$\varphi\in \mathcal{D}_0^{s,q}(\overline{B_1}^c)$.
\end{remark}

\textbf{Step 2:} We prove that 
$I_\delta = \int_{\mathbb{R}^N}T_\delta\phi(x)\,\varphi(x)\,dx
\text{ for every } \delta>0.$

\begin{align*}
     &\iint_{A_\delta}
     \frac{|J_q(\phi(x)-\phi(y))||\varphi(x)-\varphi(y)|}
          {|x-y|^{N+sq}}\,dx\,dy \leq 2\int_{\mathcal{K}}|\varphi(x)|
    \int_{\{|x-y|>\delta\}}
      \frac{|\phi(x)-\phi(y)|^{q-1}}
           {|x-y|^{N+sq}}\,dy\,dx.
\end{align*}
For each $x\in\mathcal{K}$, split 
\[
  \int_{\{|x-y|>\delta\}} \!
    \frac{|\phi(x)-\phi(y)|^{q-1}}{|x-y|^{N+sq}}dy
  =
  \underbrace{
    \int_{\delta<|x-y|\le 1} \!
      \frac{|\phi(x)-\phi(y)|^{q-1}}{|x-y|^{N+sq}}dy
  }_{=:\,N_\delta(x)}
  +
  \underbrace{
    \int_{|x-y|>1} \!
      \frac{|\phi(x)-\phi(y)|^{q-1}}{|x-y|^{N+sq}}dy
  }_{=:\,F(x)}.
\]
Since $\phi\in C^1(\mathbb{R}^N)$, for each fixed $\delta>0$, we have
\[
  N_\delta(x)
  \le C(\|\nabla\phi\|_{L^\infty(\mathcal{K}+B_1)},q)
      \int_{\delta<|x-y|\le 1} \frac{dy}{|x-y|^{N + sq -q+1}}
  = C\omega_N\int_\delta^1 r^{q-2-sq}\,dr
  =: N_\delta(\mathcal{K}).
\]
\begin{align*}
  F(x)
  &\le C(q)\int_{|x-y|>1}
        \frac{|\phi(x)|^{q-1}+|\phi(y)|^{q-1}}
             {|x-y|^{N+sq}}\,dy\\
  &=
  C(q)|\phi(x)|^{q-1}
    \int_{|z|>1} \frac{dz}{|z|^{N+sq}}
  +
  C(q)\int_{|x-y|>1}
       \frac{|\phi(y)|^{q-1}}{|x-y|^{N+sq}}\,dy\leq C(N,s,q)\|\phi\|_{L^\infty}^{q-1}.
\end{align*}
Thus 
\begin{align*}
  &\iint_{A_\delta}
     \frac{|J_q(\phi(x)-\phi(y))||\varphi(x)-\varphi(y)|}
          {|x-y|^{N+sq}}\,dx\,dy < \infty \qquad \text{for every $\delta>0$.}
          \end{align*}
Thus, the application of Fubini's theorem is justified, and it is now straightforward to obtain the claimed identity in Step 2.

\smallskip
\textbf{Step 3: $I_\delta\to I$ as $\delta\downarrow 0$.}
\[
  I-I_\delta
  =\iint_{\{|x-y|\le\delta\}}
     \frac{J_q(\phi(x)-\phi(y))(\varphi(x)-\varphi(y))}
          {|x-y|^{N+sq}}\,dx\,dy.
\]
Since $\varphi\in C^\infty_c(\overline{B_1}^c)$ has compact support
$\mathcal{K}\subset \overline{B_1}^c\subset E$, and $E^c\subset B_1$,
we have $\mathcal{K}\cap E^c=\emptyset$.
Since both $\mathcal{K}$ and $E^c$ are compact and disjoint:
$
  d_\mathcal{K}
  := \mathrm{dist}(\mathcal{K},E^c)
  \ge \mathrm{dist}(\overline{B_1}^c,E^c)
  = d > 0.
$
For $0<\delta<d_\mathcal{K}$ and any pair $(x,y)$
with $|x-y|\le\delta$ and $\varphi(x)-\varphi(y)\ne 0$:
either $x\in\mathcal{K}$ or $y\in\mathcal{K}$. If $x\in\mathcal{K}$ and $|x-y|\le\delta<d_\mathcal{K}$,
then $y\notin E^c$. Therefore $\{|x-y|\le\delta,\,\varphi(x)-\varphi(y)\ne 0\}
\subseteq E\times E$. Finally, using H\"older's inequality, we obtain
\begin{align*}
  &|I-I_\delta|\\
  &\le
  \left(
    \iint_{\{|x-y|\le\delta\}\cap(E\times E)}
      \frac{|\phi(x)-\phi(y)|^q}{|x-y|^{N+sq}}\,dx\,dy
  \right)^{\frac{q-1}{q}}
  \left(
    \iint_{\{|x-y|\le\delta\}\cap(E\times E)}
      \frac{|\varphi(x)-\varphi(y)|^q}{|x-y|^{N+sq}}\,dx\,dy
  \right)^{\!\!1/q}
  \notag\\
  &\leq
  [\phi]_{s,q,E}^{q-1}
  \cdot
  [\varphi]_{s,q,\{|x-y|\le\delta\}}.
\end{align*}
The integrand
$\frac{|\varphi(x)-\varphi(y)|^q}{|x-y|^{N+sq}}
\chi_{\{|x-y|\le\delta\}}\to 0$
pointwise a.e.\ as $\delta\downarrow 0$,
and is dominated by
$\frac{|\varphi(x)-\varphi(y)|^q}{|x-y|^{N+sq}}
\in L^1(\mathbb{R}^{2N})$
since $[\varphi]_{s,q} \in \mathcal{D}_0^{s,q}(\overline{B_1}^c)$.
Dominated convergence theorem then proves the claim.

\smallskip
\textbf{Step 4:}
$\int_{\mathbb{R}^N}T_\delta\phi(x)\varphi(x)\,dx
\to \int_{\mathbb{R}^N}(-\Delta)_q^s\phi(x)\varphi(x)\,dx$
as $\delta\downarrow 0$.

We apply the dominated convergence theorem
to $\int_{\mathcal{K}}T_\delta\phi(x)\,\varphi(x)\,dx$,
where $\mathcal{K}:=\mathrm{supp}(\varphi)\Subset \overline{B_1}^c$.
Since $d_0:=\mathrm{dist}(\mathcal{K},\partial B_1)>0$,
fix $0<R<d_0/3$,
so that $B_{3R/2}(x)\Subset \overline{B_1}^c$
for every $x\in\mathcal{K}$. For each fixed $x\in\mathcal{K}$ and $\delta\in(0,R)$,
write:

\[
  T_\delta\phi(x)
  =
 2 \underbrace{
    \int_{\delta<|x-y|<R}
      \frac{J_q(\phi(x)-\phi(y))}{|x-y|^{N+sq}}\,dy
  }_{=:\,S_\delta(x)}
  +
 2 \underbrace{
    \int_{|x-y|\ge R}
      \frac{J_q(\phi(x)-\phi(y))}{|x-y|^{N+sq}}\,dy
  }_{=:\,T_R\phi(x)}.
\]
The far term $T_R\phi(x)$ is independent of $\delta$ and finite. Moreover, \cite[Lemma~3.6]{FractionalLaplacianEquivalenceOfSolutions} applies appropriately to the first integral, in view of the assumption
$
\overline{B_1}^c \Subset \{\nabla \phi \neq 0\}.
$
Hence
\[
(-\Delta)^s_q\phi(x)
=
2\operatorname{P.V.}\int_{\mathbb R^N}
\frac{J_q(\phi(x)-\phi(y))}{|x-y|^{N+sq}}\,dy
\]
is well defined and finite. Hence, the dominated convergence theorem proves the claim of Step~4.  Finally, we have established the identity \eqref{eq:antisymmetry} for Part~A:
\[
  I
  = \lim_{\delta\downarrow 0}I_\delta
  = \lim_{\delta\downarrow 0}
      \int_{\mathbb{R}^N}T_\delta\phi(x)\,\varphi(x)\,dx
  = \int_{\mathbb{R}^N}(-\Delta)_q^s\phi(x)\,\varphi(x)\,dx.
\]

\medskip
\noindent\textit{Part B: Extension to general
$\varphi\in \mathcal{D}_0^{s,q}(\overline{B_1}^c)$.}

Using \cite[Theorem~2.1]{Brasco_DecayEstimate}, there exists
$\{\varphi_n\}_{n\in\mathbb{N}}\subset C^\infty_c(\overline{B_1}^c)$
such that $[\varphi_n-\varphi]_{s,q}\to 0$
as $n\to\infty$.
By Part~A, for each $n\in\mathbb{N}$:
\begin{equation*}
  \iint_{\mathbb{R}^{2N}}
    \frac{J_q(\phi(x)-\phi(y))
          (\varphi_n(x)-\varphi_n(y))}
         {|x-y|^{N+sq}}\,dx\,dy
  = \int_{\mathbb{R}^N}
      (-\Delta)_q^s\phi(x)\,\varphi_n(x)\,dx.
\end{equation*}
Denoting $\psi_n:=\varphi_n-\varphi\in \mathcal{D}_0^{s,q}(\overline{B_1}^c)$,
and Remark~\ref{rem:step1_general_varphi}, we get
\begin{align*}
  &\left|
    \iint_{\mathbb{R}^{2N}}
      \frac{J_q(\phi(x)-\phi(y))
            (\varphi_n(x)-\varphi_n(y))}
           {|x-y|^{N+sq}}\,dx\,dy
    -\iint_{\mathbb{R}^{2N}}
       \frac{J_q(\phi(x)-\phi(y))
             (\varphi(x)-\varphi(y))}
            {|x-y|^{N+sq}}\,dx\,dy
  \right|\\
  &=
  \left|
    \iint_{\mathbb{R}^{2N}}
      \frac{J_q(\phi(x)-\phi(y))
            [\psi_n(x)-\psi_n(y)]}
           {|x-y|^{N+sq}}\,dx\,dy
  \right|\\
  &\le
  C\Bigl([\phi]_{s,q,E}
        +\|\phi\|_{L^{q-1}(B_{R_0})}\Bigr)^{q-1}
  [\psi_n]_{s,q}
  \to 0
  \quad\text{as }n\to\infty.
\end{align*}
We now show
\begin{equation}\label{eq:RHS_to_show}
  \int_{\mathbb{R}^N}(-\Delta)_q^s\phi(x)\,\varphi_n(x)\,dx
  \to
  \int_{\mathbb{R}^N}(-\Delta)_q^s\phi(x)\,\varphi(x)\,dx.
\end{equation}
Since $\psi_n:=\varphi_n-\varphi\in \mathcal{D}_0^{s,q}(\overline{B_1}^c)$,
by H\"{o}lder's inequality
and the fractional Hardy inequality
\cite{FractionalHaryInequality}, we deduce
\begin{equation*}
  \left|
    \int_{\mathbb{R}^N}
      (-\Delta)_q^s\phi(x)\,\psi_n(x)\,dx
  \right|
  \le
  C(N,s,q)
  \left\|(-\Delta)_q^s\phi\;|x|^s\right\|_{L^{q'}(\overline{B_1}^c)}
  [\psi_n]_{s,q},
\end{equation*}
where $\frac{1}{q}+\frac{1}{q^{'}}=1$. So \eqref{eq:RHS_to_show} follows once we establish
$
  (-\Delta)_q^s\phi\;|x|^s\in L^{q'}(\overline{B_1}^c).
$
By \cite[Lemma~7.1]{PezzoQuaas}, since $\alpha(q-1)>N$,
there exist $\widetilde{R}>1$ and
$C=C(\alpha,N,q,s,\|\phi\|_{C^2(\mathbb{R}^N)})>0$ such that
\[
  \int_{B_{\widetilde{R}}^c}
    |(-\Delta)_q^s\phi(x)|^{q'}|x|^{sq'}\,dx
  \le C
  \int_{B_{\widetilde{R}}^c}
    |x|^{-(N+sq-s)q'}\,dx
  < \infty,
\]
The finiteness of the integral over
$\{1<|x|<\widetilde{R}\}$
follows again from
\cite[Lemma~3.6]{FractionalLaplacianEquivalenceOfSolutions}.
This completes the proof of the lemma.
\qedhere
\end{proof}

\medskip
\begin{proof}[Proof of \textup{(ii)} Theorem \ref{prop:decay-main}]
Let $\Upsilon$, and $k$ be as in Lemma~\ref{lem:subsolution_gamma}.

Define $\phi(x):=K\Upsilon(kx)$, where $K$ will be chosen later, then
\begin{align*}
  (-\Delta)_q^s \phi(x)= K^{q-1}k^{sq}\,(-\Delta)_q^s \Upsilon(kx), \quad  -\Delta_p\phi(x)
  = K^{p-1}k^p\,(-\Delta_p \Upsilon)(kx).
\end{align*}

The aforementioned fractional term $(-\Delta)_q^s \phi(x) \in L_{loc}^{\infty}(\overline{B_R}^c)$ for any $R > 0$ by Lemma $3.6$ of \cite{FractionalLaplacianEquivalenceOfSolutions}. 
Then by Lemma~\ref{lem:subsolution_gamma}, we get
\begin{equation}
  \mathcal{L}_{p,q}(\phi)(x)
  = K^{p-1}k^p\,(-\Delta_p\Upsilon)(kx)
    + K^{q-1}k^{sq}\,(-\Delta)_q^s\Upsilon(kx) <0 \qquad\forall\,|x|\geq 1. \label{eq:phi_subsol}
\end{equation}

Since $\Upsilon\in C^2(\mathbb{R}^N)$, and using the Harnack inequality from Proposition \ref{Weak Harnack inequality}, we have
\begin{equation*}
  m := \inf_{x\in B_1} U(x) > 0,  \; M := \sup_{x\in B_1}\Upsilon(kx) =\sup_{|z|\le k}\Upsilon(z)
     = \Upsilon(0) < \infty, \;   K := \frac{m}{M} > 0.
\end{equation*}

Hence, 
\begin{equation*}
  \phi(x) \le U(x) \qquad \text{for a.e.\ } x\in B_1.
\end{equation*}

\medskip
We now verify the weak inequality
$\langle \! \langle \mathcal{L}_{p,q}(\phi),\varphi\rangle \! \rangle
\le \langle \! \langle \mathcal{L}_{p,q}(U),\varphi\rangle \! \rangle$
for all nonnegative functions $\varphi\in \X(\overline{B_1}^c)$, where the notations are as in Lemma~\ref{dualityLemma}.
Firstly, both $U, \phi \in \mathcal D^{s,p,q}(\overline{B_1}^c)$. Indeed, by Proposition~\ref{prop:global_boundedness_mixed},
$
U\in L^\infty(\mathbb R^N)\cap \X(\rnn)
\subset L^{q-1}_{\rm loc}(\mathbb R^N)\cap L^{p^*}(\overline{B_1}^c),
$
with $\|\nabla U\|_{L^p(\rnn)}<\infty$ and
$[U]_{s,q}<\infty$ by Theorem~\ref{ineq:main_embedding}. Moreover,
$
\phi(x)=K\Upsilon(kx)\sim |x|^{-\alpha}
\;\;\text{for }|x|\ge1,
$
with $\alpha=\frac{N-p}{p-1}$, so that
$
\phi\in L^{q-1}_{\rm loc}(\mathbb R^N)\cap L^{p^*}(\overline{B_1}^c)
\cap \mathcal{D}^{1,p}(\mathbb R^N),
$
and again $[\phi]_{s,q}<\infty$.
\smallskip
Then, since $\phi=K\Upsilon(k\,\cdot\,)\in C^2(\mathbb{R}^N)$, using \eqref{eq:phi_subsol} and $\varphi\ge 0$, we have
\begin{align}
  \langle \! \langle \mathcal{L}_{p,q}(\phi),\varphi\rangle \! \rangle
  &= \int_{\mathbb{R}^N}|\nabla\phi|^{p-2}\nabla\phi\cdot\nabla\varphi\,dx
    + \iint_{\mathbb{R}^{2N}}
        \frac{J_q(\phi(x)-\phi(y))(\varphi(x)-\varphi(y))}
             {|x-y|^{N+sq}}\,dx\,dy \notag\\
  \label{eq:phi_weak_subsol}
  &= -\int_{\overline{B_1}^c}(\Delta_p\phi)\,\varphi\,dx+  \int_{\mathbb{R}^N}(-\Delta)_q^s\phi(x)\,\varphi(x)\,dx \leq 0,
\end{align}
where the identity for the nonlocal term used in the second-to-last step above follows from Lemma \ref{lem:antisymmetry}.
 Further as $U \geq 0$ is a weak solution of \eqref{eq:main_problem} in $\mathbb{R}^N$ and for $\varphi\in \X(\overline{B_1}^c)\subset \X(\rnn)$, we get
\begin{equation}\label{eq:U_weak_sol}
  \langle \! \langle \mathcal{L}_{p,q}(U),\varphi\rangle \! \rangle
  = \int_{\mathbb{R}^N} U^{p^*-1}\varphi\,dx \ge 0.
\end{equation}

Combining \eqref{eq:phi_weak_subsol} and \eqref{eq:U_weak_sol}
\begin{equation*}
  \langle \! \langle \mathcal{L}_{p,q}(\phi),\varphi\rangle \! \rangle
  \le 0
  \le \langle \! \langle \mathcal{L}_{p,q}(U),\varphi\rangle \! \rangle
  \qquad
  \forall\,\varphi\in \X(\overline{B_1}^c),\;\varphi\ge 0.
\end{equation*}

Therefore all the hypotheses of
Theorem~\ref{thm:comparison_principle_mixed} are satisfied on $\Omega=\overline{B_1}^c$, thus
\[
  U(x)
  \ge \phi(x)
  = K\Upsilon(kx)
  = \frac{m}{M}\cdot k^{-\alpha}|x|^{-\alpha}
  = c_1\,|x|^{-\frac{N-p}{p-1}}, \qquad \text{for }|x|\geq 1
\]
where
$
  c_1 := \frac{m\,k^{-\alpha}}{M} > 0.
$
This completes the proof.

\end{proof}

\subsubsection{Proof of \textup{(ii)} of Theorem \ref{prop:decay-main} for the regime  $q< \frac{p(N-1)}{N-p}$}\label{Case 2}

\medskip
 
We first record the upper bound on $(-\Delta)_q^s\widetilde{\Gamma}$
under the condition $\alpha(q-1)<N \iff q< \frac{p(N-1)}{N-p}$ where $\widetilde{\Gamma}$ is as defined in the subsequent Lemma.
 
\begin{lemma}[Upper bound for $\mathcal{L}_{p,q}(\widetilde{\Gamma})$]
\label{lem:nonlocal_upper}
For $x\in \rnn$, define $\widetilde{\Gamma}(x) = \min\{1, |x|^{-\alpha}\}$.
Then for every $R > 1$, $\widetilde{\Gamma} \in \mathcal{D}^{s,p,q}(\overline{B_R}^c)$
and $\mathcal{L}_{p,q}(\widetilde{\Gamma}) \leq C/|x|^{N+sq}$
weakly in $\overline{B_R}^c$, i.e.,
\begin{equation*}
  \langle \! \langle \mathcal{L}_{p,q}(\widetilde{\Gamma}),\varphi\rangle \! \rangle
  \leq \int_{\overline{B_R}^c}
         \frac{C(N,q,s,\alpha,R)}{|x|^{N+sq}}
         \varphi(x)\,dx
\end{equation*}
for all nonnegative $\varphi \in \X(\overline{B_1}^c)$,
where $C(N,q,s,\alpha,R) > 0$ is a constant. 
\end{lemma}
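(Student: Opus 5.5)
We split the claim into three parts: membership $\widetilde\Gamma\in\mathcal{D}^{s,p,q}(\overline{B_R}^c)$, vanishing of the local contribution, and a pointwise bound on the nonlocal one.

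\emph{Membership.} Since $\widetilde\Gamma$ is Lipschitz and bounded by $1$, it lies in $L^{q-1}_{\mathrm{loc}}(\mathbb{R}^N)$. On $\overline{B_R}^c$ we have $\widetilde\Gamma=|x|^{-\alpha}$ and $|\nabla\widetilde\Gamma|\le\alpha|x|^{-\alpha-1}$. Because $(\alpha+1)p=p(N-1)/(p-1)>N$ and $\alpha p^*>N$, this gives $\widetilde\Gamma\in\mathcal{D}^{1,p}(\overline{B_R}^c)$. For the Gagliardo part we take $E=\overline{B_{R'}}^c$ with $1<R'<R$. The energy $[\widetilde\Gamma]_{s,q,E}$ is finite either by the embedding \eqref{ineq:main_embedding} applied to a $\mathcal{D}^{1,p}$ extension, or by a direct dyadic estimate using $|\widetilde\Gamma(x)-\widetilde\Gamma(y)|\le C\min\{|x-y|\,|x|^{-\alpha-1},|x|^{-\alpha}+|y|^{-\alpha}\}$.

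\emph{Local term.} Outside $\overline{B_1}$ the function $|x|^{-\alpha}$ is $C^2$ and $p$-harmonic. For $\varphi\ge0$ compactly supported in $\overline{B_R}^c$, integration by parts therefore gives $\int|\nabla\widetilde\Gamma|^{p-2}\nabla\widetilde\Gamma\cdot\nabla\varphi=0$. General test functions are handled by density; the flux $|\nabla\widetilde\Gamma|^{p-1}\in L^{p'}(\overline{B_R}^c)$ controls the limit.

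\emph{Nonlocal term.} We will invoke the identity \eqref{eq:antisymmetry} of Lemma~\ref{lem:antisymmetry}, or rather its proof, with $B_1$ replaced by $B_R$. The point is that $\widetilde\Gamma$ is smooth only away from $\partial B_1$, but Parts A--B of that proof only use $C^2$ regularity near the support of $\varphi$, the decay bounds (iii), and $\widetilde\Gamma\in\widetilde{\mathcal D}^{s,q}$. It then suffices to show the pointwise bound $(-\Delta)^s_q\widetilde\Gamma(x)\le C|x|^{-N-sq}$ for $|x|>R$, together with $|x|^s(-\Delta)^s_q\widetilde\Gamma\in L^{q'}$. The latter follows from the same pointwise bound and a matching lower bound of the same order.

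For the pointwise bound, fix $|x|>R$, set $\rho=|x|$, and split the principal value into four regions.
\begin{itemize}
\item Region (a), $|y-x|<\rho/2$. Expanding to second order and using the P.V. cancellation, this contributes $O(\rho^{-\alpha(q-1)-sq})$ up to the usual $|\nabla\widetilde\Gamma|^{q-2}$ factors. Since $\alpha(q-1)<N$, that rate is larger than $\rho^{-N-sq}$, so we use finer cancellation instead. On the whole region $|y|>1$ the kernel integral involves $\Gamma=|\cdot|^{-\alpha}$, for which \cite[Lemma 7.1]{PezzoQuaas} gives an exact asymptotic. In the regime $\alpha(q-1)<N$ that lemma yields $(-\Delta)^s_q\Gamma(x)\le C\rho^{-N-sq}$ (the sign of the leading term being nonpositive), where the integral is over $\mathbb{R}^N$ with $\Gamma$ singular at $0$.
\item Region (b), the cap. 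Replacing $\Gamma$ by $\widetilde\Gamma$ only changes values on $B_1$, where $\Gamma\ge\widetilde\Gamma=1$. There $J_q(\widetilde\Gamma(x)-1)\ge J_q(\Gamma(x)-\Gamma(y))$ because $\Gamma(y)\ge1$. The difference is therefore positive but bounded by $C\int_{B_1}\frac{1+|y|^{-\alpha(q-1)}}{|x-y|^{N+sq}}dy\le C\rho^{-N-sq}$, provided $|y|^{-\alpha(q-1)}\in L^1(B_1)$. That integrability holds exactly because $\alpha(q-1)<N$, which is where the hypothesis enters.
\end{itemize}
Adding (a) and (b) gives the bound with constant depending on $N,q,s,\alpha,R$.

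The main obstacle is extracting from \cite[Lemma 7.1]{PezzoQuaas} a usable one-sided bound for the singular profile $\Gamma$ when $\alpha(q-1)<N$. If that lemma only gives a two-sided $O(\rho^{-\alpha(q-1)-sq})$, we would redo its computation directly. We would rescale $y=\rho z$ to get $\rho^{-\alpha(q-1)-sq}\,\mathrm{P.V.}\!\int J_q(1-|z|^{-\alpha})|e-z|^{-N-sq}dz$, and then show that this constant is nonpositive. That should follow from a Kelvin-type/inversion symmetry, or from recognizing $|z|^{-\alpha}$ as $q$-superharmonic-type for the fractional operator in this range. If the sign argument fails, we would still have the weaker but sufficient bound $C\rho^{-\alpha(q-1)-sq}$. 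We would then state the lemma with that rate, after checking it is what the later comparison in Lemma~\ref{lem:nonlocal_barU} actually needs.
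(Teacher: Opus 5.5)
The overall architecture of your proposal is the paper's. You write $\widetilde\Gamma=\Gamma-(\Gamma-1)_+$ with $\Gamma=|x|^{-\alpha}$ and note that the local part vanishes. You check that the correction coming from $B_1$ is nonnegative and bounded by $C|x|^{-N-sq}$, since $|y|^{-\alpha(q-1)}\in L^1(B_1)$. Everything is then reduced to $(-\Delta)^s_q\Gamma$. The paper reaches the weak form through \cite[Proposition~2.8]{Brasco_DecayEstimate} rather than by re-running Lemma~\ref{lem:antisymmetry} for a non-$C^2$ function, but that difference is minor.

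\textbf{The gap: the sign of $C(\alpha)$.} By scaling, $(-\Delta)^s_q\Gamma(x)=C(\alpha)|x|^{-\alpha(q-1)-sq}$. Since $\alpha(q-1)<N$, this decays \emph{more slowly} than $|x|^{-N-sq}$. Because the cap correction is nonnegative, the claimed bound holds if and only if $C(\alpha)\le 0$, so this sign is the step that decides the lemma.
\begin{itemize}
\item You assert the sign only parenthetically and then list it as the main obstacle.
\item One of your suggested routes points the wrong way: $q$-superharmonicity of $|z|^{-\alpha}$ would mean $C(\alpha)\ge 0$.
\item Your fallback does not suffice. With an upper bound $C|x|^{-\alpha(q-1)-sq}$, $C>0$, the requirement $\varepsilon^{q-1}C|x|^{-\alpha(q-1)-sq}\le c_1|x|^{-N-sq}$ for all $|x|>3$ in the final comparison fails for every $\varepsilon>0$.
\item The paper invokes \cite[Lemma~7.1]{PezzoQuaas} only in the opposite regime $\alpha(q-1)>N$, where the mass on $B_1$ dominates. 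It cannot be cited for this sign without further argument.
\end{itemize}

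\textbf{The missing ingredient.} What you need is the explicit constant of \cite[Lemmas~A.1--A.2]{Brasco_DecayEstimate}. It comes from exactly the radial inversion you allude to. In polar coordinates, fold $(1,\infty)$ onto $(0,1)$ via $t\mapsto 1/t$, using $\Phi(1/t)=t^{N+sq}\Phi(t)$, to obtain
\[
C(\alpha)=2\int_0^1 t^{sq-1}\bigl(1-t^{\,N-sq-\alpha(q-1)}\bigr)\,|1-t^{\alpha}|^{q-1}\,\Phi(t)\,dt .
\]
Hence $C(\alpha)$ has the sign of $N-sq-\alpha(q-1)$; it is negative exactly when $\alpha>(N-sq)/(q-1)$.

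This is where the isocritical hypothesis enters, and your proposal never uses it. From $N-sq=q(N-p)/p$ one gets
\[
N-sq-\alpha(q-1)=\frac{(N-p)(p-q)}{p(p-1)}<0,
\]
since $p<q$, so $C(\alpha)<0$. The proof therefore needs both $\alpha(q-1)<N$, for the cap correction, and $\alpha(q-1)>N-sq$, for the sign. You identified only the first. Without the second, for example if $\alpha(q-1)<N-sq$, one has $C(\alpha)>0$ and the stated bound is false.
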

\begin{proof}

It is easy to see that $\widetilde{\Gamma}\in \mathcal{D}^{s,p,q}(\overline{B_R}^c)$. Then for any nonnegative $\varphi\in  \X(\overline{B_1}^c)$:
\begin{equation*}
  \langle \! \langle \mathcal{L}_{p,q}(\widetilde{\Gamma}),\varphi\rangle \! \rangle
  = \underbrace{\int_{\mathbb{R}^N}|\nabla\widetilde{\Gamma}|^{p-2}
      \nabla\widetilde{\Gamma}\cdot\nabla\varphi\,dx}_{=0}
    + \iint_{\mathbb{R}^{2N}}
        \frac{J_q(\widetilde{\Gamma}(x)-\widetilde{\Gamma}(y))
              (\varphi(x)-\varphi(y))}
             {|x-y|^{N+sq}}\,dx\,dy.
\end{equation*}

To evaluate $ (-\Delta)_q^s\widetilde{\Gamma}(x)$ we want to invoke Proposition $2.8$ of \cite{Brasco_DecayEstimate} with $\Omega=\overline{B_R}^c$, $u=\Gamma$ with $\Gamma(x)=|x|^{-\alpha}$, $f=(-\Delta)_q^s\Gamma$, and $v=-(\Gamma-1)_+$.  \\
Note that by Lemma~A.1 and Lemma~A.2 of \cite{Brasco_DecayEstimate} gives $\Gamma \in \tilde{\mathcal{D}}^{s,q}(\overline{B_R}^c)$ (this space is defined in Lemma \ref{lem:antisymmetry}) and
\begin{equation*}
  (-\Delta)_q^s\Gamma(x)
  = C(\alpha)\,|x|^{-\alpha(q-1)-sq}
  \quad\text{weakly in }\overline{B_R}^c,
\end{equation*}
where $C(\alpha) = 2 \int_{0}^1 t^{sq -1} (1 - t^{N - s q  - \alpha (q-1)}) | 1 - t^{\alpha} |^{q-1} \Phi(t) \, dt$ with the kernel given by $\Phi(s)
=
\mathcal{H}^{N-2}(\mathbb{S}^{N-2})
\int_{-1}^{1}
\frac{
(1-t^2)^{\frac{N-3}{2}}
}{
(1 - 2ts + s^2)^{\frac{N+qs}{2}}
}
\, dt$. Also, since $p < q$, we have $N - sq - \alpha (q - 1) < 0$. This yields $C(\alpha) < 0$. \\
Moreover, the function $f = C(\alpha)|x|^{-\alpha(q-1)-sq}$ satisfies $f \in L^1_{\rm loc}(\overline{B_R}^c)$ trivially since $|x| \geq R > 0$. For the dual membership $f \in (\mathcal{D}_0^{s,q}(\overline{B_R}^c))^*$, by the fractional Sobolev inequality
\[
  \left|\int_{\overline{B_R}^c} f\varphi\,dx\right|
  \leq \|f\|_{L^{(q^*_s)'}(\overline{B_R}^c)}\|\varphi\|_{L^{q^*_s}(\overline{B_R}^c)}
  \leq C\|f\|_{L^{(q^*_s)'}(\overline{B_R}^c)}[\varphi]_{s,q},
\quad \varphi \in \mathcal{D}_0^{s,q}(\overline{B_R}^c)\]
and $\|f\|_{L^{(q^*_s)'}(\overline{B_R}^c)} < \infty$ since $(q^*_s)' = qN/(N(q-1)+sq)$ and the integrability condition $(\alpha(q-1)+sq)(q^*_s)' > N$ reduces to $\alpha > (N-sq)/q$, which holds as it reduces to $\alpha = (N-p)/(p-1) > (N-sq)/q$. Hence $f \in (\mathcal{D}_0^{s,q}(\overline{B_R}^c))^*$. Finally, the function $v := -(\Gamma-1)_+$ satisfies $\text{supp}(v)\subseteq\overline{B_1}$ and $v\in L^{q-1}(\mathbb{R}^N)$.
Thus, for $|x|>R>1$, we obtain
\begin{equation*}
\langle (-\Delta)_q^s\widetilde{\Gamma}(x) ,\varphi\rangle_{s,q}
  =  \left\langle \frac{C(\alpha)}{|x|^{\alpha(q-1) +sq}},\varphi \right\rangle_{p}
    + 2 \left\langle \int_{B_1}
        \frac{J_q(\Gamma(y)-\Gamma(x)) - J_q(1-\Gamma(x))}
             {|x-y|^{N+sq}}\,dy, \varphi \right\rangle_{s,q}.
\end{equation*}

To estimate the second term, using $|x-y|\ge\frac{R-1}{R}|x|$ for $|x|>R>1$ and $y\in B_1$, we deduce
\begin{equation*}
  2\int_{B_1}
    \frac{J_q(\Gamma(y)-\Gamma(x)) - J_q(1-\Gamma(x))}
         {|x-y|^{N+sq}}\,dy
  \le \frac{2\bigl(\frac{R}{R-1}\bigr)^{N+sq}}{|x|^{N+sq}}
      \int_{B_1}\Gamma(y)^{q-1}\,dy
  \leq \frac{C(N,s,\alpha,q,R)}{|x|^{N+sq}},
\end{equation*}
where we used $
\int_{B_1}|y|^{-\alpha(q-1)}dy  < \infty$, since $\alpha(q-1)<N$. Thus the claim follows.
\end{proof}

Next, we establish a lower bound for the solution $U$ of \eqref{eq:main_problem}. To this end, we begin by defining an auxiliary problem. By Harnack's inequality in Proposition \ref{Weak Harnack inequality}, there exists a constant $ c^* > 0$ such that $\inf_{B_1} U \geq  c^*$. We then define the following minimization problem.
\begin{equation}\label{eq:auxiliary_pb_mixed}\tag{$A$}
\mathcal{A} := \inf_{u \in \X(\rnn)} \left\{ [u]_{1,p}^p + [u]_{s,q}^q : u \geq \frac{c^*}{2} \;
\text{ on }\;B_1 \right\}
\end{equation}

\begin{proposition}
The problem \eqref{eq:auxiliary_pb_mixed} has a unique positive solution $U_{0} \in  \X(\rnn) \cap C_{loc}^{0,\alpha}(\overline{B_1}^c)$. Moreover, $U_{0}$ is radial and non-increasing, and $U_{0} \in \X(\rnn)$ solves in the weak sense
\begin{equation*}
    \begin{cases}
        -\Delta_{p}\, U_{0} + (-\Delta_{q})^{s} U_{0} = 0
            & \text{in } \overline{B_{1}}^{c}, \\[4pt]
        U_{0} \equiv \frac{c^*}{2}
            & \text{in } \overline{B}_1.
    \end{cases}
\end{equation*}
\end{proposition}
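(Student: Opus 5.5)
We prove existence with the direct method, uniqueness by strict convexity, and then the symmetry, equation and regularity properties one at a time. Let $K:=\{u\in\X(\rnn): u\ge \tfrac{c^*}{2}\text{ a.e. on }B_1\}$. This set is nonempty, since it contains a cutoff equal to $\tfrac{c^*}{2}$ on $B_1$. It is convex, and it is closed in $\X(\rnn)$: a sequence converging in norm converges in $L^{p^*}$, hence a.e. along a subsequence. The functional $F(u)=[u]_{1,p}^p+[u]_{s,q}^q$ is coercive on $\X(\rnn)=\mathcal{D}^{1,p}_0(\rnn)$, weakly lower semicontinuous, and strictly convex. Strict convexity holds because $|\cdot|^p$ is strictly convex on $\mathbb{R}^N$, and $\nabla u=\nabla v$ a.e. together with $u,v\in L^{p^*}$ forces $u=v$. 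A minimizing sequence is therefore bounded by coercivity. We extract a weak limit $U_0$, which lies in $K$ because $K$ is convex and closed, hence weakly closed. Lower semicontinuity then makes $U_0$ a minimizer, and strict convexity gives uniqueness.

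Next come the qualitative properties. Since $F(|u|)\le F(u)$ and $|u|\in K$, uniqueness gives $U_0=|U_0|\ge0$. Truncating, $\min\{U_0,\tfrac{c^*}{2}\}$ lies in $K$ and does not increase $F$, so $U_0\le \tfrac{c^*}{2}$. Together with the constraint, this gives $U_0\equiv \tfrac{c^*}{2}$ on $B_1$ (and on $\overline B_1$ a.e.). Radial symmetry follows from uniqueness, because $F$ and $K$ are invariant under rotations. For monotonicity I would use the Schwarz symmetrization $U_0^\star$. The Pólya--Szegő inequality holds for $[\cdot]_{1,p}$ and, by Almgren--Lieb, for $[\cdot]_{s,q}$. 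Since $U_0^\star\ge \tfrac{c^*}{2}$ on $B_1$ (the superlevel set $\{U_0\ge c^*/2\}\supset B_1$ rearranges to a ball containing $B_1$), we have $U_0^\star\in K$. Uniqueness then gives $U_0=U_0^\star$, which is radially non-increasing.

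For the equation, take any $\varphi\in\X(\overline{B_1}^c)$ of either sign. Then $U_0+t\varphi\in K$ for every $t\in\mathbb{R}$, because $\varphi=0$ on $B_1$. Differentiating $F(U_0+t\varphi)$ at $t=0$ gives $\langle\!\langle\mathcal{L}_{p,q}(U_0),\varphi\rangle\!\rangle=0$, so $U_0$ is a weak solution in $\overline{B_1}^c$. Positivity there follows from Theorem~\ref{thm:strong_comparison_mixed} applied to $U_0$ on the connected set $\overline{B_1}^c$, or more directly from $U_0$ being radially nonincreasing and nontrivial. One adjustment is needed: $U_0$ does not vanish on $\Omega^c=\overline{B}_1$. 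Since $U_0\ge0$ everywhere, I would simply note that the proof of that theorem only uses $U_0\ge0$ and the logarithmic Lemma~\ref{lem:logarithmic_lemma}; the tail term remains finite.

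The main obstacle is the local Hölder regularity $C^{0,\alpha}_{loc}(\overline{B_1}^c)$. I would first establish local boundedness. This is immediate from $0\le U_0\le \tfrac{c^*}{2}$, so we have $U_0\in L^\infty(\rnn)$ and a finite nonlocal tail. I would then invoke the De Giorgi--Nash--Moser theory for mixed local--nonlocal operators. In the $p=q$ case this is Garain--Kinnunen; for $p\ne q$, one needs the mixed $(p,q)$ version in the spirit of De Filippis--Mingione and Byun--Lee--Song. Alternatively, I would derive it directly: combine the Caccioppoli estimates of the type used in Step~2 of Lemma~\ref{lem:expansion of positivity}, applied to $(U_0-k)_\pm$, with the expansion-of-positivity mechanism, and run the standard oscillation-reduction iteration on shrinking balls. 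The uniform $L^\infty$ bound makes the term of order $r^{-sq}$ dominated by $r^{-p}$ for $r\le1$, as in Step~1 there. Checking that the tail contributions decay geometrically in this iteration is the delicate point.
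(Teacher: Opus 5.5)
Apart from one step, your route is the paper's: the direct method with strict convexity, Schwarz symmetrization with the local and fractional P\'olya--Szeg\H{o} inequalities, truncation at $c^*/2$, the free Euler--Lagrange equation outside $\overline{B_1}$, the maximum principle, and De Filippis--Mingione for the H\"older regularity. Your $1$-Lipschitz truncation argument is a fine elementary replacement for the Gigli--Mosconi sublattice inequality.

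\textbf{The failing step is the Euler--Lagrange equation.} You minimize $F(u)=[u]_{1,p}^p+[u]_{s,q}^q$, as \eqref{eq:auxiliary_pb_mixed} is literally written, but
\[
\frac{d}{dt}F(U_0+t\varphi)\Big|_{t=0}=p\,\langle -\Delta_p U_0,\varphi\rangle_p+q\,\langle U_0,\varphi\rangle_{s,q}.
\]
So the minimizer solves $-\Delta_pU_0+\tfrac{q}{p}(-\Delta)^s_qU_0=0$ in $\overline{B_1}^c$, not $\mathcal{L}_{p,q}U_0=0$. Since $p<q$ in the isocritical regime, the weights do not cancel. This cannot be absorbed afterwards without changing the data. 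The multiple $\lambda U_0$ with $\lambda=(q/p)^{1/(q-p)}$ does solve $\mathcal{L}_{p,q}(\lambda U_0)=0$, but its plateau is $\lambda c^*/2\neq c^*/2$.

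The paper's proof sidesteps this by working with $\mathcal{E}(u)=\frac1p[u]_{1,p}^p+\frac1q[u]_{s,q}^q$, whose first variation is exactly $\langle\!\langle\mathcal{L}_{p,q}(U_0),\varphi\rangle\!\rangle$. You should minimize $\mathcal{E}$ over $K$ from the outset. All your other steps carry over verbatim to $\mathcal{E}$: coercivity, strict convexity, rotation invariance, $\mathcal{E}(|u|)\le\mathcal{E}(u)$, truncation, and P\'olya--Szeg\H{o}.

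\textbf{A smaller point on positivity.} Positivity does not follow ``directly from $U_0$ being radially nonincreasing and nontrivial'', since such a function may vanish identically outside a ball. Keep the strong-maximum-principle route. Your adjustment there is sound: $U_0\ge0$ makes the weak-maximum-principle step unnecessary, and $0\le U_0\le c^*/2$ with $q\ge p$ keeps the tail in Lemma~\ref{lem:logarithmic_lemma} bounded uniformly in $\varepsilon$.

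Alternatively, argue directly. Suppose $U_0=0$ on $\{|x|>R\}$, and test with $0\le\varphi\in C_c^\infty(\{|x|>R\})$, $\varphi\not\equiv0$. The local term vanishes and
\[
\langle\!\langle\mathcal{L}_{p,q}(U_0),\varphi\rangle\!\rangle=-2\int_{\{|x|>R\}}\int_{B_R}\frac{U_0(y)^{q-1}\varphi(x)}{|x-y|^{N+qs}}\,dy\,dx<0,
\]
which contradicts the equation.
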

\begin{proof}
The existence of a minimizer follows directly as in \cite[Proposition $3.5$]{Brasco_DecayEstimate}. Note that, since the constraint forces nontriviality, we have $U_{0} \not \equiv 0$. \\
Define the functional $\mathcal{E}(u) := \frac{[u]_{{1,p}}^{p}}{p} + \frac{[u]_{{s,q}}^{q}}{q}$. Since the functional $\mathcal{E}$ is strictly convex on $\X(\rnn)$, and the admissible set $\mathcal{K}
:=
\left\{
u\in \X(\mathbb R^N):
u \geq \frac{c^*}{2} \;
\text{ on }\;B_1
\right\}
$ is convex and closed. Uniqueness of the minimizer follows immediately. Positivity follows from the maximum principle.

The Schwarz decreasing rearrangement $U_{0}^{\#}$ of $U_{0}$ defined via the layer-cake representation: for each $t > 0$,\begin{equation*}
    \{ U_{0}^{\#} > t \} = B_{r(t)},
    \qquad
    r(t) := \left(\frac{|\{ U_{0} > t \}|}{\omega_{N}}\right)^{1/N},
\end{equation*}
where $\omega_{N} = |B_{1}|$, so that $|B_{r(t)}| = |\{ U_{0} > t \}|$. It satisfies
\[
    [U_{0}^{\#}]_{{1,p}} \leq [U_{0}]_{{1,p}}
    \qquad\text{and}\qquad
    [U_{0}^{\#}]_{{s,q}} \leq [U_{0}]_{{s,q}},
\]
by the P\'olya-Szeg\H{o} inequality for $\mathcal{D}_0^{1,p}$ (see \cite[Theorem $2.1.3$]{henrot_extremum_problems}) and its fractional analogue for $\mathcal{D}_0^{s,q}$ (see \cite[Theorem 6.2]{malhotra2025eigenvalues}). Moreover, $U_{0}^{\#} \geq \frac{c^*}{2}\chi_{B_1}$. Indeed, fix any $
0\le t<\frac{ c^*}{2}.
$ Since $U_{0} \geq \frac{c^*}{2}$ on $B_{1}$, we have
$B_{1} \subseteq \{ U_{0} > t \}$, and therefore
$
    |\{ U_{0} > t \}| \geq |B_{1}| = \omega_{N},
$
which gives $B_{1} \subseteq B_{r(t)} = \{ U_{0}^{\#} > t \}$.
Since this holds for every $t \in [0,\frac{c^*}{2})$, letting $t \nearrow \frac{ c^*}{2}$ yields
$U_{0}^{\#} \geq \frac{c^*}{2}$ on $B_{1}$, hence $U_{0}^{\#} \geq \frac{c^*}{2} \chi_{B_{1}}$. 
Thus $U_{0}^{\#}$ is admissible and achieves energy no larger than $U_{0}$, uniqueness forces $U_{0}^{\#} = U_{0}$. Hence $U_{0}$ is radial and non-increasing.

Now, for every $u \in \X(\rnn)$ and $t \in \mathbb{R}$, it follows from \cite[Remark 3.3]{mosconi_LS_ineq} that
\begin{equation}\label{eq:sublattice}
    \mathcal{E}(\max\{u,t\}) + \mathcal{E}(\min\{u,t\}) \leq \mathcal{E}(u).
\end{equation}
Applying \eqref{eq:sublattice} shows that $\min\left\{U_{0}, \frac{c^*}{2} \right\} \geq  \frac{c^*}{2} \chi_{B_{1}}$ is admissible and $\mathcal{E}(\min\{U_{0}, \frac{c^*}{2}\}) \leq \mathcal{E}(U_{0})$. By minimality and uniqueness, $\min\{U_{0},\frac{c^*}{2}\} = U_{0}$, i.e.\ $U_{0} \leq \frac{c^*}{2}$. Combined with the constraint $U_{0} \geq \frac{c^*}{2}$ on $B_{1}$, we conclude $U_{0} \equiv \frac{c^*}{2}$ in $\overline{B}_{1}$. Since $U_{0}$ minimizes $\mathcal{E}$ freely on $\overline{B_{1}}^{\,c}$, it satisfies the Euler-Lagrange equation for $\mathcal{E}$ in the weak sense.

Finally, applying Theorem $5$ of \cite{CristianaGiuseppe}, we get $U_{0} \in C_{loc}^{0,\alpha}(\overline{B_{1}}^{\,c})$ for some $\alpha \in (0,1)$.
\end{proof}
\begin{lemma}\label{lem: strict dercresing U_0}
Let $U_{0}$ be the unique solution of \eqref{eq:auxiliary_pb_mixed}. If $R > 1$ then $U_{0}(R) < \frac{c^*}{2}$.
\end{lemma}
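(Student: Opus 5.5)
We argue by contradiction. By the previous proposition, $U_0$ is radial, non-increasing, satisfies $U_0\le \frac{c^*}{2}$ everywhere, and $U_0\equiv\frac{c^*}{2}$ on $\overline{B}_1$. Suppose that $U_0(R)=\frac{c^*}{2}$ for some $R>1$. Monotonicity then forces $U_0\equiv\frac{c^*}{2}$ on the whole ball $B_R$, and in particular on the annulus $B_{1,R}$. The idea is to test the Euler--Lagrange equation $\mathcal{L}_{p,q}U_0=0$ in $\overline{B_1}^c$ at a point of this annulus where the function is locally constant. There the local term vanishes, and the nonlocal term is strictly positive, because $U_0$ is not globally constant.

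\textbf{Non-constancy.} First note that $U_0\not\equiv\frac{c^*}{2}$ on $\mathbb{R}^N$. Indeed $U_0\in\mathcal{D}^{1,p}_0(\mathbb{R}^N)\subset L^{p^*}(\mathbb{R}^N)$, so $U_0$ cannot be a positive constant. Since $U_0$ is non-increasing, there is a radius $R'>R$ and a set $A\subset B_{R'}^c$ of positive measure on which $U_0<\frac{c^*}{2}-\sigma$ for some $\sigma>0$.

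\textbf{The test.} Fix a nonnegative $\varphi\in C_c^\infty(B_{1,R})$ with $\varphi\not\equiv0$; it is admissible in $\X(\overline{B_1}^c)$. Since $\nabla U_0=0$ a.e. on $B_{1,R}$, the local pairing vanishes. For the nonlocal pairing, use $\varphi=0$ outside $B_{1,R}$ and $U_0=\frac{c^*}{2}$ on $B_{1,R}$ to write
\[
0=\langle\!\langle \mathcal{L}_{p,q}(U_0),\varphi\rangle\!\rangle
=2\int_{B_{1,R}}\varphi(x)\int_{B_{1,R}^c}\frac{J_q\bigl(\tfrac{c^*}{2}-U_0(y)\bigr)}{|x-y|^{N+qs}}\,dy\,dx .
\]
Here the diagonal part over $B_{1,R}\times B_{1,R}$ vanishes identically, because $U_0$ is constant there. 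The inner integrand is nonnegative, since $U_0\le\frac{c^*}{2}$. On $A$ it is bounded below by $\sigma^{q-1}|x-y|^{-N-qs}>0$, with $|x-y|$ bounded above for $x\in\operatorname{supp}\varphi$ if we shrink $A$ to be bounded. Hence the right-hand side is strictly positive, which is a contradiction. We conclude that $U_0(R)<\frac{c^*}{2}$.

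\textbf{Main obstacle.} The delicate step is justifying that the weak formulation can be split into integrals of this form, i.e.\ absolute integrability of the cross term $\iint_{B_{1,R}\times B_{1,R}^c}$. This follows from $[U_0]_{s,q}<\infty$ and Hölder's inequality, as in Lemma~\ref{dualityLemma}, together with $\operatorname{dist}(\operatorname{supp}\varphi,\partial B_{1,R})>0$ and $U_0\in L^\infty$, so that the contribution from far $y$ is controlled by the kernel decay. A second point is that $U_0(R)$ must be read pointwise. This is legitimate by the local Hölder continuity of $U_0$ in $\overline{B_1}^c$ established in the previous proposition, which allows us to pass from $U_0(R)=\frac{c^*}{2}$ and radial monotonicity to $U_0\equiv\frac{c^*}{2}$ a.e. on $B_{1,R}$.
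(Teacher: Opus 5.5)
Your argument is correct, but it takes a different route from the paper's.

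\textbf{The paper's route.} The paper never invokes the Euler--Lagrange equation here. It lets $x^*$ be the largest radius with $U_0\equiv\frac{c^*}{2}$ on $B_{x^*}$ and assumes $x^*>1$. It then considers the dilation $U_1(x)=U_0(x^*x)$. This function is still admissible, since it equals $\frac{c^*}{2}$ on $B_1$. Its energy is $\frac1p(x^*)^{p-N}[U_0]_{1,p}^p+\frac1q(x^*)^{sq-N}[U_0]_{s,q}^q<\mathcal{E}(U_0)$, because $p<N$ and $sq<N$. This contradicts minimality.

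\textbf{What each route buys.} The paper's proof uses only minimality and scaling, so it needs none of the splitting or integrability discussion you carry out. It would also work verbatim for the purely local problem. There your mechanism gives no contradiction, since the $p$-Laplacian term vanishes identically wherever $U_0$ is locally constant.

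Your proof, conversely, is the classical nonlocal strong-maximum-principle mechanism. It uses only three facts: that $\langle\!\langle \mathcal{L}_{p,q}(U_0),\varphi\rangle\!\rangle\le 0$ for nonnegative $\varphi$ supported in the annulus; that $U_0\le\frac{c^*}{2}$ globally; and that $U_0\not\equiv\frac{c^*}{2}$. It needs neither minimality nor dilation-invariance of the constraint, so it would survive perturbations that break scaling. It also works for any weak subsolution, not just the minimizer.

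\textbf{Minor points.} The absolute integrability needed for your splitting follows directly from H\"older's inequality, via the bound $[U_0]_{s,q}^{q-1}[\varphi]_{s,q}$. The extra remarks about $\operatorname{dist}(\operatorname{supp}\varphi,\partial B_{1,R})$ and $U_0\in L^\infty$ are therefore unnecessary, though harmless. Both proofs rely equally on $U_0\le\frac{c^*}{2}$, radial monotonicity and continuity from the preceding proposition. You additionally rule out $U_0\equiv\frac{c^*}{2}$ explicitly, which the paper's definition of $x^*$ tacitly assumes (the case $x^*=\infty$).
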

\begin{proof}
Define 
\vspace{-1.5em}
\begin{equation*}
    x^* = \sup \{ |z| \in [0,\infty) : U_{0}(|y|) = \frac{c^*}{2} \text{ for all } y \in [0,|z|) \}.
\end{equation*}
Since, $U_{0}$ is continuous, $x^*$ is achieved. We have to show that $x^* = 1$. On contrary, assume that $x^* > 1$. Then we define $U_1(x) = U_{0}(x^* x )$. Then clearly $U_1$ is an admissible function. Consider
\begin{align*}
  \mathcal E(U_1)
&=\frac{1}{p}[U_1]_{1,p}^p
+
\frac1q [U_1]_{s,q}^q =\frac{1}{p} (x^*)^{p-N}[U_0]_{1,p}^p
+
\frac1q (x^*)^{sq-N}[U_0]_{s,q}^q < \mathcal{E}(U_{0}).
\end{align*}
But this is contrary to the definition of $U_{0}$. Therefore, $U_{0}(R) < \frac{c^*}{2}$.
\end{proof}

\begin{lemma}[Nonlocal computation for $\bar{U}$]
\label{lem:nonlocal_barU}
Let $U_{0}\in \X(\mathbb{R}^N)$ be a positive radially symmetric
decreasing solution of \eqref{eq:auxiliary_pb_mixed}.
Define
\begin{equation*}
  \bar{U}(x) := \min\{U_{0}(x),\, U_{0}(2)\},
\end{equation*}
Then $\bar{U}\in \mathcal{D}^{s,p,q}(\overline{B_3}^c)$ and for a.e.\ $|x|>3$:
\begin{equation}
  \langle \! \langle \mathcal{L}_{p,q}(\bar{U}),\varphi\rangle \! \rangle
  \ge \int_{\overline{B_3}^c}\frac{c_1}{|x|^{N+sq}}\varphi(x)\,dx
  > 0 \label{Eq:LoweBoundOfTruncationOfAuxillaryProblemSolution}
\end{equation}
for all nonnegative $\varphi\in \X(\overline{B_3}^c)$,
where
\begin{equation*}
  c_1 :=
  \frac{2\omega_N\cdot 3^{N+sq}}{4^{N+sq}}\cdot c_0 > 0,
  \qquad
  c_0 :=
  \begin{cases}
    2^{2-q}(U_{0}(1)-U_{0}(2))^{q-1} & \text{if } q\ge 2, \\[4pt]
    \dfrac{(U_{0}(1)-U_{0}(2))^{q-1}}{2^{q-1}} & \text{if } 1 < q < 2.
  \end{cases}
\end{equation*}
\end{lemma}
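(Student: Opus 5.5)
The strategy is to compute the action of $\mathcal{L}_{p,q}$ on $\bar U$ against a nonnegative test function $\varphi$ supported in $\overline{B_3}^c$. We use that $\bar U$ is \emph{constant} on $\overline{B_2}^c$, equal to $U_0(2)$; this holds because $U_0$ is radially non-increasing, so $U_0\le U_0(2)$ there. First we check membership in $\mathcal{D}^{s,p,q}(\overline{B_3}^c)$. Since $t\mapsto\min\{t,U_0(2)\}$ is $1$-Lipschitz, we get $[\bar U]_{1,p}\le[U_0]_{1,p}$ and $[\bar U]_{s,q}\le [U_0]_{s,q}$. Moreover $\bar U\in L^\infty\subset L^{q-1}_{\rm loc}$, and $\bar U$ is constant outside $B_2$; we take $E=\overline{B_{5/2}}^c$. (Note that $\bar U\notin L^{p^*}$ globally, but only $L^{p^*}(\overline{B_3}^c)$-type control of the gradient part matters. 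There $\nabla \bar U=0$, and we interpret $\mathcal{D}^{1,p}(\Omega)$ modulo this constant, as the paper does for barriers.)

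The local term vanishes identically, since $\nabla\bar U=0$ on $\overline{B_2}^c\supset\operatorname{supp}\varphi$. For the nonlocal term, we split $\mathbb{R}^{2N}$ according to whether $x,y$ lie in $\overline{B_2}^c$ or in $B_2$. Pairs with both points in $\overline{B_2}^c$ contribute zero, because $\bar U(x)=\bar U(y)$ there. Pairs with both points in $B_2$ contribute zero, because $\varphi=0$ there. By symmetry of the kernel, the remaining contribution is
\[
\langle\!\langle \mathcal{L}_{p,q}(\bar U),\varphi\rangle\!\rangle = 2\int_{\overline{B_3}^c}\varphi(x)\int_{B_2}\frac{J_q\bigl(U_0(2)-\bar U(y)\bigr)}{|x-y|^{N+sq}}\,dy\,dx .
\]
Since $\bar U(y)\ge U_0(2)$ for all $y$, this integrand is nonnegative. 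Hence it suffices to bound the inner integral from below on a good subset.

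For the lower bound, we restrict the inner integral to $y\in B_1$, where $\bar U(y)=U_0(y)=c^*/2=U_0(1)$; this uses the previous Proposition, which gives $U_0\equiv c^*/2$ on $\overline B_1$. There the numerator equals $\bigl(U_0(1)-U_0(2)\bigr)^{q-1}$, which is strictly positive by Lemma~\ref{lem: strict dercresing U_0} (taking $R=2$). For $|x|>3$ and $|y|<1$ we have $|x-y|\le|x|+1\le \tfrac43|x|$. Therefore
\[
\int_{B_1}\frac{dy}{|x-y|^{N+sq}}\ \ge\ \omega_N\Bigl(\tfrac{3}{4}\Bigr)^{N+sq}|x|^{-N-sq},
\]
and the claimed bound follows with $c_1= 2\omega_N 3^{N+sq}4^{-(N+sq)}c_0$. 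The constant $c_0$ as stated ($2^{2-q}$ or $2^{1-q}$ times $(U_0(1)-U_0(2))^{q-1}$) is smaller than what this argument directly gives. It presumably arises from an elementary inequality $J_q(a)-J_q(b)\ge c\,(a-b)^{q-1}$ that the authors use when writing the integrand as a difference. Since we obtain the larger constant, the stated inequality with $c_0$ follows a fortiori.

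The main point needing care is justifying the reduction of the double integral: absolute integrability, so that Fubini and the symmetrization are valid. This holds because on the relevant region $|x-y|\ge1$, $\bar U$ is bounded, and $\varphi\in\mathcal D^{s,q}_0$ satisfies the Hardy-type bound $\int\varphi|x|^{-N-sq}<\infty$ used in Lemma~\ref{lem:antisymmetry}. Everything else is direct.
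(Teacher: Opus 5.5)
There is a genuine error at the very first step: the truncation is backwards. Because $U_0$ is radially non-increasing, $U_0\ge U_0(2)$ on $B_2$ and $U_0\le U_0(2)$ on $\overline{B_2}^c$. So $\bar U=\min\{U_0,U_0(2)\}$ is the constant $U_0(2)$ on $\overline{B_2}$ and coincides with $U_0$ on $\overline{B_2}^c$. Your own justification (``$U_0\le U_0(2)$ there'') gives $\bar U=U_0$ outside $B_2$, not $U_0(2)$. Every later step inherits this mistake:
\begin{itemize}
\item The local term does not vanish, since $\nabla\bar U=\nabla U_0$ on $\operatorname{supp}\varphi$.
\item Pairs with both points outside $B_2$ do not cancel.
\item On $B_1$ one has $\bar U=U_0(2)$, not $c^*/2$. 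So even your displayed integrand $J_q(U_0(2)-\bar U(y))$, evaluated with the correct values, vanishes identically on $B_2$, and the argument gives no lower bound.
\end{itemize}
There are also sign slips inside your own picture. By definition $\bar U\le U_0(2)$ everywhere, and $\bar U(y)\ge U_0(2)$ would force $J_q(U_0(2)-\bar U(y))\le0$, not $\ge 0$. For the function you are implicitly using, $\max\{U_0,U_0(2)\}$, the far-field nonlocal term is in fact negative. Finally, the $L^{p^*}$ worry is spurious: $0\le\bar U\le U_0$, so $\bar U\in\mathcal{D}^{1,p}(\mathbb{R}^N)$ and no reinterpretation is needed.

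The missing idea is that the proof must use the equation satisfied by $U_0$. Since $\bar U=U_0$ on $\overline{B_2}^c$ and $\varphi=0$ on $\overline{B_3}$, the only pairs that contribute differently for $\bar U$ and $U_0$ are $x\in\overline{B_3}^c$, $y\in B_2$ and their symmetric counterparts. This gives
\[
\langle\!\langle \mathcal{L}_{p,q}(\bar U),\varphi\rangle\!\rangle
=\langle\!\langle \mathcal{L}_{p,q}(U_0),\varphi\rangle\!\rangle
+2\int_{\overline{B_3}^c}\varphi(x)\int_{B_2}\frac{J_q(U_0(x)-U_0(2))-J_q(U_0(x)-U_0(y))}{|x-y|^{N+sq}}\,dy\,dx .
\]
The first term is zero because $U_0$ solves $\mathcal{L}_{p,q}U_0=0$ weakly in $\overline{B_1}^c$ and $\varphi\in\mathcal{X}_0(\overline{B_3}^c)\subset\mathcal{X}_0(\overline{B_1}^c)$. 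Positivity then follows from the monotonicity of $J_q$ and $U_0(y)\ge U_0(2)$ on $B_2$: lowering $U_0$ inside $B_2$ raises the nonlocal operator far away.

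On $B_1$ the integrand is the genuine difference $(U_0(1)-U_0(x))^{q-1}-(U_0(2)-U_0(x))^{q-1}$ with $0<U_0(x)\le U_0(2)$, not a single power. This is why the paper invokes the elementary inequalities (2.2) and (2.4) of \cite{Brasco_DecayEstimate}. Your claim that the stated $c_0$ follows a fortiori is therefore not justified as written:
\begin{itemize}
\item For $q\ge 2$, superadditivity of $t\mapsto t^{q-1}$ does give at least $(U_0(1)-U_0(2))^{q-1}$.
\item For $1<q<2$, the difference is increasing in $U_0(x)$, and $U_0(x)\to0$. The safe bound is then $U_0(1)^{q-1}-U_0(2)^{q-1}>0$, which need not dominate $2^{1-q}(U_0(1)-U_0(2))^{q-1}$.
\end{itemize}
Your geometric estimate $|x-y|\le\frac43|x|$ and your integrability justification carry over unchanged once the correct decomposition is in place.
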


\begin{proof}
    It is easy to see that $\bar{U}\in \mathcal{D}^{s,p,q}(\overline{B_3}^c)$. 
Following as in Lemma $2.8$ of \cite{HolderRegularityFractional}, for all nonnegative $\varphi\in \X(\overline{B_3}^c)$, we get 
 \begin{align}
  &\langle \! \langle \mathcal{L}_{p,q}(\bar{U}),\varphi\rangle \! \rangle
  =  \langle -\Delta_p\bar{U}(x),\varphi\rangle_{p} +   \langle (-\Delta)_q^s\bar{U}(x) ,\varphi\rangle_{s,q}
  \notag\\
  &= \langle -\Delta_p U_{0}(x),\varphi\rangle_{p} + \langle (-\Delta)_q^s U_{0}(x) ,\varphi\rangle_{s,q}
    \\
    & \quad + 2\int_{\overline{B_3}^c} \int_{B_2}
        \frac{J_q(U_{0}(x)-U_{0}(2)) - J_q(U_{0}(x)-U_{0}(y))}
             {|x-y|^{N+sq}}  \varphi(x)\,dy \, dx
  \notag\\
  &= \langle \! \langle \mathcal{L}_{p,q}(U_{0})(x),\varphi\rangle \! \rangle
    + 2\int_{\overline{B_3}^c} \int_{B_2}
        \frac{J_q(U_{0}(x)-U_{0}(2)) - J_q(U_{0}(x)-U_{0}(y))}
             {|x-y|^{N+sq}} \varphi(x)\,dy \, dx
  \notag\\
  &=  2\int_{\overline{B_3}^c}  \int_{B_2}
        \frac{J_q(U_{0}(y)-U_{0}(x))-J_q(U_{0}(2)-U_{0}(x))}
             {|x-y|^{N+sq}} \varphi(x)\,dy \, dx,\notag\\
  &\geq  2\int_{\overline{B_3}^c}  \int_{B_1}
        \frac{J_q(U_{0}(y)-U_{0}(x))-J_q(U_{0}(2)-U_{0}(x))}
             {|x-y|^{N+sq}} \varphi(x)\,dy \, dx.    \label{eq:Lbarv_pointwise} 
\end{align}
The above computation is valid for each $\varphi\in \X(\overline{B_3}^c)$ since  $\langle \! \langle \mathcal{L}_{p,q}({U_{0}}),\varphi\rangle \! \rangle < \infty $ and 
\begin{align*}
  &\iint_{\overline{B_3}^c\times\overline{B_2}}
     \frac{|J_q(U_0(x)-U_0(2))|
           +|J_q(U_0(x)-U_0(y))|}
          {|x-y|^{N+sq}}
     |\varphi(x)|\,dx\,dy\\
  &\le
   C(N,s,q)(c^*/2)^{q-1} \int_{\overline{B_3}^c}
    \frac{|\varphi(x)|}{|x|^{N+sq}}\,dx
  \le C \mathcal{S}_f^{-1/q}[\varphi]_{s,q} <\infty.
\end{align*}

Now, to get a lower bound on the RHS of 
\eqref{eq:Lbarv_pointwise}, we consider two cases. 
For $q \geq 2$, we apply \cite[eq.~(2.2)]{Brasco_DecayEstimate}, which states that for all $a\in\mathbb{R}$, $b\ge 0$, $q\ge 2$, we have
\begin{equation}\label{eq:brasco22}
  J_q(a) - J_q(a+b) \le -2^{2-q}b^{q-1}.
\end{equation}
Set $a:=U_{0}(x)-U_{0}(y)<0$ and $b:=U_{0}(y)-U_{0}(2)\ge U_{0}(1)-U_{0}(2)>0$ for $y\in B_1$. Applying \eqref{eq:brasco22}, we deduce
\begin{equation*}
\begin{aligned}
 J_q(U_{0}(y)-U_{0}(x))-J_q(U_{0}(2)-U_{0}(x))
  \ge 2^{2-q}(U_{0}(y)-U_{0}(2))^{q-1} \\
  \ge 2^{2-q}(U_{0}(1)-U_{0}(2))^{q-1}
  =: c_0 > 0.
\end{aligned}
\end{equation*}
For the case $1 < q < 2$, we apply \cite[eq.~(2.4)]{Brasco_DecayEstimate},
which states that for all $a\in[0,A]$, $b\ge 0$, $q\in(1,2]$, we have
\begin{equation}\label{eq:brasco24}
  J_q(a) - J_q(a-b)
  \ge \max\left\{
        J_q(A) - J_q(A-b),\;
        \frac{b^{q-1}}{2^{q-1}}
      \right\}.
\end{equation}
Set $a:=U_{0}(y)-U_{0}(x)>0$ and $b:=U_{0}(y)-U_{0}(2)\ge U_{0}(1)-U_{0}(2)>0$ for $y\in B_1$, and $A:=\|U_{0}\|_{L^\infty(\rnn)}$.
Applying \eqref{eq:brasco24}, we get
\begin{equation*}
  J_q(U_{0}(x)-U_{0}(2)) - J_q(U_{0}(x)-U_{0}(y))
  \ge \frac{(U_{0}(y)-U_{0}(2))^{q-1}}{2^{q-1}}
  \ge \frac{(U_{0}(1)-U_{0}(2))^{q-1}}{2^{q-1}}
  =: \! c_0 > 0,
\end{equation*}
Therefore, using the above estimates together with the fact that for $y\in B_1$ and $|x|\geq 3$, $|x-y|\le\frac{4}{3}|x|$, we obtain
\begin{align}
  \int_{B_1}
    \frac{J_q(U_{0}(x)-U_{0}(2))-J_q(U_{0}(x)-U_{0}(y))}{|x-y|^{N+sq}}dy
  &\ge
  \int_{B_1}
    \frac{c_0}{\left(\frac{4}{3}\right)^{N+sq}|x|^{N+sq}}dy
  \notag=\frac{c_0\,\omega_N}{\left(\frac{4}{3}\right)^{N+sq}|x|^{N+sq}}.
\end{align}
Substituting the above computation into
\eqref{eq:Lbarv_pointwise} yields
\eqref{Eq:LoweBoundOfTruncationOfAuxillaryProblemSolution}.
\end{proof}

\begin{proof}[Proof of \textup{(ii)} Theorem \ref{prop:decay-main}]
Let $\bar{U}$, $c_1$, and $C$ be as in Lemmas~\ref{lem:nonlocal_upper} and 
\ref{lem:nonlocal_barU}, and set \[\varepsilon := \min\{\left(\frac{c_1}{C}\right)^{\!1/(q-1)}, U_0(3)\} > 0.\]
By Lemma~\ref{lem:nonlocal_upper} and the scaling properties of $\mathcal{L}_{p,q}$, it follows that
\begin{equation*}
 \langle \! \langle  \mathcal{L}_{p,q}(\varepsilon\widetilde{\Gamma}), \varphi \rangle \! \rangle 
  = \varepsilon^{p-1}\underbrace{\langle -\Delta_p\widetilde{\Gamma},  \varphi \rangle_{p}  }_{=\,0}
    + \varepsilon^{q-1} \langle (-\Delta)_q^s\widetilde{\Gamma} , \varphi \rangle_{s,q}
  \le  \int_{\overline{B_3}^c} \frac{\varepsilon^{q-1}C}{|x|^{N+sq}} \varphi \, dx 
\end{equation*}
for all $\varphi \in \X(\overline{B_3}^c)$.
Thus, by the choice of $\varepsilon$ together with Lemma~\ref{lem:nonlocal_barU}, it follows that
$\mathcal{L}_{p,q}(\varepsilon\widetilde{\Gamma}) \le \mathcal{L}_{p,q}(\bar{U})$.
For the boundary condition on $B_3$, note that $\widetilde{\Gamma}(x) \le 1$ 
for $x \in \rnn$, and $\inf_{x \in \overline{B_3}} \bar{U}(x)= U_{0}(3).$ 
Hence, 
\begin{equation*}
  \varepsilon\widetilde{\Gamma}(x)
  \le \varepsilon
  \le U_{0}(3)
  \le \bar{U}(x)
  \qquad \forall\, x \in B_3.
\end{equation*}
An application of 
Theorem~\ref{thm:comparison_principle_mixed} then yields
\[
  \varepsilon\widetilde{\Gamma}(x) \le \bar{U}(x)
  \qquad \text{for a.e.\ } x \in \overline{B_3}^c,
\]
and consequently, for a.e.\ $x \in \overline{B_3}^c$,
\begin{equation}\label{eq:aux_lower_bound}
  U_{0}(x)
  = \bar{U}(x)
  \ge \varepsilon\widetilde{\Gamma}(x)
  = \varepsilon|x|^{-\alpha}
  = \varepsilon|x|^{-(N-p)/(p-1)}.
\end{equation}
It remains to compare $U_{0}$ and $U$ on $\overline{B_1}^c$. For any 
$\varphi \in \X(\overline{B_1}^c)$, one has
\[
  \langle \! \langle \mathcal{L}_{p,q}(U_{0}), \varphi \rangle \! \rangle
  = 0
  \le \langle \! \langle U^{p^*-1}, \varphi \rangle \! \rangle
   = \langle \! \langle \mathcal{L}_{p,q}(U), \varphi \rangle \! \rangle.
\]
Moreover, for each $x \in \overline{B}_1$,
$
  U_{0}(x) = \frac{c^*}{2} < c^* \le U(x).
$
Since $U_{0}, U \in \mathcal{D}^{1,p}(\rnn) \subset \mathcal{D}^{s,p,q}(\overline{B_1}^c)$, a further application of Theorem~\ref{thm:comparison_principle_mixed} 
therefore gives
\begin{equation}\label{eq:aux_upper_bound}
  U_{0}(x) \le U(x)
  \qquad \text{for a.e.\ } x \in \overline{B_1}^c.
\end{equation}
Combining \eqref{eq:aux_lower_bound} and \eqref{eq:aux_upper_bound} completes 
the proof.
\end{proof}

\section*{Acknowledgements}

The authors would like to express their sincere gratitude to
Prof. Jacques Giacomoni for several stimulating discussions
and valuable suggestions. D.~Gupta is supported by the Jang Young Sil Fellowship Program. S.~Malhotra is supported by the Prime Minister's Research Fellowship
(PMRF ID-1403266), Government of India.
\bibliographystyle{abbrv}
\bibliography{references}

@book {giovanni_fractional_book,
    AUTHOR = {Leoni, Giovanni},
     TITLE = {A first course in fractional {S}obolev spaces},
    SERIES = {Graduate Studies in Mathematics},
    VOLUME = {229},
 PUBLISHER = {American Mathematical Society, Providence, RI},
      YEAR = {2023},
     PAGES = {xv+586},
      ISBN = {[9781470468989]; [9781470472535]; [9781470472528]},
   MRCLASS = {46-01 (30H05 35R11 42Bxx 42C40 46E35)},
  MRNUMBER = {4567945},
MRREVIEWER = {E.\ S.\ Dubtsov},
       DOI = {10.1090/gsm/229},
       URL = {https://doi.org/10.1090/gsm/229},
}

@article {willem_radial_cpt_embeddingp>2,
    AUTHOR = {Su, Jiabao and Wang, Zhi-Qiang and Willem, Michel},
     TITLE = {Weighted {S}obolev embedding with unbounded and decaying
              radial potentials},
   JOURNAL = {J. Differential Equations},
  FJOURNAL = {Journal of Differential Equations},
    VOLUME = {238},
      YEAR = {2007},
    NUMBER = {1},
     PAGES = {201--219},
      ISSN = {0022-0396,1090-2732},
   MRCLASS = {35J60 (35J20)},
  MRNUMBER = {2334597},
       DOI = {10.1016/j.jde.2007.03.018},
       URL = {https://doi.org/10.1016/j.jde.2007.03.018},
}

@article {lions_strauss_lemma,
    AUTHOR = {Lions, Pierre-Louis},
     TITLE = {Sym\'etrie et compacit\'e{} dans les espaces de {S}obolev},
   JOURNAL = {J. Functional Analysis},
  FJOURNAL = {Journal of Functional Analysis},
    VOLUME = {49},
      YEAR = {1982},
    NUMBER = {3},
     PAGES = {315--334},
      ISSN = {0022-1236},
   MRCLASS = {46E35},
  MRNUMBER = {683027},
       DOI = {10.1016/0022-1236(82)90072-6},
       URL = {https://doi.org/10.1016/0022-1236(82)90072-6},
}

@article {Brasco_DecayEstimate,
    AUTHOR = {Brasco, Lorenzo and Mosconi, Sunra and Squassina, Marco},
     TITLE = {Optimal decay of extremals for the fractional {S}obolev
              inequality},
   JOURNAL = {Calc. Var. Partial Differential Equations},
  FJOURNAL = {Calculus of Variations and Partial Differential Equations},
    VOLUME = {55},
      YEAR = {2016},
    NUMBER = {2},
     PAGES = {Art. 23, 32},
      ISSN = {0944-2669,1432-0835},
   MRCLASS = {35A23 (35A15 35R11 46E35 49K21)},
  MRNUMBER = {3461371},
MRREVIEWER = {Evangelos\ A.\ Latos},
       DOI = {10.1007/s00526-016-0958-y},
       URL = {https://doi.org/10.1007/s00526-016-0958-y},
}

@incollection {simon_inequalities,
    AUTHOR = {Simon, J.},
     TITLE = {R\'egularit\'e{} de la solution d'une \'equation non
              lin\'eaire dans {${\mathbb{R}}\sp{N}$}},
 BOOKTITLE = {Journ\'ees d'{A}nalyse {N}on {L}in\'eaire ({P}roc. {C}onf.,
              {B}esan\c con, 1977)},
    SERIES = {Lecture Notes in Math.},
    VOLUME = {665},
     PAGES = {205--227},
 PUBLISHER = {Springer, Berlin},
      YEAR = {1978},
      ISBN = {3-540-08922-5},
   MRCLASS = {35D10 (35J60)},
  MRNUMBER = {519432},
MRREVIEWER = {Wolf\ von Wahl},
}

@article {lindgren_lindqvist_fractional_eigenvalues,
    AUTHOR = {Lindgren, Erik and Lindqvist, Peter},
     TITLE = {Fractional eigenvalues},
   JOURNAL = {Calc. Var. Partial Differential Equations},
  FJOURNAL = {Calculus of Variations and Partial Differential Equations},
    VOLUME = {49},
      YEAR = {2014},
    NUMBER = {1-2},
     PAGES = {795--826},
      ISSN = {0944-2669,1432-0835},
   MRCLASS = {35P30 (35J60 35R11)},
  MRNUMBER = {3148135},
MRREVIEWER = {Xavier\ Ros-Oton},
       DOI = {10.1007/s00526-013-0600-1},
       URL = {https://doi.org/10.1007/s00526-013-0600-1},
}

@article {garain_juha_transactions_regularity_mixed,
    AUTHOR = {Garain, Prashanta and Kinnunen, Juha},
     TITLE = {On the regularity theory for mixed local and nonlocal
              quasilinear elliptic equations},
   JOURNAL = {Trans. Amer. Math. Soc.},
  FJOURNAL = {Transactions of the American Mathematical Society},
    VOLUME = {375},
      YEAR = {2022},
    NUMBER = {8},
     PAGES = {5393--5423},
      ISSN = {0002-9947,1088-6850},
   MRCLASS = {35B45 (35B65 35D30 35J92 35R11)},
  MRNUMBER = {4469224},
       DOI = {10.1090/tran/8621},
       URL = {https://doi.org/10.1090/tran/8621},
}

@book {evans_pde_book,
    AUTHOR = {Evans, Lawrence C.},
     TITLE = {Partial differential equations},
    SERIES = {Graduate Studies in Mathematics},
    VOLUME = {19},
   EDITION = {Second},
 PUBLISHER = {American Mathematical Society, Providence, RI},
      YEAR = {2010},
     PAGES = {xxii+749},
      ISBN = {978-0-8218-4974-3},
   MRCLASS = {35-01},
  MRNUMBER = {2597943},
MRREVIEWER = {Diego\ M.\ Maldonado},
       DOI = {10.1090/gsm/019},
       URL = {https://doi.org/10.1090/gsm/019},
}

@article {CristianaGiuseppe,
    AUTHOR = {De Filippis, Cristiana and Mingione, Giuseppe},
     TITLE = {Gradient regularity in mixed local and nonlocal problems},
   JOURNAL = {Math. Ann.},
  FJOURNAL = {Mathematische Annalen},
    VOLUME = {388},
      YEAR = {2024},
    NUMBER = {1},
     PAGES = {261--328},
      ISSN = {0025-5831,1432-1807},
   MRCLASS = {49N60 (35J60 35R11 49J10)},
  MRNUMBER = {4693935},
MRREVIEWER = {Xiaodong\ Yan},
       DOI = {10.1007/s00208-022-02512-7},
       URL = {https://doi.org/10.1007/s00208-022-02512-7},
}

@article {Castro_NonLocalHarnackInequalities,
    AUTHOR = {Di Castro, Agnese and Kuusi, Tuomo and Palatucci, Giampiero},
     TITLE = {Nonlocal {H}arnack inequalities},
   JOURNAL = {J. Funct. Anal.},
  FJOURNAL = {Journal of Functional Analysis},
    VOLUME = {267},
      YEAR = {2014},
    NUMBER = {6},
     PAGES = {1807--1836},
      ISSN = {0022-1236,1096-0783},
   MRCLASS = {35R11 (35B45 35B65 35D30 35J25 35R09)},
  MRNUMBER = {3237774},
       DOI = {10.1016/j.jfa.2014.05.023},
       URL = {https://doi.org/10.1016/j.jfa.2014.05.023},
}

@article{brezis_nirenberg,
  author    = {Brezis, Ha\"im and Nirenberg, Louis},
  title     = {Positive solutions of nonlinear elliptic equations involving 
               critical {S}obolev exponents},
  journal   = {Comm. Pure Appl. Math.},
  volume    = {36},
  number    = {4},
  pages     = {437--477},
  year      = {1983}
}

@article{brezis_lieb,
  author    = {Brezis, Ha\"im and Lieb, Elliott},
  title     = {A relation between pointwise convergence of functions and 
               convergence of functionals},
  journal   = {Proc. Amer. Math. Soc.},
  volume    = {88},
  number    = {3},
  pages     = {486--490},
  year      = {1983}
}

@article{aubin,
  author    = {Aubin, Thierry},
  title     = {Probl\`emes isop\'erim\'etriques et espaces de {S}obolev},
  journal   = {J. Differential Geometry},
  volume    = {11},
  number    = {4},
  pages     = {573--598},
  year      = {1976}
}

@article{talenti,
  author    = {Talenti, Giorgio},
  title     = {Best constant in {S}obolev inequality},
  journal   = {Ann. Mat. Pura Appl. (4)},
  volume    = {110},
  pages     = {353--372},
  year      = {1976}
}

@article{cotsiolis_tavoularis,
  author    = {Cotsiolis, Athanase and Tavoularis, Nikolaos K.},
  title     = {Best constants for {S}obolev inequalities for higher order 
               fractional derivatives},
  journal   = {J. Math. Anal. Appl.},
  volume    = {295},
  number    = {1},
  pages     = {225--236},
  year      = {2004}
}

@article{biagi_dipierro_valdinoci_vecchi_2021,
  author    = {Biagi, Stefano and Dipierro, Serena and Valdinoci, Enrico 
               and Vecchi, Eugenio},
  title     = {Semilinear elliptic equations involving mixed local and nonlocal 
               operators},
  journal   = {Proc. Roy. Soc. Edinburgh Sect. A},
  volume    = {151},
  number    = {5},
  pages     = {1611--1641},
  year      = {2021}
}

@article{biagi_dipierro_valdinoci_vecchi_2022,
  author    = {Biagi, Stefano and Dipierro, Serena and Valdinoci, Enrico 
               and Vecchi, Eugenio},
  title     = {Mixed local and nonlocal elliptic operators: regularity and 
               maximum principles},
  journal   = {Comm. Partial Differential Equations},
  volume    = {47},
  number    = {3},
  pages     = {585--629},
  year      = {2022}
}

@book{willem,
  author    = {Willem, Michel},
  title     = {Minimax Theorems},
  series    = {Progress in Nonlinear Differential Equations and their 
               Applications},
  volume    = {24},
  publisher = {Birkh\"auser},
  address   = {Boston},
  year      = {1996}
}

@book {WillemFunctionalAnalysis,
    AUTHOR = {Willem, Michel},
     TITLE = {Functional analysis---fundamentals and applications},
    SERIES = {Cornerstones},
   EDITION = {Second},
 PUBLISHER = {Birkh\"auser/Springer, Cham},
      YEAR = {[2022] \copyright 2022},
     PAGES = {xv+251},
      ISBN = {978-3-031-09148-3; 978-3-031-09149-0},
   MRCLASS = {46-01 (35B05 35B50 35J25 35P05)},
  MRNUMBER = {4696491},
       DOI = {10.1007/978-3-031-09149-0},
       URL = {https://doi.org/10.1007/978-3-031-09149-0},
}

@article{Ponce,
	author = {Ponce, Augusto C. },
	journal = {Calculus of Variations and Partial Differential Equations},
	number = {3},
	pages = {229--255},
	title = {A new approach to Sobolev spaces and connections to {$\mathbf{\Gamma}$-convergence}},
	volume = {19},
	year = {2004}}

@inproceedings{JeanBrezisMironescu,
  title={Another look at Sobolev spaces},
  author={Jean Bourgain and Haim Brezis and Petru Mironescu},
  year={2001},
  url={https://api.semanticscholar.org/CorpusID:125723890}
}

@article{BrascoCharacterization,
	author = {Brasco, Lorenzo and G{\'o}mez-Castro, David and V{\'a}zquez, Juan Luis},
	journal = {Calculus of Variations and Partial Differential Equations},
	number = {2},
	pages = {60},
	title = {Characterisation of homogeneous fractional Sobolev spaces},
	volume = {60},
	year = {2021}}

@article {brasco_salort_spaces,
    AUTHOR = {Brasco, Lorenzo and Salort, Ariel},
     TITLE = {A note on homogeneous {S}obolev spaces of fractional order},
   JOURNAL = {Ann. Mat. Pura Appl. (4)},
  FJOURNAL = {Annali di Matematica Pura ed Applicata. Series IV},
    VOLUME = {198},
      YEAR = {2019},
    NUMBER = {4},
     PAGES = {1295--1330},
      ISSN = {0373-3114,1618-1891},
   MRCLASS = {46E35 (46B70)},
  MRNUMBER = {3987216},
MRREVIEWER = {Oscar\ Dom\'inguez},
       DOI = {10.1007/s10231-018-0817-x},
       URL = {https://doi.org/10.1007/s10231-018-0817-x},
}

@book {brezis_book,
    AUTHOR = {Brezis, Haim},
     TITLE = {Functional analysis, {S}obolev spaces and partial differential
              equations},
    SERIES = {Universitext},
 PUBLISHER = {Springer, New York},
      YEAR = {2011},
     PAGES = {xiv+599},
      ISBN = {978-0-387-70913-0},
   MRCLASS = {35-01 (46-01 46E35 46N20 47F05)},
  MRNUMBER = {2759829},
MRREVIEWER = {Vicen\c tiu\ D.\ R\u adulescu},
}

@article{malhotra2025eigenvalues,
  title={On the Eigenvalues and {F}u{\v{c}}{\'\i}k Spectrum of $p$-Laplace Local and Nonlocal Operator With Mixed Interpolated {H}ardy Term},
  author={Malhotra, Shammi and Goyal, Sarika and Sreenadh, Konijeti},
  journal={Asymptotic Analysis},
  pages={09217134251339280},
  year={2025},
  publisher={SAGE Publications Sage UK: London, England}
}

@article{silva2024mixed,
  title={Mixed local-nonlocal quasilinear problems with critical nonlinearities},
  author={da Silva, Jo{\~a}o Vitor and Fiscella, Alessio and Viloria, Victor A Blanco},
  journal={Journal of Differential Equations},
  volume={408},
  pages={494--536},
  year={2024},
  publisher={Elsevier}
}

@article{wang2023ground,
  title={The ground states of {H}{\'e}non equations for $p$-{L}aplacian in $\mathbb{R}^N$ involving upper weighted critical exponents},
  author={Wang, Cong and Su, Jiabao},
  journal={Communications in Nonlinear Science and Numerical Simulation},
  volume={120},
  pages={107146},
  year={2023},
  publisher={Elsevier}
}

@article {CastroTuomoPalatucci,
    AUTHOR = {Di Castro, Agnese and Kuusi, Tuomo and Palatucci, Giampiero},
     TITLE = {Local behavior of fractional {$p$}-minimizers},
   JOURNAL = {Ann. Inst. H. Poincar\'e{} C Anal. Non Lin\'eaire},
  FJOURNAL = {Annales de l'Institut Henri Poincar\'e{} C. Analyse Non
              Lin\'eaire},
    VOLUME = {33},
      YEAR = {2016},
    NUMBER = {5},
     PAGES = {1279--1299},
      ISSN = {0294-1449,1873-1430},
   MRCLASS = {35R11 (35B45 35B65 35D30 47G20)},
  MRNUMBER = {3542614},
MRREVIEWER = {Erwin\ Topp},
       DOI = {10.1016/j.anihpc.2015.04.003},
       URL = {https://doi.org/10.1016/j.anihpc.2015.04.003},
}

@book {giusti_book,
    AUTHOR = {Giusti, Enrico},
     TITLE = {Direct methods in the calculus of variations},
 PUBLISHER = {World Scientific Publishing Co., Inc., River Edge, NJ},
      YEAR = {2003},
     PAGES = {viii+403},
      ISBN = {981-238-043-4},
   MRCLASS = {49-02 (35J50 49J10 49K10 49N60)},
  MRNUMBER = {1962933},
MRREVIEWER = {Giovanni\ Alberti},
       DOI = {10.1142/9789812795557},
       URL = {https://doi.org/10.1142/9789812795557},
}

@book {giovanni_book_sobolev,
    AUTHOR = {Leoni, Giovanni},
     TITLE = {A first course in {S}obolev spaces},
    SERIES = {Graduate Studies in Mathematics},
    VOLUME = {181},
   EDITION = {Second},
 PUBLISHER = {American Mathematical Society, Providence, RI},
      YEAR = {2017},
     PAGES = {xxii+734},
      ISBN = {978-1-4704-2921-8},
   MRCLASS = {46E35 (26Axx 26B30 28A78 46-01)},
  MRNUMBER = {3726909},
       DOI = {10.1090/gsm/181},
       URL = {https://doi.org/10.1090/gsm/181},
}

@book {lindqvist_notes,
    AUTHOR = {Lindqvist, Peter},
     TITLE = {Notes on the {$p$}-{L}aplace equation},
    SERIES = {Report. University of Jyv\"askyl\"a{} Department of
              Mathematics and Statistics},
    VOLUME = {102},
 PUBLISHER = {University of Jyv\"askyl\"a, Jyv\"askyl\"a},
      YEAR = {2006},
     PAGES = {ii+80},
      ISBN = {951-39-2586-2},
   MRCLASS = {35J60},
  MRNUMBER = {2242021},
MRREVIEWER = {Jorge\ Garc\'ia-Meli\'an},
}

@article {PezzoQuaas,
    AUTHOR = {Del Pezzo, Leandro M. and Quaas, Alexander},
     TITLE = {Spectrum of the fractional {$p$}-{L}aplacian in {$\mathbb{R}^N$}
              and decay estimate for positive solutions of a {S}chr\"odinger
              equation},
   JOURNAL = {Nonlinear Anal.},
  FJOURNAL = {Nonlinear Analysis. Theory, Methods \& Applications. An
              International Multidisciplinary Journal},
    VOLUME = {193},
      YEAR = {2020},
     PAGES = {111479, 24},
      ISSN = {0362-546X,1873-5215},
   MRCLASS = {35R11 (35B40 35P05 46E35 47A10)},
  MRNUMBER = {4062974},
       DOI = {10.1016/j.na.2019.03.002},
       URL = {https://doi.org/10.1016/j.na.2019.03.002},
}

@article {HolderRegularityFractional,
    AUTHOR = {Iannizzotto, Antonio and Mosconi, Sunra and Squassina, Marco},
     TITLE = {Global {H}\"older regularity for the fractional
              {$p$}-{L}aplacian},
   JOURNAL = {Rev. Mat. Iberoam.},
  FJOURNAL = {Revista Matem\'atica Iberoamericana},
    VOLUME = {32},
      YEAR = {2016},
    NUMBER = {4},
     PAGES = {1353--1392},
      ISSN = {0213-2230,2235-0616},
   MRCLASS = {35R11 (35B65 35J92 47G20)},
  MRNUMBER = {3593528},
MRREVIEWER = {Pablo\ Ra\'ul\ Stinga},
       DOI = {10.4171/RMI/921},
       URL = {https://doi.org/10.4171/RMI/921},
}

@article {FractionalLaplacianEquivalenceOfSolutions,
    AUTHOR = {Korvenp\"a\"a, Janne and Kuusi, Tuomo and Lindgren, Erik},
     TITLE = {Equivalence of solutions to fractional {$p$}-{L}aplace type
              equations},
   JOURNAL = {J. Math. Pures Appl. (9)},
  FJOURNAL = {Journal de Math\'ematiques Pures et Appliqu\'ees. Neuvi\`eme
              S\'erie},
    VOLUME = {132},
      YEAR = {2019},
     PAGES = {1--26},
      ISSN = {0021-7824,1776-3371},
   MRCLASS = {35R11 (35B51 35D30 35D40 35R09)},
  MRNUMBER = {4030247},
MRREVIEWER = {Joana\ Terra},
       DOI = {10.1016/j.matpur.2017.10.004},
       URL = {https://doi.org/10.1016/j.matpur.2017.10.004},
}

@article{anthal2025pohozaev,
  title={Pohozaev-type identities for classes of quasilinear elliptic local and nonlocal equations and systems, with applications},
  author={Anthal, Gurdev Chand and Garain, Prashanta},
  journal={arXiv preprint arXiv:2506.08667},
  year={2025}
}

@article {cozzi_harnack_fractional,
    AUTHOR = {Cozzi, Matteo},
     TITLE = {Regularity results and {H}arnack inequalities for minimizers
              and solutions of nonlocal problems: a unified approach via
              fractional {D}e {G}iorgi classes},
   JOURNAL = {J. Funct. Anal.},
  FJOURNAL = {Journal of Functional Analysis},
    VOLUME = {272},
      YEAR = {2017},
    NUMBER = {11},
     PAGES = {4762--4837},
      ISSN = {0022-1236,1096-0783},
   MRCLASS = {49N60 (35B45 35B65 35R11 47G20)},
  MRNUMBER = {3630640},
MRREVIEWER = {Mark\ Allen},
       DOI = {10.1016/j.jfa.2017.02.016},
       URL = {https://doi.org/10.1016/j.jfa.2017.02.016},
}

@article {FractionalHaryInequality,
    AUTHOR = {Frank, Rupert L. and Seiringer, Robert},
     TITLE = {Non-linear ground state representations and sharp {H}ardy
              inequalities},
   JOURNAL = {J. Funct. Anal.},
  FJOURNAL = {Journal of Functional Analysis},
    VOLUME = {255},
      YEAR = {2008},
    NUMBER = {12},
     PAGES = {3407--3430},
      ISSN = {0022-1236,1096-0783},
   MRCLASS = {46E35 (35A25)},
  MRNUMBER = {2469027},
MRREVIEWER = {Niko\ M.\ Marola},
       DOI = {10.1016/j.jfa.2008.05.015},
       URL = {https://doi.org/10.1016/j.jfa.2008.05.015},
}

@book {henrot_extremum_problems,
    AUTHOR = {Henrot, A.},
     TITLE = {Extremum problems for eigenvalues of elliptic operators},
    SERIES = {Frontiers in Mathematics},
 PUBLISHER = {Birkh\"auser Verlag, Basel},
      YEAR = {2006},
     PAGES = {x+202},
      ISBN = {978-3-7643-7705-2; 3-7643-7705-4},
   MRCLASS = {35P05 (35J05 35J15 35P15 47F05 49R50)},
  MRNUMBER = {2251558},
MRREVIEWER = {G\"unter\ Berger},
}

@article {mosconi_LS_ineq,
    AUTHOR = {Gigli, Nicola and Mosconi, Sunra},
     TITLE = {The abstract {L}ewy-{S}tampacchia inequality and applications},
   JOURNAL = {J. Math. Pures Appl. (9)},
  FJOURNAL = {Journal de Math\'ematiques Pures et Appliqu\'ees. Neuvi\`eme
              S\'erie},
    VOLUME = {104},
      YEAR = {2015},
    NUMBER = {2},
     PAGES = {258--275},
      ISSN = {0021-7824,1776-3371},
   MRCLASS = {49J40 (35J86 35J87)},
  MRNUMBER = {3365829},
MRREVIEWER = {Dimitri\ Mugnai},
       DOI = {10.1016/j.matpur.2015.02.007},
       URL = {https://doi.org/10.1016/j.matpur.2015.02.007},
}

@article{LocalSobolevOnBall,
author = {Adams, David R.},
title = {FINE REGULARITY OF SOLUTIONS OF ELLIPTIC PARTIAL DIFFERENTIAL EQUATIONS (Mathematical Surveys and Monographs 51)},
journal = {Bulletin of the London Mathematical Society},
volume = {31},
number = {2},
pages = {248-250},
doi = {https://doi.org/10.1112/S0024609398215025},
url = {https://londmathsoc.onlinelibrary.wiley.com/doi/abs/10.1112/S0024609398215025},
eprint = {https://londmathsoc.onlinelibrary.wiley.com/doi/pdf/10.1112/S0024609398215025},
year = {1999}
}

@article {brasco_second_eigenvalue,
    AUTHOR = {Brasco, Lorenzo and Parini, Enea},
     TITLE = {The second eigenvalue of the fractional {$p$}-{L}aplacian},
   JOURNAL = {Adv. Calc. Var.},
  FJOURNAL = {Advances in Calculus of Variations},
    VOLUME = {9},
      YEAR = {2016},
    NUMBER = {4},
     PAGES = {323--355},
      ISSN = {1864-8258,1864-8266},
   MRCLASS = {35R11 (35P30 35R09 47J10)},
  MRNUMBER = {3552458},
MRREVIEWER = {Eduardo\ Colorado},
       DOI = {10.1515/acv-2015-0007},
       URL = {https://doi.org/10.1515/acv-2015-0007},
}

@book {grisvard_book,
    AUTHOR = {Grisvard, Pierre},
     TITLE = {Elliptic problems in nonsmooth domains},
    SERIES = {Classics in Applied Mathematics},
    VOLUME = {69},
      NOTE = {Reprint of the 1985 original [MR0775683],
              With a foreword by Susanne C. Brenner},
 PUBLISHER = {Society for Industrial and Applied Mathematics (SIAM),
              Philadelphia, PA},
      YEAR = {2011},
     PAGES = {xx+410},
      ISBN = {978-1-611972-02-3},
   MRCLASS = {35J25 (01A75 35-02)},
  MRNUMBER = {3396210},
       DOI = {10.1137/1.9781611972030.ch1},
       URL = {https://doi.org/10.1137/1.9781611972030.ch1},
}

@article{chakraborty2025global,
  title={Global Compactness Result for a {B}r\'ezis-{N}irenberg-Type Problem Involving Mixed Local Nonlocal Operator},
  author={Chakraborty, Souptik and Gupta, Diksha and Malhotra, Shammi and Sreenadh, Konijeti},
  journal={arXiv preprint arXiv:2504.15968},
  year={2025}
}

@article {mosconi_BN_pfrac,
    AUTHOR = {Mosconi, Sunra and Perera, Kanishka and Squassina, Marco and
              Yang, Yang},
     TITLE = {The {B}rezis-{N}irenberg problem for the fractional
              {$p$}-{L}aplacian},
   JOURNAL = {Calc. Var. Partial Differential Equations},
  FJOURNAL = {Calculus of Variations and Partial Differential Equations},
    VOLUME = {55},
      YEAR = {2016},
    NUMBER = {4},
     PAGES = {Art. 105, 25},
      ISSN = {0944-2669,1432-0835},
   MRCLASS = {35R11 (35A15 35B33 35J92)},
  MRNUMBER = {3530213},
MRREVIEWER = {Kai\ Diethelm},
       DOI = {10.1007/s00526-016-1035-2},
       URL = {https://doi.org/10.1007/s00526-016-1035-2},
}

@article {peetre_lorentz_spaces,
    AUTHOR = {Peetre, Jaak},
     TITLE = {Espaces d'interpolation et th\'eor\`eme de {S}oboleff},
   JOURNAL = {Ann. Inst. Fourier (Grenoble)},
  FJOURNAL = {Universit\'e{} de Grenoble. Annales de l'Institut Fourier},
    VOLUME = {16},
      YEAR = {1966},
     PAGES = {279--317},
      ISSN = {0373-0956,1777-5310},
   MRCLASS = {46.38 (42.00)},
  MRNUMBER = {221282},
MRREVIEWER = {C.\ Goulaouic},
       URL = {http://www.numdam.org/item?id=AIF_1966__16_1_279_0},
}

@article {cassani_Sobolevineq_lorentz,
    AUTHOR = {Cassani, Daniele and Ruf, Bernhard and Tarsi, Cristina},
     TITLE = {Optimal {S}obolev type inequalities in {L}orentz spaces},
   JOURNAL = {Potential Anal.},
  FJOURNAL = {Potential Analysis. An International Journal Devoted to the
              Interactions between Potential Theory, Probability Theory,
              Geometry and Functional Analysis},
    VOLUME = {39},
      YEAR = {2013},
    NUMBER = {3},
     PAGES = {265--285},
      ISSN = {0926-2601,1572-929X},
   MRCLASS = {46E35 (35A23 35B38 35B65 35J20)},
  MRNUMBER = {3102987},
MRREVIEWER = {Dachun\ Yang},
       DOI = {10.1007/s11118-012-9329-2},
       URL = {https://doi.org/10.1007/s11118-012-9329-2},
}

@article {cassani_Hardy_lorentz,
    AUTHOR = {Cassani, D. and Ruf, B. and Tarsi, C.},
     TITLE = {Equivalent and attained version of {H}ardy's inequality in
              {$\mathbb{R}^n$}},
   JOURNAL = {J. Funct. Anal.},
  FJOURNAL = {Journal of Functional Analysis},
    VOLUME = {275},
      YEAR = {2018},
    NUMBER = {12},
     PAGES = {3303--3324},
      ISSN = {0022-1236,1096-0783},
   MRCLASS = {42B30 (46E40)},
  MRNUMBER = {3864503},
MRREVIEWER = {Oscar\ Blasco},
       DOI = {10.1016/j.jfa.2018.09.008},
       URL = {https://doi.org/10.1016/j.jfa.2018.09.008},
}

@article {alvino_best_Lorentz_Sobolev,
    AUTHOR = {Alvino, Angelo},
     TITLE = {Sulla diseguaglianza di {S}obolev in spazi di {L}orentz},
   JOURNAL = {Boll. Un. Mat. Ital. A (5)},
  FJOURNAL = {Bollettino della Unione Matematica Italiana. A. Serie 5},
    VOLUME = {14},
      YEAR = {1977},
    NUMBER = {1},
     PAGES = {148--156},
      ISSN = {0392-4033},
   MRCLASS = {46E35},
  MRNUMBER = {438106},
MRREVIEWER = {Pavel\ Doktor},
}

@article {glowinski_plap_glaciology,
    AUTHOR = {Glowinski, Roland and Rappaz, Jacques},
     TITLE = {Approximation of a nonlinear elliptic problem arising in a
              non-{N}ewtonian fluid flow model in glaciology},
   JOURNAL = {M2AN Math. Model. Numer. Anal.},
  FJOURNAL = {M2AN. Mathematical Modelling and Numerical Analysis},
    VOLUME = {37},
      YEAR = {2003},
    NUMBER = {1},
     PAGES = {175--186},
      ISSN = {0764-583X,1290-3841},
   MRCLASS = {86A40 (65N15 65N30 76A05 76M10)},
  MRNUMBER = {1972657},
MRREVIEWER = {Eun-Jae\ Park},
       DOI = {10.1051/m2an:2003012},
       URL = {https://doi.org/10.1051/m2an:2003012},
}

@inproceedings{kuijper_plap_image_processing,
  title={$p$-{L}aplacian driven image processing},
  author={Kuijper, Arjan},
  booktitle={2007 IEEE International conference on image processing},
  volume={5},
  pages={V--257},
  year={2007},
  organization={IEEE}
}

@article {cuccu_plap_nonlinear_elasticity,
    AUTHOR = {Cuccu, Fabrizio and Emamizadeh, Behrouz and Porru, Giovanni},
     TITLE = {Nonlinear elastic membranes involving the {$p$}-{L}aplacian
              operator},
   JOURNAL = {Electron. J. Differential Equations},
  FJOURNAL = {Electronic Journal of Differential Equations},
      YEAR = {2006},
     PAGES = {No. 49, 10},
      ISSN = {1072-6691},
   MRCLASS = {35J60 (35J25 49Q10 74K15)},
  MRNUMBER = {2226922},
}

@article {pellacci_logistic_equation,
    AUTHOR = {Montefusco, E. and Pellacci, B. and Verzini,
              G.},
     TITLE = {Fractional diffusion with {N}eumann boundary conditions: the
              logistic equation},
   JOURNAL = {Discrete Contin. Dyn. Syst. Ser. B},
  FJOURNAL = {Discrete and Continuous Dynamical Systems. Series B. A Journal
              Bridging Mathematics and Sciences},
    VOLUME = {18},
      YEAR = {2013},
    NUMBER = {8},
     PAGES = {2175--2202},
      ISSN = {1531-3492,1553-524X},
   MRCLASS = {35R11 (35B32 35J25 92D40)},
  MRNUMBER = {3082317},
MRREVIEWER = {Xavier\ Ros-Oton},
       DOI = {10.3934/dcdsb.2013.18.2175},
       URL = {https://doi.org/10.3934/dcdsb.2013.18.2175},
}

@article {valdinoci_logistic_equation,
    AUTHOR = {Dipierro, S. and P. Lippi, E. and Valdinoci,
              E.},
     TITLE = {({N}on)local logistic equations with {N}eumann conditions},
   JOURNAL = {Ann. Inst. H. Poincar\'e{} C Anal. Non Lin\'eaire},
  FJOURNAL = {Annales de l'Institut Henri Poincar\'e{} C. Analyse Non
              Lin\'eaire},
    VOLUME = {40},
      YEAR = {2023},
    NUMBER = {5},
     PAGES = {1093--1166},
      ISSN = {0294-1449,1873-1430},
   MRCLASS = {35Q92 (35R11 60G22 92D25)},
  MRNUMBER = {4651677},
       DOI = {10.4171/aihpc/57},
       URL = {https://doi.org/10.4171/aihpc/57},
}

@article {pellacci_best_dispersal_strategies,
    AUTHOR = {Pellacci, B. and Verzini, G.},
     TITLE = {Best dispersal strategies in spatially heterogeneous
              environments: optimization of the principal eigenvalue for
              indefinite fractional {N}eumann problems},
   JOURNAL = {J. Math. Biol.},
  FJOURNAL = {Journal of Mathematical Biology},
    VOLUME = {76},
      YEAR = {2018},
    NUMBER = {6},
     PAGES = {1357--1386},
      ISSN = {0303-6812,1432-1416},
   MRCLASS = {35R11 (35P15 47A75 92D25)},
  MRNUMBER = {3771424},
       DOI = {10.1007/s00285-017-1180-z},
       URL = {https://doi.org/10.1007/s00285-017-1180-z},
}

@article {valdinoci_ecological_niche,
    AUTHOR = {Dipierro, S. and Valdinoci, E.},
     TITLE = {Description of an ecological niche for a mixed local/nonlocal
              dispersal: an evolution equation and a new {N}eumann condition
              arising from the superposition of {B}rownian and {L}\'evy
              processes},
   JOURNAL = {Phys. A},
  FJOURNAL = {Physica A. Statistical Mechanics and its Applications},
    VOLUME = {575},
      YEAR = {2021},
     PAGES = {Paper No. 126052, 20},
      ISSN = {0378-4371,1873-2119},
   MRCLASS = {60G50 (35Q92 92B05)},
  MRNUMBER = {4249816},
       DOI = {10.1016/j.physa.2021.126052},
       URL = {https://doi.org/10.1016/j.physa.2021.126052},
}

@article{blazevski_anisotropic_heat_transport,
  title={Local and nonlocal anisotropic transport in reversed shear magnetic fields: Shearless Cantori and nondiffusive transport},
  author={Blazevski, D. and del-Castillo-Negrete, D.},
  journal={Physical Review E—Statistical, Nonlinear, and Soft Matter Physics},
  volume={87},
  number={6},
  pages={063106},
  year={2013},
  publisher={APS}
}

@article {byun_regularity_mixed,
    AUTHOR = {Byun, Sun-Sig and Lee, Ho-Sik and Song, Kyeong},
     TITLE = {Regularity results for mixed local and nonlocal double phase
              functionals},
   JOURNAL = {J. Differential Equations},
  FJOURNAL = {Journal of Differential Equations},
    VOLUME = {416},
      YEAR = {2025},
     PAGES = {1528--1563},
      ISSN = {0022-0396,1090-2732},
   MRCLASS = {49N60 (35B65 35R05 35R11 47G20)},
  MRNUMBER = {4819006},
MRREVIEWER = {Eugen\ Viszus},
       DOI = {10.1016/j.jde.2024.10.028},
       URL = {https://doi.org/10.1016/j.jde.2024.10.028},
}

@article {biagi_Hong_krahn_ineq_mixed,
    AUTHOR = {Biagi, Stefano and Dipierro, Serena and Valdinoci, Enrico and
              Vecchi, Eugenio},
     TITLE = {A {H}ong-{K}rahn-{S}zeg\"o{} inequality for mixed local and
              nonlocal operators},
   JOURNAL = {Math. Eng.},
  FJOURNAL = {Mathematics in Engineering},
    VOLUME = {5},
      YEAR = {2023},
    NUMBER = {1},
     PAGES = {Paper No. 014, 25},
      ISSN = {2640-3501},
   MRCLASS = {35P30},
  MRNUMBER = {4391102},
MRREVIEWER = {Tetsutaro\ Shibata},
       DOI = {10.3934/mine.2023014},
       URL = {https://doi.org/10.3934/mine.2023014},
}

@article {biagi_faber_krahn_ineq_mixed,
    AUTHOR = {Biagi, Stefano and Dipierro, Serena and Valdinoci, Enrico and
              Vecchi, Eugenio},
     TITLE = {A {F}aber-{K}rahn inequality for mixed local and nonlocal
              operators},
   JOURNAL = {J. Anal. Math.},
  FJOURNAL = {Journal d'Analyse Math\'ematique},
    VOLUME = {150},
      YEAR = {2023},
    NUMBER = {2},
     PAGES = {405--448},
      ISSN = {0021-7670,1565-8538},
   MRCLASS = {35P15 (31B10)},
  MRNUMBER = {4645045},
MRREVIEWER = {Richard\ S.\ Laugesen},
       DOI = {10.1007/s11854-023-0272-5},
       URL = {https://doi.org/10.1007/s11854-023-0272-5},
}

@article {pezzo_ferreira_rossi_eigenvalues_mixed,
    AUTHOR = {Del Pezzo, Leandro M. and Ferreira, Ra\'ul and Rossi, Julio
              D.},
     TITLE = {Eigenvalues for a combination between local and nonlocal
              {$p$}-{L}aplacians},
   JOURNAL = {Fract. Calc. Appl. Anal.},
  FJOURNAL = {Fractional Calculus and Applied Analysis. An International
              Journal for Theory and Applications},
    VOLUME = {22},
      YEAR = {2019},
    NUMBER = {5},
     PAGES = {1414--1436},
      ISSN = {1311-0454,1314-2224},
   MRCLASS = {35R11 (26A33 35J92 35P30 47G20)},
  MRNUMBER = {4044581},
       DOI = {10.1515/fca-2019-0074},
       URL = {https://doi.org/10.1515/fca-2019-0074},
}

@article {divya_eigenvalue_mixed,
    AUTHOR = {Goel, Divya and Sreenadh, K.},
     TITLE = {On the second eigenvalue of combination between local and
              nonlocal {$p$}-{L}aplacian},
   JOURNAL = {Proc. Amer. Math. Soc.},
  FJOURNAL = {Proceedings of the American Mathematical Society},
    VOLUME = {147},
      YEAR = {2019},
    NUMBER = {10},
     PAGES = {4315--4327},
      ISSN = {0002-9939,1088-6826},
   MRCLASS = {35P15 (35P30 35R11 47J10 49Q10)},
  MRNUMBER = {4002544},
       DOI = {10.1090/proc/14542},
       URL = {https://doi.org/10.1090/proc/14542},
}

@article{bhakta_mixed_multiplicity,
  title={Quasilinear problems with mixed local-nonlocal operator and concave-critical nonlinearities: Multiplicity of positive solutions},
  author={Bhakta, Mousomi and Biswas, Nirjan and Das, Paramananda},
  journal={arXiv preprint arXiv:2504.15000},
  year={2025}
}

@article{dhanya_mixed_multiplicity_weights,
  title={Multiplicity Results for Mixed Local Nonlocal Equations With Indefinite Concave-Convex Type Nonlinearity},
  author={Dhanya, R and Giacomoni, Jacques and Jana, Ritabrata},
  journal={arXiv preprint arXiv:2503.00365},
  year={2025}
}

@article {biagi_vecchi_abc_mixed,
    AUTHOR = {Biagi, Stefano and Vecchi, Eugenio},
     TITLE = {On the existence of a second positive solution to mixed
              local-nonlocal concave-convex critical problems},
   JOURNAL = {Nonlinear Anal.},
  FJOURNAL = {Nonlinear Analysis. Theory, Methods \& Applications. An
              International Multidisciplinary Journal},
    VOLUME = {256},
      YEAR = {2025},
     PAGES = {Paper No. 113795, 27},
      ISSN = {0362-546X,1873-5215},
   MRCLASS = {35J75 (35B09 35B33 35D30 35J25)},
  MRNUMBER = {4878869},
MRREVIEWER = {Abdelkrim\ Moussaoui},
       DOI = {10.1016/j.na.2025.113795},
       URL = {https://doi.org/10.1016/j.na.2025.113795},
}

@article {chen_kim_song_uniform_harnack_mixed,
    AUTHOR = {Chen, Zhen-Qing and Kim, Panki and Song, Renming and Vondra\v{c}ek, Zoran},
     TITLE = {Boundary {H}arnack principle for {$\Delta+\Delta^{\alpha/2}$}},
   JOURNAL = {Trans. Amer. Math. Soc.},
  FJOURNAL = {Transactions of the American Mathematical Society},
    VOLUME = {364},
      YEAR = {2012},
    NUMBER = {8},
     PAGES = {4169--4205},
      ISSN = {0002-9947,1088-6850},
   MRCLASS = {31B25},
  MRNUMBER = {2912450},
MRREVIEWER = {Erkan\ Nane},
       DOI = {10.1090/S0002-9947-2012-05542-5},
       URL = {https://doi.org/10.1090/S0002-9947-2012-05542-5},
}

@article {garain_lindgren_holder_pmixed,
    AUTHOR = {Garain, Prashanta and Lindgren, Erik},
     TITLE = {Higher {H}\"older regularity for mixed local and nonlocal
              degenerate elliptic equations},
   JOURNAL = {Calc. Var. Partial Differential Equations},
  FJOURNAL = {Calculus of Variations and Partial Differential Equations},
    VOLUME = {62},
      YEAR = {2023},
    NUMBER = {2},
     PAGES = {Paper No. 67, 36},
      ISSN = {0944-2669,1432-0835},
   MRCLASS = {35B65 (26A33 35D30 35J70 35R09 35R11)},
  MRNUMBER = {4530314},
       DOI = {10.1007/s00526-022-02401-6},
       URL = {https://doi.org/10.1007/s00526-022-02401-6},
}

@article {xifeng_valdinoci_mixed_c2alpha_regularity,
    AUTHOR = {Su, Xifeng and Valdinoci, Enrico and Wei, Yuanhong and Zhang,
              Jiwen},
     TITLE = {On some regularity properties of mixed local and nonlocal
              elliptic equations},
   JOURNAL = {J. Differential Equations},
  FJOURNAL = {Journal of Differential Equations},
    VOLUME = {416},
      YEAR = {2025},
     PAGES = {576--613},
      ISSN = {0022-0396,1090-2732},
   MRCLASS = {35B65 (35J67 35R11)},
  MRNUMBER = {4808805},
MRREVIEWER = {Xuan\ Xuan\ Xi},
       DOI = {10.1016/j.jde.2024.10.003},
       URL = {https://doi.org/10.1016/j.jde.2024.10.003},
}

@article {shang_zhang_smp_mixed_ev,
    AUTHOR = {Shang, Bin and Zhang, Chao},
     TITLE = {A strong maximum principle for mixed local and nonlocal
              {$p$}-{L}aplace equations},
   JOURNAL = {Asymptot. Anal.},
  FJOURNAL = {Asymptotic Analysis},
    VOLUME = {133},
      YEAR = {2023},
    NUMBER = {1-2},
     PAGES = {1--12},
      ISSN = {0921-7134,1875-8576},
   MRCLASS = {35J92},
  MRNUMBER = {4595187},
       DOI = {10.3233/asy-221803},
       URL = {https://doi.org/10.3233/asy-221803},
}

@article {antonini_cozzi_gradient_regularity_mixed,
    AUTHOR = {Antonini, Carlo Alberto and Cozzi, Matteo},
     TITLE = {Global gradient regularity and a {H}opf lemma for quasilinear
              operators of mixed local-nonlocal type},
   JOURNAL = {J. Differential Equations},
  FJOURNAL = {Journal of Differential Equations},
    VOLUME = {425},
      YEAR = {2025},
     PAGES = {342--382},
      ISSN = {0022-0396,1090-2732},
   MRCLASS = {35B65 (35D30 35J60 35J92 35R11)},
  MRNUMBER = {4853424},
MRREVIEWER = {Makson\ S.\ Santos},
       DOI = {10.1016/j.jde.2025.01.030},
       URL = {https://doi.org/10.1016/j.jde.2025.01.030},
}

@article {xifeng_valdinoci_mixed_regularity,
    AUTHOR = {Su, Xifeng and Valdinoci, Enrico and Wei, Yuanhong and Zhang,
              Jiwen},
     TITLE = {Regularity results for solutions of mixed local and nonlocal
              elliptic equations},
   JOURNAL = {Math. Z.},
  FJOURNAL = {Mathematische Zeitschrift},
    VOLUME = {302},
      YEAR = {2022},
    NUMBER = {3},
     PAGES = {1855--1878},
      ISSN = {0025-5874,1432-1823},
   MRCLASS = {35R11 (35B65 35J67)},
  MRNUMBER = {4492518},
       DOI = {10.1007/s00209-022-03132-2},
       URL = {https://doi.org/10.1007/s00209-022-03132-2},
}

@book {serra_book,
    AUTHOR = {Badiale, Marino and Serra, Enrico},
     TITLE = {Semilinear elliptic equations for beginners},
    SERIES = {Universitext},
      NOTE = {Existence results via the variational approach},
 PUBLISHER = {Springer, London},
      YEAR = {2011},
     PAGES = {x+199},
      ISBN = {978-0-85729-226-1},
   MRCLASS = {35-01 (35B38 35J20 35J61 35J91 47J30)},
  MRNUMBER = {2722059},
MRREVIEWER = {Stephen\ B.\ Robinson},
       DOI = {10.1007/978-0-85729-227-8},
       URL = {https://doi.org/10.1007/978-0-85729-227-8},
}

@article {palais_principle_symmetric,
    AUTHOR = {Palais, Richard S.},
     TITLE = {The principle of symmetric criticality},
   JOURNAL = {Comm. Math. Phys.},
  FJOURNAL = {Communications in Mathematical Physics},
    VOLUME = {69},
      YEAR = {1979},
    NUMBER = {1},
     PAGES = {19--30},
      ISSN = {0010-3616,1432-0916},
   MRCLASS = {58E05},
  MRNUMBER = {547524},
MRREVIEWER = {A.\ J.\ Tromba},
       URL = {http://projecteuclid.org/euclid.cmp/1103905401},
}

@article {chen_li_symmetry_pfrac,
    AUTHOR = {Chen, Wenxiong and Li, Congming},
     TITLE = {Maximum principles for the fractional {$p$}-{L}aplacian and
              symmetry of solutions},
   JOURNAL = {Adv. Math.},
  FJOURNAL = {Advances in Mathematics},
    VOLUME = {335},
      YEAR = {2018},
     PAGES = {735--758},
      ISSN = {0001-8708,1090-2082},
   MRCLASS = {35R11 (35B50)},
  MRNUMBER = {3836677},
MRREVIEWER = {Vincenzo\ Ambrosio},
       DOI = {10.1016/j.aim.2018.07.016},
       URL = {https://doi.org/10.1016/j.aim.2018.07.016},
}
\end{document}